\documentclass[10pt]{amsart}

\usepackage{graphicx, color}
\usepackage{psfrag}



\newtheorem{thm}{Theorem}[section]
\newtheorem{ass}[thm]{Assumption}
\newtheorem{coro}[thm]{Corollary}
\newtheorem{lem}[thm]{Lemma}
\newtheorem{prop}[thm]{Proposition}

\theoremstyle{definition}

\newtheorem{defn}[thm]{Definition}
\newtheorem{eg}[thm]{Example}

\theoremstyle{remark}

\newtheorem{remk}[thm]{Remark}


\newcommand{\Hcal}{ {\mathcal H}}
\newcommand{\Mcal}{ {\mathcal M}}
\newcommand{\Dcal}{ {\mathcal D}}
\newcommand{\Ucal}{ {\mathcal U}}
\newcommand{\Vcal}{ {\mathcal V}}
\newcommand{\Ecal}{ {\mathcal E}}
\newcommand{\Acal}{ {\mathcal A}}

\newcommand{\tensor}{\otimes}

\newcommand{\eps}{\varepsilon}

\newcommand{\spec}{{\rm spec}}
\newcommand{\dom}{{\rm dom}}

\newcommand{\vphi}{\varphi}

\newcommand{\R}{{\mathbf R}}
\newcommand{\Z}{{\mathbf Z}}
\newcommand{\N}{{\mathbf N}}

\newcommand{\Cc}{{\mathcal C}_0}

\newcommand{\refeq}[1]{(\ref{#1})}
\newcommand{\und}{\frac{1}{2}}
\newcommand{\dt}{\frac{2}{3}}
\newcommand{\qt}{\frac{4}{3}}
\newcommand{\unt}{\frac{1}{3}}

\newcommand{\td}{\frac{3}{2}}

\begin{document}

\title[Spectral simplicity]{Spectral simplicity \\ and asymptotic
separation of variables}

\author{Luc Hillairet}

\author{Chris Judge}
\thanks{
L.H. would like to thank the Indiana University 
for its invitation and hospitality 
and the ANR programs `Teichm\"uller' and `R\'esonances et chaos quantiques' 
for their support.
C.J. thanks the Universit\'e de Nantes, MATPYL 
program, L'Institut Fourier, and the Max Planck Institut 
f\"ur Mathematik-Bonn  for hospitality and support.}

\begin{abstract}
We describe a method for comparing the real 
analytic eigenbranches of two families, $(a_t)$ and $(q_t)$, of 
quadratic forms that degenerate as $t$ tends to zero. 
We consider families $(a_t)$ amenable to `separation of variables'
and show that if $(q_t)$ is asymptotic to $(a_t)$  at first order as $t$ tends to $0$,
then the generic spectral simplicity of $(a_t)$ implies 
that the eigenbranches of $(q_t)$ are also generically one-dimensional. 
As an application, we prove that for the generic triangle (simplex) 
in  Euclidean space (constant curvature space form) 
each eigenspace of the Laplacian is one-dimensional.   
\end{abstract}

\maketitle

\section{Introduction}

In this paper we continue a study of generic spectral simplicity that began 
with \cite{HlrJdg09} and \cite{Slit}. In particular, we 
develop a method that allows us to prove the following. 

\begin{thm}\label{ThmIntroTriDDD}
For almost every Euclidean triangle $T \subset \R^2$, each eigenspace of the 
Dirichlet Laplacian associated to $T$ is one-dimensional.
\end{thm}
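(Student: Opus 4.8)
The plan is to deduce Theorem~\ref{ThmIntroTriDDD} from the comparison theorem described above, by pushing one model degeneration --- the collapse of a triangle onto a segment --- through the machinery of the paper and then spreading spectral simplicity from the collapsed regime to the whole moduli space by analytic continuation.

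\emph{Reduction to the collapsed regime.} Work in the open convex cone $\Omega=\{(\ell_1,\ell_2,\ell_3)\in\R^3:\ \ell_i>0,\ \ell_i<\ell_j+\ell_k\}$ of side-length triples of Euclidean triangles; since spectral simplicity is scale-invariant, it suffices to show that the bad set $B\subset\Omega$ --- the triangles carrying a Dirichlet eigenvalue of multiplicity $\geq 2$ --- has measure zero. The boundary $\partial\Omega$ is made up of the degeneration faces $\{\ell_i=\ell_j+\ell_k\}$, on which the triangle has collapsed onto a segment. After pulling the Dirichlet form back to a fixed reference triangle by an affine map it depends real-analytically on $(\ell_1,\ell_2,\ell_3)$, so by the Rellich--Kato theory the Dirichlet eigenvalues organize, along any real-analytic arc in $\Omega$, into globally defined real-analytic eigenbranches. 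Note $B\neq\emptyset$ --- it contains the equilateral triangle --- so ``almost every'' is sharp. Suppose $|B|>0$. By Fubini, for a suitable direction there is a straight segment $\gamma\subset\Omega$ in that direction meeting $B$ in positive one-dimensional measure whose forward extension ends at an interior point of a face $\{\ell_1=\ell_2+\ell_3\}$, so that near its endpoint the triangles are thin slivers over the side of length $\ell_1$ with apex in the interior of that side. Along $\gamma$ the locus where some eigenvalue is multiple --- a countable union of coincidence sets of pairs of real-analytic eigenbranches --- has positive measure, so two distinct eigenbranches $\lambda_i,\lambda_j$ agree on a set of positive measure, hence identically on $\gamma$. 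Thus every triangle along $\gamma$, including those arbitrarily close to the collapse, has a doubly degenerate Dirichlet eigenvalue; and as one approaches the collapse this degenerate eigenvalue stays at bounded height in the rescaled spectrum (its leading asymptotics being $\sim(k\pi/h)^2$ for a fixed channel index $k$, in the notation below). It therefore suffices to show: \emph{as a triangle collapses onto a segment, its Dirichlet eigenvalues at any bounded rescaled height become and remain simple.}

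\emph{The collapsing family and its separated model.} Rescale the long side to $[0,1]\times\{0\}$, so that the triangle is $T_h=\{(x,y):\ 0<x<1,\ 0<y<h\,\tilde w(x)\}$, where $\tilde w(x)=\min(x/c,\,(1-x)/(1-c))$ is the Lipschitz, piecewise-linear tent of height $1$ with vertex at $x=c\in(0,1)$ and $h\to0$. Change variables by $y=h\,\tilde w(x)\,z$, carrying $T_h$ onto the unit square; there the Dirichlet form becomes a family $q_h$ that degenerates as $h\to0$ and, at first order, is asymptotic to the model
\[
 a_h\;=\;-\partial_x^2\;-\;h^{-2}\,\tilde w(x)^{-2}\,\partial_z^2\qquad\text{on the unit square, with Dirichlet conditions.}
\]
Since $a_h$ commutes with $-\partial_z^2$, it separates variables: in the transverse mode $z\mapsto\sin(k\pi z)$ it reduces to the one-dimensional operator $-\partial_x^2+(k\pi)^2h^{-2}\tilde w(x)^{-2}$ on $[0,1]$. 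Each channel operator, being an ordinary differential operator, has simple spectrum, and as $h\to0$ the $k$-th channel concentrates near $(k\pi/h)^2$, so at any bounded rescaled height the channels are disjoint; hence the separated family $(a_h)$ meets the spectral-simplicity hypothesis of the main comparison theorem. That theorem then gives: the first-order asymptotic agreement of $(q_h)$ with $(a_h)$, together with the simplicity of $(a_h)$, forces the eigenbranches of $(q_h)$ --- equivalently the Dirichlet eigenvalues of $T_h$ at any bounded rescaled height --- to be simple once $h$ is small. This is exactly what the reduction required, so $|B|=0$ and Theorem~\ref{ThmIntroTriDDD} follows. The constant-curvature and higher-dimensional simplex statements go the same way, collapsing one vertex onto the opposite facet and replacing the base interval by the corresponding lower-dimensional simplex; linearizing $\tilde w$ at its vertex turns the transverse potential into a V-shaped well, which is why the Airy functions $\Ai,\Bi$ govern the asymptotics of the relevant eigenbranches.

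\emph{Where the work is.} The soft ingredients --- Rellich--Kato eigenbranches, the Fubini and analytic-continuation reduction, simplicity of one-dimensional Schr\"odinger operators, separation of channels --- are routine. The substance of the proof is the verification, for this specific pair of families, of the hypothesis of the main comparison theorem: that the rescaled and straightened triangle forms $q_h$ are asymptotic to the separated model $a_h$ \emph{at first order, and uniformly in the transverse quantum number}. One must control the error of this adiabatic reduction on the fixed square, where the change of variables $y=h\,\tilde w(x)\,z$ produces coefficients that are merely Lipschitz, genuinely singular at $x=0,1$, and non-smooth at $x=c$: the corner at $x=c$ and the two base vertices each contribute corrections to the separated model that must be shown to be of strictly lower order than the retained terms, with enough uniformity that every relevant eigenbranch --- not only the lowest ones in the first channel --- is captured. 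Quantifying how the three vertices of the triangle perturb the separated model is the crux.
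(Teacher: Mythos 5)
Your outer reduction is sound as far as it goes, but it is more elaborate than the route the paper takes, and --- more importantly --- it forces you into a degeneration that the paper's machinery is not built to handle. You collapse a \emph{general} triangle onto a segment, so the transverse profile is the tent $\tilde w(x)=\min(x/c,(1-x)/(1-c))$ with a corner at the apex $x=c$ and zeros at both endpoints, and the separated model's channel potentials $(k\pi)^2 h^{-2}\tilde w(x)^{-2}$ have their wells centred exactly on that corner. This is outside the hypotheses of Theorem~\ref{ThmSimplDbc}: there $\rho$ must be smooth on $[0,c]$ with $\rho(0)=0$ and $\rho'>0$ (strictly monotone), so that $\psi(x)=\int_x^c dx/\rho(x)$ maps $(0,c]$ diffeomorphically to the half-line and $\sigma=\rho^2\circ\psi^{-1}$ is a smooth, strictly decreasing weight on $[0,\infty)$. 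All of Part~2 --- the Langer--Cherry transform, the Airy approximations, the non-concentration estimate of Proposition~\ref{CrucialProposition}, the super-separation Theorem~\ref{Superseparation} --- is built on that smooth monotone $\sigma$, so that the Airy turning point $x_E$ sits at a smooth interior point and the eigenfunctions concentrate near a smooth endpoint $y=0$ where only a boundary condition lives. In your model, by contrast, the concentration point \emph{is} the corner, and the vertex singularities at $x=0,1$ are not pushed off to $+\infty$ by a monotone $\psi$. The difficulty you name at the end ("quantifying how the three vertices of the triangle perturb the separated model") is therefore the whole theorem in your setup, not a verification, and nothing in the paper closes it.

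The paper avoids this obstruction entirely: it degenerates \emph{right} triangles. In the map $\phi_t(x,y)=(x/t,\rho(x)y)$ with $Y=[0,a]$ and $\rho(x)=x$, the collapsing vertex is sent to the far end of the half-line and the right-angle side becomes the smooth endpoint $y=0$; there is no interior corner in the profile, and $\sigma=\rho^2\circ\psi^{-1}$ is a smooth exponential. Theorem~\ref{SimplicityTheorem} then applies and produces a right triangle with simple Dirichlet spectrum. From there, "almost every triangle" follows by the standard one-dimensional Rellich--Kato plus Fubini argument: along a real-analytic segment from that right triangle to any other triangle, two distinct eigenbranches would have to agree identically if they agree on a set of positive measure, which is impossible because they are distinct at the base point; so the bad set meets every such segment in a countable set, and polar-coordinate Fubini about the base point gives measure zero. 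This only requires the collapse result for the \emph{one} convenient family of right triangles --- precisely the family for which the smoothness and monotonicity hypotheses can actually be verified --- rather than for a family ending at an interior point of a degeneration face, which is the part your Fubini reduction commits you to and cannot deliver without new analysis at the corner.
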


Although we establish the existence of triangles with simple Laplace spectrum,
we do not know the exact geometry of a single triangle that has simple spectrum. Up
to homothety and isometry, there are only two Euclidean triangles whose Laplace
spectrum has been explicitly computed, the equilateral triangle and the right is-
coceles triangle, and in both these cases the Laplace spectrum has multiplicities
\cite{Lame} \cite{PinskyTriangle} \cite{Berard} \cite{Harmer}. 
Numerical results indicate that other triangles might have spectra 
with multiplicities [BryWlk84]. Note that non-isometric triangles  
have different spectra \cite{Durso} \cite{LucDiffractive}.

More generally, we prove that almost every simplex in Euclidean space 
has simple Laplace spectrum. Our method applies to other settings as well. 
For example, we have the following.

\begin{thm}  \label{alpha}
 For all but countably many $\alpha$, each eigenspace of the 
 Dirichlet Laplacian associated to the geodesic triangle $T_{\alpha}$
 in the hyperbolic plane with angles $0$, $\alpha$, and $\alpha$, 
 is one-dimensional. 
\end{thm}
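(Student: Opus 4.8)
The plan is to exhibit $(T_\alpha)$ as a degenerating family $(q_t)$ of Dirichlet forms, compare it at first order to a separable model $(a_t)$, and invoke the main comparison theorem of this paper. Since the angles sum to less than $\pi$ we have $\alpha\in(0,\pi/2)$, and by Gauss--Bonnet $\area(T_\alpha)=\pi-2\alpha$, so as $\alpha\to\pi/2$ the triangle collapses. Realize $T_\alpha$ in the upper half-plane with ideal vertex at $\infty$, vertical sides $x=\pm\cos\alpha$ and base the unit semicircle, i.e.\ $T_\alpha=\{|x|<\cos\alpha\}\cap\{x^2+y^2>1\}$; as $t:=\cos\alpha\to0$ this pinches onto the geodesic ray $\{x=0,\ y>1\}$. (Because both sides meeting the cusp carry Dirichlet conditions and the hyperbolic width of the cross-section of the cusp tends to $0$, the Dirichlet Laplacian of $T_\alpha$ has purely discrete spectrum, so the statement is meaningful.) As the separable model I take the straight cuspidal strip $S_t=\{|x|<t\}\cap\{y>1\}$: in the coordinates $\xi=x/t\in(-1,1)$, $u=\log y\in(0,\infty)$ the domain becomes the product $(-1,1)\times(0,\infty)$, the metric becomes $t^2e^{-2u}\,d\xi^2+du^2$, and the Laplacian $-\partial_u^2+\partial_u-(e^{2u}/t^2)\partial_\xi^2$ separates: the angular factors are $\sin(\frac{n\pi}{2}(\xi+1))$ with frequencies $\mu_n=\frac{n\pi}{2}$, $n\ge1$, and the radial factor of mode $n$ solves $-\psi''+\psi'+(\mu_n^2 e^{2u}/t^2)\psi=\lambda\psi$ on $(0,\infty)$ with a Dirichlet condition at $0$ --- a modified Bessel equation in $v=(\mu_n/t)e^u$. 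Thus $(a_t)$ is amenable to separation of variables.

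Next I would check the two hypotheses of the comparison theorem. \emph{First-order asymptotics.} The domains $T_t$ and $S_t$ differ only in that the curved base $\{x^2+y^2=1\}$ is replaced by the flat base $\{y=1\}$; in $(\xi,u)$ coordinates this moves the lower boundary from $u=\frac12\log(1-t^2\xi^2)=-\frac{t^2\xi^2}{2}+O(t^4)$ to $u=0$. A diffeomorphism straightening this boundary, equal to the identity outside a fixed neighborhood of the base, moves points by $O(t^2)$ and pulls the metric of $T_t$ back to one differing from the model metric by $O(t^2)$ inside that neighborhood; this discrepancy is of higher order than the collapse itself, which is what makes $(q_t)$ asymptotic to $(a_t)$ at first order. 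The corner angles even match in the limit, since $S_t$ has right-angled base corners while the base angles of $T_t$ tend to $\pi/2$. \emph{Genericity of the model.} The eigenvalues of $a_t$ are numbers $\lambda_{n,j}(t)$ indexed by the $n$-th angular and $j$-th radial mode, and a turning-point analysis of the radial equation --- whose potential $\mu_n^2 e^{2u}/t^2$ is nearly linear near $u=0$ --- gives $\lambda_{n,j}(t)=\mu_n^2 t^{-2}+c\,\mu_n^{4/3}|\zeta_j|\,t^{-4/3}+o(t^{-4/3})$, with $c>0$ absolute and $\zeta_j$ the $j$-th (negative) zero of $\Ai$. The leading terms separate distinct $n$ (as $\mu_n=n\pi/2$), and for fixed $n$ the next terms separate distinct $j$; hence no two eigenbranches of $(a_t)$ coincide identically, so $(a_t)$ is (eventually, hence generically) spectrally simple.

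Granting these hypotheses, the comparison theorem gives that the eigenbranches of $(q_t)$ are generically one-dimensional. Since $T_\alpha$ depends real-analytically on $\alpha$, Kato--Rellich furnishes a complete system of eigenbranches $\lambda_k(\alpha)$ analytic on $(0,\pi/2)$; the conclusion of the comparison theorem means precisely that no two of these coincide identically (each shadows a distinct mode of the model near $\alpha=\pi/2$). Two non-identical real-analytic functions on an interval agree only on a discrete set, and the union over the countably many pairs is countable; for every $\alpha$ off this countable set the eigenbranch values at $\alpha$ are pairwise distinct, i.e.\ the Dirichlet spectrum of $T_\alpha$ is simple. This is the assertion.

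The main obstacle is the first hypothesis: building the comparison diffeomorphism $T_t\to S_t$ and showing, with estimates uniform as $t\to0$, that the transported form is asymptotic to the separable model at first order in the precise quantitative sense the method requires --- especially near the two base vertices, where the geometries genuinely differ and where one must ensure the error form is controlled by the model form on the scale set by the degeneration. By contrast, the turning-point asymptotics of the radial operators and the ensuing non-coincidence are comparatively routine, all the more so because here the angular operator is merely the Dirichlet Laplacian of an interval.
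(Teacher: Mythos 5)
Your plan matches the paper's actual proof: Theorem \ref{alpha} is obtained as the special case $h(x)=\sqrt{1-x^2}$ of Theorem \ref{HyperbolicSimple} in \S\ref{IdealSection}, which degenerates the domain by sending $t=\cos\alpha\to 0$, compares the pulled-back Dirichlet form to a separable model on the straight cuspidal strip, and feeds this into Theorem \ref{SimplicityTheorem}. Your choice of coordinates $(\xi,\log y)$ differs from the paper's $(a,b)$ by a logarithmic change of the radial variable and a conformal factor $b^{1/2}$, but the resulting one-dimensional radial problems coincide; and your final real-analyticity argument (non-identical analytic branches coincide only on a discrete set, countable union over pairs) is exactly how the paper passes from small $t$ to ``all but countably many $\alpha$.''

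The genuine gap is precisely what you flag as ``the main obstacle.'' Your heuristic --- the base curve displaces the boundary by $O(t^2)$ in $(\xi,u)$ coordinates, so a straightening diffeomorphism perturbs the metric by ``$O(t^2)$,'' which is ``higher order than the collapse'' --- does not establish Definition \ref{AsymptoticDefn}. That definition is a \emph{relative} two-part estimate: $|q_t(u,v)-a_t(u,v)|\lesssim t\, a_t(u)^{1/2}a_t(v)^{1/2}$ \emph{and} $|\dot q_t(v)-\dot a_t(v)|\lesssim a_t(v)$. The form $a_t$ is anisotropic (the radial term carries the factor $t^2$), so a pointwise $O(t^2)$ metric perturbation is not automatically of size $O(t)\cdot a_t$; the mixed $d\xi\,du$ error term and the error in the $d\xi^2$ coefficient are each comparable to the small $t^2$-weighted part of $a_t$, and one must actually check that they pair only against the large angular part at the right rate. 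The paper handles this by exploiting conformal invariance of the two-dimensional Dirichlet energy (so it compares Euclidean forms, with the hyperbolic weight only in the inner product), introducing the diffeomorphism $\phi_t(a,b)=(ta,\,b+h(ta)-h(0))$ together with the conformal multiplier $\psi(x,y)=y/(y-h(x)+h(0))$ so that the hyperbolic inner product pulls back \emph{exactly} to a $t$-independent one, and then bounding the remainder $q_t-a_t$ by four explicit integral terms using $|h'(ta)|=O(t)$ (from $h'(0)=0$) and the Poincar\'e inequality in the angular variable. You also do not address the pullback of the inner product or the derivative condition $|\dot q_t-\dot a_t|\lesssim a_t$ at all, and these are not corollaries of the boundary displacement being small.

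A secondary imprecision: you frame the second requirement as ``$(a_t)$ is generically spectrally simple,'' supported by ``no two eigenbranches coincide identically.'' That is not the hypothesis of Theorem \ref{SimplicityTheorem}; what is needed is the simplicity of the cross-sectional form $b$ (here automatic, the interval) together with the quantitative \emph{super-separation} near thresholds (Theorem \ref{Superseparation}), i.e.\ that distinct eigenvalues of $a_t^\mu$ converging to the same threshold separate at a rate $\gg t$. Your Airy expansion $\lambda_{n,j}(t)=\mu_n^2 t^{-2}+c\,\mu_n^{4/3}|\zeta_j|\,t^{-4/3}+o(t^{-4/3})$ points the right way, but gives separation only branch-by-branch with fixed $(n,j)$; as the paper explicitly remarks after the statement of Theorem \ref{Superseparation}, that is insufficient --- the result is needed along arbitrary sequences of eigenvalues possibly belonging to infinitely many distinct branches, and is proved by a contradiction argument through the Airy operator $\mathcal A_z$, not by fixed-branch asymptotics.
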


If $\alpha= \pi/3$, then $T_{\alpha}$ is isometric to 
a fundamental domain for the group $SL_2(\Z)$ acting on the 
upper half-plane as linear fractional transformations.  
P. Cartier \cite{Cartier} conjectured that $T_{\pi/3}$ 
has simple spectrum. This conjecture remains open (see \cite{Sarnak}).

Until now, the only extant methods for proving that domain had simple
Laplace spectrum consisted of either explicit computation of the spectrum,
a perturbation of a sufficiently well-understood domain, 
or a perturbation within an infinite dimensional space of domains. 
As an example of the first approach, using separation of variables 
one can compute the Laplace spectrum of each rectangle exactly and find
that this spectrum is simple iff the ratio of the squares of the sidelengths 
is not a rational number. In \cite{HlrJdg09} we used this fact and the analytic 
perturbation method to show that almost every polygon with at least four sides has simple
spectrum. The method for proving spectral simplicity by making perturbations
in an infinite dimensional space originates with J. Albert \cite{Albert} 
and K. Uhlenbeck  
\cite{Uhlenbeck}. In particular, it is shown in \cite{Uhlenbeck} that  
the generic compact domain with smooth boundary has simple spectrum.

In the case of Euclidean triangles, the last method does not apply since the 
space of triangles is finite dimensional. We also do not know how to compute 
the Laplace spectrum of a triangle other than the right-isoceles and equilateral
ones. One does know the eigenfunctions of these two triangles sufficiently well
to apply the perturbation method, but unfortunately the eigenvalues
do not split at first order and it is not clear to us what happens at second order.

As a first step towards describing our approach, we consider the following example. 
Let $T_t$ be the family of Euclidean right triangles with vertices $(0,0)$, $(1,0)$,
and $(1,t)$ and let $q_t$ denote the associated Dirichlet energy form 
\[ q_t(u)~ =~ \int_{T_t}  \|\nabla u\|^2~ dx~ dy. \]
For each $u,v \in C^{\infty}_0(T_t)$, we have $q_t(u,v)=\langle \Delta_t u, v\rangle$ 
where $\Delta_t$ is the Laplacian, and hence the spectrum of $\Delta_t$ equals 
the spectrum of $q_t$ with respect to the $L^2$-inner product on $T_t$.

As $t$ tends to zero, the triangle $T_t$ degenerates to the segment
that joins $(0,0)$ and $(1,0)$.  Note that the spectrum of an interval is simple
and hence one can hope to use this to show that $T_t$ has simple spectrum for 
some small $t>0$. 

\begin{figure}[h]   
\begin{center}


\includegraphics[totalheight=1.5in]{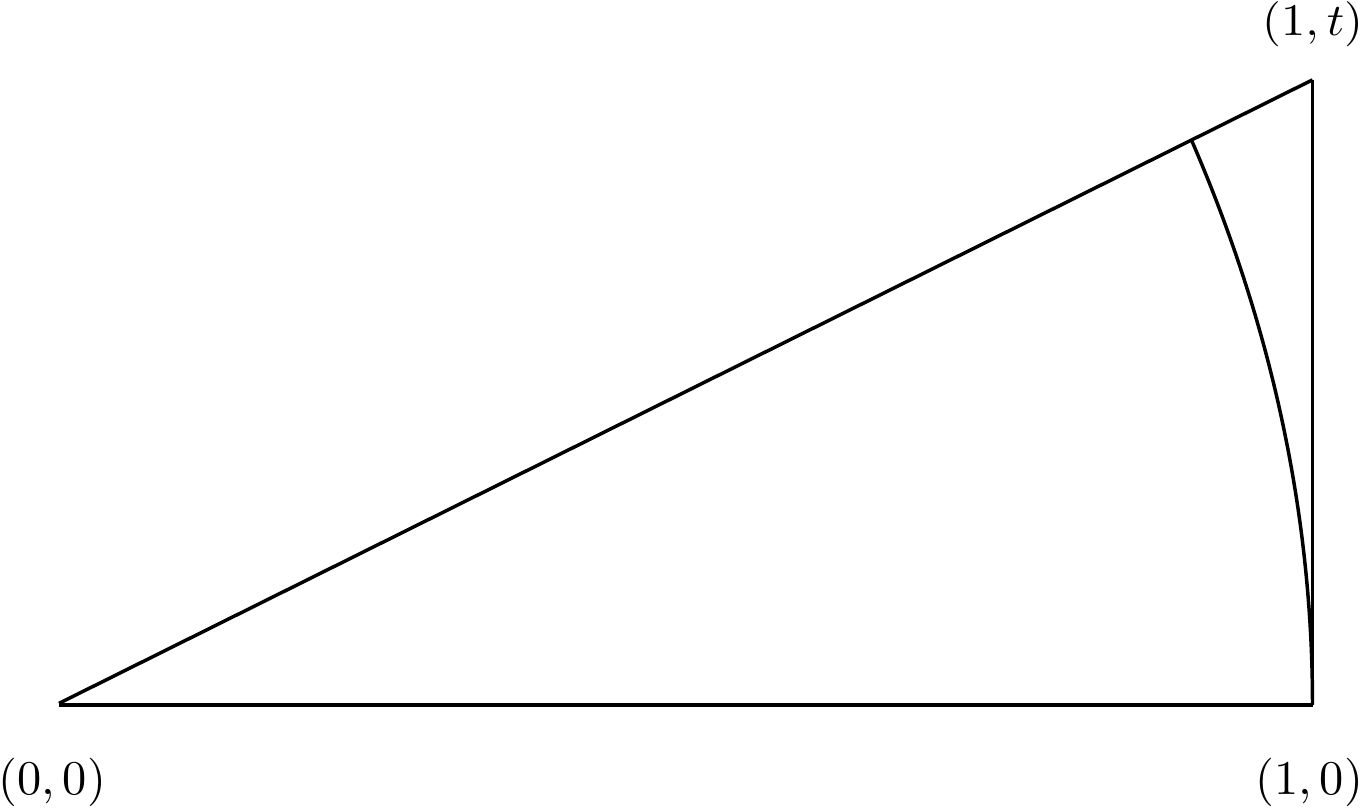}

\end{center}
\caption{\label{TriSect}The triangle $T_t$ and the sector $S_t$.}
\end{figure}

Indeed, the spectral study of domains that degenerate to a one-dimensional object 
is quite well developed. In particular, it is known that---up to renormalization---the 
spectrum of the ordered eigenvalues of the domain converge to eigenvalues of the 
limiting object (see, for example, \cite{Post} and  \cite{Friedlander}). 
Using these kinds of results it is quite easy to prove that 
for each $n \in \N$, there exists $t_n>0$ so that the the first $n$ 
eigenvalues of $T_{t_n}$ are simple.

Unfortunately, this does not imply the existence of a triangle 
{\em all} of whose eigenvalues are simple. This subtle point is 
perhaps best illustrated by a different example whose spectrum can be explicitly
calculated: Let $C_t$ be the cylinder $[0,1] \times \R/{t\Z}$. 
The spectrum of the Dirichlet Laplacian on $C_t$ is 
\[ \left\{\pi^2 \cdot \left( k^2+\ell^2/t^2\right)~ |~ 
        (k, \ell) \in \N \times (\N \cup \{0\}) \right\}.  \]
Moreover, for each $t>0$ and $(k, \ell) \in  \N \times \N$, 
each eigenspace is 2-dimensional. On the other hand,  the first 
$n$ eigenvalues of the cylinder $C_{t}$ are simple iff $t < (n^{2}-1)^{-1}$. 

The example indicates that the degeneration approach to proving
spectral simplicity does not work at the `zeroeth order' approximation. 
The method that we describe here is at the next order.  
For example, in the case of the degenerating triangles $T_t$, there is  
a second quadratic form $a_t$ to which $q_t$ is asymptotic 
in the sense that $a_t-q_t$ is of the same order of magnitude as $t \cdot a_t$. 
Geometrically, the quadratic form $a_t$ corresponds to the Dirichlet energy form 
on the sector, $S_t$, of the unit disc with angle $\arctan(t)$. See Figure \ref{TriSect}. 

The spectrum of $a_t$ may be analyzed using polar coordinates and separation of variables.
In particular, we obtain the Dirichlet quadratic form $b$ associated to the interval 
of angles $[0, \arctan(t)]$, and, asssociated to each eigenvalue 
$(\ell\cdot \pi/\arctan(t))^2$ of $b$, we have a quadratic form $a_t^{\ell}$
on the radial interval $[0,1]$.  Each eigenfunction of $a_t^{\ell}$ is of the form 
$r \mapsto J_{\nu}(\sqrt{\lambda} \cdot r)$ where $J_{\nu}$ is a 
Bessel function of order $\nu=2 \pi/\arctan(t)$ and where the eigenvalue, $\lambda$, is 
determined by the condition that this function vanish at $r=1$. 
The spectrum of $a_t$ is the union of the spectra of $a_t^{\ell}$ over $\ell \in \N$.

\begin{figure}[h]   
\begin{center}


\includegraphics[totalheight=2.5in]{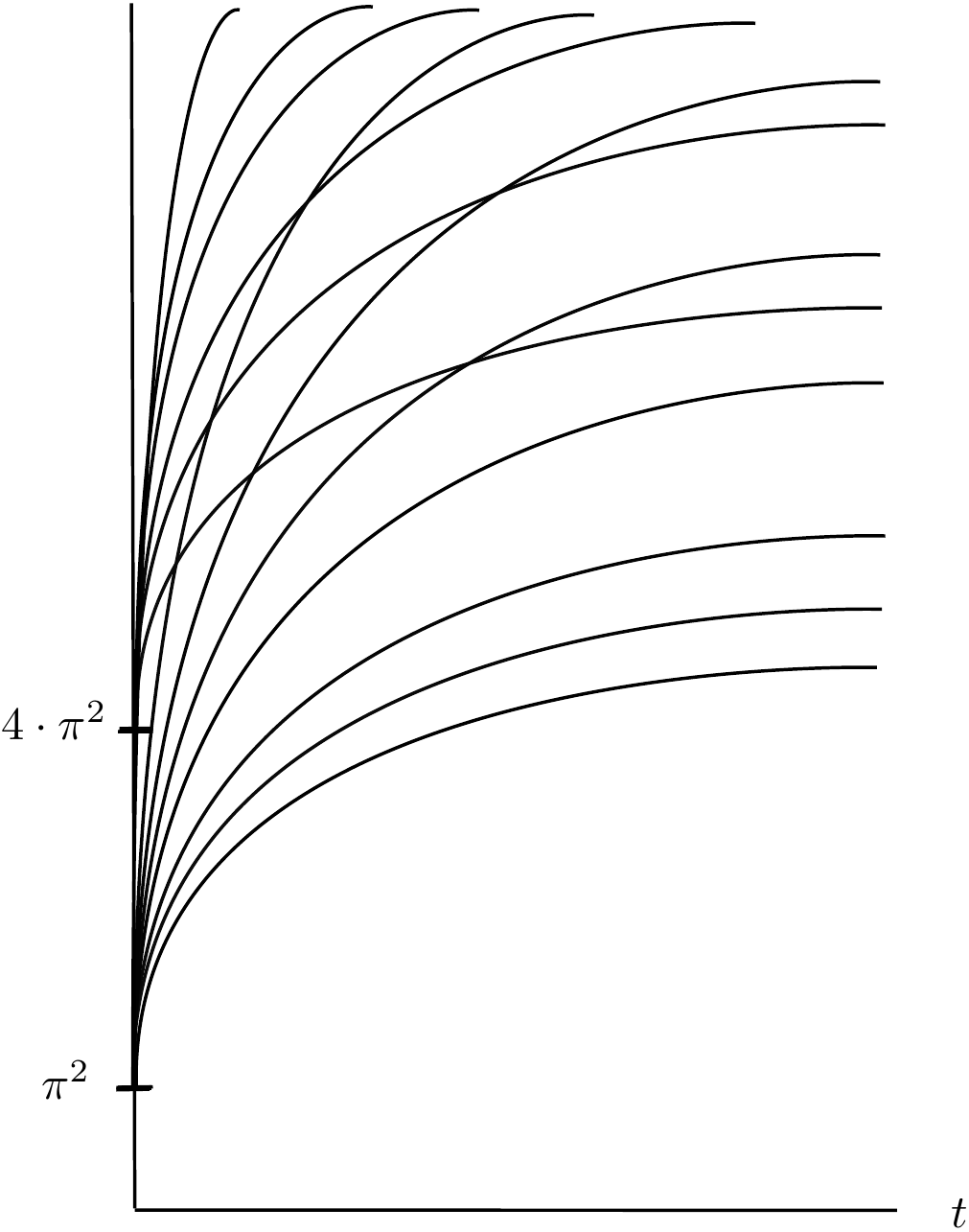}

\end{center}
\caption{\label{atspec}The spectrum of the family $a_t$}
\end{figure}

In order to better describe the qualitative 
features of the spectrum of $a_t$ as $t$ tends to zero, 
we renormalize by multiplying by $t^2$. 
Figure \ref{atspec} 
presents the main qualitative features of the renormalized spectrum of $a_t$.
For each $\ell \in \N$, the (renormalized) real-analytic eigenvalue branches of $a$ coming 
from $a_t^{\ell}$ converge to the threshold  $(\ell \cdot\pi)^2$.  The 
eigenvalues of $a_t^{\ell}$ are simple for all $t$, and for all but countably 
many $t$, the spectrum of $a_t$ is simple. 
 
From the asymptotics of the zeroes of the Bessel function, one can show that  
the distance between any two (renormalized) real-analytic 
eigenbranches of $a_t^{\ell}$ is of order at least $t^\frac{2}{3}$. 
Note that this `super-separation' of eigenvalues can have strong implications
for the simplicity problem and, in fact, is central to our method. 
Indeed, simplicity would follow if one were to prove 
that each real-analytic eigenvalue branch of $q_t$ lies in an $O(t)$ neighborhood
of a real-analytic eigenvalue branch of $a_t$ and that at most one 
eigenfunction branch of $q_t$ has its eigenvalue branch lying in this neighborhood.

In fact, as sets, the distance between the spectrum of $a_t$ and the spectrum 
of $q_t$ is $O(t)$, and, with some work, one can prove that each (renormalized)
real-analytic eigenvalue branch of $q_t$ converges to a threshold in 
$\{(\ell \cdot \pi)^2~ |~ \ell \in \N \}$ (Theorem \ref{qLimit}).
Nonetheless,  infinitely many real-analytic eigenbranches 
of $a_t$ converge to each threshold and the crossing pattern 
of these branches and the branches of $q_t$ can be quite complicated. 
Semiclassical analysis predicts\footnote{In semiclassical terminology, 
$t$ corresponds to $h$ and, away for $(\ell \cdot\pi)^2,$ the energy is non-critical.  
Hence one would expect eigenvalues to be separated at order $t$.} that the eigenvalues of 
$a_t^{\ell'}$ become separated at order $t$ away from the threshold $(\ell \cdot \pi)^2$.
On the other hand, two real-analytic eigenbranches that converge to the same threshold
stay separated at order $t^{\dt}$.  In order to use the super-separation of 
eigenvalues, we will need to show that each eigenvector 
branch of $q_t$ whose eigenvalue branch converges to a particular threshold 
does not interact with eigenvector branches of $a_t$ that converge 
to another threshold (see Lemmas 12.3 and 12.4). In this sense, 
we will asymptotically separate variables.
 
One somewhat novel feature of this work is the melding of  
techniques from semiclassical analysis
and techniques from analytic perturbation theory. 
We apply quasimode and concentration estimates to make comparative 
estimates of the eigenvalues and eigenfunctions of $a_t$ and $q_t$. 
We then feed these estimates into the variational formulae of analytic
perturbation theory in order to track the real-analytic branches. 

So far, our description of the method has been limited to the special case 
of degenerating right triangles. In \S \ref{AdiabaticSection} we make a 
change of variables that places the problem for right triangles 
into the following more general context.  We suppose that there exists 
a positive abstract quadratic from $b$ with simple discrete spectrum, and define
\begin{equation}  \label{atIntro}   
 a_t(u \otimes \vphi)~ =~ t^2 \cdot (\vphi,\vphi)  \int_0^{\infty} |u'(x)|^2~ dx~  
  +~ b(\vphi) \int_0^{\infty} |u(x)|^2~ dx. 
\end{equation}
We consider this family of quadratic forms relative to the weighted $L^2$-inner product
defined by
\[ \langle u \otimes \varphi , v \otimes \psi \rangle~ 
    =~ (\varphi, \psi) \int_0^{\infty}  u \cdot v~ \sigma~ dx. 
\]
where $\sigma$ is a smooth positive function with $\sigma'<0$ and 
$\lim_{x \rightarrow \infty} \sigma(x)=0$. See \S \ref{SectionReduction}. 
The spectrum of $a_t$ decomposes 
into the joint spectra of 
\begin{equation}  \label{atmu}
   a_t^{\mu}(u)~ 
 =~ t^2  \int_0^{\infty} |u'(x)|^2~ dx~  
  +~ \mu \int_0^{\infty} |u(x)|^2~ dx. 
\end{equation}
where $\mu$ is an eigenvalue of $b$ (and hence is positive). Because $\sigma$ is a decreasing function, 
an eigenfunction of $a_t^{\mu}$ with eigenvalue $E$ 
oscillates for $x<< x_E=  (E - \mu \cdot \sigma)^{-1}(0)$
and decays rapidly for  $x>> x_E$. Since $\sigma'<0$, one can approximate 
the eigenfunction (or a quasimode at energy $E$) with Airy functions in 
a neighborhood of $x_E$.  A good deal of the present work is based
on this approximation by Airy functions.  For example, the asymptotics of the
zeroes of Airy functions underlies the super-separation of eigenvalues. 

The following is the general result. 

\begin{thm}[Theorem \ref{SimplicityTheorem}]
If $q_t$ is a a real-analytic family of positive quadratic forms 
that is asymptotic to $a_t$ at first order (see Definition \ref{AsymptoticDefn}), 
then for all but countably many $t$, the spectrum of $q_t$ is simple. 
\end{thm}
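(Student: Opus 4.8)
The plan is to use analytic perturbation theory (Kato--Rellich) on the family $q_t$, keeping track of real-analytic eigenvalue branches $E(t)$ and eigenvector branches $u(t)$, and to rule out crossings of distinct branches by comparing with the auxiliary family $a_t$. First I would fix, for each threshold $(\ell\cdot\pi)^2$, the set of eigenbranches of $q_t$ whose renormalized eigenvalue converges to it; by Theorem~\ref{qLimit} every branch lands on some threshold, so it suffices to analyze the interaction of branches attached to a single threshold. Within such a cluster I would use quasimode estimates: given an eigenfunction branch $u(t)$ of $q_t$ at energy $E(t)$, the asymptotic separation of variables (Lemmas 12.3 and 12.4) shows that $u(t)$ is concentrated, up to $O(t)$ in the relevant norm, on a single $b$-mode $\vphi_\mu$, so that $u(t)$ is an $O(t)$-quasimode for the one-dimensional operator $a_t^{\mu}$. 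Conversely the Airy-function analysis near the turning point $x_E$ gives the location and the $t^{\dt}$-separation of the eigenvalues of $a_t^{\mu}$.

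The core of the argument is then a counting/pigeonhole step combined with the variational formula for eigenbranch derivatives. Since the spectrum of $a_t$ is $O(t)$-close to the spectrum of $q_t$ as \emph{sets}, each $q_t$-eigenvalue branch lies in an $O(t)$-neighborhood of some $a_t$-eigenvalue; because consecutive eigenbranches of $a_t^{\mu}$ are separated by at least a constant times $t^{\dt} \gg t$, for small $t$ at most one $q_t$-branch can sit in the $O(t)$-window around a given $a_t^{\mu}$-eigenvalue, and at most one $a_t^{\mu}$-eigenvalue can be that close to a given $q_t$-branch. This gives a well-defined injective correspondence, valid for $t$ in a neighborhood of $0$ minus the (countable, closed-and-discrete away from $0$) set of $t$ where two $a_t$-branches cross. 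Using the Feynman--Hellmann/variational formula $\dot E(t) = \dot q_t(u(t))$ together with the quasimode control of $u(t)$, I would show $\dot E(t)$ for the $q_t$-branch agrees to leading order with $\dot E(t)$ for the matched $a_t^{\mu}$-branch, so two $q_t$-branches matched to distinct $a_t$-branches cannot cross at first order; a standard argument (as in \cite{HlrJdg09}) upgrades ``no first-order crossing'' to ``the set of crossing parameters is countable.''

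To finish I would assemble these local statements over all thresholds $\ell$ and all $b$-eigenvalues $\mu$: the bad set of $t$ is a countable union of countable sets, hence countable, and off this set every eigenvalue of $q_t$ is the image of a unique $a_t^{\mu}$-eigenvalue under an injective map, whence simple. One must also handle the top and bottom of each cluster and the high-energy regime, where one invokes that only finitely many branches are relevant below any fixed energy bound and that the renormalization confines everything, so no ``escape to infinity'' of multiplicity occurs.

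The step I expect to be the main obstacle is making the quasimode/concentration estimate uniform enough: one needs the $O(t)$ bound on how far a $q_t$-eigenfunction branch is from a single $b$-mode to hold with constants independent of which threshold the branch approaches and, crucially, uniformly as the branch parameter and the energy vary over a compact range, since it is this uniformity that is being played against the $t^{\dt}$ super-separation. Controlling the Airy approximation near the turning point $x_E$ when $x_E$ itself may drift (because $\sigma$ is only assumed smooth, decreasing, and decaying) is the delicate technical point, and is presumably where the bulk of the paper's analytic work goes.
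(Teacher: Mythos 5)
Your outline correctly identifies the ingredients (Theorem~\ref{qLimit}, super-separation, the variational formula, a quasimode bridge between $q_t$ and the one-dimensional forms $a_t^{\mu}$), but it is built around a claim that the paper does not prove and in fact carefully avoids, and it is missing the one step that actually closes the argument.

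The claim you rely on --- that a real-analytic eigenfunction branch $u_t$ of $q_t$ is, up to $O(t)$ in norm, concentrated on a \emph{single} $b$-mode $\varphi_{\mu}$, hence an $O(t)$-quasimode for $a_t^{\mu}$ --- is not what Lemmas~\ref{LemLiminf}, \ref{MinusPreProjection}, and \ref{MinusProjection} give. What is actually proved is much weaker and of $\liminf$ type: if $u_t$ is a (possibly only a.e.-continuous) selection from the eigenspace family $V_t$ and $w_t = P_{a_t}^I(u_t)$, then $\liminf_{t \to 0} \|\Pi_{\Mcal_I^-}(w_t)\| / \|w_t\| = 0$. This is a consequence of the integrability of $t \mapsto \dot{a}_t(w_t)/\|w_t\|^2$ (Theorem~\ref{Thmlimiteigenbranch}), which in turn rests on the non-concentration estimate of Proposition~\ref{CrucialProposition}; none of that yields a uniform $O(t)$ alignment of $u_t$ with one $\varphi_\mu$, and the paper explicitly warns that infinitely many $a_t$-branches cluster at each threshold with complicated crossing patterns, so such a clean matching is not available.

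More importantly, your pigeonhole step addresses the wrong obstruction. Setting up an injective correspondence between $q_t$-branches and $a_t^{\mu}$-eigenvalues would, if it worked, rule out two \emph{distinct} real-analytic $q_t$-branches landing near the same $a_t^{\mu}$-eigenvalue, but it does not touch the possibility of a \emph{permanently degenerate} branch, i.e.\ a single real-analytic eigenvalue branch $E_t$ whose associated analytic eigenspace family $V_t$ has $\dim V_t \geq 2$ for all $t$. That is exactly what Theorem~\ref{SimplicityTheorem} rules out, and it is what the Corollary then converts (via analyticity) into ``simple off a discrete set.'' The step you are missing is the orthogonality trick in Lemma~\ref{BigPik}: assuming $\dim V_t \geq 2$, one distinguishes two cases using super-separation (Theorem~\ref{Superseparation}) at scale $\gg t$. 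Either $E_t$ is $\gtrsim t$-far from $\spec(a_t^{\mu})$, and then the resolvent estimate (Lemma~\ref{resolvent}) forces the $\mu$-component $\Pi_\mu w_t$ to be small; or $E_t$ is within $O(t)$ of a unique $\lambda_k(t) \in \spec(a_t^{\mu})$, and then --- precisely because $V_t$ is at least two-dimensional --- one can choose $u_t \in V_t$ orthogonal to the corresponding $a_t^{\mu}$-eigenvector $\tilde{v}_t \otimes \varphi_\mu$, and again the restricted resolvent estimate gives $\|\Pi_\mu w_t\| \leq \frac{1}{2}\|w_t\|$. Combined with Lemma~\ref{KIsmall}, this forces $\|\Pi_{\Mcal_I^-}(w_t)\| \geq \frac{1}{2}\|w_t\|$ uniformly, contradicting Lemma~\ref{MinusProjection}. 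Without that orthogonal-extraction idea, the proposal has no mechanism to exclude permanent degeneracy, and the Feynman--Hellmann ``no first-order crossing'' step you invoke has no content there, since a permanently degenerate branch is a single branch and there is no crossing to split.
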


Using an induction argument that begins with the triangle, we obtain
the following. 

\begin{coro} \label{Simplex}
For almost every simplex in Euclidean space, each eigenspace of the 
associated Dirichlet Laplacian is one-dimensional.
\end{coro}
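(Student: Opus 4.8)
The plan is to deduce Corollary~\ref{Simplex} from Theorem~\ref{SimplicityTheorem} (and its analogue for spherical/hyperbolic simplices, if needed only in Euclidean space we may work with the flat case) by an induction on the dimension $n$ of the simplex, the base case $n=2$ being Theorem~\ref{ThmIntroTriDDD}. Throughout, ``almost every'' refers to Lebesgue measure on the finite-dimensional space of simplices modulo isometry and homothety (an open subset of $\R^{n(n+1)/2-1}$, say, parametrized by edge lengths or vertex coordinates). The key geometric input is that an $n$-simplex $\Sigma$ can be degenerated by collapsing it onto one of its facets: fix a codimension-one face $F$, which is itself an $(n-1)$-simplex, and push the opposite vertex toward the hyperplane of $F$, obtaining a family $\Sigma_t$ with $\Sigma_t \to F$ as $t \to 0$. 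Near $F$ the domain $\Sigma_t$ looks, to first order in $t$, like a ``wedge'' over $F$ — the $n$-dimensional analogue of the sector $S_t$ over an interval in the triangle case — and the Dirichlet energy form $q_t$ of $\Sigma_t$ is asymptotic at first order (Definition~\ref{AsymptoticDefn}) to a form $a_t$ that separates variables: the ``angular'' factor is the Dirichlet form of $F$ and the ``radial'' factor has the shape of \eqref{atmu}.

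First I would set up the change of variables of \S\ref{AdiabaticSection}--\S\ref{SectionReduction} in the $n$-dimensional wedge, verifying that after the substitution the energy form of $\Sigma_t$ takes the abstract form \eqref{atIntro} with $b$ equal to the Dirichlet form on the facet $F$, and with a weight $\sigma$ that is smooth, positive, strictly decreasing, and vanishing at infinity — exactly the hypotheses feeding Theorem~\ref{SimplicityTheorem}. Here is where the induction hypothesis enters: Theorem~\ref{SimplicityTheorem} requires $b$ to have \emph{simple} discrete spectrum, and $b$ is (conjugate to) the Dirichlet Laplacian on the $(n-1)$-simplex $F$; by the inductive hypothesis, for almost every $F$ this spectrum is simple. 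So I would first fix, once and for all, a facet $F$ whose Dirichlet spectrum is simple (possible for a.e.\ $F$), realize it as a facet of a one-parameter family $\Sigma_t$ of $n$-simplices degenerating onto it, and conclude from Theorem~\ref{SimplicityTheorem} that $\Sigma_t$ has simple spectrum for all but countably many $t$. This produces, for a.e.\ choice of $F$, at least one (in fact cocountably many) $n$-simplex with simple spectrum — but not yet a full-measure set of simplices.

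To upgrade ``some simplex'' to ``almost every simplex'' I would run the analytic-perturbation / Fubini argument already used in \cite{HlrJdg09}. Consider the real-analytic family of Dirichlet forms over the full parameter space $\mathcal{P}$ of $n$-simplices. The set $Z \subset \mathcal{P}$ of simplices with a multiple eigenvalue is a countable union of real-analytic subvarieties (zero sets of resultants of characteristic functions of the associated operator pencils, cut out via the analytic perturbation theory invoked for $q_t$). A proper real-analytic subvariety has measure zero; so it suffices to show each such subvariety $Z_k$ is proper, i.e.\ that $Z_k \neq \mathcal{P}$. This is exactly what the previous paragraph gives: slicing $\mathcal{P}$ by the one-parameter degenerating families $\{\Sigma_t\}$ as $F$ ranges over a full-measure set of facets sweeps out a full-measure subset of $\mathcal{P}$, and along a.e.\ such slice the spectrum is simple for cocountably many $t$, so no $Z_k$ can contain a full slice, hence no $Z_k$ is all of $\mathcal{P}$. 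Fubini then gives $\mathrm{meas}(Z)=0$, which is the assertion of Corollary~\ref{Simplex}.

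The main obstacle I anticipate is the first paragraph's geometric reduction: checking carefully that when one collapses an $n$-simplex onto a facet, the rescaled energy form really is asymptotic \emph{at first order} to the separated form $a_t$ in the precise sense of Definition~\ref{AsymptoticDefn}, uniformly enough to apply Theorem~\ref{SimplicityTheorem}. Unlike the triangle case, the ``cross-section'' $F$ is now a genuine higher-dimensional domain with its own boundary and corners, the collapsing map is not an isometry, and the error terms $a_t - q_t$ must be controlled against $t \cdot a_t$ including near $\partial F$; this requires a good choice of the collapsing coordinates (so that the metric on $\Sigma_t$, pulled back to a fixed model wedge over $F$, is $\mathrm{Eucl} + O(t)$ in the relevant norm) and a verification that the weight $\sigma$ coming from the Jacobian of polar-type coordinates on the wedge satisfies $\sigma' < 0$ and decays appropriately. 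A secondary, more bookkeeping-level point is ensuring the induction is well-founded — that ``a.e.\ facet has simple spectrum'' combined with the slicing genuinely covers a.e.\ $n$-simplex — but this is handled by the standard Fubini argument once the analytic-subvariety structure of $Z$ is in place.
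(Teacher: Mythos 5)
Your proposal follows essentially the same route as the paper: induct on dimension starting from the triangle, degenerate an $n$-simplex onto a facet $F$ that is an $(n-1)$-simplex with simple Dirichlet spectrum, invoke Theorem~\ref{SimplicityTheorem} for that one-parameter family, and then use the analytic-perturbation/Fubini argument of \cite{HlrJdg09} to pass from ``cocountably many $t$'' to ``almost every simplex.'' The technical verification you flag as the main obstacle — that the rescaled Dirichlet energy form is asymptotic at first order to a separated form $a_t$ with $b$ the Dirichlet form on $F$ and $\sigma$ a decreasing weight — is exactly what Theorem~\ref{ThmSimplDbc} establishes (take $\rho(x)=x$ and $Y=F$, so that $\Omega_t$ is the cone over $F$, i.e.\ an $n$-simplex), so there is nothing further to prove once you invoke it together with the inductive hypothesis.
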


Note that Dirichlet boundary conditions can be replaced by any
boundary condition that corresponds to a positive quadratic form $b$.  
In particular, we can choose any mixed Dirichlet-Neumann condition on the faces
of the simplex except for all Neumann.

Using the `pulling a vertex' technique of \cite{HlrJdg09}, we 
can extend generic simplicity to certain classes of polyhedra.  
For example, a $d$-dimensional polytope $P$ is called {\em $k$-stacked} if 
$P$ can be triangulated by introducing only faces of dimension $d-1$ \cite{Grunbaum}. 

\begin{coro}
Almost every $d-1$-stacked convex polytope $P \subset \R^d$ with $n$ vertices
has simple Dirichlet spectrum.
\end{coro}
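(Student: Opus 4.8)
The plan is to induct on the number $n$ of vertices. Write $\mathcal{P}_n$ for the (finite-dimensional, real-analytic) space of $(d-1)$-stacked convex polytopes in $\R^d$ with $n$ labelled vertices --- a finite union of open subsets of $(\R^d)^n$, carrying the corresponding measure. For $n=d+1$ every member of $\mathcal{P}_n$ is a simplex, so Corollary~\ref{Simplex} supplies the base case. For the inductive step, recall (see \cite{Grunbaum}) that a $(d-1)$-stacked polytope is precisely one built from a simplex by repeated stacking: every $P\in\mathcal{P}_n$ equals $\mathrm{conv}(P'\cup\{v\})$ for some $P'\in\mathcal{P}_{n-1}$, some facet $F$ of $P'$, and some apex $v$ in the open ``stacking cone'' $C_F(P')\subset\R^d$ of points seeing only $F$; then $P=P'\cup S$ with $S$ the $d$-simplex of base $F$ and apex $v$. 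Since $\dim C_F(P')=d$, the maps $(P',v)\mapsto P$, one for each of the finitely many combinatorial choices of $F$, are open real-analytic embeddings and their images cover $\mathcal{P}_n$. So it suffices to show that for almost every $P'\in\mathcal{P}_{n-1}$ the set of apices $v\in C_F(P')$ for which $\mathrm{conv}(P'\cup\{v\})$ has a multiple eigenvalue is null; by the inductive hypothesis we may assume $P'$ has simple spectrum.

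Fix such a $P'$ and a facet $F$, let $c\in F$ be its centroid, and foliate a full-measure part of $C_F(P')$ by the segments through $c$: for a direction $u$ pointing from $c$ into $C_F(P')$, set $P_t:=\mathrm{conv}\bigl(P'\cup\{\,c+tu\,\}\bigr)=P'\cup S_t$ for $0<t<t_{\max}(u)$, where $S_t$ is the $d$-simplex of base $F$ and apex $c+tu$. By convexity of $C_F(P')$ each such segment lies in $C_F(P')$, and as $t\to0$ the simplex $S_t$ collapses, so $P_t\to P'$. This is a degenerating-simplex family of exactly the type treated in this paper, with the one new feature that the collapsing simplex $S_t$ is glued to the fixed polytope $P'$. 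Granting that the Dirichlet energy form $q_t$ on $P_t$ is asymptotic at first order to a form $a_t$ of the shape \eqref{atIntro} --- with $P'$ contributing a fixed reservoir of modes near its own (simple) spectrum, and the modes on $S_t$ governed by separation of variables --- Theorem~\ref{SimplicityTheorem} produces a sequence $t_k\to0$ with $P_{t_k}$ of simple spectrum. Since $t\mapsto q_t$ is a real-analytic family on $(0,t_{\max}(u))$, two distinct real-analytic eigenbranches meet only in a discrete set, and no two coincide identically (else $P_{t_k}$ would not be simple); hence $P_t$ has simple spectrum for all but countably many $t\in(0,t_{\max}(u))$.

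Now run Fubini. Over the simple locus of $\mathcal{P}_{n-1}$, the finitely many facet choices $F$, and the open set of admissible directions $u$, the leaves $t\mapsto P_t$ sweep out a full-measure subset of $\mathcal{P}_n$, and in the coordinates $(P',u,t)$ the parametrizing map --- the composition of the open embedding $(P',v)\mapsto\mathrm{conv}(P'\cup\{v\})$ with the polar-type change $v=c+tu$ --- is real-analytic with Jacobian nonvanishing for $t>0$. By the previous paragraph, for almost every $(P',u)$ the slice of the bad set $B_n:=\{P\in\mathcal{P}_n:\ \text{spectrum not simple}\}$ along the corresponding leaf is null in $t$; Fubini then forces $B_n$ to be null. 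Hence almost every element of $\mathcal{P}_n$ has simple spectrum, and induction on $n$ finishes the proof.

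The main obstacle is the hypothesis invoked in the second paragraph: that a degenerating simplex \emph{glued to a fixed polytope} still falls within the scope of Theorem~\ref{SimplicityTheorem}. One must verify that, at first order, the fixed part $P'$ decouples and contributes only ``low'' modes --- close to the simple spectrum of $P'$ --- which do not interact with the modes concentrating on the collapsing simplex $S_t$. This is an asymptotic separation of variables in the same spirit as, but slightly more general than, the one used in the proof of Corollary~\ref{Simplex} for a single degenerating simplex; carrying it out via the concentration and quasimode estimates of the paper is where the real work lies, while the stacking parametrization and the Fubini bookkeeping are routine.
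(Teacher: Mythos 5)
Your combinatorial setup---the induction on vertex count, the stacking parametrization, and the Fubini reduction over $(P',u,t)$---is sound and is in the spirit of the argument the paper has in mind. But the analytic step, which you yourself flag as the obstacle, is a genuine gap, and it cannot be closed in the way you propose. Theorem~\ref{SimplicityTheorem} governs families in which the \emph{whole} domain degenerates onto a lower-dimensional set, with $q_t$ asymptotic at first order to a separated-variables form of the shape~(\ref{atIntro}); Theorem~\ref{qLimit} then forces every eigenbranch of such a $q_t$ to converge to a threshold $\mu/\sigma(0)$ with $\mu\in\spec(b)$. In your family $P_t=P'\cup S_t$ only the thin slab $S_t$ collapses while $P'$ stays fixed, and the Dirichlet eigenvalues of $P_t$ converge to those of $P'$, not to thresholds of any $b$; there is no $a_t$ of the required form for $q_t$ to be asymptotic to, so the hypotheses of Theorem~\ref{SimplicityTheorem} simply fail. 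This is not a missing estimate that ``the real work'' would supply---the theorem is the wrong tool for this degeneration.

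The intended argument is the ``pulling a vertex'' technique of \cite{HlrJdg09}, which bypasses Theorem~\ref{SimplicityTheorem} entirely and is much lighter. Your family $P_t$ is in fact real-analytic \emph{at} $t=0$: one builds a real-analytic-in-$t$ family of bi-Lipschitz maps $\phi_t\colon P'\to P_t$ with $\phi_0=\mathrm{id}$ (pushing a collar of the facet $F$ outward to cover $S_t$) and pulls the Dirichlet energy form back to the fixed Hilbert space $L^2(P')$, obtaining a type-(a) real-analytic family $q_t$ on $[0,t_0]$ with $q_0$ the Dirichlet form of $P'$. The eigenbranches are then real-analytic down to and including $t=0$; since $P'$ has simple spectrum they take pairwise distinct values at $t=0$, so no two coincide identically and each pair meets only on a discrete set, giving simplicity for all $t$ off a countable set. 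No quasimodes, concentration estimates, or super-separation are needed for the inductive step---that machinery enters only through the base case, Corollary~\ref{Simplex}, where the simplex genuinely collapses onto a lower-dimensional simplex.
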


Finally, we note that by perturbing the curvature of Euclidean space
as in \S 4 of \cite{HlrJdg09} we obtain the following. 

\begin{coro}
Almost every simplex in a constant curvature space form has 
simple Dirichlet Laplace spectrum. 
\end{coro}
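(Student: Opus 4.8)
The plan is to reduce this corollary to Corollary \ref{Simplex} (generic simplicity for Euclidean simplices) via a one-parameter deformation of the metric, exactly as in \S 4 of \cite{HlrJdg09}. Fix a constant curvature space form $M_\kappa$ (curvature $\kappa\neq 0$) and a geodesic simplex $\Delta\subset M_\kappa$. I would embed $\Delta$ in a real-analytic family of simplices: introduce a parameter $s$ and a family of metrics $g_s$ interpolating between the flat metric ($s=0$) and the curvature-$\kappa$ metric ($s=1$) on a fixed simplex $\Delta_0\subset\R^d$, chosen so that $s\mapsto g_s$ is real-analytic and $\Delta_0$ with $g_1$ is isometric to $\Delta$. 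Pulling back by a family of diffeomorphisms, one may regard everything on the fixed domain $\Delta_0$, so that the Dirichlet energy forms $q_s(u)=\int_{\Delta_0}\langle\nabla_{g_s}u,\nabla_{g_s}u\rangle_{g_s}\,d\mathrm{vol}_{g_s}$ depend real-analytically on $s$ as quadratic forms on the fixed Hilbert space $L^2(\Delta_0, d\mathrm{vol}_{g_0})$ (after a further real-analytic rescaling of the measure). This is the setup of Kato--Rellich analytic perturbation theory.

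The next step is to run analytic perturbation theory in the two parameters jointly. For a fixed Euclidean simplex $\Delta_0$, Corollary \ref{Simplex} gives that for almost every $\Delta_0$ the spectrum at $s=0$ is simple. Moving $\Delta_0$ within the (finite-dimensional, real-analytic) space of Euclidean simplices and also moving $s$, we get a real-analytic family of quadratic forms over a parameter space of dimension $\dim(\text{simplices})+1$. Each eigenvalue is, locally, a real-analytic function on this parameter space, and the locus where two distinct eigenbranches coincide is a real-analytic subvariety. The key point is that this coincidence variety is \emph{proper}: it does not contain the whole parameter space, precisely because on the slice $s=0$ the spectrum is simple for almost every $\Delta_0$. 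Hence the coincidence variety has positive codimension, so its complement is of full measure. Restricting back to the slice $s=1$ (fixed curvature $\kappa$, varying simplex), the coincidence variety meets this slice in a set of measure zero \emph{provided} the slice is not contained in the variety — and it is not, by the same $s=0$ input combined with connectedness/irreducibility of the relevant analytic components.

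The one genuinely delicate point — the main obstacle — is the measure-theoretic bookkeeping across the two parameter families: knowing the spectrum is simple for a.e.\ Euclidean simplex does not \emph{a priori} tell us it is simple for a.e.\ curvature-$\kappa$ simplex, since ``almost every'' is not preserved under an arbitrary real-analytic map. The resolution, following \cite{HlrJdg09}, is that for \emph{each fixed} pair of eigenbranches the coincidence set is a real-analytic subvariety of the full $(\Delta_0,s)$-parameter space which, by the $s=0$ result, is a proper subvariety of each irreducible component, hence of measure zero; one then takes a countable union over pairs of branches and applies Fubini to conclude that for a.e.\ value of the simplex-parameter, the $s=1$ slice avoids all of them. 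Making ``the coincidence set is a proper subvariety'' rigorous requires care: one shows that if it were not proper on some component, that component would have to meet $\{s=0\}$ in a positive-measure set on which the spectrum is non-simple, contradicting Corollary \ref{Simplex}. Once this is in place, the remaining steps — verifying real-analyticity of $q_s$ in $s$, handling the boundary (Dirichlet conditions are preserved under the diffeomorphisms), and the countable union argument — are routine and identical in structure to the polygon case treated in \cite{HlrJdg09}.
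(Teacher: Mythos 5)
Your outline correctly identifies the main obstacle -- passing from ``simple for a.e.\ Euclidean simplex'' to ``simple for a.e.\ simplex at a fixed nonzero curvature $\kappa$'' -- but the resolution you offer does not close it. The crux is your claim that the slice $\{s=1\}$ cannot be contained in the coincidence variety, which you attribute to ``the $s=0$ input combined with connectedness/irreducibility.'' That inference is false. An analytic subvariety of the full $(\Delta_0,s)$-space can be a \emph{proper} subvariety (hence of measure zero in the total space) and nevertheless contain an entire slice: the eigenvalue-difference function $g(\Delta_0,s)$ could, in principle, vanish identically on $\{s=1\}$ while being nonzero on $\{s=0\}$ -- think of the model $g=s-1$. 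Your fallback argument, that an improper component ``would have to meet $\{s=0\}$ in a positive-measure set,'' does not apply, since a component equal to $\{s=1\}$ never meets $\{s=0\}$. Likewise the Fubini step you invoke only yields ``for a.e.\ $\Delta_0$, the set of $s$ where the spectrum is non-simple has measure zero,'' which is strictly weaker than the assertion for the fixed value $s=1$. (Secondarily, eigenvalue branches of a multi-parameter analytic family are not individually real-analytic in the parameters; one must work with the multiplicity locus, e.g.\ via discriminants, rather than with ``pairs of branches.'' This can be fixed, but it is another sign that more structure is needed.)

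The missing ingredient, which is what the reference to \S 4 of \cite{HlrJdg09} supplies, is \emph{scale invariance}. Dilating a geodesic simplex $P$ in curvature $\kappa$ by a factor $\lambda$ produces a geodesic simplex in curvature $\kappa/\lambda^2$, and the Dirichlet spectrum scales by $\lambda^{-2}$; in particular spectral multiplicities are preserved. Consequently the ``bad'' set $\{(P,\kappa):\text{non-simple}\}$ is invariant under the rescaling flow $R_\lambda(P,\kappa)=(\lambda P,\kappa/\lambda^2)$. This trades one real degree of freedom in the shape of the simplex (its overall size) against the curvature parameter, and identifies ``a.e.\ simplex at the fixed curvature $\kappa_0$'' with ``a.e.\ pair (normalized shape, curvature).'' The latter \emph{does} follow from your Fubini argument: for a.e.\ shape the Euclidean spectrum is simple, hence by one-parameter analytic perturbation in $\kappa$ the spectrum remains simple for all but countably many $\kappa$. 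Without the rescaling identification your argument proves the weaker statement ``for a.e.\ $\kappa$, a.e.\ simplex at curvature $\kappa$ has simple spectrum,'' and in particular does not yield the corollary for, say, $\kappa=-1$. Since the paper itself only gestures at the proof by citing \cite{HlrJdg09}, you are right to try to fill in the details, but the scale-invariance step is precisely the nontrivial bridge and it must be made explicit.
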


\subsection*{Organization of the paper}
In \S \ref{QuasimodeSection} we use standard 
resolvent estimates to quantify the assertion that if two quadratic forms 
are close, then their spectra are close. In particular, we consider the 
projection, $P_{a}^I(u)$, of an eigenfunction $q$ with eigenvalue $E$
onto the eigenspaces of $a$ whose eigenvalues lie in an interval $I \ni E$. 
We show that this projection is essentially a quasimode
at energy $E$ for $a$. 
 
In \S \ref{AsymptoticSection} we specialize these estimates
to the case of two real analytic families of quadratic 
forms $a_t$ and $q_t$. We define what it means for $q_t$ to be asymptotic to $a_t$
at first order. We show that if the first order variation, $\dot{a}_t$, of $a_t$
is nonnegative, then each real-analytic eigenbranch of $a_t$ converge as $t$ tends to
zero, and if $q_t$ is asymptotic to $a_t$ at first order, then the eigenbranches
of $q_t$ also converge.

In section \ref{IntegrabilitySection}, we use the variational
formula along a real-analytic eigenfunction 
branch $u_t$ to derive an estimate on the projection 
$P_{a_t}^I(u_t)$. This results in the assertion that the function 
\[  t~  \mapsto~ \frac{ \|\dot{a}_t(P_{a_t}^I(u_t))\|}{ \|P_{a_t}^I(u_t)\|^2}  \]
is integrable  (Theorem \ref{Thmlimiteigenbranch}). 
The integrability will be used several times in the sequel to control the projection 
$P_{a_t}^I(u_t)$, and in particular, it will be used to prove that
the eigenspaces essentially become one-dimensional in the limit. 
Note that this result depends on both analytic perturbation theory
and resolvent estimates.

Sections \ref{aSection} through  \ref{ConvEstSepSection} are devoted to 
the study of the one dimensional quadratic forms $a_t^{\mu}$ in (\ref{atmu}). 
Most of the material in these sections is based on asymptotics of 
solutions to second order ordinary differential equations (see, for example, \cite{Olver}). 
In \S \ref{ExponentialEstimates} we provide uniform estimates on the $L^2$-norm
of quasimodes and on the exponential decay of eigenfunctions for large $x$.   
In \S \ref{LGSection} we make a well-known change of variables to 
transform the second order ordinary differential equation associated to $a_t^{\mu}$ 
into the Airy equation with a potential. In \S \ref{AirySection} we use elementary 
estimates of the Airy kernel to estimate both quasimodes and eigenfunctions 
near the turning point $x_E$.

In \S \ref{crucial} we use the preceding estimates to 
prove Proposition \ref{CrucialProposition} which essentially says
that the $L^2$-mass of both eigenfunctions and quasimodes of $a_t^{\mu}$ 
does not concentrate at $x_E$ as $t$ tends to zero.
This proposition is an essential ingredient in proving the projection estimates 
of \S \ref{ProjectionSection}. But first we use it in \S \ref{ConvEstSepSection}
to prove that each real-analytic eigenvalue branch of $a_t^{\mu}$ converges to
a threshold $\mu/\sigma(0)$.

In \S \ref{ConvEstSepSection} we also establish the `super-separation'
of eigenvalue branches for $a_t^{\mu}$.
In the case of degenerating right-triangles, we may use the uniform asymptotics of the 
Bessel function (see \cite{Olver}) to obtain the 'super-separation' near the threshold. 
We prove it directly in Proposition \ref{Superseparation} for general $\sigma$.  

In \S \ref{SectionReduction} we establish some basic properties of 
the quadratic form defined in (\ref{atIntro}).  In \S \ref{ProjectionSection}  
we combine results of \S \ref{QuasimodeSection}, 
\S \ref{IntegrabilitySection}, and \S \ref{crucial}
to derive estimates on $P_{a_t}^I(u_t)$ where $u_t$ is a real-analytic  
eigenfunction branch of $q_t$ with eigenvalue branch $E_t$ converging to
a point $E_0$ belonging to the interior of an interval $I$.  

In \S \ref{SectionLimits} we show that each eigenvalue branch of $q_t$ 
converges to some threshold $\mu/\sigma(0)$  (Theorem \ref{qLimit}). 
Observe that this leads to the following natural question: 
Which thresholds $\mu/\sigma(0)$ are limits of some real-analytic eigenvalue branch 
of $q_t$? Strangely enough, we do not answer it here.

In \S \ref{SectionSimplicity} we prove the generic simplicity of $q_t$.
In \S \ref{AdiabaticSection}, we show how simplices and other domains 
in Euclidean space fit into the general framework presented here. 
Finally, in \S \ref{IdealSection} we prove a generalization of Theorem \ref{alpha}.

\setcounter{tocdepth}{1}
\tableofcontents


\part{Aysmptotic families of quadratic forms}

\section{Quasimode estimates for quadratic forms}

\label{QuasimodeSection}

Let ${\mathcal H}$ be a real Hilbert space with inner product
$\langle \cdot, \cdot \rangle$. Let $a$ be a real-valued, 
densely defined, closed quadratic form on $\Hcal$.
Let ${\rm dom}(a) \subset {\mathcal H}$ denote the domain 
of $a$. 

In the sequel, we will assume that the spectrum $\spec(a)$ of $a$
with respect to $\langle \cdot, \cdot \rangle$ is discrete. Moreover,
we will assume that for each $\lambda \in \spec(a)$, the associated 
eigenspace $V_{\lambda}$ is finite dimensional, 
and we will assume that there exists an orthonormal collection,  
$\{\psi_{\ell}\}_{\ell \in \N}$, of eigenfunctions
such that the span of  $\{\psi_{\ell}\}$ 
is dense in ${\mathcal H}$. 

\begin{lem}(Resolvent estimate) \label{resolvent}
Suppose that the distance, $\delta$, from $E$ to the spectrum of $a$ is positive. 
If  
\[   \left| a( w, v)~ -~  E \cdot \langle w, v \rangle \right|~
     \leq~  \epsilon \cdot \|v \|  \]
then 
\[    \|w\|~  \leq~   \frac{\epsilon}{ \delta}. \]
\end{lem}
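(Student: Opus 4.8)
The plan is to diagonalize $a$ and reduce the estimate to an elementary inequality for a nonnegative series. Expand $w$ in the orthonormal eigenbasis, $w = \sum_\ell c_\ell \psi_\ell$ with $c_\ell = \langle w, \psi_\ell\rangle$, so that $\|w\|^2 = \sum_\ell c_\ell^2$ and the conclusion is equivalent to $\sum_\ell c_\ell^2 \le \epsilon^2/\delta^2$. In operator language this is nothing but the spectral-calculus bound $\|(A - E)^{-1}\| \le \delta^{-1}$ for the self-adjoint operator $A$ associated to $a$; but I would argue directly with the eigenbasis, since that keeps the hypotheses to a minimum and matches the setup already fixed above.

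First I would record the identity $a(w, \psi_\ell) = \lambda_\ell\,\langle w, \psi_\ell\rangle = \lambda_\ell\, c_\ell$. This is valid because $\psi_\ell$ lies in the operator domain of $a$ (it is an eigenvector with eigenvalue $\lambda_\ell$), while $w$ lies in the form domain $\dom(a)$, which is implicit in writing $a(w,v)$. Then, for each finite subset $F \subset \N$, I would apply the hypothesis to the test vector $v_F = \sum_{\ell \in F} (\lambda_\ell - E)\, c_\ell\, \psi_\ell$, which lies in $\dom(a)$ as a finite linear combination of eigenvectors. Using bilinearity together with the identity above, and orthonormality of the $\psi_\ell$ (note $\lambda_\ell - E \in \R$ since $\Hcal$ is real), one gets $a(w, v_F) - E\langle w, v_F\rangle = \sum_{\ell \in F} (\lambda_\ell - E)^2 c_\ell^2 = \|v_F\|^2$. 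Hence the hypothesis reads $\|v_F\|^2 \le \epsilon \|v_F\|$, i.e. $\sum_{\ell \in F}(\lambda_\ell - E)^2 c_\ell^2 \le \epsilon^2$.

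Letting $F$ exhaust $\N$ (monotone convergence of a series of nonnegative terms) yields $\sum_\ell (\lambda_\ell - E)^2 c_\ell^2 \le \epsilon^2$. Since $|\lambda_\ell - E| \ge \delta$ for every $\ell$ by the definition of $\delta$, this gives $\delta^2 \|w\|^2 = \delta^2 \sum_\ell c_\ell^2 \le \sum_\ell (\lambda_\ell - E)^2 c_\ell^2 \le \epsilon^2$, and taking square roots completes the proof.

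I do not expect any real obstacle. The only steps that deserve a line of justification are that a finite combination of eigenvectors belongs to $\dom(a)$ and that the form reproduces the eigenvalue on eigenvectors, both standard facts about closed quadratic forms and their associated operators, and the passage from finite $F$ to all of $\N$, which is immediate. The entire content is the one-line choice of the test vector $v_F$, engineered so that the left-hand side of the hypothesis becomes a perfect square.
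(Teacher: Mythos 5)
Your proof is correct. It is spectral-theoretic at heart, just like the paper's, but the implementation differs: the paper invokes the Riesz representation theorem to produce $f$ with $a(w,v) - E\langle w,v\rangle = \langle f,v\rangle$ and $\|f\|\leq\epsilon$, passes to the associated self-adjoint operator $A$ via $(A - E)w = f$, and concludes from the operator-norm bound $\|(A-E)^{-1}\| = 1/\delta$ (the paper has a small slip writing "$=\delta$"). You instead stay at the level of the bilinear form and expand $w$ in the orthonormal eigenbasis, choosing the test vector $v_F = \sum_{\ell\in F}(\lambda_\ell - E)c_\ell\psi_\ell$ so that the left-hand side of the hypothesis becomes exactly $\|v_F\|^2$, and then let $F$ grow. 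What you gain is that the argument is self-contained and avoids appealing to the abstract resolvent-norm identity; what you lose is compactness of exposition and the reusable observation that $w = (A-E)^{-1}f$, which is the operator-theoretic statement the paper leans on (implicitly or explicitly) elsewhere. Both proofs use the completeness of the eigenbasis $\{\psi_\ell\}$, which the paper's standing hypotheses guarantee, so neither has an advantage in generality here. One small remark: your identity $a(w,\psi_\ell) = \lambda_\ell c_\ell$ quietly uses symmetry of the form together with $\psi_\ell\in\dom(A)$ and $w\in\dom(a)$; you flag this, and it is indeed standard, but in a written-up version it is worth the half-sentence you allot it.
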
 

\begin{proof}
The norm of the linear functional $v \mapsto a( w, v)- E \cdot \langle w, v \rangle$
is bounded by $\epsilon$, and hence there exists $f$ with $\|f\| \leq \epsilon$ such that 
$\mapsto a( w, v) - E \cdot \langle w,v\rangle =  \langle f, v \rangle$ for all $v$ in $\dom(a)$. 

Let $A$ be the self-adjoint, closed, densely defined operator on ${\mathcal H}$
such that $\langle A u,v \rangle = a(u,v)$ for all $u, v \in \dom(A)\subset \dom(a)$.
It follows that $(A - E \cdot I) w = f$, and hence 
\[   \|w\|~ \leq~ \| (A~ -~ E \cdot I)^{-1} \| \cdot \|f \|. \] 
The spectral radius of $(A~ -~ E \cdot I)^{-1}$ equals $\delta$,
and thus since $A$ is self-adjoint, 
$\|(A~ -~ E \cdot I)^{-1}\| = \delta$.  The claim follows. 
\end{proof}

Given a closed interval $I \subset [0, \infty)$,
define $P^I_a$ to be the orthogonal projection onto 
$\oplus_{\lambda \in I}  V_{\lambda}$.

\begin{defn}
Let $q$ be a real-valued, closed quadratic form defined on 
$\dom(a)$. We will say that $q$ is {\em $\varepsilon$-close} 
to $a$ if and only if for each $v,w \in \dom(a)$, we have 
\begin{equation}  \label{Asymptotic}
   \left| q(v,w) - a(v,w) \right|~ \leq~ 
  \varepsilon \cdot a(v)^{\frac{1}{2}} \cdot a(w)^{\frac{1}{2}}. 
\end{equation}
\end{defn}

Note that if $0 \leq q-a \leq \eps \cdot a$, then $q$ is $\varepsilon$-close to $a$ by the Cauchy-Schwarz 
inequality.

For each quadratic form $q$ defined on $\dom(a)$, define 
\[  n_q(u)~ =~ \left( \| u\|^2+q(u)\right)^{\und}. \]
Note that if $q$ is $\eps$-close to $a$, then the norms 
$n_q$ and $n_a$ are equivalent on 
$\dom(a)$. Thus, the form domains of $q$ and $a$ with respect to $\|\cdot\|$
coincide. We will denote 
this common form domain by ${\mathcal D}.$

\begin{lem}\label{QuasiEstimate}
Let $q$ and $a$ be quadratic forms such that $q$ is  $\eps$-close to $a$. 
If $u$ is an eigenfunction of $q$ with eigenvalue $E$ contained
in the open interval $I \subset \R$, then
\begin{equation}  \label{uchi}
 a \left(u- P^I_a(u) \right)~ \leq~  \eps^2 \cdot a(u)  \cdot
     \left (1+\frac{E}{\delta}\right )^2
 \end{equation}
where $\delta$ is the distance from $E$ to the complement
$\R \setminus I$. 
\end{lem}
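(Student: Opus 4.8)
The plan is to apply the resolvent estimate of Lemma \ref{resolvent}, but on the orthogonal complement of the spectral subspace $\oplus_{\lambda \in I} V_\lambda$, so that the relevant distance $\delta$ is controlled. First I would set $w = u - P^I_a(u)$. Since $u$ is an eigenfunction of $q$ with eigenvalue $E$, for every $v \in \mathcal D$ we have $q(u,v) = E \langle u, v\rangle$, and the $\eps$-closeness hypothesis \eqref{Asymptotic} gives $|a(u,v) - E\langle u, v\rangle| \le \eps \cdot a(u)^{1/2} a(v)^{1/2}$. The key observation is that $w$ lies in the spectral subspace of $a$ associated to $\R \setminus I$: writing $u = \sum_\ell c_\ell \psi_\ell$ in the orthonormal eigenbasis, $w$ is the sum over those $\ell$ with $\lambda_\ell \notin I$. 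Hence I want to test the inequality against vectors $v$ that also lie in this complementary subspace, so that the operator $A - E\cdot I$ restricted there is boundedly invertible with norm $1/\delta$.

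The main step is therefore to estimate the functional $v \mapsto a(w,v) - E\langle w, v\rangle$ on the complementary subspace. For $v$ in the span of the $\psi_\ell$ with $\lambda_\ell \notin I$, we have $a(w,v) = a(u,v)$ and $\langle w, v\rangle = \langle u, v\rangle$ (since $P^I_a(u)$ is orthogonal to $v$ in both the $\langle\cdot,\cdot\rangle$ inner product and the $a$-pairing). Thus $a(w,v) - E\langle w,v\rangle = a(u,v) - E\langle u,v\rangle$, which is bounded in absolute value by $\eps \cdot a(u)^{1/2} a(v)^{1/2}$. To convert this to a bound by a multiple of $\|v\|$ as required by Lemma \ref{resolvent}, I would bound $a(v)^{1/2}$: since $v$ is supported on eigenvalues outside $I$ but $E \in I$, and the nearest such eigenvalue is at distance at least $\delta$ from $E$, one has $a(v) = \sum \lambda_\ell |\langle v,\psi_\ell\rangle|^2 \le \sum (E + |\lambda_\ell - E|) \cdot(\cdots)$; more cleanly, $\lambda_\ell \le E + |\lambda_\ell - E| \le (1 + E/\delta)|\lambda_\ell - E|$ using $|\lambda_\ell - E| \ge \delta$, so $a(v) \le (1 + E/\delta)\,\|(A - E\cdot I)v\|\,\|v\|$ — but it is simpler to just carry $a(v)^{1/2}$ through and apply the resolvent estimate in $a$-norm directly. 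Then $a(w)^{1/2} \le (1 + E/\delta)\cdot \eps \cdot a(u)^{1/2}$, which is exactly \eqref{uchi} after squaring.

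The only genuine obstacle is bookkeeping: making precise that the restriction of $A - E\cdot I$ to the closed span of $\{\psi_\ell : \lambda_\ell \notin I\}$ has inverse of norm $1/\delta$, and that testing \eqref{Asymptotic} against this subspace suffices (a density argument, since the $\psi_\ell$ span a dense subspace of $\mathcal D$ in the $n_a$-norm, and both sides of the desired inequality are continuous in that norm). I would handle the estimate $\lambda_\ell \le (1 + E/\delta)\,|\lambda_\ell - E|$ as the arithmetic heart of the $(1 + E/\delta)^2$ factor: for $\lambda_\ell \notin I$ we have $|\lambda_\ell - E| \ge \delta > 0$, hence $\lambda_\ell = E + (\lambda_\ell - E) \le E + |\lambda_\ell - E| \le (E/\delta)|\lambda_\ell - E| + |\lambda_\ell - E|$, and applying this coordinatewise to $w$ gives $a(w) \le (1 + E/\delta)^2 \sum (\lambda_\ell - E)^2 |\langle w, \psi_\ell\rangle|^2 = (1 + E/\delta)^2 \|(A - E\cdot I)w\|^2$, while $\|(A-E\cdot I)w\| \le \eps \cdot a(u)^{1/2}$ follows from the functional estimate above by duality. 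Combining yields the claim.
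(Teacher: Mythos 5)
Your plan is the right one and in essence it reproduces the paper's proof, but the bookkeeping in your final paragraph contains two compensating slips, so that as written neither intermediate step is actually correct. Write $w = u - P^I_a(u) = \sum_{\lambda_\ell \notin I} w_\ell \psi_\ell$ and $f_\ell = (\lambda_\ell - E)w_\ell$. Your duality step claims $\|(A-E)w\|^2 = \sum_{\lambda_\ell\notin I} |f_\ell|^2 \le \eps^2 a(u)$; but the functional estimate bounds $|a(w,v)-E\langle w,v\rangle|$ by $\eps\, a(u)^{1/2}\, a(v)^{1/2}$, and the norm on $v$ is $a(v)^{1/2}$, not $\|v\|$. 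Taking the supremum over $v$ with $a(v)\le 1$ therefore gives the \emph{weighted} bound
\[
\sum_{\lambda_\ell\notin I} \frac{|f_\ell|^2}{\lambda_\ell}~\le~\eps^2\,a(u),
\]
which is exactly the paper's estimate (\ref{flambda}) (obtained there by substituting the explicit test vector $v_{\mathrm{test}} = \sum \lambda_\ell^{-1} f_\ell\psi_\ell$, which is the same thing made concrete). Similarly your other claim, that applying $\lambda_\ell \le (1+E/\delta)|\lambda_\ell - E|$ ``coordinatewise'' gives $a(w) \le (1+E/\delta)^2 \sum(\lambda_\ell - E)^2 |w_\ell|^2$, does not follow: $a(w) = \sum \lambda_\ell |w_\ell|^2$, and the coordinatewise inequality yields only $a(w) \le (1+E/\delta)\sum|\lambda_\ell - E|\,|w_\ell|^2$. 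The correct pairing is to write
\[
a(w)~=~\sum_{\lambda_\ell\notin I} \lambda_\ell |w_\ell|^2~=~\sum_{\lambda_\ell\notin I}\frac{\lambda_\ell^2}{(\lambda_\ell - E)^2}\cdot\frac{|f_\ell|^2}{\lambda_\ell}~\le~\sup_{\lambda_\ell\notin I}\frac{\lambda_\ell^2}{(\lambda_\ell-E)^2}\;\cdot\;\sum_{\lambda_\ell\notin I}\frac{|f_\ell|^2}{\lambda_\ell},
\]
and then the arithmetic inequality $\lambda_\ell \le (1+E/\delta)|\lambda_\ell - E|$ bounds the supremum by $(1+E/\delta)^2$, while the weighted duality bound controls the sum by $\eps^2 a(u)$. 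So: your route to the result is the same as the paper's (restrict to the spectral complement of $I$, use the form estimate in the $a$-norm, do the arithmetic), and your intermediate statement ``$a(w)^{1/2}\le(1+E/\delta)\,\eps\,a(u)^{1/2}$'' is exactly what one gets; the only repair needed is to replace the unweighted operator norm $\|(A-E)w\|$ by the $a^{-1}$-weighted norm throughout the last paragraph.
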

\begin{proof}
Let $v \in \Dcal$. 
Since $q(u,v)= E \cdot \langle u,v \rangle$, 
from (\ref{Asymptotic}) we have
\begin{equation} \label{QEuv}
 \left| E \cdot \langle u,v \rangle~ -~ a(u,v) \right|~
    \leq~  \eps \cdot a(u)^\und \cdot a(v)^\und. 
\end{equation}
There exists $f \in \Hcal$ 
such that for all $v \in \Dcal$ we have 
\begin{equation} \label{fdefn}
\begin{array}{lr}
 E \cdot \langle u,v\rangle~ -~ a(u,v)
     \,=\, \langle f,v \rangle. 
\end{array}
\end{equation}
Write $f = \sum f_{\ell}\cdot  \psi_{\ell}$ and define
$v_{{\rm test}}=  \sum \lambda_{\ell}^{-1}  f_{\ell} \cdot \psi_{\ell}$.
Observe that  
\[ \langle f,v_{{\rm test}} \rangle
=~ \sum_{\ell}~ \frac{|f_l|^2}{\lambda_l}
~=~ a(v_{{\rm test}}). 
\]
By substituting $v=v_{{\rm test}}$ into (\ref{QEuv}), we find that  
\begin{equation} \label{flambda}
  \sum_{\ell}~ \frac{|f_l|^2}{\lambda_l}~ \leq~ \eps^2 \cdot a(u). 
\end{equation}
Let $u= \sum_{\ell} u_{\ell} \cdot \psi_{\ell}$. 
From (\ref{fdefn}) we find 
that $u_{\ell} = (E -\lambda_l)^{-1} \cdot f_{\ell}$ 
for each $\ell \in  \N$. Therefore, 
\[ a(u-P_a^I(u))\,=\, \sum_{\lambda_l \notin I}~
\lambda_l \cdot \frac{|f_l|^2}{|E-\lambda_l|^2}~
\leq~ \eps^2 \cdot a(u) \cdot \sup_{\lambda_l \notin I}
  \frac{\lambda_{\ell}^2}{|E- \lambda_{\ell}|^2}
\] 
where the inequality follows from (\ref{flambda}). We have 
\[  \sup_{\lambda_l \notin I}~  
\frac{\lambda_{\ell}^2}{|E- \lambda_{\ell}|^2}~  \leq~ 
   \sup_{|1-x|> \delta/E}~  
\frac{x^2}{|1- x|^2}~ =~ \left(\frac{E}{\delta} +1 \right)^2. \] 
The desired bound follows.
\end{proof}

The preceding lemma provides control of the norm 
of $P_a^I(u).$ In particular, we have the following.

\begin{coro}\label{CoroNormPu}
Let $q$ and $a$ be quadratic forms such that $q$ is  $\eps$-close to $a$. 
If $u$ is an eigenfunction of $q$ with eigenvalue $E$ contained
in the open interval $I \subset \R$, then
\begin{equation}\label{NormPu}
\| P_a^I(u)\|^2 ~\geq~ 
\left[ 1-\eps^2\cdot\left( 1+\frac{E}{\delta}\right)^2\right]
\cdot \frac{ a(u)}{\sup(I)}
\end{equation}
where $\delta$ is the distance from $E$ to the complement
$\R \setminus I$.
\end{coro}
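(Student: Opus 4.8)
The plan is to start from the conclusion of Lemma \ref{QuasiEstimate} and estimate $a(u)$ in terms of $a(P_a^I(u))$ and the ``error'' $a(u - P_a^I(u))$, then bound $a(P_a^I(u))$ from above by $\sup(I) \cdot \|P_a^I(u)\|^2$. First I would use the orthogonal (with respect to both $\langle\cdot,\cdot\rangle$ and $a$) decomposition $u = P_a^I(u) + (u - P_a^I(u))$, which is valid because the $\psi_\ell$ diagonalize $a$; this gives the Pythagorean identity $a(u) = a(P_a^I(u)) + a(u - P_a^I(u))$. Combining this with \refeq{uchi}, namely $a(u - P_a^I(u)) \leq \eps^2 \cdot a(u) \cdot (1 + E/\delta)^2$, yields
\[
 a(P_a^I(u))~ =~ a(u)~ -~ a(u - P_a^I(u))~ \geq~ \left[1 - \eps^2\cdot\left(1+\frac{E}{\delta}\right)^2\right]\cdot a(u).
\]

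Next I would bound $a(P_a^I(u))$ from above: writing $P_a^I(u) = \sum_{\lambda_\ell \in I} u_\ell \cdot \psi_\ell$, we have $a(P_a^I(u)) = \sum_{\lambda_\ell \in I} \lambda_\ell \cdot |u_\ell|^2 \leq \sup(I) \cdot \sum_{\lambda_\ell \in I} |u_\ell|^2 = \sup(I) \cdot \|P_a^I(u)\|^2$, using that every $\lambda_\ell$ appearing in the sum lies in $I$ and hence is at most $\sup(I)$. Chaining the two inequalities gives
\[
 \sup(I)\cdot \|P_a^I(u)\|^2~ \geq~ a(P_a^I(u))~ \geq~ \left[1 - \eps^2\cdot\left(1+\frac{E}{\delta}\right)^2\right]\cdot a(u),
\]
and dividing by $\sup(I)$ produces exactly \refeq{NormPu}.

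There is no real obstacle here; the only points requiring a word of care are that $\sup(I)$ is finite and positive (which holds since $I$ is a closed interval in $[0,\infty)$ containing the positive eigenvalue $E$, so we may assume $\sup(I) > 0$) and that the bracketed factor is the same one appearing in Lemma \ref{QuasiEstimate}, so the estimate is only informative when $\eps$ is small enough that $1 - \eps^2(1+E/\delta)^2 > 0$; otherwise the stated inequality is vacuous but still true since the right-hand side is nonpositive. The orthogonality of the spectral decomposition with respect to the form $a$ is the one structural fact being used, and it was built into the standing assumptions on $a$ at the start of \S\ref{QuasimodeSection}.
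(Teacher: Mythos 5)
Your proof is correct and follows exactly the paper's argument: the $a$-orthogonal decomposition giving $a(P_a^I(u)) = a(u) - a(u - P_a^I(u))$, the lower bound from Lemma \ref{QuasiEstimate}, and the upper bound $a(P_a^I(u)) \leq \sup(I)\cdot\|P_a^I(u)\|^2$ from the spectral expansion. No discrepancies.
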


\begin{proof}
Note that $a(u-P_a^I(u), P_a^I(u))=0$, and hence 
\[ a(P_a^I(u))= a(u)-a\left(u-P_a^I(u)\right ). \]
Thus, it follows from Lemma \ref{QuasiEstimate} that
\[  a(P_a^I(u))~ \geq~ \left (1- \eps^2\left(1+\frac{E}{\delta}\right)^2\right)\cdot a(u).
\]
On the other hand, using the orthonomal set $\{\psi_{\ell}\}$
consisting of eigenfunction of $\psi$ with respect to $\langle \cdot, \cdot \rangle$
one finds that 
\[
a(P_a^I(u))\,\leq\, \sup(I) \cdot \| P_a^I(u)\| ^2. \]
The claim follows.
\end{proof}

We use the preceding to prove the following.

\begin{lem} \label{Closeness}
Let $I$ be an  interval, let $E \in I$, and let $\delta$ denote 
the distance from $E$ to the complement $\R \setminus I$.
Let $u$ be an eigenfunction 
of $q$ with eigenvalue $E$.  If 
$\eps  < (1+ E/\delta )^{-1}$ 
and $q$ is $\eps$-close to $a$,  
then for each $v \in \Dcal$,   
we have  
\begin{equation}  \label{Pref}
 \left| a\left(P^I_a(u), v \right)~ -~
     E \cdot \langle P^I_a(u), v \rangle \right|~ 
 ~ \leq~ \frac{\eps \cdot \sup(I)}{
\left( 1-\eps^2\left(1+\frac{E}{\delta}\right)^2\right)^{\und}}
\cdot \|P_a^I(u)\| \cdot \|v\|.
 \end{equation}
\end{lem}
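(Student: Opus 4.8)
The plan is to decompose an arbitrary $v \in \Dcal$ into its $P^I_a$-part and its complement, and estimate the two resulting terms by the two lemmas already proved. Write $v = P^I_a(v) + v^\perp$ where $v^\perp = v - P^I_a(v)$ is supported on the eigenspaces with $\lambda_\ell \notin I$. Since $P^I_a(u)$ lies in $\oplus_{\lambda \in I} V_\lambda$, which is $a$-orthogonal (and $\langle\cdot,\cdot\rangle$-orthogonal) to the span of the remaining $\psi_\ell$, both bilinear expressions $a(P^I_a(u), v^\perp)$ and $\langle P^I_a(u), v^\perp\rangle$ vanish. Hence the left-hand side of \refeq{Pref} equals $\left| a(P^I_a(u), P^I_a(v)) - E \cdot \langle P^I_a(u), P^I_a(v) \rangle \right|$, and it suffices to bound this with $\|P^I_a(v)\|$ in place of $\|v\|$ on the right (which is only stronger, since $\|P^I_a(v)\| \leq \|v\|$).

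Next I would relate this quantity back to $u$ itself. Since $q(u,v) = E \langle u, v\rangle$ for all $v \in \Dcal$, and $q$ is $\eps$-close to $a$, inequality \refeq{QEuv} gives $\left| a(u,w) - E\langle u,w\rangle \right| \leq \eps \cdot a(u)^{\und} a(w)^{\und}$ for every $w \in \Dcal$. Apply this with $w = P^I_a(v)$. On the one hand, $a(u, P^I_a(v)) = a(P^I_a(u), P^I_a(v))$ and $\langle u, P^I_a(v)\rangle = \langle P^I_a(u), P^I_a(v)\rangle$ by the orthogonality of the spectral decomposition (the part of $u$ outside $I$ pairs to zero against $P^I_a(v)$). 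So the left side of \refeq{Pref} is at most $\eps \cdot a(u)^{\und} \cdot a(P^I_a(v))^{\und}$. Then bound $a(P^I_a(v))^{\und} \leq \sup(I)^{\und} \|P^I_a(v)\| \leq \sup(I)^{\und}\|v\|$ using that $P^I_a(v)$ is spanned by eigenfunctions with eigenvalue at most $\sup(I)$.

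Finally I would eliminate $a(u)^{\und}$ in favor of $\|P^I_a(u)\|$. Corollary \ref{CoroNormPu} gives exactly
\[
 \|P^I_a(u)\|^2 ~\geq~ \left[1 - \eps^2\left(1+\tfrac{E}{\delta}\right)^2\right]\cdot \frac{a(u)}{\sup(I)},
\]
so under the hypothesis $\eps < (1+E/\delta)^{-1}$ the bracket is positive and
\[
 a(u)^{\und} ~\leq~ \frac{\sup(I)^{\und}}{\left(1 - \eps^2\left(1+\tfrac{E}{\delta}\right)^2\right)^{\und}} \cdot \|P^I_a(u)\|.
\]
Substituting this into the bound from the previous paragraph yields the factor $\eps \cdot \sup(I) \cdot \left(1 - \eps^2(1+E/\delta)^2\right)^{-\und} \cdot \|P^I_a(u)\| \cdot \|v\|$, which is precisely \refeq{Pref}.

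I do not expect a genuine obstacle here; the one point requiring care is the bookkeeping of the three orthogonality reductions — that $P^I_a(u)$ is orthogonal (both in $a$ and in $\langle\cdot,\cdot\rangle$) to everything outside $I$, which lets me replace $v$ by $P^I_a(v)$ on the left and simultaneously replace $u$ by $P^I_a(u)$ inside the bilinear forms. Once those reductions are in place, the estimate is just \refeq{QEuv}, the crude bound $a(\cdot) \leq \sup(I)\|\cdot\|^2$ on $\oplus_{\lambda\in I}V_\lambda$, and Corollary \ref{CoroNormPu}, combined in sequence.
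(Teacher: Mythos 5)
Your proof is correct and follows essentially the same route as the paper's: both reduce, via the orthogonality of the spectral projection $P^I_a$ and the fact that it commutes with $a$, to the pairing $a(u,\tilde v) - E\langle u,\tilde v\rangle$ with $\tilde v = P^I_a(v)$, apply \refeq{QEuv}, bound $a(\tilde v)\le\sup(I)\|v\|^2$, and then control $a(u)$ by $\|P^I_a(u)\|^2$ via Corollary \ref{CoroNormPu}. The only surface difference is that you write out the decomposition $v = P^I_a(v)+v^\perp$ explicitly, while the paper invokes the commuting-projection fact directly.
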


\begin{proof}
Let $\tilde{u} = P^I_a(u)$, and  $\tilde{v} = P^I_a(v).$ 
Since $P_a^I$ is an orthogonal projection that commutes with $a$,
we have $ a(\tilde{u},v)\,=\,a(\tilde{u},\tilde{v})=a(u,\tilde{v})$ 
and $\langle \tilde{u},v\rangle =  
\langle \tilde{u},\tilde{v} \rangle =\langle u,\tilde{v}\rangle.$ 
Therefore, by replacing $v$ with 
$\tilde{v}$ in (\ref{QEuv}) we obtain
\[   \left| a(\tilde{u}, v)~ -~
     E \cdot \langle \tilde{u}, v \rangle \right|~ 
    \leq~ \eps \cdot a(u)^{\und} \cdot a(\tilde{v})^{\und}. 
\] 
Since $\tilde{v} \in P^I_a({\mathcal H})$, we have 
\[ a(\tilde{v})~ \leq~ \sup(I) \cdot \|\tilde{v} \|^2~ \leq~ \sup (I) \cdot \| v\|^2. \] 
By the hypothesis and Corollary  \ref{CoroNormPu}, we have
\[
a(u)~ \leq~  \left[ 1-\eps^2
\left(1+\frac{E}{\delta}\right)^2\right]^{-1}\cdot \sup (I) \cdot \| P_a^I(u)\| ^2.
\]
By combining these estimates, we obtain the claim.
\end{proof}

Let $\{a_n\}_{n \in {\mathbb N}}$ and $\{q_n\}_{n \in \N}$ 
be sequences of quadratic forms defined on $\Dcal$.
For each $n$, let $E_n$ be an eigenvalue of $q_n$.

\begin{prop}\label{Estonchir}
Suppose that $\lim_{n \rightarrow \infty} E_n$ exists and is finite.   
If the quadratic form $q_n$ is $1/n$-close to $a_n$ for each $n$, 
then there exist $N>0$ and $C>0$ such that 
for each $n>N$ and each eigenfunction $u$ of $q_n$ 
with eigenvalue $E_n$,  we have
\begin{equation}  \label{qmest}
      \left| a_n\left(P_{a_n}^I(u),v\right)~ 
      -~ E_n \cdot \langle P_{a_n}^I(u),v \rangle \right|~
          \leq~  C\cdot \frac{1}{n} \cdot \left\| P_{a_n}^I(u) \right\| \cdot \| v\|.
\end{equation}
\end{prop}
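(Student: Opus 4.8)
The plan is to apply Lemma~\ref{Closeness} uniformly in $n$, which requires only that we establish a uniform version of the hypothesis $\eps < (1+E/\delta)^{-1}$ together with uniform control of the constants $\sup(I)$ and $\eps^2(1+E/\delta)^2$ appearing on the right-hand side of \eqref{Pref}. First I would fix the interval $I$. Since $L=\lim_{n\to\infty}E_n$ exists and is finite, the sequence $(E_n)$ is bounded, say $E_n \in [0, M]$ for all $n$, and we may choose a fixed closed interval $I \subset [0,\infty)$ whose interior contains $[0,M]$; then $E_n \in I$ for every $n$, and the distance $\delta_n$ from $E_n$ to $\R \setminus I$ is bounded below by a fixed constant $\delta_0 > 0$ (namely the distance from $[0,M]$ to the complement of the interior of $I$). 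Consequently $1 + E_n/\delta_n \leq 1 + M/\delta_0 =: K$, a constant independent of $n$.

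Next I would choose $N$. Since $q_n$ is $1/n$-close to $a_n$, for $n > N := \lceil 2K \rceil$ we have $\eps := 1/n < 1/(2K) \leq (1+E_n/\delta_n)^{-1}$, so the hypothesis of Lemma~\ref{Closeness} is satisfied for every such $n$, with the same $E = E_n$, $I$, and $\delta = \delta_n$. Moreover for $n > N$ we get $\eps^2(1+E_n/\delta_n)^2 \leq (1/(2K))^2 \cdot K^2 = 1/4$, so $1 - \eps^2(1+E_n/\delta_n)^2 \geq 3/4$, and hence $(1-\eps^2(1+E_n/\delta_n)^2)^{-1/2} \leq 2/\sqrt{3}$. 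Feeding these bounds into \eqref{Pref} with $u$ any eigenfunction of $q_n$ with eigenvalue $E_n$, we obtain
\[
 \left| a_n\left(P^I_{a_n}(u), v\right) - E_n \cdot \langle P^I_{a_n}(u), v \rangle \right|
 ~\leq~ \frac{1}{n} \cdot \frac{\sup(I)}{(3/4)^{1/2}} \cdot \|P^I_{a_n}(u)\| \cdot \|v\|
\]
for all $v \in \Dcal$, which is exactly \eqref{qmest} with $C := \sup(I) \cdot 2/\sqrt{3}$, a constant depending only on $I$ (equivalently, only on $M$ and the chosen $I$), not on $n$.

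There is no real obstacle here; the proposition is essentially a uniform-in-$n$ repackaging of Lemma~\ref{Closeness}, and the only point requiring a moment's care is the selection of a single interval $I$ that works for all $n$ simultaneously — this is where finiteness of $\lim E_n$ is used, since it guarantees both an upper bound $M$ on the eigenvalues and, after fixing $I$ generously around $[0,M]$, a uniform lower bound $\delta_0$ on the distances $\delta_n$. One should also note that Lemma~\ref{Closeness} is stated with $a$ a fixed form satisfying the spectral hypotheses of \S\ref{QuasimodeSection}; here we are implicitly assuming each $a_n$ satisfies those hypotheses (discrete spectrum, finite-dimensional eigenspaces, complete orthonormal eigenbasis), which is the standing assumption on the forms in this section, so Lemma~\ref{Closeness} applies verbatim to each $a_n$. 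Everything else is the arithmetic above.
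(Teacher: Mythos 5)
Your proof is correct and follows essentially the same route as the paper's: convert the hypotheses into uniform-in-$n$ bounds on $\delta_n$ and $E_n/\delta_n$, then feed these into Lemma~\ref{Closeness}. The only noteworthy difference is in the choice of $I$: the paper fixes an arbitrary open interval containing $E_0=\lim E_n$ and only requires $E_n\in I$ eventually (using $N_0$ to guarantee $\delta_n>\delta_0$ for $n>N_0$), whereas you pick a single large interval containing the whole range of the sequence. Both yield the stated conclusion, but the paper's version gives the estimate for \emph{every} small interval around $E_0$ (with $N$, $C$ depending on $I$), which is the form actually used downstream; your fixed large $I$ proves only a particular instance. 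Also note that requiring the interior of a closed $I\subset[0,\infty)$ to contain $[0,M]$ is impossible if $0\in[0,M]$ — the paper sidesteps this by taking $I$ an arbitrary open interval around $E_0$, not tied to the left endpoint $0$. Aside from these cosmetic points, the constant bookkeeping ($\eps_n(1+E_n/\delta_n)\le 1/2$, giving $C=2\sup(I)/\sqrt3$) is a clean, explicit instance of what the paper does implicitly by choosing $N\ge\max\{N_0,\,1+2E_0/\delta_0\}$.
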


\begin{proof}  
Let $E_0= \lim_{n \rightarrow \infty} E_n$ and let  
$I$ be an open interval that contains $E_0$.
Let $\delta_n$ be the distance from $E_n$ to $\R \setminus I$. 
Since $E_n$ converges to $E_0$ and $I$ is open, there exists $\delta_0>0$
and $N_0$ so that if $n > N_0$, then $\delta_n> \delta_0$.
Choose $N \geq \max\{N_0, 1+ 2E_0/\delta_0\}$ so that if $n > N$, then $E_n < 2 E_0$. 
Then for each $n>N$ we have 
$n^{-1} (1+E_n/\delta_n) \leq 1$ and we can apply Lemma \ref{Closeness}
to obtain the claim. 
\end{proof} 

\begin{remk} 

Let $A_n$ be the self-adjoint operator such that $a_n(v,w)= \langle A_nv, w \rangle$
for all $u, v \in \dom(a_n)$. Let $\chi_n$ denote the projection $P_{a_n}^I(u)$.
Estimate (\ref{qmest}) is then equivalent to
\[
\left\| A_n \chi_n~ -~  E_n \cdot \chi_n \right\|~ \leq~ C\cdot \frac{1}{n} \cdot \| \chi_n\|. \]
In the terminology of semi-classical analysis, the function 
$\chi_n$ is a {\em quasimode} at energy $E_0= \lim_{n \rightarrow \infty} E_n$ (see also Remark 
\ref{QuasimodeRemark}).
\end{remk}


\section{Asymptotic families and eigenvalue convergence} 

\label{AsymptoticSection}
Given a mapping of the form $t \mapsto f_t$, we will use 
$\dot{f}_t$ to denote its first derivative. More
precisely, we define  
\[ \dot{f}_t~ :=~ \left.  \frac{d}{ds}\right|_{s=t} f_s. \] 

Let $a_t$ and $q_t$ be real-analytic families of closed 
quadratic forms densely defined on ${\mathcal D} \subset {\mathcal H}$
for $t>0$.\footnote{For notational simplicity, we will often drop the index $t,$ 
but note that each object related to $a$ or $q$ will, in general, depend on $t$.} 
In this section, we show that the nonnegativity of both $a_t$ and $\dot{a}_t$
implies that each real-analytic eigenvalue branch of $a_t$ converges as $t$ tends to zero. 
We then show that if $q$ is asymptotic to $a$ in the following sense 
then the eigenvalue branches of $q_t$ also converge (Proposition \ref{qconvergence}). 

\begin{defn} \label{AsymptoticDefn}
We will say that {\em $q_t$ is asymptotic to $a_t$ at first order}
iff there exists $C>0$  such that 
for each $t>0$
\begin{equation} \label{tclose}
 \left| q_t(u,v)~ -~ a_t(u,v)\right|~ \leq~ C \cdot 
 t \cdot a_t(u)^{\und} \cdot a_t(v)^{\und},
\end{equation}
and for each $v \in \Dcal$
\begin{equation} \label{Boundqdot}
 \left| \dot{q}_t(v)~ -~ \dot{a}_t(v)\right|~ \leq~ C \cdot a_t(v).
\end{equation}
\end{defn}

\begin{remk} \label{Cequals1}
By reparameterizing the family---replacing $t$ by say $t/C$---one 
may assume, without loss of generality, that $C=1$. We will do so 
in what follows. 
\end{remk}

In what follows, we will assume that the eigenvalues and eigenfunctions 
of $a_t$ and $q_t$ vary real-analytically.  To be precise, 
we will suppose for each $t>0$, there exists an orthonormal collection 
$\{\psi_{\ell}(t)\}_{\ell \in \N}$
of eigenvectors whose span is dense in $\Hcal$
such that $t \mapsto \psi_{\ell}(t)$ is real-analytic for each $\ell \in \N$.  
This assumption is satisfied if the operators that represent 
$a_t$ and $q_t$ with respect to $\langle \cdot, \cdot \rangle$ 
have a compact resolvent for each $t>0$.
See, for example, Remark 4.22 in \S VII.4 of \cite{Kato}.

The following proposition is well-known. 
We include the proof here as a model of the more complicated arguments
that follow.

\begin{prop} \label{aconvergence}
If $a_t \geq 0$ and $\dot{a}_t \geq 0$ for all small $t$, 
then each real-analytic eigenvalue branch of $a_t$ converges 
to a finite limit as $t$ tends to zero. 
\end{prop}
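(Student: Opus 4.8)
The plan is to exploit analytic perturbation theory, which tells us that along a real-analytic eigenvalue branch $\lambda(t)$ with real-analytic normalized eigenvector $\psi(t)$, the Feynman--Hellmann-type variational formula $\dot\lambda(t) = \dot a_t(\psi(t))$ holds (this is the standard first-variation formula, cf. Kato \S VII; I would quote it rather than reprove it). The hypothesis $\dot a_t \geq 0$ then forces $\dot\lambda(t) \geq 0$ for all small $t$, so $\lambda$ is monotone nondecreasing in $t$ on an interval $(0,t_0)$. A monotone function on $(0,t_0)$ automatically has a limit as $t \to 0^+$ — the only question is whether that limit is finite rather than $-\infty$.

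Finiteness comes from the other hypothesis, $a_t \geq 0$: since $\psi(t)$ is a normalized eigenvector with eigenvalue $\lambda(t)$, we have $\lambda(t) = a_t(\psi(t)) \geq 0$. So the branch is bounded below by $0$, and combined with monotonicity it converges to some finite limit in $[0, \lambda(t_0)]$ as $t \to 0^+$. That is the whole argument.

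The one point requiring a little care — and the step I would flag as the main (minor) obstacle — is justifying the variational formula $\dot\lambda(t) = \dot a_t(\psi(t))$ at the level of quadratic forms rather than bounded operators, since here $a_t$ is only a closed form on a fixed form domain $\Dcal$ and need not be represented by analytic bounded operators. The clean way is this: differentiate the identity $a_t(\psi(t), v) = \lambda(t)\langle \psi(t), v\rangle$, valid for all $v \in \Dcal$, with respect to $t$; setting $v = \psi(t)$ and using $\langle \psi(t),\psi(t)\rangle \equiv 1$ (so $\langle \dot\psi(t), \psi(t)\rangle = 0$) together with the symmetry of $a_t$ and the eigenvalue equation to cancel the $\dot\psi(t)$ terms, one is left with $\dot\lambda(t) = \dot a_t(\psi(t),\psi(t))$. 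The hypothesis \eqref{Boundqdot}-style bound is exactly what guarantees $\dot a_t(\psi(t))$ is well-defined and finite along the branch; one should check that $\psi(t) \in \Dcal$ with $a_t(\psi(t))$ bounded near $t$, which follows since $\lambda(t)$ is continuous. With that in hand the sign argument and the lower bound close the proof immediately, and — as the authors remark — this serves as the template for the harder convergence statement for $q_t$ in Proposition \ref{qconvergence}, where $\dot q_t$ is only controlled up to the error term in \eqref{Boundqdot}.
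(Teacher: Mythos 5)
Your proof is correct and follows essentially the same route as the paper: differentiate the weak eigenvalue equation to obtain the variational formula $\dot\lambda_t \cdot \|u_t\|^2 = \dot a_t(u_t)$, deduce monotonicity from $\dot a_t \geq 0$, and use $a_t \geq 0$ (giving $\lambda_t \geq 0$) to bound the branch from below. The only cosmetic difference is that you kill the $\dot\psi$ terms via the normalization $\|\psi(t)\|\equiv 1$, while the paper cancels them using the eigenvalue equation $a_t(\dot u_t, u_t) = \lambda_t\langle \dot u_t, u_t\rangle$, which avoids assuming a normalized branch.
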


\begin{proof}
Let $\lambda_t$ be a real-analytic eigenvalue branch of $a_t$.
That is, there exists for each $t>0$ a function $u_t$ such that
$t \mapsto u_t$ is real-analytic and 
\begin{equation} \label{Eigenequation}
 a_t(u_t,v)~ =~ \lambda_t \cdot \langle u_t, v \rangle 
\end{equation}
for all $v \in \Dcal$. In particular,  $a_t(u_t)= \lambda_t \cdot \|u_t\|^2$,
and thus since  $a_t \geq 0$, we have $\lambda_t \geq 0$ for each $t>0$.

By differentiating (\ref{Eigenequation}) and then taking the
inner product with $v \in \Dcal$, we obtain
\begin{equation} \label{VariationalFormula}
    \dot{\lambda}_t \cdot \|u_t\|^2 ~ =~ \dot{a}(u_t).  
\end{equation}
Thus, since $\dot{a}_t \geq 0$, the function $t \mapsto \lambda_t$
is increasing in $t$.  Since $\lambda_t$ is bounded below, 
the limit $\lim_{t \rightarrow 0} \lambda_t$ exists.   
\end{proof}

If $q_t$ is asymptotic to $a_t$, then the eigenvalues of $q_t$
also converge.

\begin{prop} \label{qconvergence}
Suppose that for each $t>0$, the quadratic forms $a_t$ and $\dot{a}_t$ are nonnegative.
If $q_t$ is asymptotic to $a_t$ at first order, then
each real-analytic eigenvalue branch of $q_t$ converges 
to a finite limit. 
\end{prop}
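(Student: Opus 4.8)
The plan is to mimic the proof of Proposition~\ref{aconvergence}, but to work with $q_t$ and to control the error terms using the hypothesis that $q_t$ is asymptotic to $a_t$ at first order. Let $E_t$ be a real-analytic eigenvalue branch of $q_t$, with associated real-analytic unit eigenvector branch $u_t$, so that $q_t(u_t,v)=E_t\langle u_t,v\rangle$ for all $v\in\Dcal$. Differentiating in $t$ and setting $v=u_t$, as in \eqref{VariationalFormula}, gives $\dot{E}_t\cdot\|u_t\|^2=\dot{q}_t(u_t)$. The idea is then to replace $\dot{q}_t(u_t)$ by $\dot{a}_t(u_t)$ with an error of order $a_t(u_t)$ using \eqref{Boundqdot}, and to relate $a_t(u_t)$ to $q_t(u_t)=E_t\|u_t\|^2$ using \eqref{tclose}. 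Since $\dot{a}_t\ge 0$, this produces a differential inequality for $E_t$ whose right-hand side is controlled by $E_t$ itself, from which convergence as $t\to 0$ will follow by a Gronwall-type argument.

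First I would normalize so that the constant $C$ in Definition~\ref{AsymptoticDefn} equals $1$, as in Remark~\ref{Cequals1}. Next, from \eqref{tclose} applied with $u=v=u_t$, for $t$ small (say $t\le 1/2$) we get $|q_t(u_t)-a_t(u_t)|\le t\,a_t(u_t)$, hence $a_t(u_t)\le (1-t)^{-1}q_t(u_t)\le 2\,q_t(u_t)=2E_t\|u_t\|^2$. Then \eqref{Boundqdot} with $v=u_t$ gives $\dot{q}_t(u_t)\le\dot{a}_t(u_t)+a_t(u_t)$. Combining with the variational formula,
\[
\dot{E}_t\cdot\|u_t\|^2~=~\dot{q}_t(u_t)~\le~\dot{a}_t(u_t)+a_t(u_t)~\le~\dot{a}_t(u_t)+2E_t\|u_t\|^2.
\]
The term $\dot{a}_t(u_t)$ is the obstacle: it is nonnegative but not obviously controlled by $E_t\|u_t\|^2$ alone, so I cannot immediately close the inequality. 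To handle it I would integrate: for $0<s<\tau$ small, integrating the variational formula for $q$ gives $E_\tau\|u_\tau\|^2-E_s\|u_s\|^2=\int_s^\tau\dot{q}_t(u_t)\,dt$, and correspondingly for $a$, $a_\tau(u_\tau)-a_s(u_s)=\int_s^\tau\dot{a}_t(u_t)\,dt\ge 0$ since $\dot a_t\ge0$, so $a_t(u_t)$ is nondecreasing in $t$; in particular, for $t\le\tau$, $a_t(u_t)\le a_\tau(u_\tau)\le 2E_\tau\|u_\tau\|^2=:M$. Hence $\int_s^\tau\dot{a}_t(u_t)\,dt\le a_\tau(u_\tau)\le M$ is bounded independently of $s$, and likewise $\int_s^\tau\dot{q}_t(u_t)\,dt\le \int_s^\tau(\dot a_t(u_t)+2E_t\|u_t\|^2)\,dt\le M+2\int_s^\tau E_t\|u_t\|^2\,dt$.

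To finish, set $\phi(t)=E_t\|u_t\|^2=q_t(u_t)\ge0$. The previous step shows $\phi(\tau)-\phi(s)=\int_s^\tau\dot q_t(u_t)\,dt\le M+2\int_s^\tau\phi(t)\,dt$ for all $s\in(0,\tau]$; since $\phi$ is continuous (indeed real-analytic) and bounded above by $\phi(\tau)$ plus a nonnegative quantity — more precisely, rearranging, $\phi(s)\ge\phi(\tau)-M-2\int_s^\tau\phi(t)\,dt$, and also $\phi(s)\le\phi(\tau)+2\int_s^\tau\phi(t)\,dt$ using $\dot q_t(u_t)\ge \dot a_t(u_t)-a_t(u_t)\ge -a_t(u_t)\ge -2\phi(t)$ — Gronwall's inequality applied on $(s,\tau]$ gives $\phi(s)\le\phi(\tau)e^{2(\tau-s)}$ and $\phi(s)\ge(\phi(\tau)-M)e^{-2(\tau-s)}$, so $\phi$ is bounded on $(0,\tau]$. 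Then $|\dot q_t(u_t)|\le 2\phi(t)$ is integrable near $0$, so $\phi(t)=\phi(\tau)-\int_t^\tau\dot q_s(u_s)\,ds$ has a finite limit as $t\to0$. Finally $E_t=\phi(t)/\|u_t\|^2$, and $\|u_t\|^2$ is real-analytic and positive; one checks $\|u_t\|$ has a positive limit as $t\to0$ (for instance $\frac{d}{dt}\|u_t\|^2=2\langle \dot u_t,u_t\rangle$ is controlled once $\phi$ and the eigenequation are used, or simply invoke that $t\mapsto u_t$ extends real-analytically past any point where the branch is defined so $\|u_t\|\to\|u_0\|$), whence $\lim_{t\to0}E_t$ exists and is finite. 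The main delicate point is justifying that $\dot a_t(u_t)$, equivalently the monotonicity $a_\tau(u_\tau)\ge a_s(u_s)$, is enough to bound the accumulated variation of $q$; the nonnegativity of $\dot a_t$ is exactly what makes this work.
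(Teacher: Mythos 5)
Your core inequality is exactly the paper's: combining the variational formula $\dot{E}_t\|u_t\|^2=\dot q_t(u_t)$ with $\dot q_t(u_t)\ge\dot a_t(u_t)-a_t(u_t)\ge -a_t(u_t)\ge -2q_t(u_t)$ yields $\dot{E}_t+2E_t\ge 0$. At that point the argument is essentially finished: $(E_te^{2t})'=(\dot E_t+2E_t)e^{2t}\ge 0$, so $E_te^{2t}$ is nondecreasing, and since $E_t\ge 0$ it is bounded below; hence $\lim_{t\to0}E_te^{2t}$ exists and is finite, and so does $\lim_{t\to0}E_t$. This is precisely the paper's proof.

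The part of your proposal that attempts to also bound $\dot E_t$ from above, however, contains a genuine error. You claim $a_\tau(u_\tau)-a_s(u_s)=\int_s^\tau\dot a_t(u_t)\,dt$ and deduce from $\dot a_t\ge 0$ that $a_t(u_t)$ is nondecreasing. That identity is false here: writing the total derivative,
\[
\frac{d}{dt}\,a_t(u_t)~=~\dot a_t(u_t)~+~2\,a_t(u_t,\dot u_t),
\]
and the cross term $a_t(u_t,\dot u_t)$ does not vanish, because $u_t$ is an eigenfunction branch of $q_t$, not of $a_t$. (In the paper's Proposition~\ref{aconvergence} the analogous term does vanish, precisely because there $u_t$ is an eigenfunction of $a_t$; you transplanted that identity into a setting where its hypothesis fails.) Consequently the bound $\int_s^\tau\dot a_t(u_t)\,dt\le M$ is unjustified, and the two-sided Gronwall estimate and the claimed integrability of $\dot q_t(u_t)$ near $0$ do not follow. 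The fix is simply to drop the upper-bound half of the argument; the one-sided inequality $\dot E_t+2E_t\ge 0$ together with $E_t\ge 0$ already gives monotone convergence of $E_te^{2t}$, which is all that is needed.
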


\begin{proof}
Let $(E_t,u_t)$ be a real-analytic eigenbranch of $q_t$ with respect 
to $\langle \cdot, \cdot \rangle$. By arguing as in the proof of
Proposition \ref{aconvergence} we find that
\begin{equation} \label{Variation}
  \dot{q}(u_t)~ =~ \dot{E}_t \cdot \| u_t\|^2. 
\end{equation}
Using (\ref{tclose}), we have 
\begin{equation}  \label{HalfEstimate}
q_t(v)~ \geq~ \und~ \cdot a_t(v)
\end{equation}
for all $t$ sufficiently small. Since $a_t \geq 0$,
we have $q_t \geq 0$ and hence $E_t \geq 0$ for small $t$.
From  (\ref{Boundqdot}) and Remark \ref{Cequals1} we have $\dot{q}(u_t)~ \geq~ \dot{a}(u_t)- a(u_t)$
and hence, since $\dot{a} \geq 0$, we have
$\dot{q}(u_t)~ \geq~  -a(u_t)$.
By combining this fact with (\ref{Variation}) and (\ref{HalfEstimate}), 
we find that  
\begin{equation} \label{ODEEst}
 \dot{E}_t~ +~ 2 \cdot E_t~ \geq~ 0 
\end{equation}
for sufficiently small $t$.

To finish the proof, it suffices to show that
the function $f(t)= E_t \cdot \exp(2t)$ has 
a finite limit as $t$ tends to zero. By (\ref{ODEEst})
we have  $f'(t) \geq 0$ for $t < t_0$.
Since the form $q_t$ is non-negative for $t<t_0$, 
the eigenvalue $E_t$ is non-negative, 
and thus $f$ is bounded from below. 
Therefore $\lim_{t \rightarrow 0} f(t)$ exists and is finite. 
\end{proof}


\section{An integrability condition} \label{IntegrabilitySection}

Let $q_t$ be a real-analytic family that is asymptotic to $a_t$
at first order. In this section, we use the estimates of \S \ref{QuasimodeSection}
to derive an integrability condition (Theorem \ref{Thmlimiteigenbranch})
that will be used in \S \ref{SectionSimplicity} 
to prove that the spectrum of $q_t$ is simple 
for most $t$ under certain additional conditions.

Let $E_t$ be an real-analytic eigenvalue branch of $q_t$ that converges 
to $E_0$ as $t$ tends to zero. Let $V_t$ be the associated real-analytic family of 
eigenspaces. Let $I$ be a compact interval whose interior contains $E_0$.  

\begin{remk}\label{DefVt}
The definition of $V_t$ implies that, for each $t>0$, the vector space  
$V_t$ is a subspace of $\ker(A_t-E_t\cdot I)$. If a distinct real-analytic 
eigenvalue branch crosses the branch $E_t$ at $t=t_0$, 
then $V_{t_0}$ is a proper subspace of 
$\ker(A_{t_0}-E_{t_0}\cdot I)$.
\end{remk} 

\begin{thm}\label{Thmlimiteigenbranch}
Let $q_t$ be asymptotic to $a_t$ at first order,
and suppose that for each $t>0$, we have 
\begin{equation} \label{Logarithm2}
 0~ \leq~  \dot{a}_t(v)~ \leq~ t^{-1} \cdot a_t(v).
\end{equation}
If  $t \mapsto u_t \in V_t$ is continuous on 
the complement of a countable set, then the function
\begin{equation} \label{festimate}
t~ \mapsto~ \frac{\dot{a}_t\left(P^I_{a_t}(u_t) \right)}{ 
 \left\| P^I_{a_t}(u_t) \right\|^2}      
\end{equation}
is integrable on each interval of the form $(0,t^*]$.  
\end{thm}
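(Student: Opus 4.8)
The plan is to extract an ordinary differential inequality for $E_t$ along the eigenbranch, and then to show that the integrand in \eqref{festimate} is controlled by $\dot{E}_t$ plus a bounded term, so that its integral over $(0,t^*]$ is dominated by the total variation of $E_t$, which is finite because $E_t$ converges. More precisely, let $u_t \in V_t$ and set $\chi_t = P^I_{a_t}(u_t)$. Normalizing so that $\|u_t\|=1$, the variational formula \eqref{Variation} gives $\dot{E}_t = \dot{q}_t(u_t)$, and by \eqref{Boundqdot} (with $C=1$, Remark \ref{Cequals1}) we have $\dot{q}_t(u_t) \geq \dot{a}_t(u_t) - a_t(u_t)$. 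Since $a_t(u_t) = E_t \to E_0$, the term $a_t(u_t)$ is bounded; so it suffices to compare $\dot{a}_t(u_t)$ with $\dot{a}_t(\chi_t)/\|\chi_t\|^2$.

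First I would estimate the error $\dot{a}_t(u_t - \chi_t)$. Writing $u_t = \chi_t + w_t$ with $w_t = u_t - P^I_{a_t}(u_t)$, expand $\dot{a}_t(u_t) = \dot{a}_t(\chi_t) + 2\dot{a}_t(\chi_t, w_t) + \dot{a}_t(w_t)$. By \eqref{Logarithm2}, $\dot{a}_t \leq t^{-1} a_t$, so $\dot{a}_t(w_t) \leq t^{-1} a_t(w_t)$; and $a_t(w_t) = a_t(u_t - P^I_{a_t}(u_t))$ is exactly what Lemma \ref{QuasiEstimate} (applied with $q = q_t$, which is $\eps$-close to $a_t$ with $\eps = Ct$ by \eqref{tclose}) bounds by $\eps^2 a_t(u_t)(1+E_t/\delta)^2 = O(t^2)$. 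Hence $\dot{a}_t(w_t) = O(t)$, which is integrable on $(0,t^*]$. The cross term is handled by Cauchy–Schwarz for the nonnegative form $\dot{a}_t$: $|\dot{a}_t(\chi_t,w_t)| \leq \dot{a}_t(\chi_t)^{1/2}\dot{a}_t(w_t)^{1/2}$; since $\dot{a}_t(w_t) = O(t)$ and $\dot{a}_t(\chi_t) \leq t^{-1} a_t(\chi_t) \leq t^{-1}\sup(I)\|\chi_t\|^2 \leq t^{-1}\sup(I)$, this cross term is $O(1)$, hence also integrable. Meanwhile Corollary \ref{CoroNormPu} gives $\|\chi_t\|^2 \geq (1 - \eps^2(1+E_t/\delta)^2) a_t(u_t)/\sup(I)$, which is bounded below by a positive constant for small $t$ since $a_t(u_t) = E_t \to E_0 > 0$ (note $E_0$ lies in the interior of $I$, so $E_0 > 0$ as $I \subset [0,\infty)$; if $E_0 = 0$ a separate trivial argument using $\dot a_t(\chi_t)\le t^{-1}a_t(\chi_t)$ and $a_t(\chi_t)\le E_t\to 0$ applies). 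So $\dot{a}_t(\chi_t)/\|\chi_t\|^2 \leq c^{-1}\dot{a}_t(\chi_t)$, and combining:
\begin{equation*}
\frac{\dot{a}_t(\chi_t)}{\|\chi_t\|^2} \;\leq\; c^{-1}\left(\dot{a}_t(u_t) + 2|\dot{a}_t(\chi_t,w_t)| + \dot{a}_t(w_t)\right) \;\leq\; c^{-1}\dot{a}_t(u_t) + O(1).
\end{equation*}
Then $\dot{a}_t(u_t) \leq \dot{q}_t(u_t) + a_t(u_t) = \dot{E}_t + E_t$, so the integrand is bounded by $c^{-1}\dot{E}_t + O(1)$ for small $t$.

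Finally I would integrate: $\int_0^{t^*} \dot{E}_t\, dt = E_{t^*} - E_0$ is finite (Proposition \ref{qconvergence} gives convergence, but here we need more — that $E_t$ has bounded variation; this follows because by \eqref{ODEEst}-type reasoning $\dot{E}_t \geq -2E_t \geq -2\sup(I)$ is bounded below, so the negative part of $\dot{E}_t$ is integrable, and the positive part is then also integrable since the integral of $\dot E_t$ itself converges). The countable exceptional set where $t \mapsto u_t$ may be discontinuous does not affect integrability since the bound holds pointwise off that set and the bounding function is integrable. The main obstacle I expect is controlling the cross term $\dot{a}_t(\chi_t,w_t)$ with the right power of $t$ — naively it is only $O(1)$ rather than $O(t)$, which is why the argument must route everything through $\dot{E}_t$ and its bounded variation rather than hoping for an $L^1$ bound with a small constant; one must be careful that the $O(1)$ contributions are genuinely bounded (not just integrable against some measure) so that they integrate to something finite on the \emph{finite} interval $(0,t^*]$.
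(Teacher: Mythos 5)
Your plan reproduces, step for step, the paper's own argument, which isolates the key computation as Lemma \ref{DotQuasiBound}: expand $\dot a_t(u_t)$ around $\dot a_t(\chi_t)$ with $\chi_t = P^I_{a_t}(u_t)$, control the error $\dot a_t(u_t-\chi_t)$ by combining $\dot a_t \le t^{-1}a_t$ with the quasimode bound of Lemma \ref{QuasiEstimate}, handle the cross term by Cauchy--Schwarz for the nonnegative form $\dot a_t$, bound $\|\chi_t\|$ below via Corollary \ref{CoroNormPu}, and then integrate the resulting pointwise inequality $\dot E_t \ge C\,\dot a_t(\chi_t)/\|\chi_t\|^2 - C$ using that $E_t$ is nonnegative and converges. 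The paper also records, as you do, that local integrability of the integrand on $(0,t^*]$ comes from real-analyticity of $t\mapsto P^I_{a_t}$ off a countable set. So the route is the same; there is no genuinely different idea here.

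The one step that does not hold up is your parenthetical treatment of $E_0=0$. The chain $\dot a_t(\chi_t) \le t^{-1}a_t(\chi_t)$ together with $a_t(\chi_t) \le a_t(u_t) \le 2E_t\to 0$ only gives $\dot a_t(\chi_t)=O(E_t/t)$, which need not be integrable near $t=0$ (take, e.g., $E_t\sim 1/\log(1/t)$), and in any case it says nothing about the lower bound on $\|\chi_t\|^2$, which for $E_0>0$ you extracted from Corollary \ref{CoroNormPu} precisely by using $a_t(u_t)\approx E_0>0$. The paper's Lemma \ref{DotQuasiBound} resolves the $E_0=0$ case by a different mechanism: since $P^I_a$ is a spectral projection, one has the spectral gap bound $a(u - P_a^I(u)) \ge \delta\,\|u - P_a^I(u)\|^2$, and combining this with Lemma \ref{QuasiEstimate} and the bound $a_t(u)\le 2(E_0+1)\|u\|^2$ gives $\|u - P_a^I(u)\| \le \tfrac{1}{2}\|u\|$ for $t$ small, hence $\|P_a^I(u)\| \ge \tfrac{1}{2}\|u\|$ by the triangle inequality. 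That yields the needed uniform lower bound on $\|\chi_t\|$ for all $E_0\ge 0$, after which the rest of your argument runs without change.
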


\begin{proof} 
Since the spectrum of $a_t$ is discrete and $E_t$
is real-analytic, the operator family $t \mapsto P^I_{a_t}$ 
is real-analytic on the complement of a countable set. 
By combining this with the hypothesis, we find that the function 
$\dot{a}(P^I_{a_t}(u_t) )/\|P_{a_t}(u_t)\|^2$ is
locally integrable on $(0,t^*]$ for each $t^*>0$.

By Lemma \ref{DotQuasiBound} below, there exists a constant $C>0$ such that
\[
  \dot{E}_t~ \geq~ C \cdot \frac{ \dot{a}_t(\chi_t) }{
  \|\chi_t\|^2} ~  -~ C. 
\]
Integration then gives
\[  E_{t^*}~ -~ E_{t}~ \geq~ C \int_{t}^{t^*} 
  \frac{ \dot{a}_s(\chi_s) }{\|\chi_s\|^2}~ ds~  -~ C (t^*-t).
\]
Since $E_t \geq 0$ and the integrand is
nonnegative, the integral on the right hand side converges
as $t$ tends to zero.  
\end{proof}

\begin{lem} \label{DotQuasiBound}
Suppose that for each $t>0$, we have 
\begin{equation} \label{Logarithm}
 0~ \leq~  \dot{a}_t(v)~ \leq~ t^{-1} \cdot a_t(v).
\end{equation}
If $q_t$ is asymptotic to $a_t$ at first order,
then there exists $t'>0$ and a constant $C>0$ 
such that for each $t\leq t'$ and each eigenvector $u \in V_t$ we have
\begin{equation} \label{OQuasiBound}
  \left| \dot{E}_t \cdot  \|u\|^2~
 -~ \dot{a}_t\left( P^I_{a_t}(u) \right)
 \right|~ \leq~  C \cdot \| u\|^2 
\end{equation}
and 
\begin{equation} \label{QuasiLowerBound}
 \| P_a^I(u)\|~ \geq~ \frac{1}{C} \cdot \|u\|. 
\end{equation}
\end{lem}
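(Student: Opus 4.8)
The plan is to prove both estimates \eqref{OQuasiBound} and \eqref{QuasiLowerBound} simultaneously, obtaining \eqref{QuasiLowerBound} first and then feeding it into \eqref{OQuasiBound}. First I would observe that \eqref{Logarithm} gives $\eps_t = t$ as the closeness parameter (after the normalization $C=1$ of Remark \ref{Cequals1}), so that for $t$ small enough we have $\eps_t(1+E_t/\delta_t) \leq 1/2$, say, where $\delta_t$ is the distance from $E_t$ to $\R \setminus I$; this is possible because $E_t \to E_0$ with $E_0$ in the interior of $I$, so $\delta_t$ is bounded below by some $\delta_0>0$ for small $t$. Corollary \ref{CoroNormPu} then immediately yields
\[
  \|P^I_{a_t}(u)\|^2 ~\geq~ \left[1 - t^2\left(1+\tfrac{E_t}{\delta_t}\right)^2\right]\cdot \frac{a_t(u)}{\sup(I)} ~\geq~ \tfrac{3}{4}\cdot\frac{a_t(u)}{\sup(I)},
\]
and since $a_t(u) = E_t\|u\|^2$ with $E_t$ bounded below (it converges to $E_0>0$, being in the interior of $I \subset [0,\infty)$; if $E_0=0$ one argues separately but the interval forces $E_0>0$ for the estimate to be meaningful), this gives \eqref{QuasiLowerBound}.

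For \eqref{OQuasiBound}, I would start from the variational formula \eqref{Variation}, namely $\dot{E}_t\|u\|^2 = \dot{q}_t(u)$ for $u \in V_t$ (this is exactly the computation in the proof of Proposition \ref{qconvergence}). Using \eqref{Boundqdot}, $|\dot{q}_t(u) - \dot{a}_t(u)| \leq a_t(u) = E_t\|u\|^2 \leq C\|u\|^2$. So it suffices to compare $\dot{a}_t(u)$ with $\dot{a}_t(P^I_{a_t}(u))$. Writing $\chi = P^I_{a_t}(u)$ and $w = u - \chi$, expand $\dot{a}_t(u) = \dot{a}_t(\chi) + 2\dot{a}_t(\chi, w) + \dot{a}_t(w)$. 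The key point is that Lemma \ref{QuasiEstimate} controls the $a_t$-size of $w$: $a_t(w) \leq t^2 a_t(u)(1+E_t/\delta_t)^2 \leq t^2 a_t(u)$ for small $t$. Then \eqref{Logarithm} gives $\dot{a}_t(w) \leq t^{-1} a_t(w) \leq t\cdot a_t(u) \leq C\|u\|^2$, and by Cauchy-Schwarz for the nonnegative form $\dot{a}_t$ together with $\dot{a}_t(\chi) \leq t^{-1}a_t(\chi) \leq t^{-1}a_t(u)$, the cross term satisfies $|\dot{a}_t(\chi,w)| \leq \dot{a}_t(\chi)^{1/2}\dot{a}_t(w)^{1/2} \leq (t^{-1}a_t(u))^{1/2}(t\,a_t(u))^{1/2} = a_t(u) \leq C\|u\|^2$. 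Combining the three pieces gives $|\dot{a}_t(u) - \dot{a}_t(\chi)| \leq C\|u\|^2$, and assembling with the $\dot q$-vs-$\dot a$ bound yields \eqref{OQuasiBound}.

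The main obstacle I anticipate is bookkeeping the dependence of all constants on $t$: one must be careful that $\delta_t$ stays bounded below (handled by $E_t \to E_0 \in \mathrm{int}(I)$), that $E_t$ stays bounded above and below, and that the closeness parameter is genuinely $\eps_t = t$ rather than a constant — which is where the hypothesis \eqref{Logarithm}, and not merely $\dot a_t \geq 0$, is essential: it is what converts the factor $t^{-1}$ coming from $\dot a_t \leq t^{-1}a_t$ into something that cancels against the $t^2$ from Lemma \ref{QuasiEstimate}. The other subtle point is making sure $P^I_{a_t}(u)$ is the right object to differentiate — but here we never differentiate the projection; we only use the static estimates of \S \ref{QuasimodeSection} at each fixed $t$, applied to the eigenfunction $u$ of $q_t$, so no real-analyticity of $t \mapsto P^I_{a_t}$ is needed for this lemma (that is needed only in Theorem \ref{Thmlimiteigenbranch}, where it is already addressed).
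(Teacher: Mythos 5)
Your argument for \eqref{OQuasiBound} is essentially the paper's proof: expand $\dot a_t(u) - \dot a_t(\chi)$ using bilinearity and Cauchy--Schwarz for the nonnegative form $\dot a_t$, invoke Lemma~\ref{QuasiEstimate} to bound $a_t(u-\chi) \leq C t^2 a_t(u)$, and use \eqref{Logarithm} to convert $\dot a_t$-bounds into $t^{-1}a_t$-bounds, with the $t^{-1}$ cancelling the $t^2$ in the cross and remainder terms; this is the right and the paper's mechanism. However, you repeatedly write $a_t(u) = E_t\|u\|^2$. That identity holds for $q_t$, not $a_t$, since $u$ is an eigenvector of $q_t$. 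What is actually true, via \eqref{tclose}, is $\tfrac12 E_t\|u\|^2 \leq a_t(u) \leq 2E_t\|u\|^2$ for $t$ small (this is \eqref{atbound}); the estimates survive the replacement, but the literal equation is false and the needed inequality must be stated.

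The more substantive gap is in your treatment of \eqref{QuasiLowerBound}. Your argument via Corollary~\ref{CoroNormPu} needs a lower bound $a_t(u) \geq c\|u\|^2$, which via the inequality above requires $E_t$ bounded away from $0$, i.e.\ $E_0 > 0$. You wave this away by asserting that the interval forces $E_0>0$, but the lemma as stated allows $E_0 = 0$ and the paper gives a separate argument for it: on the range of $1 - P^I_{a_t}$, the form $a_t$ is bounded below by $\delta$ times the norm squared (the relevant eigenvalues of $a_t$ lie outside $I$, hence are $\geq \delta$ away from $E_t$), so Lemma~\ref{QuasiEstimate} forces $\|u - P^I_{a_t}(u)\|^2 \leq \tfrac14 \|u\|^2$ once $t$ is small enough, and \eqref{QuasiLowerBound} follows from the triangle inequality. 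You should either supply that argument or observe explicitly that in the eventual application $E_0 = \mu/\sigma(0) > 0$, so only the $E_0 > 0$ case occurs; but as written your proof does not establish the lemma in the generality in which it is stated.
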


\begin{proof}
Since $V_t$ is the real-analytic family of eigenspaces associated
to $E_t$, for each $t>0$ and $u \in V_t$ we have $\dot{q}(u) =\dot{E} \cdot \|u \|^2$ (see Remark \ref{DefVt}). 
Since $E_t$ converges to $E_0$, we find 
using (\ref{tclose}) that
there exists $t_0$ so that for $t<t_0$  
\begin{equation} \label{atbound}
 a_t(u)~ \leq~ 2 q_t(u)~ =~ 2 E_t \cdot \|u\|^2~ \leq~ 2(E_0+1) \cdot \|u\|^2. 
\end{equation}
Thus, from (\ref{Boundqdot}) we find that
\begin{equation}\label{dotE1}
 \left|\dot{E} \cdot \|u\|^2~ -~ \dot{a}_t(u) \right|~ 
           \leq~   2(E_0+1) \cdot \|u\|^2
\end{equation}
for $t< t_0$.

Let $\chi_t=P_{a_t}^I(u)$. 
Since $\dot{a}_t$ is a nonnegative quadratic form, we have 
\[  \dot{a}_t(u)~ \leq~ \dot{a}_t(\chi_t)~ +~ 2 \dot{a}_t(\chi_t)^\und
\cdot \dot{a}_t(u-\chi_t)^\und~ +~ \dot{a}_t(u-\chi_t) \]
and 
\[  \dot{a}_t(\chi_t)~ \leq~ \dot{a}_t(u)~ +~ 2 \dot{a}_t(u)^\und
\cdot \dot{a}_t(\chi_t-u)^\und~ +~ \dot{a}_t(\chi_t-u). \]
The former estimate provides a bound on $\dot{a}_t(u)-\dot{a}_t(\chi_t)$ 
and the latter one gives a bound on its negation. In particular, we find that 
\[ 
\left| \dot{a}_t(u)-\dot{a}_t(\chi_t) \right|~
 \leq~ 2 \cdot \max \left\{ \dot{a}_t(u)^\und,\dot{a}_t(\chi_t)^\und \right\} 
\cdot \dot{a}_t(u-\chi_t)^\und~ +~ \dot{a}_t(u-\chi_t).
\]
Thus, by (\ref{Logarithm}), we have
\begin{equation} \label{AfterLog}
  |\dot{a}_t(u)-\dot{a}_t(\chi_t)|~
\leq~ \frac{2}{t} \cdot \max \left\{ a_t(\chi_t)^\und,a_t(u)^\und \right\}
 a_t(u-\chi_t)^\und~
 +~ \frac{a_t(u-\chi_t)}{t}. 
\end{equation}

Let $\delta_t$ be the distance from $E_t$ to the complement $\R \setminus I$. 
Since $E_0$ belongs to the interior of $I$ and $E_t \rightarrow E_0$, 
there exists $\delta>0$ and $0<t_1\leq t_0$ 
so that if $t< t_1$, then $\delta_t \geq \delta$.
Hence we may apply Lemma \ref{QuasiEstimate} to find that
\[   a_t \left(u~ -~ \chi_t \right)~ \leq~  t^2 \cdot a_t(u)  \cdot
     \left (1+\frac{2 E_0}{\delta}\right )^2
\]
for $t<t_1$. Since $a_t$ is non-negative, from (\ref{atbound}) we have
\[ a_t(\chi_t)~ \leq~ a_t(u)~ \leq~ 2(E_0+1) \cdot \|u\|^2 \]
for $t \leq t_0$. By combining these estimates with (\ref{AfterLog}) we
find that for $t \leq t_1$
\begin{equation} \label{FinalQuasi}
    |\dot{a}_t(u)-\dot{a}_t(\chi_t)|~ \leq~
            2(E_0+1) \cdot \|u\|^2 \cdot \left( 1+ \frac{2 E_0}{\delta} \right) \cdot
            \left( 2 + t \cdot \left(1 + \frac{2 E_0}{\delta}\right) \right).
\end{equation}
Estimate (\ref{OQuasiBound}) then follows from (\ref{dotE1}), (\ref{FinalQuasi})
and the triangle inequality.

If $E_0 > 0$, then there exists $0<t_2 \leq t_1$ such that
if $t<t_2$, 
then 
\[ a_t(u)~ \geq~ \und \cdot q_t(u)~ =~ \und \cdot E_t\cdot \|u\|^2~
 \geq~  \frac{1}{4} \cdot E_0 \cdot \|u\|^2.
\]
Thus, if $E_0>0$, then (\ref{QuasiLowerBound}) follows from 
Corollary \ref{CoroNormPu}. 

On the other hand, if $E_0=0$, then let $t_1$ and $\delta$
be as above. Since $P^I_{a}$ is a spectral projection and the 
eigenspaces are orthogonal, 
we have 
\[ a\left(u- P_a^I(u)\right)~ \geq~ \delta \cdot \left\|u- P_a^I(u) \right\|^2.\]
Thus, by Lemma \ref{QuasiEstimate} and (\ref{atbound})
we have
\[  2 t^2 \cdot (E_0+1) \cdot \left(1 + \frac{E_0+1}{\delta} \right)
 \cdot  \|u\|^2~ \geq~  \delta \cdot \left\|u- P_a^I(u) \right\|^2.
\] 
In particular, if $t^2< (\delta/8) \cdot (E_0+1)^{-1} \cdot (1+ (E_0+1)/\delta)^{-1}$,
then 
\[    \|u\|^2~ \geq~  \frac{1}{4} \cdot\left\|u - P_a^I(u) \right\|^2.  \]
Estimate (\ref{QuasiLowerBound}) then follows from the triangle inequality.
\end{proof}


\part{A family of quadratic forms on the half-line}


\section{Definition and basic properties}  \label{aSection}

In the sequel $\sigma: [0,\infty) \rightarrow \R^+$ will be a smooth positive function such that 
\begin{itemize}
    \item $\lim_{x \rightarrow \infty} \sigma(x)=0$,

    \item $\sigma'(x)<0$ for all $x \geq 0$, 
    \item $|\sigma''|$ has at most polynomial growth on $[0, \infty)$.
\end{itemize}
For $u,v \in C^{\infty}_0((0, \infty))$, define 
\[   \langle u,v \rangle_{\sigma}~ =~ 
\int_0^{\infty} u(x) \cdot v(x) \cdot \sigma(x)~ dx. \]
Let ${\mathcal H}_{\sigma}$ denote the Hilbert space obtained by completing $C_0^{\infty}((0, \infty))$
with respect to the norm  $\|u\|_{\sigma}:=   \sqrt{\langle u,u \rangle_{\sigma}}$.

Let $H^1(0,\infty)$ and $H^1_0(0,\infty)$ denote, respectively,
the classical Sobolev spaces with respect to Lebesgue measure  
on $(0,\infty)$. For each $t>0$ and $u$ in $H^1(0,\infty)$, we define 
\[  a^{\mu}_t(u)~ =~  \int_{0}^{\infty} \left( t^2 \cdot |u'(x)|^2~ 
    +~ \mu \cdot |u(x)|^2 \right)~ dx. \]  

\begin{remk}  \label{MuPos}
If $\mu>0$, then since $\sigma$ is decreasing, we have
\[\| u\|_\sigma^2~  \leq~ \sigma(0) \int_0^\infty |u(x)|^2 dx~
   \leq~ \frac{\sigma(0)}{\mu}a^{\mu}_t(u).
\]
\end{remk}

Let 
\[   \dom_D(a^{\mu}_t) =  H^1_0(0,\infty) \cap \mathcal{H}_{\sigma}  \]
and let
\[ \dom_N(a^{\mu}_t)\,=\,H^1(0,\infty)  \cap \mathcal{H}_{\sigma}.  \]
Both $\dom_D(a^{\mu}_t)$ and $\dom_N(a^{\mu}_t)$ are 
closed form domains for $a$ that are dense in ${\mathcal{H}}_{\sigma}$.

\begin{defn}\label{defDirNeu}
The spectrum of the quadratic form $a_t^{\mu}$ restricted to $\dom_D(a^{\mu}_t)$
(resp. $\dom_N(a^{\mu}_t)$)  with respect to $\langle \cdot, \cdot \rangle_{\sigma}$ 
will be called the {\em Dirichlet} (resp. {\em Neumann}) {\em spectrum of $a_t^{\mu}$}. 
\end{defn}

In the sequel, we will drop the subscript `$D$' from 
 $\dom_D(a_t^\mu)$ and the subscript `$N$' from $\dom_N(a^{\mu}_t)$.  
In particular, unless stated otherwise, all of the results 
below hold for both the Neumann and Dirichlet boundary conditions.
When we refer to the `spectrum' of $a_t^{\mu}$,
we will mean either the Dirichlet or the Neumann spectrum.

\begin{prop}
If $\mu>0$ and $t>0$, then the quadratic form $a^{\mu}_t$ has discrete 
spectrum with respect to $\langle \cdot, \cdot \rangle_{\sigma}$. 
\end{prop}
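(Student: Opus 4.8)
The plan is to show that the form $a^\mu_t$ with form domain $\dom(a^\mu_t)$ has compact resolvent with respect to $\langle\cdot,\cdot\rangle_\sigma$, which gives discreteness of the spectrum. First I would observe, via Remark \ref{MuPos}, that for $\mu>0$ the norm $n_{a^\mu_t}(u)^2 = \|u\|_\sigma^2 + a^\mu_t(u)$ controls $\|u'\|_{L^2(0,\infty)}^2$ and $\|u\|_{L^2(0,\infty)}^2$; that is, the form domain embeds continuously into $H^1(0,\infty)$ (with Lebesgue measure). So it suffices to show that the inclusion $\Dcal \hookrightarrow \Hcal_\sigma$ is compact, where $\Dcal$ carries the $H^1(0,\infty)$-norm and $\Hcal_\sigma$ the weighted $L^2$-norm.

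The key point is the decay of $\sigma$ at infinity, which compactifies the otherwise-noncompact inclusion $H^1(0,\infty)\hookrightarrow L^2(0,\infty)$. Given a bounded sequence $(u_n)$ in $H^1(0,\infty)$, I would use a diagonal/Rellich argument: on each bounded interval $[0,R]$ the inclusion $H^1(0,R)\hookrightarrow L^2(0,R)$ is compact, so after passing to a subsequence $u_n$ converges in $L^2_{loc}$, and since $\sigma$ is bounded on $[0,R]$ this gives convergence in $L^2([0,R],\sigma\,dx)$. To handle the tail, fix $\eta>0$ and choose $R$ so large that $\sigma(x)<\eta$ for $x>R$ (possible since $\sigma(x)\to 0$); then
\[
 \int_R^\infty |u_n - u_m|^2\,\sigma\,dx~\leq~\eta \int_R^\infty |u_n-u_m|^2\,dx~\leq~ \eta \cdot \big(\|u_n\|_{L^2} + \|u_m\|_{L^2}\big)^2~\leq~ C\eta,
\]
uniformly in $n,m$, using the $H^1$-bound. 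Combining the tail estimate with $L^2([0,R],\sigma\,dx)$-convergence shows $(u_n)$ is Cauchy in $\Hcal_\sigma$. Hence the inclusion $\Dcal \hookrightarrow \Hcal_\sigma$ is compact.

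From compactness of this inclusion, the standard variational argument applies: the self-adjoint operator $A^\mu_t$ associated to $a^\mu_t$ via $\langle A^\mu_t u, v\rangle_\sigma = a^\mu_t(u,v)$ has, by Remark \ref{MuPos}, a lower bound $a^\mu_t(u) \geq (\mu/\sigma(0))\|u\|_\sigma^2 > 0$, so $A^\mu_t$ is invertible, and $(A^\mu_t)^{-1}$ factors through the compact inclusion $\Dcal\hookrightarrow\Hcal_\sigma$ (this is the classical criterion, e.g. \cite{Kato}); therefore $(A^\mu_t)^{-1}$ is compact and the spectrum of $a^\mu_t$ is discrete. The argument is identical for the Dirichlet domain $H^1_0(0,\infty)\cap\Hcal_\sigma$ and the Neumann domain $H^1(0,\infty)\cap\Hcal_\sigma$, since both are closed subspaces of $H^1(0,\infty)$ on which the same estimates hold.

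I expect the main obstacle to be purely the tail estimate — verifying that $\sigma\to 0$ is genuinely what rescues compactness — together with the bookkeeping of checking that $\dom(a^\mu_t)$ with the graph norm embeds in $H^1(0,\infty)$; the local Rellich step and the abstract passage from compact form-domain inclusion to discrete spectrum are routine. One subtlety worth a line: one should confirm $\Hcal_\sigma$ genuinely contains $\dom(a^\mu_t)$ as a dense subspace with the stated embedding, but this was already asserted in the text preceding the proposition.
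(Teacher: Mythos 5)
Your proposal is correct and follows essentially the same route as the paper: a local Rellich-plus-diagonalization step combined with a tail estimate driven by $\sigma(x)\to 0$ to show that bounded sets in the form domain are precompact in $\Hcal_\sigma$. The paper phrases the abstract step via the Reed--Simon criterion (compactness of the set $\{u:\ a^\mu_t(u)\leq r,\ \|u\|_\sigma\leq 1\}$) rather than through compact resolvent, but these are equivalent formulations of the same fact.
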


\begin{proof}
By a standard result in spectral theory---see, 
for example, Theorem XIII \cite{Reed-Simon}---it 
suffices to prove that for each $r>0$ the set 
\[  A_r~ =~ \left\{u \in \dom(a_t^{\mu})~ |~ a^{\mu}_t(u)~ \leq~ r,~ \|u\|_{\sigma}~ \leq~  1 \right\}  \]
is compact with respect to $\|\cdot\|_{\sigma}$.  

Let $(u_n)_{n\in \N} \subset A_r.$  By the definition of $a^\mu_t$,  
for each $n \in N$ and $M>0$, the  $H^1(0,M)$  norm of $u_n$
is at most   $2 r/ \min\{t^2, \mu\}$. Thus, by Rellich's lemma,
$u_n$ has a convergent subsequence in $L^2(0,M)$ for each $M$.
Diagonalization provides a function $u_\infty$ in $L^2_{loc}(0,\infty)$ 
and a subsequence $u_{n_k}$ such that $u_{n_k}$ converges 
to $u_{\infty}$ in $L^2([0,M])$ for each $M$. 

To finish the proof, it suffices to show that $u_{n_k}$ is Cauchy in ${\mathcal H}_{\sigma}$. 
Since $\sigma$ is decreasing, for each $u\in A_r$ we find that 
\begin{eqnarray*}   
\int_{x>M} |u|^2 \cdot \sigma(x)~ dx~
   &\leq& \sigma(M)  \int_{x>M} |u|^2~ dx~ \\
&\leq& \sigma(M) \cdot \frac{r}{\mu}.    
\end{eqnarray*}
Thus, since $\lim_{x \rightarrow \infty} \sigma(x)=0$, 
for each $\eps>0$, there exists $M_\eps$ such that for each $u \in A_r$ we have 
\[ 
    \int_{x>{M_\eps}} |u|^2\cdot \sigma(x)~ dx~ \leq~ \eps.
\]
Since $u_{n_k}$ converges in $L^2([0,M_\eps])$, there exists $k_0$ such that
if $j,k \geq k_0$, then 
\[ 
\int_0^{M_\eps} |u_{n_k}(x)-u_{n_j}(x)|^2 \cdot \sigma(x)~
 dx~ \leq~ \sigma(0) \cdot \|u_{n_k}-u_{n_j}\|_{[0,M_\eps]}^2~ \leq~ \eps.
\]
Combining the two preceding estimates we find that $(u_{n_k})$ is a Cauchy sequence in $\mathcal{H}_\sigma$.
\end{proof}


\section{Estimates of quasimodes and eigenfunctions}\label{ExponentialEstimates}

In the sequel, unless otherwise stated, we assume that $\mu>0$. 
  
Let $r \in {\mathcal H}_{\sigma}$ and let $E\geq 0$.
In this section, we begin our analysis of functions 
$w$ in $\dom(a_t^\mu)$  that satisfy
\begin{equation}  \label{rDefined}  
   a^{\mu}_{t}(w,v)~ -~ E \cdot  \langle w, v\rangle_{\sigma}~ 
         =~ \langle r, v\rangle_{\sigma}~ 
\end{equation}
for all $v \in {\rm Dom}(a_{t}^{\mu})$. 

In applications, the function $r$ in (\ref{rDefined}) will be negligible.  
For example, if $r=0$, then $w$ is an eigenfunction with eigenvalue $E$.
More generally, if 
\begin{equation}  \label{rDefinedQuasi}  
   a^{\mu}_{t_n}(w_n,v)~ -~ E_n \cdot  \langle w_n, v\rangle_{\sigma}~ 
         =~ \langle r_n, v\rangle_{\sigma}~ 
\end{equation}
where $t_n \rightarrow 0$, $w_{n} \in {\rm Dom}(a_t^{\mu})$, $\lim E_{n}\,=\, E_0$ and 
$\|r_{n}\| = O(t_n^{\rho}) \cdot \|w_{t_n}\|$, then the sequence $w_{n}$ is called 
{\em a quasimode of order $\rho$} at energy $E_0.$ (See also Remark \ref{QuasimodeRemark}.)

Our goal is to understand the behavior of both eigenfunctions and quasimodes. 
Of course, in most situations, either the eigenfunction estimate will be 
stronger  than the quasimode estimate and/or the proof will be simpler. 
In the following, we will first provide a general estimate---valid for any 
quasimode---and then, as needed, we will state and prove the stronger 
result for eigenfunctions.
  
By unwinding the definitions, equation (\ref{rDefined}) may be rewritten as  
\begin{equation}  \label{definitionr}
\int_0^{\infty} 
\left( t^2 \cdot w'(x) \cdot v'(x)~ +~ f_E(x) 
\cdot w(x) \cdot v(x) \right)~ dx~ =~  \int_{0}^\infty r (x)\cdot v(x) \cdot \sigma(x)~ dx.
\end{equation}
where 
\[  f_E(x)~ = \mu~ -~ E \cdot \sigma(x).  \]
By integrating (\ref{definitionr}) by parts, we find that $w$ satisfies
(\ref{definitionr}) for all $v \in C^{\infty}_0((0,\infty))$ if and only if 
for each $x \in (0, \infty)$
\begin{equation}  \label{ODE}   
 -t^2 \cdot w''(x)~ +~  f_E(x) \cdot w(x)~ =~ r(x) \cdot \sigma(x). 
\end{equation}
Moreover, the Dirichlet and Neumann cases are characterised by the 
usual boundary conditions. To be precise, $w$ is a Dirichlet (resp. Neumann) 
eigenfunction of $a_t^\mu$ if and only if $w$ is in $\mathcal{H}_\sigma\cap H^1$, 
satisfies equation (\ref{ODE})  and $w(0)=0$ (resp. $w'(0)=0$).  
  
Let $E \geq \mu/\sigma(0)$.\footnote{For example, it follows from
the definitions that each eigenvalue 
of $a_t^\mu$ satisfies this inequality.} 
Since $\sigma$ is strictly decreasing, 
there exists a unique point $x_E \in [0, \infty)$ 
such that $f_E(x_E)$. In particular, if $x > x_E$,
then $f_E(x) > 0$ and if $x< x_E$, then $f_E(x)<0$.

If $w$ is an eigenfunction ($r=0$), then 
one expects $w$ to behave like an exponential 
function when $x>>x_E$ and to oscillate for $x<<x_E$. 
Moreover, as $t$ tends to zero, one expects 
that both types of behavior will become more and more extreme.  

On the other hand, since 
$\lim_{x \rightarrow \infty} \sigma(x)=0$, we do not know, for example,  
that $r$ is bounded as $x$ tends to infinity.  In particular,
for a non-zero $r$, we have no direct argument that shows that 
a solution $w$ to (\ref{ODE}) has exponential decay or is, in fact, bounded.

\subsection{A general $L^2$ estimate}

For each $E \geq \mu/\sigma(0)$ and $s \in [0,\mu)$, 
let $x_E^s \geq x_E$ be the unique solution to 
\begin{equation} \label{xs}
 f_E(x_E^{s})~ =~  s. 
\end{equation}

\begin{remk}\label{RmkSmoothness}
Observe that since the derivative of $\sigma$ does not vanish, the mappings 
$E \mapsto x_E$ and $E \mapsto x_E^s$ are smooth 
from $[\mu/\sigma(0), \infty)$ to $(0,\infty).$ 
Note also that $\lim_{s \rightarrow 0} x_E^s=x_E$ and 
 $\lim_{s \rightarrow \mu} x_E^s=\infty$.
\end{remk}

The following estimate shows that if $w_t$ satisfies (\ref{rDefined})
and 
 \[  \lim_{t \rightarrow 0}~ \frac{\|r\|_\sigma}{\| w_t\|_\sigma}~ =~ 0, \]
then, for any fixed $s,$ the $L^2$ mass of $w_t$ concentrates in the region 
$\{ x~ |~ x \leq x_E^s\}$ as $t$ goes to $0$. Additional work is required to  
prove that $w_t$ actually concentrates in the {\em classically allowed region} 
$\{ x~ |~ x\leq x_E \}$. See Proposition \ref{CrucialProposition}.

\begin{lem}  \label{ExponentialEstimate}
Let $K \subset [\mu/\sigma(0), \infty)$ be compact and let $s \in (0, \mu)$.
There exists a constant $C$ such that for each $E \in K$,
$r \in {\mathcal H}_\sigma$, and 
solution $w$ to (\ref{ODE})  we have
\[     \int_{x_E^s}^{\infty} |w(x)|^2~ dx~ \leq~  
     C \cdot \left(t^2~ +~ \frac{\|r\|_{\sigma}}{\|w\|_{\sigma}} \right) 
    \cdot \int_{0}^{\infty} |w(x)|^2~ dx.
\]
The constant $C$ depends only upon $K$, $\mu$, $\sigma$, and $s$. 
\end{lem}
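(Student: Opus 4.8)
The plan is to derive a weighted differential inequality for the tail integral by testing equation \refeq{definitionr} against a suitable cutoff multiple of $w$ itself. Specifically, fix $s \in (0,\mu)$ and for $a \geq x_E^s$ consider test functions of the form $v = \chi_a \cdot w$, where $\chi_a$ is a smooth cutoff that vanishes on $[0, x_E^s]$ and equals $1$ on $[x_E^a, \infty)$ for a slightly larger threshold, or more simply a cutoff supported in $[x_E^{s'}, \infty)$ for some $s' \in (s,\mu)$. Substituting into \refeq{definitionr} produces, after integration by parts, a term $t^2 \int \chi_a |w'|^2$, a term $\int f_E \chi_a |w|^2$ where $f_E \geq s' > 0$ on the support, a commutator term $t^2 \int \chi_a' w w'$ coming from differentiating the cutoff, and on the right-hand side the term $\int r \cdot \chi_a w \cdot \sigma$. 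The key point is that $f_E$ is bounded below by a positive constant on the region past $x_E^{s}$, so the quadratic form on the left controls $\int_{x_E^{s}}^\infty |w|^2$.

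First I would set up the cutoff carefully so that $\chi_a'$ is supported in a region where $f_E$ is already positive; then the commutator term $t^2\int \chi_a' w w'$ is absorbed using $2|ww'| \leq \eta^{-1}|w|^2 + \eta |w'|^2$ with $\eta$ chosen so the $|w'|^2$ part is swallowed by the $t^2\int\chi_a|w'|^2$ term, at the cost of a $t^2 \cdot \eta^{-1} \int_{\text{supp}\,\chi_a'} |w|^2$ contribution. Since $\chi_a'$ is supported in a bounded $E$-independent interval once $K$ is compact (here Remark \ref{RmkSmoothness} gives that $x_E^s$ varies smoothly and hence stays in a compact set for $E \in K$), that contribution is at most $t^2 \int_0^\infty |w|^2$ times a constant. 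The right-hand side is estimated by Cauchy--Schwarz: $|\int r \chi_a w \sigma| \leq \|r\|_\sigma \cdot \|\chi_a w\|_\sigma \leq \|r\|_\sigma \cdot \sqrt{\sigma(0)} \cdot (\int |w|^2)^{1/2}$, and then by the trivial bound $(\int|w|^2)^{1/2} = \|w\|_\sigma^{-1}\|w\|_\sigma (\int|w|^2)^{1/2}$ one rewrites this as $\|r\|_\sigma/\|w\|_\sigma$ times $\sqrt{\sigma(0)}\,\|w\|_\sigma (\int|w|^2)^{1/2}$, and $\|w\|_\sigma \leq \sqrt{\sigma(0)} (\int|w|^2)^{1/2}$ converts it to the desired form $(\|r\|_\sigma/\|w\|_\sigma)\int|w|^2$ up to constants depending only on $\sigma$.

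Putting the pieces together, the left side gives $\gtrsim \int_{x_E^s}^\infty |w|^2$ (after letting $a \to \infty$, using monotone convergence and the fact that $w \in \mathcal H_\sigma \cap H^1$ so all integrals are finite), and the right side is $\lesssim (t^2 + \|r\|_\sigma/\|w\|_\sigma)\int_0^\infty|w|^2$, with all constants controlled by $K$, $\mu$, $\sigma$, and $s$. One should be mildly careful that the constant does not degenerate: the lower bound on $f_E$ past $x_E^{s}$ is exactly $s$ (or $s'$), which is fixed and positive, and the $E$-dependence enters only through the location of the cutoff, which stays in a compact set.

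The main obstacle I anticipate is the boundary/decay bookkeeping on the noncompact end: because $\sigma \to 0$ we have no a priori pointwise control on $r$ or on $w$ at infinity, so one must justify that $v = \chi_a w$ is a legitimate element of $\mathrm{Dom}(a_t^\mu)$ and that the integration by parts produces no boundary term at $+\infty$ — this is where the cutoff truncation parameter $a$ (keeping $v$ compactly supported) followed by a limiting argument is essential, rather than working with a single unbounded cutoff. A secondary technical point is handling the case where $x_E = x_E^s$ could be $0$ or very small for some $E\in K$; but since $K \subset [\mu/\sigma(0),\infty)$ and $s>0$, the point $x_E^s$ is bounded away from the issues, and the Dirichlet/Neumann condition at $0$ plays no role because $\chi_a$ vanishes near $0$. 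Once these points are dispatched, the estimate is essentially a one-line consequence of the energy identity.
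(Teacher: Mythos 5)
Your overall scaffold matches the paper's: test \refeq{definitionr} against a cutoff multiple of $w$ supported past $x_E$ (truncated at a large $M$ to keep everything compactly supported, then let $M\to\infty$), use that $f_E \geq s$ past $x_E^s$ to extract the tail integral, and estimate the $r$-term by Cauchy--Schwarz together with $\|w\|_\sigma^2 \leq \sigma(0)\int |w|^2$. You also correctly flag the reason the two-sided truncation is needed, and your conversion of $\|r\|_\sigma\|w\|_\sigma$ into $(\|r\|_\sigma/\|w\|_\sigma)\int|w|^2$ is the same as the paper's.

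However, there is a genuine gap in your treatment of the commutator term $t^2\int \chi_a'\, w\, w'$. You propose to absorb the $\eta|w'|^2$ piece from Young's inequality into $t^2\int \chi_a |w'|^2$, but this requires $\eta\,|\chi_a'| \leq \chi_a$ on $\supp(\chi_a')$, and that fails near the left edge of the transition region, where $\chi_a'$ is of order $(x_E^{s'}-x_E^s)^{-1}$ but $\chi_a$ is arbitrarily close to $0$; no fixed $\eta>0$ works, and a position-dependent $\eta(x)\sim\chi_a/|\chi_a'|$ produces the unbounded weight $(\chi_a')^2/\chi_a$ on the $|w|^2$ side. The paper sidesteps this entirely by integrating the commutator by parts, $\int \rho_M'\, w\, w' = -\tfrac12\int \rho_M''\, w^2$ (no boundary terms since $\rho_M$ and $\rho_M'$ vanish at both ends), which converts the problematic term into a pure $\int|w|^2$ contribution with a bounded weight $|\rho_M''|\leq C'$; the positive $t^2\int \rho_M\,(w')^2$ term is then simply dropped. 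An alternative repair within your framework is to test against $\chi_a^2\, w$ instead of $\chi_a\, w$: the commutator becomes $2t^2\int \chi_a\chi_a'\, w\, w'$, and Young with $\eta<1$ absorbs $\eta\,t^2\int\chi_a^2(w')^2$ into the $t^2\int \chi_a^2(w')^2$ term at the cost of $\eta^{-1}t^2\int(\chi_a')^2 w^2$, which is bounded. Either repair makes your proof go through and matches the paper's estimate with the same dependence of $C$ on $K$, $\mu$, $\sigma$, $s$.
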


\begin{proof}
Let $\chi: \R \rightarrow [0,1]$ be a smooth function 
such that $\chi(x)=0$ for all $x \leq 0$
and $\chi(x)=1$ for all $x\geq 1$.  For each $M \in \R$, define 
\[  \rho_M(x)~ =~ \chi\left( \frac{x - x_E}{x_E^s-x_E}\right) \cdot \chi(M+1-x). \]
Substitute $\rho_M \cdot w$ for $v$ in (\ref{definitionr}).
Since $\rho_M$ vanishes for $x\leq x_E$, we obtain
\begin{equation}  \label{afterwrho}
\int_{x_E}^{\infty} 
t^2 \cdot (w')^2 \cdot \rho_M~ +~ t^2 \cdot w'\cdot w \cdot 
\rho_M' +~ f_E(x) 
\cdot w^2 \cdot \rho_M~dx
=~  \int_{x_E}^\infty r \cdot w \cdot \rho_M \cdot \sigma\, dx.
\end{equation}
By integrating by parts, one finds that  
\[  \int_{x_E}^\infty w \cdot w' \cdot \rho_M'~ dx~ 
=~ -\frac{1}{2} \int_{x_E}^\infty w^2 \cdot \rho_M''~ dx, \]
and hence (\ref{afterwrho}) is equivalent to 
\[ 
t^2 \int_{x_E}^{\infty} (w')^2 \cdot \rho_M~ dx~
 +~~ \int_{x_E}^{\infty} f_E \cdot w^2 \cdot \rho_M~ dx~ 
=~  \frac{t^2}{2}  \int_{x_E}^{\infty} \rho_M'' \cdot w^2~ dx 
 + \int_{x_E}^\infty r \cdot w \cdot \rho_M \cdot \sigma~ dx.
\]
The first integral on the left hand side 
is positive. Moreover, since $f_E\cdot w^2\cdot \rho_M$ is non-negative on 
$[x_E,x_E^s]$, and  $0 \leq \rho_M\leq 1$, we have 
\begin{equation} \label{Lastrho}  
 \int_{x_E^s}^{\infty} f_E\cdot w^2 \cdot \rho_M~ dx~ 
\leq~  \frac{t^2}{2}  \int_{x_E}^{\infty} |\rho_M''| \cdot w^2~ dx~ 
 +~ \int_{x_E}^\infty |r \cdot w| \cdot \sigma~ dx.
\end{equation}

The function $|\rho_M''|$ 
is bounded by a constant multiplied by $|x_E^s-x_E|^{-2}$. Therefore, since 
$x_E<x_E^s,$ and$x_E,~x_E^s$ are smooth over the compact $K$ (see remark \ref{RmkSmoothness}), 
there exists $C'>0$ 
such that for each $E \in K$, $x \in [0, \infty)$,
and $M \in \R$, we have $|\rho_M''(x)| \leq C'$.
 
By applying this estimate to (\ref{Lastrho}) and applying 
the Cauchy-Schwarz inequality, we obtain 
\[   \int_{x_E^s}^{\infty} f_E \cdot w^2 \cdot \rho_M~ dx~ 
\leq~  \frac{C' \cdot t^2}{2}  \int_{x_E}^{\infty}  w^2~ dx~ 
      +~   \|r\|_{\sigma} \cdot \|w\|_{\sigma}
\]
Note that for $x \geq x_E^s$, we have $\rho_M(x)= \chi(M+1-x)$.
Thus, since $f_E\cdot w^2$ is integrable, by the Lebesgue dominated 
convergence theorem, 
we may let $M$ tend to $\infty$ and obtain
\[ 
   \int_{x_E^s}^{\infty} f_E \cdot w^2 ~ dx~ 
\leq~  \frac{C' \cdot t^2}{2}  \int_{x_E}^{\infty}  w^2~ dx~ 
      +~   \|r\|_{\sigma} \cdot \|w\|_{\sigma}
\]
Since $\sigma$ is decreasing, the function $f_E(x)$ is increasing
and 
\[  \|w\|_{\sigma}^2~ \leq~ \sigma(0)~ \int_0^{\infty} w^2(x)~ dx. \]
Therefore, we find that 
\[   f_E(x_E^s) \cdot \int_{x_E^+}^{\infty}  w^2 ~ dx~ 
     \leq~ \left( \frac{C' \cdot t^2}{2}~ 
 +~ \sigma(0) \cdot \frac{\|r\|_{\sigma}}{\|w\|_{\sigma}} \right)
    \int_{0}^{\infty}  w^2~ dx.
\]
Since $f_E(x_E^s)= s$, the Lemma follows by choosing 
$C$ to be $s^{-1}\max (\frac{C'}{2},\sigma(0))$. 
\end{proof}

\subsection{An estimate of the $L^2$ mass of an eigenfunction}

If $w$ is an eigenfunction, then the bound given in Lemma
\ref{ExponentialEstimate} can be greatly improved. In particular, 
an eigenfunction is exponentially small in the classically forbidden region, 
and hence one can make $L^2$ estimates with polynomial weights. See 
Lemma \ref{L2Eigenfunction}.

First, we quantify the exponential decay of each eigenfunction.

\begin{lem} \label{ExponentialDecay}
Let $w$ be an eigenfunction of $a^{\mu}_t$ with eigenvalue $\lambda \leq E$.
If $x \geq y \geq x_E^s$, then  
\begin{equation} \label{Exp+}
   w^2(x)~ \leq~ w^2(y) \cdot {\rm exp} \left(-\frac{\sqrt{2s}}{t} \cdot (x-y)\right) 
\end{equation}
\end{lem}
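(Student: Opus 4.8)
The plan is to prove the pointwise exponential decay estimate \refeq{Exp+} by a standard ODE comparison/energy argument applied to the eigenfunction equation $-t^2 w'' + f_E(x) \cdot w = 0$ (the case $r=0$ of \refeq{ODE}), exploiting that $f_E$ is increasing (since $\sigma$ is decreasing) and that $f_E(x) \geq f_E(x_E^s) = s > 0$ for all $x \geq x_E^s$. Since $\lambda \leq E$, the relevant turning point for $w$ is at $x_\lambda \leq x_E \leq x_E^s$, so on $[x_E^s, \infty)$ we are in the classically forbidden region for $w$ and the equation reads $t^2 w'' = f_E(x) w \geq s \cdot w$ wherever $w \geq 0$.

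First I would reduce to controlling a suitable energy-type quantity. Set $g(x) = w'(x)^2 - (s/t^2) \cdot w(x)^2$, or more robustly work with $h(x) = t^2 w'(x)^2 - f_E(x) w(x)^2$. Differentiating and using the ODE, $h'(x) = 2t^2 w' w'' - f_E' w^2 - 2 f_E w w' = -f_E'(x) w(x)^2 \leq 0$ since $f_E' = -E\sigma' \geq 0$. Hence $h$ is nonincreasing. A first instinct is to deduce that $t^2 (w^2)'' = 2t^2 (w')^2 + 2 t^2 w w'' = 2 t^2 (w')^2 + 2 f_E w^2 \geq 0$, so $w^2$ is convex on $[x_E^s,\infty)$; combined with the fact that $w \in \Hcal_\sigma \cap H^1$ forces $w^2(x) \to 0$ along a sequence (indeed $w$ decays, by Lemma \ref{ExponentialEstimate} applied with $w$ itself, or directly since $w \in L^2$), convexity plus vanishing at infinity gives that $w^2$ is nonincreasing on $[x_E^s, \infty)$. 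That alone is not the exponential bound, but it is the structural fact that makes the comparison argument clean.

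Next I would run the comparison argument proper. Fix $y \geq x_E^s$ and let $\phi(x) = w^2(y) \cdot \exp(-\frac{\sqrt{2s}}{t}(x - y))$, which satisfies $t^2 \phi'' = \frac{s}{2}\cdot \ldots$ — more precisely one checks $t^2 \phi'' = \tfrac{1}{2} \cdot 2s \cdot \phi = s\phi$ with the chosen exponent (the factor $\sqrt{2s}$ rather than $\sqrt{s}$ is present precisely because we are comparing the \emph{square} $w^2$, not $w$, and $(w^2)'' $ picks up the extra factor $2$ from the cross term). Consider $D(x) = w^2(x) - \phi(x)$ on $[y,\infty)$: at $x = y$, $D(y) = 0$; as $x \to \infty$, $D(x) \to 0$ (both terms vanish); and where $D > 0$ one has $t^2 D'' = t^2(w^2)'' - t^2 \phi'' = 2t^2(w')^2 + 2f_E w^2 - s\phi \geq 2 f_E w^2 - s \phi \geq 2s w^2 - s \phi \geq s(2w^2 - \phi) > s D \geq 0$. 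So $D$ is subharmonic-type where positive, vanishes at both ends of $[y,\infty)$ — hence by the maximum principle $D \leq 0$ throughout, which is exactly \refeq{Exp+}. I would take a little care that $w$ need not be sign-definite; but $w^2$ is, and all the above inequalities only use $w^2 \geq 0$ together with $(w^2)'' = 2(w')^2 + 2 w w'' \geq 2 w w'' = (2/t^2) f_E w^2$, so the sign of $w$ is irrelevant.

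The main obstacle I anticipate is the behavior at infinity: to invoke the maximum principle on the unbounded interval $[y,\infty)$ I need to know $\liminf_{x\to\infty} w^2(x) = 0$ (or at least that $w^2$ does not blow up), and for an eigenfunction this should follow from $w \in L^2((0,\infty), \sigma\,dx)$ together with monotonicity of $\sigma$ bounded below by the convexity of $w^2$ shown above — a monotone nonnegative convex function that is integrable against a positive weight near infinity must tend to $0$. Alternatively, one truncates to $[y, M]$ using a cutoff as in the proof of Lemma \ref{ExponentialEstimate}, derives the inequality $w^2(x) \leq \phi(x) + (\text{boundary term at } M)$, and lets $M \to \infty$. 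Either route is routine; everything else is the convexity observation plus a one-line comparison. I would present the convexity step first, then the comparison, then the limit $M\to\infty$.
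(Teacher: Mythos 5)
Your proposal matches the paper's proof: both multiply the eigenvalue ODE by $w$, pass to $(w^2)'' \geq 2w''w$ to get $t^2 (w^2)'' \geq 2s\, w^2$ on $[x_E^s,\infty)$, observe that convexity plus integrability forces $w^2(x)\to 0$, and then compare $w^2$ against $w^2(y)\exp\bigl(-\sqrt{2s}(x-y)/t\bigr)$ via the maximum principle on $[y,\infty)$. The aside about $h = t^2(w')^2 - f_E w^2$ is unused, and there is a small arithmetic slip ($t^2\phi'' = 2s\phi$, not $s\phi$), but neither affects the argument, which is essentially identical to the paper's.
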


\begin{proof}
Since $r=0$, multiplying equation (\ref{ODE}) by $w$ gives
\[
    t^2 \cdot w'' \cdot w~ =~  f_E \cdot w^2. 
\]
Since $(w^2)'' \geq 2 w'' \cdot w$, we have
\[     t^2 \cdot (w^2)''~ \geq~  2f_E \cdot w^2.  \]
If $x \geq  x_E^s$, then $f_E(x) \geq s$ and thus
\[    (w^2)''~ \geq~  \frac{2s}{t^2} \cdot w^2.  \]
In particular, $w^2$ is convex on $[x_E^s, \infty)$,
and since $w^2$ is non-negative and integrable, 
$w^2(x)$ tends to $0$ as $x$ tends to $\infty$. 

Define $M_y: [x_E^s, \infty) \rightarrow \R$ by
\[
  M_y(x)~ =~ w^2(x)~ 
-~ w^2(y) \cdot \exp\left(-\frac{\sqrt{2s}}{t} \cdot (x-y)\right)~
 \] 
Then $M_y(y)=0$,  $\lim_{x \rightarrow \infty} M_y(x)=0$, and
\begin{equation} \label{MConvexity}
   M_y''(x)~ \geq~ \frac{s}{t^2} \cdot M_y(x).
\end{equation}
Since $s/t^2>0$, the maximum 
principle gives that $M_y(x) \leq 0$ for each $x \geq y$, and 
(\ref{Exp+}) follows. 
\end{proof}

This estimates allows us to prove the following.

\begin{lem} \label{L2Eigenfunction}
For each $\nu>0$,  there exists a function 
$\beta_\nu : (\mu/\sigma(0),\infty)\times (0,\mu)\rightarrow \R $  
such that if $w$ is an eigenfunction of $a_t^{\mu}$ 
with eigenvalue $\lambda \leq E,$ and $t \leq 1$, then
\[
  \int_{3 x_E^s}^{\infty} w^2(x)\cdot (1+x^\nu)~ dx~ 
     \leq~  \beta_\nu(E,s) \cdot t\cdot \int_{x_E^s}^{\infty} w^2(x)~ dx.          
\]
\end{lem}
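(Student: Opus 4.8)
The plan is to exploit the exponential decay estimate of Lemma~\ref{ExponentialDecay}, which controls $w^2(x)$ in terms of $w^2(y)$ for $x \geq y \geq x_E^s$, together with an averaging argument to convert that pointwise decay into an $L^2$-weighted bound. First I would fix $E$ and $s$, and for brevity write $d = d_E^s := x_E^s - x_E > 0$ and note that $d$ is bounded above and below by positive constants depending only on $E$ and $s$ (by the smoothness observed in Remark~\ref{RmkSmoothness} and the fact that $f_E$ is increasing with nonvanishing derivative bounded below on a neighbourhood). The strategy is: (i) show that there exists a point $y_0 \in [x_E^s, 3x_E^s]$ at which $w^2(y_0)$ is comparable to the average of $w^2$ over $[x_E^s, 3x_E^s]$, hence controlled by $(2x_E^s)^{-1}\int_{x_E^s}^\infty w^2$; (ii) apply Lemma~\ref{ExponentialDecay} with this $y_0$ to bound $w^2(x)$ for $x \geq 3x_E^s \geq y_0$ by $w^2(y_0)\exp(-\tfrac{\sqrt{2s}}{t}(x - y_0))$; (iii) integrate the weight $(1+x^\nu)$ against this exponential.

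For step (i), by the mean value property there is $y_0 \in [x_E^s, 3x_E^s]$ with $w^2(y_0) \leq \frac{1}{2x_E^s}\int_{x_E^s}^{3x_E^s} w^2(x)\,dx \leq \frac{1}{2x_E^s}\int_{x_E^s}^{\infty} w^2(x)\,dx$. For step (iii), since $y_0 \leq 3x_E^s \leq x$ we have $\exp(-\tfrac{\sqrt{2s}}{t}(x-y_0)) \leq \exp(-\tfrac{\sqrt{2s}}{t}(x - 3x_E^s))$, so
\[
  \int_{3x_E^s}^{\infty} w^2(x)(1+x^\nu)\,dx
  \;\leq\; w^2(y_0)\int_{3x_E^s}^{\infty}(1+x^\nu)\exp\!\left(-\tfrac{\sqrt{2s}}{t}(x-3x_E^s)\right)dx.
\]
Substituting $x = 3x_E^s + ty/\sqrt{2s}$ turns the integral into $\tfrac{t}{\sqrt{2s}}\int_0^\infty (1 + (3x_E^s + ty/\sqrt{2s})^\nu)e^{-y}\,dy$, and for $t \leq 1$ the polynomial factor is bounded by a constant times $1 + (x_E^s)^\nu + (1/\sqrt{2s})^\nu\Gamma(\nu+1)$ after expanding $(a+b)^\nu \leq 2^\nu(a^\nu + b^\nu)$ (for $\nu \geq 1$; for $0<\nu<1$ it is even easier since $(a+b)^\nu \leq a^\nu + b^\nu$). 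Collecting constants, the whole right-hand side is bounded by $C_\nu(E,s)\cdot t \cdot \tfrac{1}{2x_E^s}\int_{x_E^s}^\infty w^2$, and absorbing the harmless factor $\tfrac{1}{2x_E^s}$ (bounded since $x_E^s$ is bounded below on the relevant range) into a function $\beta_\nu(E,s)$ gives the claim.

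The one subtlety I anticipate is making sure that the constant genuinely depends only on $E$, $s$, and $\nu$ (and $\sigma$, $\mu$), uniformly — in particular that $x_E^s$ and $d_E^s$ stay bounded away from $0$ and $\infty$ as $E$ ranges over the stated domain $(\mu/\sigma(0),\infty)$. Near the left endpoint $E \to \mu/\sigma(0)$ one has $x_E \to 0$ but $x_E^s$ stays bounded below by the solution of $f_E(x)=s$, which is positive and continuous; as $E \to \infty$, $x_E^s$ grows but only polynomially (governed by the decay rate of $\sigma$), and since the statement allows $\beta_\nu$ to depend on $E$ pointwise this causes no real difficulty — the function $\beta_\nu$ need not be uniform in $E$, only finite for each $(E,s)$. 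So the only place requiring a little care is confirming $x_E^s > 0$ for all admissible $E$, which is immediate since $f_E(0) = \mu - E\sigma(0) < 0 < s$ forces $x_E^s > 0$. I do not expect any genuine obstacle here; the argument is a routine combination of the maximum-principle decay estimate with an averaging step, and the bulk of the work is bookkeeping the constants.
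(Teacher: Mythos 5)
Your proof is correct. The overall strategy is the same as the paper's---convert the $L^2$ mass on $[x_E^s, 3x_E^s]$ into a pointwise bound on $w^2$ near $3x_E^s$, then integrate the exponential decay of Lemma~\ref{ExponentialDecay} against the polynomial weight---but the two intermediate steps are carried out differently. Where you invoke the integral mean-value property to find a point $y_0 \in [x_E^s, 3x_E^s]$ with $w^2(y_0)$ at most the average, the paper swaps the roles of $x$ and $y$ in the decay estimate to obtain a \emph{reverse} inequality $w^2(x) \geq w^2(y)\exp\bigl(\tfrac{\sqrt{2s}}{t}(y-x)\bigr)$ on $[x_E^s,y]$, integrates it, and rearranges to get a pointwise bound at the specific point $y = 3x_E^s$. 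Your mean-value step is more elementary and avoids this rearrangement. For the polynomial weight, the paper uses $u^\nu \leq c_\nu e^u$ to absorb $x^\nu$ into a slower exponential $e^{\alpha x/2}$, ending up with $t^{\nu+1}$ and then invoking $t\le 1$ to reduce to $t$; you change variables and pull the factor $t/\sqrt{2s}$ out of the Jacobian directly, which produces $t$ immediately and keeps the bookkeeping cleaner. Both routes are sound; the only point worth stressing (which you already raise) is that $\beta_\nu$ is allowed to depend on $E$ and $s$ pointwise, so no uniformity beyond $x_E^s>0$ is needed, and $x_E^s > 0$ follows from $f_E(0) = \mu - E\sigma(0) < 0 < s$.
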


\begin{proof}
Let $\alpha= \sqrt{2s}/t$. By exchanging the roles of $x$ and $y$ in 
\refeq{Exp+}, we find that for all $x \in [x_E^s, y]$ 
\begin{equation} \label{Exp-}
   w^2(x)~ \geq~ w^2(y) \cdot {\rm exp} \left(\alpha \cdot (y-x)\right).
\end{equation}
Integrating with respect to $x,$ we obtain
\[   \int_{x_E^s}^{y} w^2(x)~ dx~ \geq~ 
  \frac{1}{\alpha} \cdot w^2(y) \cdot \left({\rm exp}(\alpha \cdot(y-x_E^s)-1) \right)
\]
and thus
\begin{equation} \label{wUpper}
  w^2(y)~ \leq~  \frac{\alpha}{{\rm exp}(\alpha \cdot(y-x_E^s))-1} \cdot 
   \int_{x_E^s}^{y} w^2(x)~ dx. 
\end{equation}

If $u \geq 0$, then $u^\nu \leq c_\nu \cdot e^u$, where $c_\nu=\sup\{ x^\nu e^{-u} ~|~u>0 \}.$ Hence, 
we have  
\[ x^{\nu}~  \leq~ c_\nu \cdot \left(\frac{2}{\alpha} \right)^\nu \cdot e^{\alpha \cdot x/2} \] 
By combining this with (\ref{Exp+}), we find that for $x \geq y$ 
\[   w^2(x)\cdot x^{\nu}~ 
   \leq~ c_\nu \cdot \left( \frac{2}{\alpha} \right)^\nu \cdot 
   w^2(y) \cdot {\rm exp} \left(-\alpha \cdot \left( \frac{x}{2}~ - y \right) \right)
\]
By integrating, we find that 
\[   \int_y^{\infty}  w^2(x)\cdot x^{\nu}~ dx~ \leq~
  c_\nu \cdot  \left( \frac{2}{\alpha} \right)^{\nu+1}  
   \cdot  w^2(y) \cdot {\rm exp}( \alpha\cdot y/2).
\]
Putting this together with (\ref{wUpper}) gives
\begin{equation}  \label{wCombine}
  \int_y^{\infty}  w^2(x)\cdot x^{\nu}~ dx~ \leq~
   2 \cdot c_\nu \cdot \left( \frac{2}{\alpha} \right)^\nu
 \cdot \left( 
  \frac{ {\rm exp}(\alpha\cdot y/2)}{{\rm exp}(\alpha \cdot y- \alpha \cdot x_E^s))-1}
  \right)
 \int_{x_E^s}^{y}  w^2(x)~ dx.
\end{equation}

If we let 
\[  c_{\nu}'~ =~ 
 \sup \left\{ \left. x \cdot \frac{\exp(3x/2)}{\exp(2x)-1}~ \right|~ x>0 \right\}
\]
and set $y=3 \cdot x_E^s$, then we have
\[ \frac{ {\rm exp}(\alpha\cdot y/2)}{{\rm exp}(\alpha \cdot y- \alpha \cdot x_E^s))-1}~
  \leq~   \frac{c_{\nu}'} {\alpha\cdot x_E^s}.
\]
By substituting this into (\ref{wCombine}) we obtain 
\begin{equation} \label{preFinal}
  \int_{3 x_E^s}^{\infty} w^2(x)\cdot x^{\nu}~ dx~ 
     \leq~  \frac{2 c_{\nu} \cdot c_{\nu}'}{x_E^s} \cdot t^{\nu+1} \cdot s^{-\frac{\nu+1}{2}}
      \int_{x_E^s}^{\infty} w^2(x)~ dx.          
\end{equation}
The claim then follows by specializing (\ref{preFinal}) to the case $\nu=0$
and adding the resulting estimate to (\ref{preFinal}). 
In particular, we may define
\[  \beta_{\nu}(E,s)~ =~  2 \cdot
 \frac{c_{0} \cdot c_{0}'~ +~ c_{\nu} \cdot c_{\nu}'}{x_E^s \cdot s^{\frac{\nu+1}{2}}}.
\] 
\end{proof}

\subsection{Comparing weighted $L^2$ inner products on eigenfunctions}

Let $p: [0, \infty) \rightarrow \R$ be a positive continuous 
function of (at most) polynomial growth. That is, there 
exist constants $C_p$ and $\nu_p$ such that if $x \geq 0$, then 
\[  0\,<\,p(x)\,\leq\, C_p \cdot \left(1~ +~ x^{\nu_p}\right). \]
We will regard $p$ as a weight for an $L^2$-inner product.

\begin{prop} \label{BiLinEst}
Let $p$ be as above.
There exists a function $\alpha: [\mu/\sigma(0), \infty) \times (0,\mu) \rightarrow \R$
such that if $s \in (0, \mu)$, then 
\begin{equation} \label{alphasmall}
   \lim_{E \rightarrow \mu/\sigma(0)}~ \alpha(E,s)~ =~ 0 
\end{equation}
and a function $\beta:(\mu/\sigma(0), \infty) \times (0,\mu) \rightarrow \R$ such that   
if $w_{\pm}$ is an eigenfunction of $a_t^{\mu}$ with eigenvalue $\lambda_{\pm} \leq E$, 
then  
\begin{equation*}
  \left| \int_0^{\infty} w_+ \cdot w_- \cdot p~ dx~ 
   -~ p(0)  \int_0^{\infty} w_+ \cdot w_-~ dx 
 \right|~ 
 \leq~  \left( \alpha(E,s)~ +~ \beta(E,s) \cdot t \right)   
  \int_0^{\infty} w^2~ dx.
 \end{equation*}
The functions $\alpha$ and $\beta$ depend only on $p$, $E$, $\sigma$, and $\mu$. 
\end{prop}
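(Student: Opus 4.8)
The plan is to split the integral $\int_0^\infty w_+ w_- (p - p(0))\,dx$ at the point $3x_E^s$ and estimate the two pieces separately. On the bounded interval $[0, 3x_E^s]$ the weight $p(x) - p(0)$ is small when $E$ is close to the threshold $\mu/\sigma(0)$, because in that regime $x_E \to 0$ (by Remark \ref{RmkSmoothness}, $x_E$ depends smoothly on $E$ and $x_{\mu/\sigma(0)} = 0$), hence $x_E^s \to 0$ as well, so by continuity of $p$ the supremum of $|p(x) - p(0)|$ over $[0, 3x_E^s]$ tends to $0$. This supremum will be the function $\alpha(E,s)$. On this interval one then bounds $\left| \int_0^{3x_E^s} w_+ w_- (p - p(0))\,dx \right| \leq \alpha(E,s) \int_0^\infty |w_+ w_-|\,dx$, and Cauchy--Schwarz turns the right-hand side into $\alpha(E,s)$ times $\left(\int w_+^2\right)^{1/2}\left(\int w_-^2\right)^{1/2}$, which is at most $\alpha(E,s) \int w^2\,dx$ under the convention (implicit in the statement) that $\int w^2\,dx$ denotes a common bound such as $\max\{\int w_+^2, \int w_-^2\}$, or after polarizing the estimate.

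For the tail $[3x_E^s, \infty)$ the weight can be large, but here the eigenfunctions are exponentially small, and this is exactly what Lemma \ref{L2Eigenfunction} quantifies: applying it with the polynomial $\nu = \nu_p$ (the growth exponent of $p$) gives
\[
   \int_{3x_E^s}^{\infty} w_\pm^2(x)\,(1 + x^{\nu_p})\,dx~ \leq~ \beta_{\nu_p}(E,s) \cdot t \int_{x_E^s}^{\infty} w_\pm^2\,dx~ \leq~ \beta_{\nu_p}(E,s)\cdot t \int_0^\infty w_\pm^2\,dx.
\]
Since $|p(x) - p(0)| \leq p(x) + p(0) \leq (C_p + p(0))(1 + x^{\nu_p})$, Cauchy--Schwarz in the weighted space $L^2((1+x^{\nu_p})\,dx)$ on $[3x_E^s,\infty)$ bounds $\left| \int_{3x_E^s}^\infty w_+ w_- (p - p(0))\,dx\right|$ by $(C_p + p(0))$ times the product of the two square roots above, i.e.\ by a constant times $\beta_{\nu_p}(E,s)\cdot t \int_0^\infty w^2\,dx$. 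Absorbing the constant $C_p + p(0)$ defines $\beta(E,s)$.

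Adding the two pieces and invoking the triangle inequality yields the claimed estimate, with $\alpha(E,s) = \sup_{[0,3x_E^s]} |p - p(0)|$ satisfying \eqref{alphasmall} and $\beta(E,s) = (C_p + p(0))\cdot \beta_{\nu_p}(E,s)$. The main obstacle I anticipate is purely bookkeeping: matching the asymmetry between $w_+$ and $w_-$ (different eigenvalues $\lambda_\pm \leq E$, but Lemma \ref{L2Eigenfunction} and Lemma \ref{ExponentialDecay} are stated per eigenfunction) to the single symbol $\int w^2\,dx$ on the right-hand side, and checking that $3x_E^s$ is indeed a legitimate cutoff simultaneously for $w_+$ and $w_-$ — this works because $x_E^s$ depends only on $E$, not on the individual eigenvalue, so the same point serves both. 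One must also confirm $t \leq 1$ so that the hypothesis of Lemma \ref{L2Eigenfunction} applies; this is harmless since we only care about small $t$.
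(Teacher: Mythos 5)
Your proposal is correct and follows essentially the same route as the paper: split at $3x_E^s$, control the near region by continuity of $p$ (giving $\alpha$) and Cauchy--Schwarz, control the tail via Lemma \ref{L2Eigenfunction} and the polynomial bound on $p$ (giving $\beta$). The only cosmetic difference is that the paper bounds $\int_{3x_E^s}^\infty w_+w_- p$ and $p(0)\int_{3x_E^s}^\infty w_+w_-$ separately (using $\beta_{\nu_p}$ and $\beta_0$ respectively) rather than folding $p-p(0)$ into a single weight; and the asymmetry between $\|w_+\|\|w_-\|$ and the symbol $\int w^2$ that you flag is indeed a sloppiness already present in the paper's statement, not something your argument introduces.
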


\begin{proof}
Set
\[ \alpha(E,s)~ =~ \sup \left\{ |p(x)-p(0)|~ |~  0 \leq x \leq 3x_E^s \right\}. \]
Since $p$ is continuous and $\lim_{E \rightarrow \mu/\sigma(0)}  x_E^s=0$
we have (\ref{alphasmall}).
Using the Cauchy-Schwarz inequality we find that
\[ \left| \int_0^{3x_E^s} w_+ \cdot w_- \cdot p~ dx~ 
   -~ p(0)  \int_0^{3x_E^s} w_+ \cdot w_-~ dx \right|~ 
  \leq~ \alpha(E,s) \cdot \|w_+\| \cdot  \|w_-\|. 
\]

We also have 
\[ \left(\int_{3x_E^s}^{\infty} w_+ \cdot w_- \cdot p~ dx\right)^2~
  \leq~ \left(\int_{3x_E^s}^{\infty} |w_+|^2 \cdot p~ dx \right) \cdot
  \left( \int_{3x_E^s}^{\infty} |w_-|^2 \cdot p~ dx \right). 
\] 
By Lemma \ref{L2Eigenfunction} we have 
\[ \int_{3x_E^s}^{\infty} |w_{\pm}|^2 \cdot p~ dx~ \leq~
   C_p \cdot \beta_{\nu_p}(E,s) \cdot  t   \int_{0}^{\infty} |w_{\pm}|^2~ dx~
\]
and also 
\[ p(0) \int_{3x_E^s}^{\infty} |w_{\pm}|^2 ~ dx~ \leq~
   p(0) \cdot \beta_0(E,s) \cdot  t   \int_{0}^{\infty} |w_{\pm}|^2~ dx~
\]
The claim then follows from combining these estimates and using
the triangle inequality.  
\end{proof}


\section{The Langer-Cherry transform} \label{LGSection}

We wish to analyse the behavior of the solutions to (\ref{ODE})
for $x$ near $x_E$ and for $t$ small. To do this, we will use a
transform to put the solution into a normal form. The transform that 
we will use was first considered by Langer \cite{Langer} and Cherry 
\cite{Cherry} and is a variant of the Liouville-Green transformation. 
See Chapter 11 in \cite{Olver}. 

As above, let $f_E= \mu - E \cdot \sigma$ where $\sigma$ is smooth 
with $\sigma'<0$ and $\lim_{x \rightarrow \infty} \sigma(x)=0$.
For $E \geq \mu/\sigma(0)$, there exists unique $x_E \in [0, \infty)$
such that $f_E(x_E)=0$.  In the present context, the Langer-Cherry 
transform is based on the function $\phi_E:[0, \infty) \rightarrow \R$
defined by 
\begin{equation}  \label{phiE}
  \phi_E(x) ~ =~\mbox{sign}(x-x_E) \cdot
\left| \frac{3}{2}~ \int_{x_E}^{x} |f_E(u)|^{\frac{1}{2}}~ du\right|^{\frac{2}{3}}.
\end{equation}

Before defining the Langer-Cherry transform, we 
collect some facts concerning $\phi_E$. 

\begin{lem}   \label{phiproperties}
Let $\Ucal= \left[\frac{\mu}{\sigma(0)},\infty \right)\times [0,\infty)$.
\begin{enumerate}
  \item  \label{phik} 
  The map $(E,x) \mapsto \phi_E(x)$ is smooth on $\Ucal$. 

  \item \label{phipos} 
 $\phi'_E(x)>0$ for each $(E, x) \in \Ucal$.
  \item \label{phiidentity}
   $ (\phi_E')^2 \cdot \phi_E= f_E$.
  \item \label{phif}  
    The map $(E,x)\mapsto f_E(x)/\phi_E(x)$ defined for $x\neq x_E$ extends to a smooth 
     map from $\Ucal$ to $\R^+$.   
  \item \label{phiasymptotics} 
   The limit
 \[ \lim_{x \rightarrow \infty}~ x^{-\frac{2}{3}} \cdot \phi_E(x)~ 
   =~ (3/2)^{\frac{2}{3}} \cdot \mu^\unt \]
  holds uniformly for $E$ in each compact subset of 
   $\left[\frac{\mu}{\sigma(0)},\infty \right)$.
 \end{enumerate}
\end{lem}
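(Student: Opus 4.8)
The plan is to establish the five assertions essentially in the order listed, since each builds on the preceding ones, with the crucial preliminary observation being that $f_E$, though it vanishes at $x_E$, vanishes to exactly first order because $f_E' = -E\sigma' > 0$ on $\Ucal$ (using $E \geq \mu/\sigma(0) > 0$ and $\sigma' < 0$). First, for \eqref{phik}, I would rewrite \eqref{phiE} in a form that manifestly avoids the absolute values and the sign: since $f_E/(x-x_E)$ extends to a smooth, strictly positive function $g_E(x)$ on $\Ucal$ (by Taylor's theorem with remainder applied to $u \mapsto f_E(u)$, or by writing $g_E(x) = \int_0^1 f_E'(x_E + \tau(x-x_E))\,d\tau$ and noting that $(E,x)\mapsto x_E$ is smooth by Remark \ref{RmkSmoothness} and the implicit function theorem), one has
\[
\phi_E(x) = (x-x_E)\cdot\left(\frac{3}{2}\int_0^1 \big(\tau\, g_E(x_E+\tau(x-x_E))\big)^{1/2}\,\tau^{-1/2}\,d\tau\right)^{2/3}\cdot(\text{const}),
\]
or more simply $\phi_E(x) = (x - x_E)\, h_E(x)$ where $h_E$ is smooth and positive; the substitution $u = x_E + \tau(x-x_E)$ in \eqref{phiE} turns the singular integrand into something smooth in all parameters, and a positive quantity raised to the $2/3$ power stays smooth. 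This simultaneously gives \eqref{phik} and the positivity of $h_E$, which is the content needed for \eqref{phif} once we also have \eqref{phiidentity}.

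Next, \eqref{phiidentity} is a direct computation: differentiating \eqref{phiE} on each of the two intervals $x < x_E$ and $x > x_E$ using the fundamental theorem of calculus gives $\phi_E' = |f_E|^{1/2}\,|\phi_E|^{-1/2}$, hence $(\phi_E')^2 = |f_E|/|\phi_E|$, and checking signs (both $\phi_E$ and $f_E$ have the sign of $x - x_E$) yields $(\phi_E')^2\,\phi_E = f_E$ away from $x_E$; by continuity and \eqref{phik} it holds everywhere on $\Ucal$. Assertion \eqref{phipos} then follows because $(\phi_E')^2 = f_E/\phi_E = g_E \cdot (x - x_E)/\phi_E = g_E/h_E > 0$, so $\phi_E'$ never vanishes, and it is positive at, say, large $x$ (where $\phi_E$ is increasing by \eqref{phiE}), hence positive throughout. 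For \eqref{phif}, write $f_E/\phi_E = (\phi_E')^2$ by \eqref{phiidentity}, which is smooth and strictly positive by \eqref{phik} and \eqref{phipos}.

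Finally, for \eqref{phiasymptotics} I would use that $f_E(u) \to \mu$ as $u \to \infty$, uniformly for $E$ in a compact subset $K$ of $[\mu/\sigma(0),\infty)$ (since $\sigma(u) \to 0$). Thus $\frac{3}{2}\int_{x_E}^x |f_E|^{1/2}\,du = \frac{3}{2}\mu^{1/2} x + o(x)$ as $x \to \infty$, uniformly on $K$ — one splits the integral at a large fixed $R$, bounds the tail using $|f_E(u)^{1/2} - \mu^{1/2}| < \eps$ for $u > R$, and absorbs the bounded piece on $[x_E, R]$ into the $o(x)$ term, noting $x_E$ stays in a compact set for $E \in K$. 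Raising to the $2/3$ power and dividing by $x^{2/3}$ gives the stated limit $(3/2)^{2/3}\mu^{1/3}$ uniformly on $K$. The main obstacle is really just the careful bookkeeping in the first step — verifying that the sign and absolute-value gymnastics in \eqref{phiE} genuinely produce a jointly smooth function across the turning point $x_E$ and that $x_E$ itself depends smoothly on $E$ — after which \eqref{phiidentity}--\eqref{phif} are essentially formal and \eqref{phiasymptotics} is a routine uniform-asymptotics argument.
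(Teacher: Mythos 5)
Your proposal follows the paper's argument essentially verbatim: the paper likewise factors $\phi_E(x) = (x-x_E)\bigl(\tfrac{3}{2}\pi(E,x)\bigr)^{2/3}$ by substituting $u = x_E + s(x-x_E)$ in the integral, deduces smoothness, and then obtains the remaining four assertions by direct calculation and passage to the limit at $x_E$. The only blemish is a small slip in your displayed formula, where the integrand should read $\tau^{1/2}\,g_E\bigl(x_E + \tau(x - x_E)\bigr)^{1/2}$ rather than $\bigl(\tau\,g_E(\cdot)\bigr)^{1/2}\tau^{-1/2}$, but as you note, only the structure $\phi_E = (x-x_E)\,h_E$ with $h_E$ smooth and positive is needed.
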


\begin{proof} 
We will use an alternative expression for $\phi_E$ from which 
several of these properties will directly follow.
Since $\sigma'(x)<0$ for all $x \in [\mu/\sigma(0), \infty)$, the map
$I: \Ucal \rightarrow \R$  
\[  I(E,u)~ =~
 \int_{0}^1 -E \cdot \sigma' \left(E,~ s\cdot u~ +~ (1-s) \cdot x_E \right)~ ds.
\]
is smooth and positive on $\Ucal$. The  map 
$\pi: \Ucal \rightarrow \R$ defined by
\begin{equation}\label{Defp}
\pi(E,x)~ =~  \int_{0}^{1} s^{\und} \cdot 
      I^{\und} \left(E,~ s\cdot x~ +~ (1-s) \cdot x_E \right)~ ds.
\end{equation} 
is also smooth and positive.

Note that since $f_E(x_E)=0$ and $f_E'(x)= -E \sigma'(x),$ 
the fundamental theorem of calculus gives that
\[
\mu~ -~ E \cdot \sigma(u)~ =~ (u-x_E) \cdot I(E,u).
\]
Direct computation shows that 
\begin{equation}  \label{Alternative}
 \phi_E(x)~ =~ (x-x_E) \cdot \left( \td \cdot \pi(E,x) \right)^{\dt}. 
\end{equation}
In particular, (\ref{phik}) holds.
 
Inspection of (\ref{phiE}) gives that (\ref{phipos}) holds for $x \neq x_E$.
For $x=x_E$, (\ref{phiE}) follows from a calculation using (\ref{Alternative}). 

 (\ref{phiidentity}) holds for $x \neq x_E$
via a direct calculation using (\ref{phiE}). Smoothness
gives the extension of this identity to $\Ucal$.

Note that $(E, \mu) \rightarrow f_E(x)$ and $(x,E) \rightarrow x_E$ are smooth on $\Ucal$.
Since $f_E(x_E)=0$ and $f_E'(x) >0$ for all $x \geq 0$,
the function $(E, x) \rightarrow f_E(x)/(x-x_E)$ is smooth,
and thus (\ref{phif}) follows from (\ref{Alternative}) and the positivity
of $p$. 

(\ref{phiasymptotics}) follows from (\ref{phiE}).
\end{proof}

\begin{defn}
Let $w: [0, \infty) \rightarrow \R$ and let $E \geq \mu/\sigma(0)$.  
Define the {\em Langer-Cherry transform of $w$ at energy $E$} 
to be the function 
\begin{equation}  \label{LCDefn}
 W_E =~ \left( (\phi_E')^{\frac{1}{2}}\cdot w \right )\circ \phi^{-1}_E. 
\end{equation}
\end{defn}

It follows from Lemma \ref{phiproperties} that the Langer-Cherry transform maps 
$C^{k}([0, \infty))$ to $C^{k}([\phi_E(0), \infty))$.
The importance of this transform is due to its effect on solutions 
to the ordinary differential equation (\ref{ODE}). In what follows
we let 
\begin{equation} \label{hDefn}
 \rho_E= (\phi_E')^{-\frac{1}{2}}. 
\end{equation}

\begin{prop} \label{wW}
Let $r: [0, \infty) \rightarrow \R$ and let  $w \in C^2([0,\infty))$.
Let $W_E$ be the Langer-Cherry transform of $w$ at energy $E$. 
Then $w$ satisfies
\[ t^2 \cdot w''~ -~  f_E\cdot w~ =~ -r \cdot \sigma \]
if and only if $W_E$ satisfies 
\begin{equation}\label{WODE}
 t^2 \cdot W_E''~ -~ y \cdot W_E~ 
   =~  -~ t^2 \cdot (\rho_E^3 \cdot \rho_E'') \circ \phi_E^{-1} \cdot W_E~ 
   - \left(\rho_E^3 \cdot r\cdot \sigma \right) \circ \phi_E^{-1}.
\end{equation}
\end{prop}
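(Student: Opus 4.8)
The plan is to verify the identity (\ref{WODE}) by a direct change of variables, using the algebraic facts about $\phi_E$ recorded in Lemma \ref{phiproperties}. First I would set $y = \phi_E(x)$, so that $x = \phi_E^{-1}(y)$, and write the transformed function as $W_E(y) = \rho_E^{-1}(x) \cdot w(x)$, where $\rho_E = (\phi_E')^{-1/2}$ as in (\ref{hDefn}); equivalently $w = \rho_E \cdot (W_E \circ \phi_E)$. The core computation is to express $w''$ in terms of derivatives of $W_E$. Differentiating $w = \rho_E \cdot (W_E \circ \phi_E)$ twice with respect to $x$ and using the chain rule gives a combination of $W_E \circ \phi_E$, $W_E' \circ \phi_E$, and $W_E'' \circ \phi_E$ with coefficients built from $\rho_E$, $\rho_E'$, $\rho_E''$, $\phi_E'$, and $\phi_E''$.

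The key simplification is that the first-derivative term $W_E' \circ \phi_E$ drops out. This is exactly the point of the factor $(\phi_E')^{1/2}$ in the definition of the Langer-Cherry transform: the coefficient of $W_E' \circ \phi_E$ in $w''$ is a multiple of $2\rho_E' \phi_E' + \rho_E \phi_E''$, and since $\rho_E = (\phi_E')^{-1/2}$ one computes $\rho_E' = -\tfrac12 (\phi_E')^{-3/2} \phi_E''$, so $2\rho_E'\phi_E' + \rho_E\phi_E'' = -(\phi_E')^{-1/2}\phi_E'' + (\phi_E')^{-1/2}\phi_E'' = 0$. After this cancellation, $w''$ equals $\rho_E (\phi_E')^2 \cdot (W_E'' \circ \phi_E) + \rho_E'' \cdot (W_E \circ \phi_E)$. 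Substituting into $t^2 w'' - f_E w = -r\sigma$, dividing through by $\rho_E$, and replacing $x$ by $\phi_E^{-1}(y)$ throughout, the left-hand side becomes $t^2 (\phi_E')^2 W_E'' - (f_E/(\phi_E')^2) W_E$ evaluated appropriately; but here I would invoke the identity (\ref{phiidentity}), namely $(\phi_E')^2 \phi_E = f_E$, which gives $f_E/(\phi_E')^2 = \phi_E$, so that after composing with $\phi_E^{-1}$ the potential term is exactly $y \cdot W_E(y)$.

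It remains to deal with the factor $(\phi_E')^2$ multiplying $t^2 W_E''$: I would rewrite $(\phi_E')^2 = \phi_E / \phi_E \cdot (\phi_E')^2 = \dots$, or more directly observe that the equation currently reads $t^2 (\phi_E'\circ\phi_E^{-1})^2 W_E'' - y W_E = -t^2 (\rho_E'' / \rho_E)\circ\phi_E^{-1}\, W_E - (\rho_E^2 r \sigma)\circ\phi_E^{-1}$, and then divide by $(\phi_E' \circ \phi_E^{-1})^2 = (\rho_E^{-4})\circ\phi_E^{-1}$. Dividing the $W_E''$ term gives $t^2 W_E''$; dividing the $y W_E$ term would spoil it, so instead I would move the $yW_E$ term to the right first, or — cleaner — note that the stated form (\ref{WODE}) already has $y W_E$ on the left with coefficient one and $-t^2(\rho_E^3\rho_E'')\circ\phi_E^{-1} W_E$ on the right, which matches after multiplying the intermediate equation by $\rho_E^4 \circ \phi_E^{-1}$ throughout; one then checks $\rho_E^4 \cdot \rho_E''/\rho_E = \rho_E^3 \rho_E''$ and $\rho_E^4 \cdot \rho_E^2 \cdot (\text{coefficient bookkeeping})$ collapses the $r\sigma$ term to $(\rho_E^3 r \sigma)\circ\phi_E^{-1}$ using $(\phi_E')^2 = \rho_E^{-4}$. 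The "if and only if" direction is immediate since $\phi_E$ is a smooth diffeomorphism of $[0,\infty)$ onto $[\phi_E(0),\infty)$ with smooth inverse by parts (\ref{phik}) and (\ref{phipos}) of Lemma \ref{phiproperties}, so every step is reversible.

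The main obstacle, such as it is, is purely bookkeeping: tracking the weight powers of $\rho_E$ and $\phi_E'$ correctly through the chain rule and the final normalization, and being careful that the evaluations at $x$ versus at $\phi_E^{-1}(y)$ are handled consistently (all coefficient functions originally in $x$ must be precomposed with $\phi_E^{-1}$). There is no analytic difficulty — the regularity needed ($w \in C^2$, $\phi_E$ smooth, $\rho_E$ smooth and positive) is supplied by Lemma \ref{phiproperties} — and the one genuinely structural fact, the vanishing of the $W_E'$ coefficient together with the potential becoming exactly $y$, is forced by (\ref{phiidentity}) and the choice $\rho_E = (\phi_E')^{-1/2}$.
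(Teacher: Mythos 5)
Your approach is correct and is essentially the paper's: write $w = \rho_E\cdot(W_E\circ\phi_E)$, differentiate twice so that the coefficient of $W_E'\circ\phi_E$ vanishes by the choice $\rho_E = (\phi_E')^{-1/2}$, substitute into the ODE using $f_E = (\phi_E')^2\phi_E$, normalize by multiplying through by $\rho_E^3$, and change variables $y = \phi_E(x)$; your two-step normalization (divide by $\rho_E$, then by $(\phi_E')^2 = \rho_E^{-4}$) is the same as the paper's single multiplication by $\rho_E^3$. One small wobble in your narrative: after dividing by $\rho_E$ alone the potential coefficient is still $f_E$ (not $f_E/(\phi_E')^2$), so the subsequent division by $(\phi_E')^2$ is exactly what produces $\phi_E\, W_E$ (hence $y\,W_E$ after composing with $\phi_E^{-1}$) rather than spoiling it — your final bookkeeping is nonetheless correct.
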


The proof is a straightforward but lengthy computation.
We include it for the convenience of the reader.\footnote{
  See also, for example, \S 11.3 in \cite{Olver}, where the function $\hat{f}$ is 
               related to $h_E$ by $\hat{f}\,=\,h^4$.}

\begin{proof}
We will suppress the dependence on $E$ from the notation.
Let $h= 1/\rho$. Then from (\ref{LCDefn}) we have 
\begin{equation}  \label{hw}
  W \circ \phi~ = h \cdot w. 
\end{equation}
Differentiation gives
\begin{gather*}
  (W'\circ \phi)\cdot \phi ' \,= \, h'\cdot w+h\cdot w'\\
  (W''\circ \phi)\cdot (\phi')^2+ (W'\circ \phi)\cdot \phi ''
  \,=\, h''\cdot w +2h'\cdot w' + h\cdot w''.
\end{gather*}
Solve the first equation for $W'\circ \phi$ and substitute the result for $W'$ 
in the second equation. One obtains
\begin{equation} \label{AfterSubstitution}
(W''\circ \phi)\cdot (\phi')^2 +\frac{\phi ''}{\phi'}\left( h' \cdot w + h \cdot w'\right )\,
=\, h''\cdot w +2h'\cdot w' + h\cdot w''.
\end{equation}
Since $h= (\phi_E')^{\frac{1}{2}}$, we have
\begin{equation}\label{defh}
\frac{\phi ''}{\phi'} \cdot h \,=\, 2 h',
\end{equation}
and thus, (\ref{AfterSubstitution}) simplifies to
\[
(W''\circ \phi) \cdot (\phi')^2\,+\,\frac{\phi''}{\phi'}\cdot h'\cdot w \,
=\, h''\cdot w\,+\,h\cdot w''.
\]
Using (\ref{defh}) and (\ref{hw}), we obtain 
\begin{equation}\label{wsecond}
(W''\circ \phi)\cdot (\phi')^2~ 
   +~ \left[- \frac{h''}{h}~ +~ 2 \cdot \frac{(h')^2}{h^2}\right] \cdot W\circ \phi~  
   =~ h\cdot w''. 
\end{equation} 
Since $(\phi')^2 = \rho^{-4}$, $h= 1/\rho$,  and 
\[  \rho''~ =~  \frac{-h''}{h^2}~ +~ 2 \cdot \frac{(h')^2}{h^3} \]
we find that
\begin{equation}\label{wthird}
  \rho^{-3} \cdot (W''\circ \phi)~ 
   +~ \rho'' \cdot (W\circ \phi)~ =~
    w''.
\end{equation} 

We now subtitute the left hand side of (\ref{wthird}) into
the first term of (\ref{ODE}) to obtain
\[
  t^2\left( \rho^{-3} \cdot (W''\circ \phi)~ 
   +~ \rho'' \cdot (W\circ \phi) \right)~ -~
 f \cdot w~ =~  - r\cdot \sigma.
\]
By (\ref{LCDefn}), we have $w= \rho \cdot W \circ \phi$,
and by part (\ref{phiidentity}) of Lemma \ref{phiproperties}, 
we have $f= \rho^{-4} \cdot \phi$.
Thus, 
\[
  t^2\left( \rho^{-3} \cdot (W''\circ \phi)~ 
   +~ \rho'' \cdot (W\circ \phi) \right)~ -~
   \rho^{-3} \cdot \phi \cdot (W \circ \phi)~ =~  - r\cdot \sigma.
\]
By multiplying by $\rho^3$, we find that
\[  t^2\cdot (W''\circ \phi )~ 
-~\phi \cdot (W \circ \phi)~ =~  
  -~ t^2 \cdot \rho^3 \cdot \rho'' \cdot (W\circ \phi)~ 
   - \rho^3 \cdot r\cdot \sigma.
\]
The claim follows from making the change of variable $y = \phi(x)$. 
\end{proof}

In the analysis that follows, we will treat the right 
hand side of (\ref{WODE}) as an error term for $t$ and $r$ small. 
The following estimates will help justify this treatment.

\begin{lem} \label{rhoBounded}
Let $K \subset [\mu/\sigma(0), \infty)$ be compact.
There exists $C>0$ such that if $x \geq 0$ and $E \in K$, 
then 
\[  \left| \frac{1}{\rho_E(x)} \right|~ \leq~ C \cdot x^{-\frac{1}{6}}, \]
and 
\[  \left| \rho_E(x) \right|~ \leq~ C \cdot \left(1+ x^{\frac{1}{6}}\right). \]
Moreover, there exists $\nu$ such that  
\[  \left| \rho_E''(x)   \right|~ \leq~ C \cdot \left(1+ x^{\nu} \right). \]
The exponent $\nu$ depends only on $\sigma$. 
The constant $C$ depends only on $\mu$, $\sigma$, and $K$.
\end{lem}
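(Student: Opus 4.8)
The plan is to work entirely through the alternative expression \eqref{Alternative} for $\phi_E$, namely $\phi_E(x) = (x-x_E)\cdot(\tfrac{3}{2}\pi(E,x))^{2/3}$, together with the identity $\rho_E = (\phi_E')^{-1/2}$. First I would record the behavior of the ingredients $I(E,u)$ and $\pi(E,x)$ on the set $\Ucal = [\mu/\sigma(0),\infty)\times[0,\infty)$: since $\sigma'<0$, the function $I$ is smooth and bounded away from $0$ and $\infty$ on compact subsets of $E$, but as $x\to\infty$ one has $I(E,u)\to$ (something governed by $-E\sigma'$); more precisely, from $\mu - E\sigma(x) = (x-x_E)\cdot I(E,x)$ and $\sigma(x)\to 0$ we get $I(E,x)\sim \mu/(x-x_E)$, hence $(x-x_E)\cdot I(E,x)\to\mu$. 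Feeding this into the definition \eqref{Defp} of $\pi$ (an average of $s^{1/2}I^{1/2}$), a short computation gives the asymptotics $\phi_E(x)\sim (\tfrac32)^{2/3}\mu^{1/3}x^{2/3}$, consistent with part \eqref{phiasymptotics} of Lemma \ref{phiproperties}, with all estimates uniform for $E$ in a compact set $K$. Differentiating \eqref{Alternative} then yields $\phi_E'(x)\sim \tfrac23(\tfrac32)^{2/3}\mu^{1/3}x^{-1/3}$ as $x\to\infty$, while near $x=0$ (and more generally on bounded $x$) smoothness and positivity of $\phi_E'$ (part \eqref{phipos}) give $0 < c \le \phi_E'(x) \le C$ uniformly over $E\in K$. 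Combining the two regimes: $\phi_E'(x)$ is comparable to $(1+x)^{-1/3}$ up to constants depending only on $\mu,\sigma,K$.

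From $\phi_E'(x) \asymp (1+x)^{-1/3}$ the first two bounds are immediate: $1/\rho_E = (\phi_E')^{1/2} \asymp (1+x)^{-1/6} \le C x^{-1/6}$, and $\rho_E = (\phi_E')^{-1/2} \asymp (1+x)^{1/6} \le C(1+x^{1/6})$, with $C$ depending only on $\mu$, $\sigma$, and $K$. For the bound on $\rho_E''$ I would differentiate $\rho_E = (\phi_E')^{-1/2}$ twice, getting a rational expression in $\phi_E'$, $\phi_E''$, $\phi_E'''$:
\[
 \rho_E'' = \tfrac34 (\phi_E')^{-5/2}(\phi_E'')^2 - \tfrac12 (\phi_E')^{-3/2}\phi_E'''.
\]
So it suffices to control $\phi_E''$ and $\phi_E'''$. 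These come from differentiating \eqref{Alternative} twice more; the derivatives of $\pi(E,x)$ involve derivatives of $I(E,u)$, which in turn involve $\sigma''$ and $\sigma'''$. Here I would invoke the standing hypothesis that $|\sigma''|$ has at most polynomial growth (and note $\sigma',\sigma$ are bounded, $\sigma'<0$); differentiating the defining integral for $I$ shows $\partial_x I$ and $\partial_x^2 I$ have at most polynomial growth, hence so do $\phi_E''$ and $\phi_E'''$ after accounting for the $x^{-1/3}$-type decay of $\phi_E'$. Plugging polynomial bounds for $\phi_E''$, $\phi_E'''$ and the two-sided bound $\phi_E'\asymp (1+x)^{-1/3}$ into the displayed formula for $\rho_E''$ gives $|\rho_E''(x)| \le C(1+x^\nu)$ for some $\nu$ determined by the polynomial growth rate of $|\sigma''|$ — i.e.\ depending only on $\sigma$ — and $C$ depending only on $\mu,\sigma,K$.

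The routine but slightly delicate point is bookkeeping the two regimes $x$ bounded versus $x\to\infty$ uniformly in $E\in K$: near $x=0$ everything is handled by smoothness and compactness, while the large-$x$ estimates must be extracted from the asymptotic expansions of $\phi_E$ and its derivatives. The main obstacle I anticipate is not any single hard estimate but rather organizing the large-$x$ asymptotics of $\phi_E$, $\phi_E'$, $\phi_E''$, $\phi_E'''$ cleanly — in particular verifying that the lower bound $\phi_E'(x) \ge c(1+x)^{-1/3}$ (needed so that the negative powers of $\phi_E'$ in the $\rho_E''$ formula don't blow up) holds uniformly over the noncompact range of $x$ and over $E\in K$. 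This follows from the uniform convergence in part \eqref{phiasymptotics} of Lemma \ref{phiproperties} together with the fact that $\phi_E' > 0$ everywhere (part \eqref{phipos}) and is continuous in $(E,x)$, so it attains a positive minimum on any compact $E$-set crossed with a bounded $x$-interval; patching these with the asymptotic lower bound for large $x$ completes the argument.
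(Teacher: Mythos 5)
Your key identity $\rho_E = (\phi_E')^{-1/2}$ and the resulting formula
\[
  \rho_E'' = \tfrac34(\phi_E')^{-5/2}(\phi_E'')^2 - \tfrac12(\phi_E')^{-3/2}\phi_E'''
\]
are correct, and this is a genuinely different route from the paper. The paper instead uses $\rho_E = (\phi_E/f_E)^{1/4}$, which follows from $(\phi_E')^2\phi_E = f_E$, and computes $\rho_E''$ as a rational expression in $\phi_E$, $f_E$, $f_E'$, $f_E''$; the point of that choice is that $f_E'' = -E\sigma''$ is then the \emph{only} ingredient whose growth must be taken as a hypothesis, while $\phi_E$, $1/\phi_E$ (for large $x$), $f_E$, $1/f_E$, $f_E'$ are all controlled by (\ref{phiasymptotics}), $f_E\to\mu>0$, and integration of $\sigma''$. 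Your decomposition through $\phi_E'$ has the cosmetic advantage that $\phi_E'>0$ everywhere, so the negative powers in your $\rho_E''$ formula are manifestly nonsingular, whereas the paper's formula has apparent singularities at $x_E$ (where $\phi_E$ and $f_E$ both vanish) that must be argued away by smoothness on compacts.

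However, there is a real gap in the way you propose to bound $\phi_E''$ and $\phi_E'''$. You differentiate the representation $\phi_E = (x-x_E)\bigl(\tfrac32\pi(E,x)\bigr)^{2/3}$ from \eqref{Alternative}, and you note yourself that the $x$-derivatives of $\pi$ bring in $\sigma'''$ (and, tracing through to $\phi_E'''$, also $\sigma^{(4)}$). The hypotheses of \S\ref{aSection} only assume that $|\sigma''|$ has at most polynomial growth; there is no assumption at all on $\sigma'''$ or $\sigma^{(4)}$, and these could oscillate arbitrarily badly. So the step ``differentiating the defining integral for $I$ shows $\partial_x I$ and $\partial_x^2 I$ have at most polynomial growth'' is unjustified beyond the first derivative. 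The fix that keeps your decomposition intact is to bound $\phi_E''$ and $\phi_E'''$ not through $\pi$ but by differentiating the identity $(\phi_E')^2\phi_E = f_E$ of Lemma~\ref{phiproperties}\,(\ref{phiidentity}): this gives $\phi_E'' = \bigl(f_E' - (\phi_E')^3\bigr)/(2\phi_E'\phi_E)$ and a similar expression for $\phi_E'''$ involving only $f_E$, $f_E'$, $f_E''$, $\phi_E$, $\phi_E'$, so that only $\sigma''$ enters. With that substitution, and the two-regime bookkeeping you already describe (compactness for bounded $x$, asymptotics from (\ref{phiasymptotics}) for large $x$), the argument goes through.
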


\begin{proof}
By Lemma (\ref{phiidentity}) of Lemma \ref{phiproperties}, 
we have $\rho= (\phi/f)^{\frac{1}{4}}$. Hence since $\lim_{x \rightarrow \infty} f_E(x)=\mu$,
we find from part (\ref{phiasymptotics}) that 
\[ \lim_{x \rightarrow \infty} \rho_E \cdot x^{-\frac{1}{6}}~
   =~  \left(\frac{2}{3 \mu}\right)^{\frac{1}{6}}
\] 
uniformly for $E \in K$. The first two estimates follow. 

To prove the last estimate, one computes using $f=(\phi')^2 \cdot \phi$ that
\[ \rho''~ =~ -\frac{5}{16} \frac{f^{\frac{3}{4}}}{\phi^{\frac{11}{4}}}~ 
 -~ \frac{1}{4} \frac{\phi^{\frac{1}{4}}\cdot f''}{f^{\frac{5}{4}}}~ 
 +~ \frac{5}{16} \frac{\phi^{\frac{3}{4}}\cdot (f')^2}{f^{\frac{9}{4}}}~ 
\]
By  part (\ref{phiasymptotics}) of Lemma \ref{phiproperties}, 
both $\phi$ and $1/\phi$ have polynomial growth that is uniform for $E \in K$. 
By assumption, $\sigma''$ has at most polynomial growth,
and hence, by integration, the function $\sigma'$ also has at most polynomial growth. 
Therefore, $f''$ and $f'$ both have polynomial growth that is uniform over $K$. 
Therefore, since $\lim_{x \rightarrow \infty} f_E(x)=\mu>0$, we find that
$\rho''$ has uniform polynomial growth. 
\end{proof}

\begin{lem}  \label{GEstimate}
Let $I \subset [0, \infty)$ be a compact interval 
and let $K \subset [\mu/\sigma(0), \infty)$ be a compact set. 
There exists a constant $C$ such that for each $E \in K$ such that 
if $w$ is a solution to (\ref{ODE}) and and $W_E$ is
the Langer-Cherry transform of $w$ at energy $E$, then 
we have 
\[
   \int_{\phi_E(I)} \left|t^2 \cdot W_E''(y) -y \cdot W_E(y) \right|^2~ dy~
 \leq~  C \cdot \left(\|r\|_{\sigma}^2~ +~ t^4  \int_{0}^{\infty} |w|^2~ dx \right).   
\]
The constant $C$ depends only on $\mu$, $\sigma$, $I$, and $K$.
\end{lem}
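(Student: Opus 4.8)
The plan is to use Proposition~\ref{wW}, which computes the right-hand side of the transformed equation~(\ref{WODE}) exactly, and then to estimate the $L^2$-norm over $\phi_E(I)$ of each of the two terms that appear there. Writing $g_E = -t^2 \cdot (\rho_E^3 \cdot \rho_E'')\circ \phi_E^{-1}\cdot W_E - (\rho_E^3 \cdot r\cdot \sigma)\circ \phi_E^{-1}$, we have $t^2 W_E'' - y W_E = g_E$ pointwise, so it suffices to bound $\|g_E\|_{L^2(\phi_E(I))}^2$ by $C(\|r\|_\sigma^2 + t^4 \int |w|^2\,dx)$. By the triangle inequality in $L^2(\phi_E(I))$ it is enough to treat the two summands separately.

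For the first summand, change variables back via $y = \phi_E(x)$, so that $dy = \phi_E'(x)\,dx$ and, recalling $\rho_E = (\phi_E')^{-1/2}$ and $W_E\circ\phi_E = (\phi_E')^{1/2} w$,
\[
 \int_{\phi_E(I)} \left| t^2 (\rho_E^3 \rho_E'')\circ\phi_E^{-1}\cdot W_E \right|^2 dy
 = t^4 \int_I \rho_E^6 (x)\, (\rho_E''(x))^2\, \phi_E'(x)\, w^2(x)\cdot \phi_E'(x)\, dx,
\]
where I have used $W_E\circ\phi_E = (\phi_E')^{1/2} w = \rho_E^{-1} w$ inside the square. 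Since $I$ is compact and $(E,x)\mapsto \phi_E(x)$ is smooth on $\Ucal$ with $\phi_E'>0$ (Lemma~\ref{phiproperties}), while $|\rho_E|$, $|1/\rho_E|$, and $|\rho_E''|$ obey the uniform polynomial bounds of Lemma~\ref{rhoBounded}, all the weights $\rho_E^6 (\rho_E'')^2 (\phi_E')^2$ are bounded by a constant depending only on $\mu,\sigma,I,K$ on the compact set $I\times K$. Hence this term is $\leq C t^4 \int_I w^2\,dx \leq C t^4 \int_0^\infty w^2\,dx$.

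For the second summand, again substitute $y=\phi_E(x)$:
\[
 \int_{\phi_E(I)} \left| (\rho_E^3 r \sigma)\circ\phi_E^{-1}\right|^2 dy
 = \int_I \rho_E^6(x)\, r^2(x)\, \sigma^2(x)\, \phi_E'(x)\, dx
 \leq C \int_I r^2(x)\,\sigma(x)\,dx \leq C\,\|r\|_\sigma^2,
\]
where on the compact set $I\times K$ the factor $\rho_E^6\,\sigma\,\phi_E'$ is uniformly bounded (using boundedness of $\rho_E$ and $\phi_E'$ from Lemmas~\ref{phiproperties} and~\ref{rhoBounded}, and continuity of $\sigma$), and then $\int_I r^2\sigma\,dx \leq \int_0^\infty r^2\sigma\,dx = \|r\|_\sigma^2$ since $\sigma>0$. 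Combining the two bounds and adjusting $C$ gives the claim. The only mildly delicate point is bookkeeping the powers of $\rho_E$ and $\phi_E'$ correctly through the change of variables — in particular remembering that $W_E$ itself contributes a factor $\rho_E^{-1}w$ — but there is no real obstacle here; everything reduces to the uniform bounds already established in Lemmas~\ref{phiproperties} and~\ref{rhoBounded} over the compact parameter region $I\times K$.
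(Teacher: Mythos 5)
Your proof is correct and follows essentially the same route as the paper: apply Proposition~\ref{wW}, change variables $y=\phi_E(x)$ in each of the two error terms (using $|W_E|^2\,dy=\rho_E^{-4}|w|^2\,dx$, which you recover via $(\phi_E')^2=\rho_E^{-4}$), and bound the resulting weights uniformly over the compact set $I\times K$ by Lemmas~\ref{phiproperties} and~\ref{rhoBounded}. The bookkeeping is right, including the final step $\int_I r^2\sigma\,dx\le\|r\|_\sigma^2$.
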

\begin{proof}
For each continuous function $F: I \rightarrow \R$,
let $|F|_{\infty}= \sup\{|F(x)|~ |~ x \in I\}$. 
We perform the change of variables $y=\phi_E(x).$
Since $\phi_E'=\rho_E^{-2}$, we have  $dy =\rho_E^{-2} \cdot dx$,
and thus by (\ref{LCDefn}) and (\ref{hDefn})   
\begin{equation}  \label{ChangeOfVariables}
  |W|^2~ dy~  =~  \rho_E^{-4} \cdot |w|^2~ dx. 
\end{equation} 
Therefore
\begin{equation*}
   \int_{\phi_E(I)}  \left |(\rho_E^{3} \cdot \rho''_E)\circ \phi_E^{-1}\right|^2 \cdot |W|^2~ dy~ \leq~
    |\rho_E|_{\infty}^2 \cdot |\rho''_E|^2_{\infty} 
     \cdot  \int_{I} |w|^2~ dx,
\end{equation*}
and 
\begin{equation*}
   \int_{\phi_E(I)}  \left |(\rho_E^3 \cdot r \cdot \sigma) \circ \phi_E^{-1}\right |^2~ dy~ 
   \leq~
   |\rho_E^{2}\cdot \sigma |_{\infty} \cdot   \|r\|^2_{\sigma}.
\end{equation*}
The claim then follows from squaring and integrating (\ref{WODE}) 
and applying the above estimates. 
\end{proof}

Suppose $w$ is an eigenfunction of $a^\mu_t,$ and denote by $\lambda$ its eigenvalue. 
If we perform the Cherry-Langer transform at energy $E=\lambda$ then $r=0$ and hence 
the conclusion of Lemma \ref{GEstimate} is stronger. Actually, we will need 
the following strengthening of Lemma \ref{GEstimate} which treats the case when
$w$ is an eigenfunction of $a_t^\mu$ but $E$ is close to but not necessarily 
exactly the corresponding eigenvalue. 
 
\begin{lem}\label{LemSpecG}
Let $K \subset [\mu/\sigma(0), \infty)$ be compact.
There exists a constant $C_K$ such that if $t<1$, 
$w$ is an eigenfunction of $a_t^\mu$ with eigenvalue $\lambda \in K$, 
and $W$ is the Cherry-Langer transform of $w$ at energy $E\in K$, 
then 
\begin{equation}\label{SpecEstG}
\int_{\phi_E(0)}^\infty  \left|t^2 \cdot W''~ -~ y \cdot W\right|^2~ dy~ \leq~
  C_K \cdot \left( |\lambda~ -~ E|^2~  +~  t^4 \right) \int_{0}^\infty~ w^2~ dx.
\end{equation}
\end{lem}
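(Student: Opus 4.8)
The plan is to recognize the eigenfunction $w$ as a solution of (\ref{ODE}) at energy $E$ whose inhomogeneity is proportional to $w$ itself, then apply the transformation formula of Proposition \ref{wW}, and finally estimate the two resulting error terms on the whole half-line using the polynomial growth bounds of Lemma \ref{rhoBounded} together with the exponentially weighted $L^2$ decay of eigenfunctions furnished by Lemma \ref{L2Eigenfunction}.

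First I would note that since $w$ is an eigenfunction with eigenvalue $\lambda$, it satisfies $-t^2\cdot w'' + f_\lambda\cdot w = 0$; as $f_E - f_\lambda = (\lambda - E)\cdot\sigma$, this may be rewritten as $-t^2\cdot w'' + f_E\cdot w = (\lambda - E)\cdot\sigma\cdot w$, so $w$ solves (\ref{ODE}) at energy $E$ with $r = (\lambda - E)\cdot w$ and in particular $\|r\|_\sigma = |\lambda - E|\cdot\|w\|_\sigma$. Feeding this $r$ into Proposition \ref{wW}, the transform $W = W_E$ satisfies (\ref{WODE}). I would then square (\ref{WODE}), use $(a+b)^2\leq 2a^2 + 2b^2$, integrate over $[\phi_E(0),\infty)$, and change variables by $y = \phi_E(x)$ --- noting $dy = \rho_E^{-2}\,dx$ and $|W|^2\,dy = \rho_E^{-4}\cdot|w|^2\,dx$ as in (\ref{ChangeOfVariables}), and that $\phi_E$ carries $[0,\infty)$ onto $[\phi_E(0),\infty)$ by parts (\ref{phipos}) and (\ref{phiasymptotics}) of Lemma \ref{phiproperties}. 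This gives
\[ \int_{\phi_E(0)}^{\infty} \left| t^2\cdot W'' - y\cdot W \right|^2\,dy~ \leq~ 2t^4\int_0^\infty \rho_E^2\cdot(\rho_E'')^2\cdot w^2\,dx~ +~ 2(\lambda-E)^2\int_0^\infty \rho_E^4\cdot\sigma^2\cdot w^2\,dx. \]

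It then remains to bound both weighted integrals by $C_K\int_0^\infty w^2\,dx$. By Lemma \ref{rhoBounded}, uniformly for $E\in K$ there is a constant and an exponent $\nu_0$ (depending only on $\sigma$) with $\rho_E^2\cdot(\rho_E'')^2 \leq C\cdot(1+x^{\nu_0})$ and, since $0<\sigma\leq\sigma(0)$, also $\rho_E^4\cdot\sigma^2 \leq C\sigma(0)^2\cdot(1+x^{2/3})$ on $[0,\infty)$; so it suffices to bound $\int_0^\infty (1+x^N)\cdot w^2\,dx$ by $C_K\int_0^\infty w^2\,dx$ for a fixed exponent $N$ depending only on $\sigma$. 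Fixing $s = \mu/2$ and applying Lemma \ref{L2Eigenfunction} with its energy parameter equal to the genuine eigenvalue $\lambda$ (legitimate as $\lambda\leq\lambda$) yields $\int_{3x_\lambda^s}^\infty (1+x^N)\cdot w^2\,dx \leq \beta_N(\lambda,s)\cdot t\cdot\int_{x_\lambda^s}^\infty w^2\,dx$, while on $[0,3x_\lambda^s]$ the weight $1+x^N$ is at most $1+(3x_\lambda^s)^N$. Since $\lambda$ ranges over the compact set $K$, the quantity $x_\lambda^s$ is bounded above and away from zero, whence $\beta_N(\lambda,s)$ is bounded; so for $t<1$ one gets $\int_0^\infty (1+x^N)\cdot w^2\,dx \leq C_K\int_0^\infty w^2\,dx$. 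Substituting back into the displayed inequality gives (\ref{SpecEstG}).

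The exponent bookkeeping is routine; the step requiring the most care is the control of the tail $x\to\infty$, where a general solution of (\ref{ODE}) cannot be estimated and Lemma \ref{GEstimate} alone would not suffice --- here the exponential decay of eigenfunctions (Lemmas \ref{ExponentialDecay} and \ref{L2Eigenfunction}) is the essential ingredient, and the one subtlety is to ensure that all constants ($x_\lambda^s$, $\beta_N(\lambda,s)$, and the growth bounds of Lemma \ref{rhoBounded}) are uniform as $\lambda$ and $E$ range over $K$.
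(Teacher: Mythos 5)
Your proposal is correct and follows essentially the same route as the paper's own proof: rewrite the eigenvalue equation as an inhomogeneous instance of (\ref{ODE}) at energy $E$ with $r = (\lambda-E)\,w$, feed it through Proposition \ref{wW}, change variables via $|W|^2\,dy = \rho_E^{-4}\,|w|^2\,dx$, control the polynomial-weight factors with Lemma \ref{rhoBounded}, and handle the $x\to\infty$ tail with Lemma \ref{L2Eigenfunction}. Your choice to invoke Lemma \ref{L2Eigenfunction} at energy $\lambda$ (rather than $E$) is actually a bit cleaner, since that lemma's hypothesis requires the eigenvalue to lie below the energy parameter and the statement here does not assume $\lambda\le E$.
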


\begin{proof}
Note that since $-t^2 \cdot w''\,+\, (\mu- \lambda \cdot \sigma)\cdot w=0$, 
the function $w$ satisfies
\[ 
        -t^2 \cdot w''~ +~ f_E \cdot w~ =~ r.
\]
with $r= (E-\lambda) \cdot \sigma \cdot w$. Therefore we may apply Proposition \ref{wW}.
In particular, it suffices to bound the integrals of the squares of the terms appearing on
the right hand side of (\ref{WODE}). 

By Lemma \ref{rhoBounded} there exists $\nu_1$ and $C_1$ (depending only on $K$)
such that
\[  |\rho_E(x)|^{-4} \cdot |\rho_E^3 \cdot \rho_E''(x)|^2~ \leq~ C_1 \cdot (1~ +~ x^{\nu_1}). \]
Hence by changing variables (recall that $W^2 dy=\rho_E^{-4}w^2 dx$) we find that
\[  \int_{\phi_E(0)}^{\infty} \left|(\rho_E^3 \cdot \rho_E'')\circ \phi_E^{-1}\right|^2 
    \cdot |W(y)|^2~ dy~  
  \leq~ C_1 \int_{0}^{\infty}  |w(x)|^2 \cdot (1~ +~ x^{\nu_1})~ dx. 
\]
Since $w$ is an eigenfunction, we can apply 
Lemma \ref{L2Eigenfunction}. By fixing $s=\mu/2$, we obtain a constant
$C_2$---depending only on $K$---such that 
\[ \int_{3 x_E^s}^{\infty}  |w(x)|^2 \cdot (1~ +~ x^{\nu_1})~ dx~ \leq~
   C_2 \cdot t  \int_{x_E^s}^{\infty}  |w(x)|^2~ dx
\] 
Let $x^*= \sup\{ x_E^s~ |~ E \in K, s=\mu/2\}$. 
Then  
\[ \int_{0}^{3 x_E^s}  |w(x)|^2 \cdot (1~ +~ (3x)^{\nu_1})~ dx~ 
   \leq~ \left(1+ (3x^*)^{\nu_1} \right)  \int_{0}^{\infty}  |w(x)|^2~ dx.  \]
In sum, if $t\geq 1$, then we have a constant $C_3$ such that
\[ \int_{\phi_E(0)}^{\infty} |\rho_E^3 \cdot \rho_E''\circ \phi^{-1}(y)|^2 \cdot |W(y)|^2~ dy~  
  \leq~ C_3   \int_{0}^{\infty}  |w(x)|^2~ dx
\]    

A similar argument shows that there exists $C_4$---depending
only on $K$---such that
\[ \int_{\phi_E(0)}^{\infty}
\left|(\rho_E^3 \cdot \sigma^2)\circ \phi_E^{-1}(y) \cdot W(y) \right|^2~ dy~  
  \leq~ C_4   \int_{0}^{\infty}  |w(x)|^2~ dx. 
\]
By putting these estimates together we obtain the claim.
\end{proof}

We now prove a lemma that allows us 
to compare the (standard) $L^2$ norm of $w$ 
with that of its Cherry-Langer transform $W$ in the limit as 
$E$ tends to $\mu/\sigma(0)$ 
and $t$ tends to $0.$
  
\begin{lem} \label{wToWIntegral}
Let $q:[0, \infty) \rightarrow \R$ be a positive continuous
function of at most polynomial growth.  
Given $\epsilon>0$, there exists $\delta>0$ such that  
if $t < \delta$,  $E< \mu/\sigma(0)+\delta$,
and $w_{\pm}$ is an eigenfunction of $a_{\mu}^t$ with
eigenvalue $\lambda_{\pm} \leq E$, 
then
\begin{equation}\label{NormComp}
\left| \int_{\phi_E(0)}^{\infty} W_+ \cdot W_-~ dy~ 
-~ \frac{1}{\rho_E^{4}(0) \cdot q(0)} \int_0^{\infty} w_+ \cdot w_- \cdot q~ dx \right|~
   \leq~ \epsilon \cdot \|w_+\|_1 \cdot \|w_-\|_1.
\end{equation}
where $W_{\pm}$ is  the Langer-Cherry transform of $w_{\pm}$ at energy $E$.
\end{lem}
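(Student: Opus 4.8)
The plan is to compare the two integrals by performing the change of variables $y = \phi_E(x)$ in the first one, so that both become integrals over $x \in [0,\infty)$, and then control the discrepancy between the resulting weights. Recall from (\ref{ChangeOfVariables}) that $W_+ \cdot W_-~ dy = \rho_E^{-4} \cdot w_+ \cdot w_-~ dx$. Hence the left-hand side of (\ref{NormComp}) equals
\[
  \left| \int_0^{\infty} w_+ \cdot w_- \cdot \left( \rho_E^{-4}(x)~ -~ \frac{q(x)}{\rho_E^4(0)\cdot q(0)} \right)~ dx \right|.
\]
The idea is that the weight $g_E(x) := \rho_E^{-4}(x) - \rho_E^{-4}(0)\cdot q(x)/q(0)$ vanishes at $x=0$ and, because $\phi_E(0) \to 0$ and hence $\rho_E^{-4}(0) = \phi_E'(0) \to$ its limiting value as $E \to \mu/\sigma(0)$, the function $g_E$ is small on a fixed compact neighborhood of $0$ once $E$ is close to $\mu/\sigma(0)$; meanwhile on the complementary region $x$ is large, so $w_\pm$ has negligible $L^2$-mass there by the eigenfunction decay estimates.

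\textbf{Key steps.} First I would split the integral at $3x_E^s$ with, say, $s = \mu/2$; note $x_E^s$ is bounded over the relevant range of $E$ and $x_E^s \to$ a small value (indeed $x_E \to 0$, $x_E^s \to 0$) as $E \to \mu/\sigma(0)$ by Remark \ref{RmkSmoothness} and Remark \ref{phiproperties}\footnote{More precisely Lemma \ref{phiproperties}.}-type facts. On $[0, 3x_E^s]$: since $\rho_E^{-4} = \phi_E'$ is smooth and positive on $\Ucal$ (Lemma \ref{phiproperties}), $q$ is continuous, and $\phi_E'(0), \rho_E^{-4}(0)$ vary continuously, the function $g_E$ is continuous in $(E,x)$ and satisfies $g_E(0) = \rho_E^{-4}(0) - \rho_E^{-4}(0) = 0$; combining uniform continuity on a fixed compact box with $3x_E^s \to 0$ gives $\sup_{[0,3x_E^s]} |g_E(x)| \to 0$ as $E \to \mu/\sigma(0)$. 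Applying Cauchy--Schwarz on this interval yields a bound $\le \sup|g_E| \cdot \|w_+\| \cdot \|w_-\| \le \sup|g_E| \cdot \|w_+\|_1 \cdot \|w_-\|_1$. On $[3x_E^s, \infty)$: $\rho_E^{-4}$ and $q$ both have at most polynomial growth uniformly over $E$ in a compact set (Lemma \ref{rhoBounded} and the hypothesis on $q$), so $|g_E|$ has polynomial growth; then Cauchy--Schwarz followed by Lemma \ref{L2Eigenfunction} (applied to each $w_\pm$ with a weight dominating $|g_E|$) bounds the tail contribution by $\mathrm{const} \cdot t \cdot \|w_+\|_1 \cdot \|w_-\|_1$. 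Adding the two pieces and choosing $\delta$ small enough that both $\sup_{[0,3x_E^s]}|g_E|$ and the constant times $t$ are each at most $\epsilon/2$ gives (\ref{NormComp}).

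\textbf{Main obstacle.} The routine part is the tail estimate, which is a direct application of Lemma \ref{L2Eigenfunction} together with the polynomial-growth bounds of Lemma \ref{rhoBounded}. The step requiring the most care is verifying that $\sup_{[0,3x_E^s]} |g_E(x)| \to 0$: one must confirm that $E \mapsto \rho_E^{-4}(0) = \phi_E'(0)$ is continuous (indeed smooth) up to and including $E = \mu/\sigma(0)$ — this is exactly Lemma \ref{phiproperties}(\ref{phik}) and (\ref{phipos}), noting $\phi_E'$ never vanishes so $\rho_E^{-4}(0)$ stays bounded away from $0$ — and that $x_E^s \to 0$, so that the sup is taken over a shrinking interval on which the integrand, continuous in $(E,x)$ and vanishing at $(\,\mu/\sigma(0),0)$... more precisely vanishing at $x=0$ for every $E$, is uniformly small. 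Once these continuity and vanishing facts are in hand, the triangle inequality assembles the proof.
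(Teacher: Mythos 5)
Your proposal is correct and follows essentially the same route as the paper: change variables via \eqref{ChangeOfVariables}, compare the resulting weight to its value at $x=0$, split at $3x_E^s$, and use Cauchy--Schwarz plus Lemma \ref{L2Eigenfunction} for the tail. The only cosmetic difference is that the paper packages the splitting argument as two black-box applications of Proposition \ref{BiLinEst} (once with $p=\rho_E^{-4}$, once with $p=q$) followed by a triangle inequality, whereas you unroll the same estimate directly for the combined weight $g_E(x)=\rho_E^{-4}(x)-\rho_E^{-4}(0)\,q(x)/q(0)$, which also makes the required uniformity in $E$ slightly more explicit.
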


\begin{proof}
Changing variables gives
\[
    \int_{\phi_E(0)}^\infty W_+ \cdot W_-~  dy~ 
    =~ \int_0^\infty w_+ \cdot w_- \cdot \rho_E^{-4}(x)~ dx.
\]
By Lemma \ref{rhoBounded}, the function $\rho_E^{-4}$ is bounded, and hence
we can apply Proposition \ref{BiLinEst}. In particular, choose 
$\delta_1>0$ so that if $E < \mu/\sigma(0) + \delta_1$, then 
$\alpha_p(E,\mu/2)< \epsilon/4$ and choose $\delta_2\leq \delta_2$ so that if
$t < \delta_2$, then $\beta(\delta_1, \mu/2) \cdot t < \epsilon/4$.
Thus, if $E< \mu/\sigma(0) + \delta_2$ and $t< \delta_2$, then 
\[  \left| \int_{\phi_E(0)}^\infty W_+ \cdot W_-~  dy~ 
    -~ \rho_E^{-4}(0) \int_0^\infty w_+ \cdot w_-~ dx \right|~ \leq~
   \frac{\epsilon}{2} \cdot \|w_+\| \cdot \|w_-\|.
\]
In a similar fashion we can apply Lemma \ref{BiLinEst} to find 
$\delta \leq \delta_2$ so that if $E< \mu/\sigma(0) + \delta$ and $t< \delta$,
then 
\[  \left| \int_{0}^\infty w_+ \cdot w_- \cdot q~ dy~ 
    -~ q(0) \int_0^\infty w_+ \cdot w_-~ dx \right|~ \leq~
    \frac{\epsilon}{2} \cdot \rho^{4}_E(0) \cdot q(0) \cdot \|w_+\|_1 \cdot \|w_-\|_1.
\]
The claim follows. 
\end{proof}


\section{Airy approximations} \label{AirySection}

In this section we analyse solutions to the inhomogeneous equation
\begin{equation}  \label{WODE2}
  t^2 \cdot W''(y)~ -~ y \cdot W(y)~ =~ g(y). 
\end{equation}
To do this, we will use a solution operator, $\tilde{K}_t$, for the 
associated homogeneous equation
\begin{equation}  \label{Homogeneous}
   t^2 \frac{\partial^2}{\partial y^2}~ W_0 -~ y \cdot W_0~ 
   =~  0.
\end{equation}
Note that $W_0$ is a solution to (\ref{Homogeneous}) if and only if
${\rm A}(u)= W_0(t^{\frac{2}{3}} \cdot u)$ is a solution to 
the {\em Airy equation}
\begin{equation}  \label{AiryEquation1}
  \frac{\partial^2}{\partial u^2}~ A~ -~ u \cdot A~ =~ 0.
\end{equation}

Using, for example, the method of variation of constants, one can construct 
an  integral kernel $K$ for an `inverse' of the operator 
$A(u) \mapsto A''(u) - u \cdot A(u)$ in terms of Airy functions. 
We give the construction of $K$ as well as its basic properties in Appendix 
\ref{AiryAppendix}.  By rescaling (or by direct construction)
we obtain an integral kernel for the operator 
$A(x) \mapsto t^2 \cdot A''(x) - x \cdot A(x)$.
To be precise, define
\[  \tilde{K}_t(y,z)~ =~ t^{-\frac{4}{3}}  \cdot 
      K \left( t^{-\frac{2}{3}} \cdot y,~ t^{-\frac{2}{3}} \cdot z \right), \] 
where $K$ is the integral kernel constructed in Appendix \ref{AiryAppendix}.
\begin{lem}\label{LemSol}
Let $\infty < a  \leq b \leq \infty$.
For each locally integrable $g: [a,b] \rightarrow \R$ 
of at most polynomial growth, the function 
\begin{equation} \label{Kernel3}
 y~ \mapsto~ \int_{a}^{b} \tilde{K}_t(y,z) \cdot g(z)~ dz~ 
\end{equation}
is a solution to (\ref{WODE2}).
\end{lem}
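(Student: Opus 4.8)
The plan is to reduce the assertion to the analogous statement for the unscaled Airy kernel $K$ established in Appendix \ref{AiryAppendix}, by undoing the dilation in the definition of $\tilde{K}_t$.

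First I would recall from the appendix the defining property of $K$: if $h$ is locally integrable on an interval and of at most polynomial growth, then $A(u) = \int K(u,v)\cdot h(v)\, dv$ converges, defines a $C^2$ function, and satisfies $A''(u) - u\cdot A(u) = h(u)$. Convergence of this integral, legitimacy of the two differentiations, and the jump relation across the diagonal $v=u$ responsible for producing the term $h(u)$ on the right-hand side are precisely the basic properties of $K$ catalogued there; nothing in the present lemma re-proves them.

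Next I would perform the change of variables $v = t^{-\frac{2}{3}}z$ in the integral \refeq{Kernel3}. Writing $\tilde{g}(v) = g(t^{\frac{2}{3}}v)$ and using $\tilde{K}_t(y,z) = t^{-\frac{4}{3}}\,K(t^{-\frac{2}{3}}y,\,t^{-\frac{2}{3}}z)$ together with $dz = t^{\frac{2}{3}}\,dv$, one gets
\[ W(y)~ :=~ \int_a^b \tilde{K}_t(y,z)\cdot g(z)~ dz~ =~ t^{-\frac{2}{3}}\int_{t^{-2/3}a}^{t^{-2/3}b} K\!\left(t^{-\frac{2}{3}}y,\,v\right)\cdot \tilde{g}(v)~ dv~ =~ t^{-\frac{2}{3}}\cdot A\!\left(t^{-\frac{2}{3}}y\right), \]
where $A(u) = \int K(u,v)\,\tilde{g}(v)\,dv$. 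Since $g$ is locally integrable and of at most polynomial growth, so is $\tilde{g}$; hence the appendix applies and $A'' - u\cdot A = \tilde{g}$.

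Finally I would verify \refeq{WODE2} by the chain rule. Setting $u = t^{-\frac{2}{3}}y$, differentiation gives $W'(y) = t^{-\frac{4}{3}}A'(u)$ and $W''(y) = t^{-2}A''(u)$, so that
\[ t^2\cdot W''(y)~ -~ y\cdot W(y)~ =~ A''(u)~ -~ u\cdot A(u)~ =~ \tilde{g}(u)~ =~ g\!\left(t^{\frac{2}{3}}u\right)~ =~ g(y), \]
which is \refeq{WODE2}. The only points requiring any care are bookkeeping the powers of $t^{\frac{2}{3}}$ and checking that the polynomial-growth hypothesis needed to invoke the appendix is inherited by $\tilde{g}$; both are immediate, so the lemma amounts to the scaling covariance of the operator $W \mapsto t^2 W'' - y\cdot W$ under $y \mapsto t^{\frac{2}{3}}y$ together with the corresponding statement for the Airy operator.
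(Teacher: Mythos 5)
Your proof is correct and matches the paper's (very terse) argument, which simply says the lemma ``follows from Lemma \ref{AiryKernel} or directly from the variation of constants construction''; your change of variables is exactly the content of the first route, spelled out. The scaling bookkeeping ($W(y)=t^{-2/3}A(t^{-2/3}y)$, hence $W''=t^{-2}A''$) is right, and the inheritance of local integrability and polynomial growth by $\tilde g$ is immediate, as you say.

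One small point worth noticing, though it is a wrinkle in the paper rather than in your argument: Lemma \ref{AiryKernel} is stated under the hypothesis $\alpha\leq 0\leq \beta$, whereas Lemma \ref{LemSol} only assumes $a\leq b$; after rescaling one has $\alpha=t^{-2/3}a$, $\beta=t^{-2/3}b$, which need not bracket $0$. In fact the Green's function identity $(\partial_u^2-u)\int_\alpha^\beta K(u,v)g(v)\,dv=g(u)$ holds for any $\alpha\leq\beta$ --- one checks it case by case from the piecewise definition of $K$ and the constancy of the Wronskian --- so the extra hypothesis in Lemma \ref{AiryKernel} is not actually needed, and your reduction goes through. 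In the paper's only application (Lemma \ref{TransitionEstimate}) one has $a<0<b$ anyway, so the point is moot there.
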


\begin{proof}
This follows from Lemma \ref{AiryKernel} or directly from the  
variation of constants construction.   
\end{proof}

The following estimate is crucial to the  
proof of Proposition \ref{CrucialProposition}.

\begin{lem}  \label{TransitionEstimate}
Let $g:\R \rightarrow \R$ be continuous. 
For each $\infty < a < 0< b$, there 
exist constants $C$ and $t_0>0$ such that if $t<t_0$ and
$W$ satisfies (\ref{WODE2}), then 
\begin{equation}  \label{Transition1}
    \int_{a}^{0} W^2~ \leq~ 
   C \cdot \left(  \int_{a}^{\frac{a}{2}}  W^2~
  +~  t^{-\frac{5}{3}} \int_{a}^{b} g^2~
\right).
\end{equation}
and 
\begin{equation}  \label{Transition2}
    \int_{0}^{\frac{b}{2}} W^2~ \leq~ 
   C \cdot \left(  t^{\frac{1}{3}} \int_{a}^{\frac{a}{2}}  W^2~ +~
      \int_{\frac{b}{2}}^{b}  W^2~
  +~  t^{-\frac{5}{3}} \int_{a}^{b} g^2~
\right).
\end{equation}
The constants $C$ and $t_0$ can be chosen to depend continuously upon $a$ and $b$.
\end{lem}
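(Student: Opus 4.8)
The plan is to exploit the explicit solution operator $\tilde K_t$ together with the fact that any solution $W$ of \eqref{WODE2} differs from the particular solution $\tilde K_t g$ by a solution $W_0$ of the homogeneous equation \eqref{Homogeneous}, i.e. a rescaled Airy function. First I would write $W = W_0 + \tilde K_t g$ where $W_0$ solves \eqref{Homogeneous}, and treat the two pieces separately. For the inhomogeneous part, the mapping properties of the Airy kernel proved in Appendix~\ref{AiryAppendix} give an $L^2 \to L^2$ bound for $g \mapsto \tilde K_t g$ on the compact interval $[a,b]$; tracking the $t^{-2/3}$ rescaling in the definition of $\tilde K_t$ produces exactly the weight $t^{-5/3}$ appearing on the right-hand sides of \eqref{Transition1} and \eqref{Transition2}. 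So it remains to prove the two estimates for a genuine homogeneous solution $W_0$, after which the triangle inequality (and absorbing the cross terms) yields the claim for $W$.

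The heart of the matter is therefore a statement purely about solutions of the Airy equation: after the substitution $u = t^{-2/3} y$, a homogeneous solution has the form $A(u) = \alpha\,\Ai(u) + \beta\,\Bi(u)$, and I need to control $\int_a^0 W_0^2$ by $\int_a^{a/2} W_0^2$ (estimate \eqref{Transition1}) and $\int_0^{b/2} W_0^2$ by $t^{1/3}\int_a^{a/2} W_0^2 + \int_{b/2}^b W_0^2$ (estimate \eqref{Transition2}), uniformly for small $t$. The first of these is essentially the statement that on the negative (oscillatory) axis the $L^2$-mass of any Airy combination over a fixed subinterval controls its $L^2$-mass over a larger subinterval, with a constant independent of the oscillation rate — this follows from the uniform asymptotics of $\Ai,\Bi$ for large negative argument (see \cite{Olver}), which show that $A$ is, up to a slowly varying amplitude, a genuine oscillation, so no interval of fixed length can be anomalously small compared to another. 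The second estimate reflects the one-way (tunnelling) nature of the equation across the turning point $u=0$: mass near $u=0$ coming from the $\Bi$-component is dwarfed, after rescaling, by mass far out on the positive axis — giving the $\int_{b/2}^b W_0^2$ term — while mass coming from the $\Ai$-component is, by the exponential decay of $\Ai$ on the positive axis, comparable to (a small multiple of) the oscillatory mass on the negative side, with the gain $t^{1/3}$ coming from the Jacobian $t^{2/3}$ of the rescaling balanced against one power of the interval length. Concretely I would split $W_0 = \alpha\,\Ai(t^{-2/3}\cdot) + \beta\,\Bi(t^{-2/3}\cdot)$, estimate $|\alpha|$ and $|\beta|$ from below and above by the relevant $L^2$-norms using the asymptotics, and then reassemble.

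The continuous dependence of $C$ and $t_0$ on $a$ and $b$ is automatic from this scheme, since every constant produced is an explicit expression in the endpoints and in uniform bounds for Airy functions on the rescaled intervals $[t^{-2/3}a, t^{-2/3}b]$, which vary continuously. \textbf{The main obstacle} I anticipate is the lower bound in \eqref{Transition1}: ensuring that $\int_a^{a/2} W_0^2$ does not degenerate relative to $\int_a^0 W_0^2$ as $t \to 0$, i.e. that a fixed oscillatory window always captures a definite fraction of the energy. This requires the full strength of the uniform (not merely pointwise) asymptotics for $\Ai$ and $\Bi$ together with a Wronskian-type argument to rule out near-cancellation between the two components on the small interval; handling the regime where $a/2$ is itself close to the turning point, so the asymptotic expansions are not yet accurate, will need the Airy-kernel estimates of Appendix~\ref{AiryAppendix} rather than the large-argument asymptotics.
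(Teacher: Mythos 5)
Your proposal matches the paper's proof in both structure and substance: the decomposition $W = W_0 + \tilde K_t g$ with $W_0$ homogeneous, the Hilbert--Schmidt estimate on $\tilde K_t$ giving the $t^{-5/3}$ weight, and then a rescaled Airy-function lemma for $W_0$ proved via the asymptotics of $\Ai$ and $\Bi$ together with cross-term control -- this last step is exactly the content of Lemma~\ref{TransitionRegion} in Appendix~\ref{AiryAppendix}, where the near-cancellation issue you flag is ruled out by the estimates on $\int A_+ A_-$. The only slight imprecision is your heuristic for the $t^{1/3}$ gain in \eqref{Transition2}: it comes not from the Jacobian (which cancels on both sides) but from the fact that the oscillatory $L^2$-mass on $[sb^-, sa^-]$ grows like $s^{1/2}$ while the decaying component contributes only $O(1)$ near the turning point, so the ratio is $s^{-1/2} = t^{1/3}$.
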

\begin{proof}
Define $W_0$ on $[a,b]$ by 
\[  W_0(y)~ =~  W(y)~ -~ 
  \int_{a}^{b} \tilde{K}_t(y,z) \cdot g(z)~ dz .\]
Using Lemma \ref{LemSol} and linearity, $W_0$ 
is a solution to (\ref{Homogeneous}). 

Using the Cauchy-Schwarz-Bunyakovsky inequality, we find that
\begin{equation} \label{FirstL2}
  |W(y)~ -~ W_0(y)|^2~ =~
   \left(\int_{a}^{b}\left|\tilde{K}_t(y,z) \right|^2 dz  \right) \left(
   \int_{a}^{b}\left| g(z) \right|^2 dz \right).   \
\end{equation}
A change of variables gives
\begin{equation} \label{ChangeBack}
   \int_{a}^{b}\int_{a}^{b}\left|\tilde{K}_t(y,z) \right|^2 dy~ dz~
    =~  t^{-\qt} \int_{t^{-\frac{2}{3}}a}^{t^{-\frac{2}{3}}b} 
   \int_{t^{-\frac{2}{3}}a}^{t^{-\frac{2}{3}}b} 
  \left|K(u,v) \right|^2 du~ dv. 
\end{equation}
By Lemma \ref{HilbertSchmidtEstimate} in Appendix \ref{AiryAppendix}, the latter integral 
is less than $C_{{\rm Airy}} \cdot \sqrt{\delta} \cdot t^{-\frac{1}{3}}$
where $C_{{\rm Airy}}$ is a universal constant and 
 $\delta= \max \{ |a|, b\}$. Therefore, by
integrating (\ref{FirstL2}) over an interval $I \subset [a,b]$ and
substituting (\ref{ChangeBack}), we find that
\begin{equation}\label{RemainderOverA}
 \|W-W_0\|_{I}^2~ \leq~ 
 C_0 \cdot t^{-\frac{5}{3}} \cdot \|g\|^2_{[a,b]}.
\end{equation}
where $C_0=  C_{{\rm Airy}} \cdot \delta$ and $\|\cdot\|_J$ denotes the 
$L^2$-norm over the interval $J$.
In particular, by the triangle inequality we have 
\[ \|W\|_{I}~ \leq~ \|W_0\|_I~ +~
  C_0^{\frac{1}{2}} 
\cdot t^{-\frac{5}{6}} \cdot \|g\|_{[a,b]},  \]
and hence
\begin{equation} \label{W}
 \|W\|_{I}^2~ \leq~ 2 \cdot  \|W_0\|_I^2~ +
  2 \cdot C_0\cdot t^{-\frac{5}{3}} \cdot \|g\|^2_{[a,b]}.
\end{equation}
Similarly,
\begin{equation} \label{WNought}
\|W_0\|_{I}^2~ \leq~ 2 \cdot  \|W\|_I^2~ +
  2 \cdot C_0 \cdot t^{-\frac{5}{3}} \cdot \|g\|^2_{[a,b]}.
\end{equation}

The function $u \mapsto W_0(t^{\frac{2}{3}} \cdot u)$ 
satisfies the Airy equation (\ref{AiryEquation}). Hence, it follows
from Lemma \ref{TransitionRegion} (in which $s$ is replaced by $t^{-\frac{2}{3}}$)
that there exist constants $M$ and $t_0>0$---depending continuouslu on $a$ and $b$---such 
that if $t \leq t_0$, then 
\begin{equation}  \label{ProtoNeg}
  \int_{a}^0 W_0^2~ dy~ \leq~ M   \int_{a}^{\frac{a}{2}} W_0^2~ dy~ 
\end{equation}
and 
\begin{equation}  \label{ProtoPos}
   \int_{0}^{\frac{b}{2}}~ W_0^2~ dy~
   \leq~ M  \left( t^{\frac{1}{3}}\int_{a}^{\frac{a}{2}} W_0^2~ dy~
  +~ \int_{\frac{b}{2}}^{b} W_0^2~ dy \right).
\end{equation}

By combining (\ref{ProtoPos}) with (\ref{W}) and (\ref{WNought}),
we obtain  (\ref{Transition1}). By combining (\ref{ProtoNeg}) 
with (\ref{W}) and (\ref{WNought}), we obtain  (\ref{Transition2}). 
\end{proof}


\section{A non-concentration estimate}\label{crucial}

Fix $\mu $ and $\sigma$ and let $a_t^{\mu}$ be the family of 
quadratic forms defined as in \S \ref{aSection}. 
The purpose of this section is to prove the following 
non-concentration estimate---see Remark \ref{RemNonC}---that 
is crucial to our proof of generic spectral simplicity. 

\begin{prop}\label{CrucialProposition}
Let $K$ be a compact subset of 
$(\mu \cdot \sigma(0)^{-1},\infty)$, and $C>0$.
There exist constants $t_0>0$ and $\kappa>0$ such that
if $E \in K$, if $t < t_0$, and if for each $v \in {\rm dom}(a_t^{\mu})$,
the function $w$ satisfies
\begin{equation}  \label{rHypothesis}
  \left|a_{t}^{\mu}(w,v)~ -~ E \cdot  \langle w, v\rangle_{\sigma} \right|~ 
\leq~  C \cdot t \cdot \|w\|_{\sigma} \cdot \|v\|_{\sigma},
\end{equation}
then 
\begin{equation}\label{DerEst}
\int_0^{\infty} \left( E \cdot \sigma(x)~ -~ \mu\right) \cdot |w(x)|^2~ 
      dx~ \geq~ 
  \kappa \cdot \| w\|_\sigma^2.
\end{equation}
The constants $t_0$ and $\kappa$ depend only upon
$K$, $C$, $\mu$, and $\sigma$.
\end{prop}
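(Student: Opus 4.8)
The plan is to run a proof by contradiction together with the Langer--Cherry normalization of \S\ref{LGSection} and the Airy transition estimates of \S\ref{AirySection}. Suppose the statement fails. Then there are $E_n \in K$, $t_n \to 0$, and functions $w_n \in \dom(a_{t_n}^\mu)$ satisfying \eqref{rHypothesis} with constant $C$ but with
\[
\int_0^\infty (E_n \sigma - \mu) |w_n|^2\, dx~ <~ \frac1n \|w_n\|_\sigma^2.
\]
Normalize $\|w_n\|_\sigma = 1$. Passing to a subsequence we may assume $E_n \to E_\infty \in K$, so $E_\infty > \mu/\sigma(0)$ and hence $x_{E_\infty} > 0$; in particular $x_{E_n}$ stays in a fixed compact subinterval of $(0,\infty)$. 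By definition $\int_0^\infty(E_n\sigma-\mu)|w_n|^2 = \int_0^{x_{E_n}} |f_{E_n}| |w_n|^2 - \int_{x_{E_n}}^\infty f_{E_n}|w_n|^2$, so the failure hypothesis says the negative part (the classically allowed region, $x < x_{E_n}$) carries asymptotically no $L^2$-mass: $\int_0^{x_{E_n}} |f_{E_n}||w_n|^2 \to 0$. Combined with Lemma~\ref{ExponentialEstimate} (applied with $r_n$ of size $O(t_n)\|w_n\|_\sigma$, which \eqref{rHypothesis} provides), the region $\{x > x_{E_n}^s\}$ also carries $o(1)$ of the unweighted $L^2$-mass for any fixed $s \in (0,\mu)$. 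So all of the $L^2$-mass of $w_n$ must concentrate in the shrinking-thickness transition layer $[x_{E_n}^{\text{lo}}, x_{E_n}^s]$ around the turning point $x_{E_n}$ — here $x_{E_n}^{\text{lo}}$ is the analogous point on the allowed side where $f_{E_n} = -s$.

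The contradiction will come from the Airy analysis. Apply the Langer--Cherry transform at energy $E_n$: set $W_n = ((\phi_{E_n}')^{1/2} w_n)\circ \phi_{E_n}^{-1}$. By Proposition~\ref{wW}, $W_n$ satisfies $t_n^2 W_n'' - y W_n = g_n$ with $g_n = -t_n^2(\rho^3\rho'')\circ\phi^{-1} W_n - (\rho^3 r_n \sigma)\circ\phi^{-1}$; using Lemma~\ref{rhoBounded} for the polynomial control of $\rho,\rho''$ and the decay estimate of Lemma~\ref{ExponentialEstimate} to kill the far-field contribution, one gets $\|g_n\|_{L^2(\phi_{E_n}(I))}^2 = o(t_n^{5/3})$ on any fixed compact $I$ containing the turning region — more precisely, one wants $t_n^{-5/3}\|g_n\|^2 \to 0$, which holds since $\|g_n\|^2 = O(t_n^4) + O(t_n^2)\cdot o(1)$. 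Now choose a fixed interval $[a,b] \ni 0$ in the $y$-variable large enough that $\phi_{E_n}([x_{E_n}^{\text{lo}}, x_{E_n}^s]) \subset [a/2, b/2]$ for all large $n$ (possible by Lemma~\ref{phiproperties}, since $x_{E_n}^s$ is bounded and $\phi_{E_n}$ is a uniformly controlled diffeomorphism); by \eqref{ChangeOfVariables}, concentration of $w_n$ in the $x$-transition layer translates to $\int_{a/2}^{b/2} W_n^2 \to c > 0$ (after renormalizing, $c$ can be taken $\asymp 1$) while $\int_a^{a/2} W_n^2 \to 0$ and $\int_{b/2}^b W_n^2 \to 0$. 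But Lemma~\ref{TransitionEstimate} gives
\[
\int_0^{b/2} W_n^2~ \leq~ C'\Bigl( t_n^{1/3}\int_a^{a/2} W_n^2 + \int_{b/2}^b W_n^2 + t_n^{-5/3}\int_a^b g_n^2\Bigr)
\]
and the analogous bound for $\int_a^0 W_n^2$; the right-hand sides tend to $0$, forcing $\int_a^{b/2}W_n^2 \to 0$ and $\int_a^0 W_n^2 \to 0$, hence $\int_{a/2}^{b/2} W_n^2 \to 0$, contradicting $c>0$.

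The main obstacle I expect is \textbf{bookkeeping the translations between the three normalizations}: the $\sigma$-weighted norm in which \eqref{rHypothesis} and \eqref{DerEst} are stated, the unweighted $L^2$ norm in which Lemmas~\ref{ExponentialEstimate} and \ref{TransitionEstimate} operate, and the $W$-norm after Langer--Cherry. Each conversion costs a factor that is bounded above and below uniformly for $E \in K$ — $\sigma$ versus Lebesgue is handled by $0 < \sigma(\infty) \le \cdots$ wait, $\sigma(\infty)=0$, so one must localize to bounded $x$ first (which is exactly what Lemma~\ref{ExponentialEstimate} arranges) before comparing weights, and $\rho_{E_n}^{-4}$ is bounded above and below on compacts by Lemma~\ref{rhoBounded} — so the estimates are uniform, but one has to be careful that the interval $[a,b]$ and the compact $x$-interval are chosen \emph{before} extracting subsequences, using only $K$, and that the $o(1)$ quantities are genuinely uniform in $n$. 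A secondary point is verifying that \eqref{rHypothesis} really yields the hypothesis $\|r_n\|_\sigma \le C' t_n \|w_n\|_\sigma$ needed to feed Lemma~\ref{ExponentialEstimate} and to bound $g_n$: by Riesz representation the functional $v \mapsto a_{t_n}^\mu(w_n,v) - E_n\langle w_n,v\rangle_\sigma$ is represented by some $r_n \in \mathcal H_\sigma$ with $\|r_n\|_\sigma$ bounded by the operator norm of that functional, which is exactly the right-hand constant in \eqref{rHypothesis}, so this is immediate.
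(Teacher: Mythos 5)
Your plan reaches for the same machinery the paper uses in its proof of this proposition — the Langer--Cherry transform, the $L^2$ tail estimate of Lemma~\ref{ExponentialEstimate}, and the Airy transition estimates of Lemma~\ref{TransitionEstimate} — but you organize it as a proof by contradiction, whereas the paper gives a direct proof by splitting $\int_0^\infty(-f_E)w^2 = \int_0^{x_E}|f_E|w^2 - \int_{x_E}^\infty|f_E|w^2$ and proving a lower bound (Lemma~\ref{Neg}) and an upper bound (Lemma~\ref{Pos}) separately, each via the transition estimate. The direct route has the advantage of producing explicit constants $c^-, C^+, t_0$; the contradiction route loses constructivity (you would need to invoke a diagonal / minimization of $\kappa, t_0$ to recover the quantitative statement). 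It is worth noting Remark~\ref{RemNonC} in the paper: the authors say that a contradiction argument standard in semiclassical measure theory works for eigenfunctions but, they believe, fails for order-one quasimodes. Your argument is not the semiclassical-measure one (you do not pass to a weak limit and invoke flow invariance — you carry the Airy estimates through directly), so that warning does not immediately apply, but it should make you cautious about how much the contradiction framing actually buys you.

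There is, however, a genuine gap in your chain, independent of bookkeeping. You write that the failure hypothesis $\int_0^\infty(E_n\sigma-\mu)|w_n|^2\to 0$ ``says'' $\int_0^{x_{E_n}}|f_{E_n}||w_n|^2\to 0$. That does not follow: the failure hypothesis only says the \emph{difference} $\int_0^{x_E}|f_E|w_n^2 - \int_{x_E}^\infty f_E w_n^2$ tends to zero, so the classically allowed piece can remain bounded away from zero as long as the forbidden piece does too. You must first show $\int_{x_E}^\infty f_E w_n^2\to 0$ and only then deduce the allowed-region decay. Your subsequent invocation of Lemma~\ref{ExponentialEstimate} controls only the region $\{x>x_E^s\}$ for \emph{fixed} $s$, which kills $\int_{x_E^s}^\infty f_E w_n^2 = O_s(t_n)$; but on the transition layer $[x_E,x_E^s]$ you only know $f_E\le s$, so $\int_{x_E}^{x_E^s}f_E w_n^2 \le s\|w_n\|_{L^2}^2$, which for fixed $s$ does not vanish. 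The fix is to interpolate the parameter $s$ against $t_n$: since $\|r_n\|_\sigma\le Ct_n\|w_n\|_\sigma$ and the constant in Lemma~\ref{ExponentialEstimate} scales as $s^{-1}$, one gets
\[
\int_{x_E}^\infty f_E\,w_n^2~\le~ s\,\|w_n\|_{L^2}^2~+~\frac{\mu\, C_1 t_n}{s}\,\|w_n\|_{L^2}^2,
\]
and choosing $s\asymp\sqrt{t_n}$ shows $\int_{x_E}^\infty f_E w_n^2 = O(\sqrt{t_n})\to 0$ (one also needs the elementary bound $\mu\|w_n\|_{L^2}^2\le a_{t_n}^\mu(w_n)\le (E_n + Ct_n)\|w_n\|_\sigma^2$ to control $\|w_n\|_{L^2}$ after your normalization $\|w_n\|_\sigma=1$). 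Then $\int_0^{x_E}|f_E|w_n^2\to 0$ does follow, and for a \emph{separate, fixed} small $s'$ you may deduce $\int_0^{x_E^{\mathrm{lo},s'}}w_n^2\to 0$ and $\int_{x_E^{s'}}^\infty w_n^2\to 0$, which is the concentration statement your transition-estimate contradiction needs. With that patched, and with the care you already flag about choosing $[a,b]$ uniformly over $K$ and verifying $t_n^{-5/3}\|g_n\|^2_{[a,b]}\to 0$ (which holds since $\|g_n\|^2 = O(t_n^2)$), the contradiction goes through.
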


Observe that, in contrast to previous estimates, 
Proposition \ref{CrucialProposition}
is concerned with so-called {\em non-critical} energies, those values of 
$E$ that are strictly greater than the threshold $\mu/\sigma(0)$.

\begin{remk} \label{QuasimodeRemark}
Estimate (\ref{rHypothesis}) is a special case 
of an estimate of the following form:  For all $v \in {\rm dom}(a_{\mu,t})$,
\begin{equation}\label{QMHypot}
\left| a_{t}^\mu(w,v)~ -~ E_t \cdot \langle w,v\rangle_\sigma \right|~ \leq~ 
t^\rho \cdot \|w\|_\sigma\cdot \|v\|_\sigma.
\end{equation}
By the Riesz representation theorem, estimate (\ref{QMHypot})
is equivalent to equation (\ref{rDefined}) 
with $r$ such that $\| r\| \leq t^\rho \cdot \|w\|.$ 
In other words, a sequence $w_n$ satisfying (\ref{QMHypot}) 
is what we have called a quasimode of order $\rho$ at energy $E_0$. 
\end{remk}

\begin{remk}\label{RemNonC}
Suppose that $w_n$ is a sequence of eigenfunctions of $a^{\mu}_{t_n}$ with 
$t_n$ tending to zero as $n$ tends to infinity.  
Then, by Lemma \ref{ExponentialDecay}, each $w_n$ decays exponentially in the region 
$\{x~ |~  E \cdot \sigma(x) -\mu <0\}$ and the rate of decay increases as $n$ increases. 
In particular, we can use  Proposition \ref{L2Eigenfunction} to prove that the measure 
$|w_n(x)|^2\,dx$ concentrates in the `classically allowed region' 
$\{x~ |~  E \cdot \sigma(x)-\mu \geq 0\}.$  Proposition \ref{CrucialProposition}
is a twofold strengthening of this latter statement: We prove that
if $E$ is not critical then $|w_n(x)|^2\,dx$ does not concentrate on  
$\{x~ |~  E \cdot \sigma(x)-\mu = 0\}$, and we prove that this also 
holds true for a quasimode of order $1$. 

Observe that estimate (\ref{DerEst}) for eigenfunctions 
could be obtained using a contradiction argument which is standard in the study of 
semiclassical measures. (See \cite{LucMeasures} for closely related topics). 
However, we believe that this method fails for first order quasimodes.
\end{remk}

\begin{proof}[Proof of Proposition \ref{CrucialProposition}]
Let $E> \mu/\sigma(0)$.  Then $f_E(0)<0$ and since
$f_E= \mu- E \cdot \sigma$ is strictly increasing
with $\lim_{x \rightarrow \infty}f_E(x)=\mu$, there exists a unique $x_E>0$
such that $f_E(x_E)=0$. Since $f_E$ changes sign at $x_E$, 
we have
\begin{equation} \label{Split}
     \int_{0}^{\infty}(-f_E) \cdot  w^2~ dx~  
    =~    \int_{0}^{x_E}|f_E| \cdot  w^2~ dx~ 
    -~   \int_{x_E}^{\infty}|f_E| \cdot  w^2~ dx.  
\end{equation}
Thus, by Lemmas \ref{Neg} and \ref{Pos} below, 
there exist constants $C^+$, $c^->0$ and $t^*>0$
such that if $t< t^*$, then 
\begin{equation}\label{FinalBoth}
  \int_{0}^{\infty} (-f_E) \cdot w^2~ dx~ \geq~
 c^- \int_{0}^{x_E} w^2~ dx~
-~  C^+ \cdot t^{\frac{1}{3}}   \int_0^\infty w^2 dx.
\end{equation}
Thus, if $t< t_0= (c^-/2C^+)^{3}$, then we have (\ref{DerEst})
with $\kappa= c^-/2$.
\end{proof}

\begin{lem} \label{Pos}
There exist constants $C^+$ and $t^+>0$ so that if $t < t^+$, then
\begin{equation}  \label{FinalPos}
\int_{x_E}^{\infty} |f_E| \cdot w^2~ dx~  \leq~  C^+  \cdot 
   t^{\frac{1}{3}} \int_{0}^{x_E} w^2~ dx.
\end{equation}
\end{lem}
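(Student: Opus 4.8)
The goal is to control the $L^2$-mass of $w$ weighted by $|f_E|$ in the classically forbidden region $[x_E,\infty)$ by the mass in the allowed region $[0,x_E]$, with a gain of $t^{\frac{1}{3}}$. The plan is to split $[x_E,\infty)$ into a \emph{transition zone} near $x_E$, where $|f_E|$ is small and we must use the Airy/Langer-Cherry machinery, and a \emph{far zone} $[x_E^s,\infty)$ (for a fixed $s\in(0,\mu)$, say $s=\mu/2$), where $|f_E| \le \mu$ is bounded and Lemma \ref{ExponentialEstimate} already gives an $O(t^2)$ bound on $\int_{x_E^s}^\infty w^2$ in terms of $\int_0^\infty w^2$, since the quasimode hypothesis \refeq{rHypothesis} translates (via Remark \ref{QuasimodeRemark}) into \refeq{rDefined} with $\|r\|_\sigma \le C\,t\,\|w\|_\sigma$, so that $\|r\|_\sigma/\|w\|_\sigma = O(t)$. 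On the far zone this contributes at most $\mu \cdot C'\,(t^2 + t)\int_0^\infty w^2\,dx$, which is even better than the claimed $t^{\frac13}$.

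First I would handle the transition zone $[x_E, x_E^s]$. Here $|f_E|$ is no longer bounded below, so the crude argument fails and one must pass through the Langer-Cherry transform of \S\ref{LGSection}. Performing the transform at energy $E$ turns \refeq{ODE} into \refeq{WODE}, whose right-hand side $g$ satisfies, by Lemma \ref{GEstimate}, $\|g\|_{L^2(\phi_E(I))}^2 \le C(\|r\|_\sigma^2 + t^4\int_0^\infty w^2\,dx) = O(t^2)\int_0^\infty w^2\,dx$ on any fixed compact interval $I$ straddling $x_E$. Now apply Lemma \ref{TransitionEstimate} (specifically \refeq{Transition1}, after translating $x_E$ to the origin in the Airy variable, so $a<0<b$ correspond to points on either side of $x_E$): this bounds $\int_{x_E}^{\text{(midpoint)}} W^2$ by $C(\int W^2$ over a piece of the allowed side $+\ t^{-\frac{5}{3}}\|g\|^2)$. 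Transferring back through the change of variables \refeq{ChangeOfVariables}, and using that $\rho_E, \rho_E^{-1}$ are bounded above and below on the fixed compact interval (Lemma \ref{rhoBounded}), the first term becomes $C\int$ over a subinterval of $[0,x_E]$ of $w^2$, hence $\le C\int_0^{x_E} w^2\,dx$; and the error term becomes $C\,t^{-\frac{5}{3}} \cdot t^2 \int_0^\infty w^2\,dx = C\,t^{\frac13}\int_0^\infty w^2\,dx$. Multiplying by the bound $|f_E| \le s$ on $[x_E, x_E^s]$ (or by $\mu$ if we cover all the way) finishes the transition-zone contribution. The residual piece $\int_0^\infty w^2\,dx$ on the right is itself comparable to $\int_0^{x_E} w^2 \, dx$ plus the already-controlled tail, so after absorbing we land at \refeq{FinalPos}.

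The step I expect to be the main obstacle is the careful bookkeeping of the transition zone: one must choose the fixed compact interval $I \ni x_E$ (uniformly over $E$ in the compact set $K$, which is possible since $E \mapsto x_E$ is smooth and $K$ is compact), verify that the shifted interval lands in the regime where Lemma \ref{TransitionEstimate}'s constants are uniform, and check that the $W^2$-integral on the ``allowed'' side produced by \refeq{Transition1} really does pull back into $[0,x_E]$ rather than spilling over $x_E$ — this is where the precise choice of $a/2$ versus $0$ in the statement of Lemma \ref{TransitionEstimate} matters. One also has to be slightly careful that the quasimode estimate \refeq{rHypothesis} holds for all $v \in \dom(a_t^\mu)$ but the Langer-Cherry analysis is done on a bounded interval; this is handled because \refeq{rHypothesis} is equivalent to the global equation \refeq{ODE} with a controlled right-hand side $r$, which is all that Lemma \ref{GEstimate} needs. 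Everything else — the far-zone estimate, the bounds on $\rho_E$, the final absorption — is routine given the lemmas already established.
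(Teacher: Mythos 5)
Your overall strategy---split $[x_E,\infty)$ into a transition zone $[x_E,x_E^s]$ and a far zone $[x_E^s,\infty)$, handle the far zone with Lemma \ref{ExponentialEstimate}, handle the transition zone via the Langer--Cherry transform together with Lemmas \ref{GEstimate} and \ref{TransitionEstimate}, and close with an absorption---is exactly the paper's route, and the far-zone piece is handled correctly (Lemma \ref{ExponentialEstimate} with $\|r\|_\sigma/\|w\|_\sigma = O(t)$ gives $O(t)$, which is more than enough).

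There is, however, a real gap in the transition-zone step. You invoke (\ref{Transition1}), but its left-hand side is $\int_a^0 W^2$, which under $y=\phi_E(x)$ corresponds to a subset of the \emph{classically allowed} region $[0,x_E]$, not the transition zone $[x_E,x_E^s]$ you want to bound. The estimate you actually need is (\ref{Transition2}), whose left side $\int_0^{b/2}W^2$ is the forbidden-side piece near the turning point. The distinction is not cosmetic: the crucial feature of (\ref{Transition2}) is the factor $t^{\frac{1}{3}}$ multiplying the allowed-side integral $\int_a^{a/2}W^2$, and that factor is the source of the $t^{\frac{1}{3}}$ gain in Lemma \ref{Pos}. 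As written, your transition-zone bound has the form $C\int_0^{x_E}w^2 + Ct^{\frac{1}{3}}\int_0^\infty w^2$ with no small factor in front of the first term; multiplying by $|f_E|\leq s$ (a fixed constant) then produces $Cs\int_0^{x_E}w^2$, which cannot be absorbed into $C^+t^{\frac{1}{3}}\int_0^{x_E}w^2$ for small $t$. Using (\ref{Transition2}) also introduces the additional term $\int_{b/2}^b W^2$ from the deeper forbidden region, which must again be controlled via Lemma \ref{ExponentialEstimate}; that step, followed by splitting $\int_0^\infty w^2 = \int_0^{x_E}w^2 + \int_{x_E}^\infty w^2$ and subtracting the tail from both sides, is what completes the paper's argument.
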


\begin{lem} \label{Neg}
There exist constants $c^->0$, and $t^->0$  so that if $t < t^-$, then
\begin{equation} \label{FinalNegative}
   \int_{0}^{x_E}|f_E| \cdot  w^2~ dx~ \geq~
    c^- \int_{0}^{\infty} w^2~ dx.
\end{equation}
\end{lem}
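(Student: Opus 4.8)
\emph{Proof sketch.} The plan is to show that a definite fraction of $\int_0^\infty w^2\,dx$ lives in a region where $|f_E|$ is bounded below. Fix $s_1\in(0,\mu)$, and for $E\in K$ and small $s>0$ let $y_E^s\in(0,x_E)$ be the unique point with $|f_E(y_E^s)|=s$; this exists for all $E\in K$ once $s<\sigma(0)(\min_K E-\mu/\sigma(0))$, and it varies smoothly with $E$ because $f_E'=-E\sigma'>0$. By the Riesz representation theorem, $(\ref{rHypothesis})$ says exactly that $w$ solves $(\ref{ODE})$ with right-hand side $r\sigma$ for some $r\in\Hcal_\sigma$ with $\|r\|_\sigma\le Ct\|w\|_\sigma\le C\sqrt{\sigma(0)}\,t\,\|w\|_{L^2}$; note also that $w\in\dom(a_t^\mu)\subset H^1(0,\infty)$, so $\|w\|_{L^2}<\infty$. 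Applying Lemma $\ref{ExponentialEstimate}$ with $s=s_1$ (and $\|r\|_\sigma/\|w\|_\sigma\le Ct$) produces a constant $C_1$ with $\int_{x_E^{s_1}}^\infty w^2\le C_1t\int_0^\infty w^2$ for $t\le1$. Writing $A=\int_0^{y_E^{s_0}}w^2$ and $B=\int_{y_E^{s_0}}^{x_E^{s_1}}w^2$ for a small $s_0$ to be fixed, this gives $A+B\ge(1-C_1t)\int_0^\infty w^2$, while $A\le s_0^{-1}\int_0^{x_E}|f_E|w^2$ since $|f_E|\ge s_0$ on $[0,y_E^{s_0}]\subset[0,x_E]$. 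It therefore remains to bound the ``turning point'' mass $B$ by $\int_0^{x_E}|f_E|w^2$ plus a term that is $o(1)\int_0^\infty w^2$.

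For this I pass to the Langer--Cherry transform $W=W_E$ at energy $E$: under $y=\phi_E(x)$ one has $x_E\mapsto0$, the allowed region $\{x<x_E\}$ maps to $\{y<0\}$, and by Proposition $\ref{wW}$ the function $W$ solves $t^2W''-yW=g$, where Lemma $\ref{GEstimate}$ gives $\int_{\phi_E(I)}g^2\le C(\|r\|_\sigma^2+t^4\int_0^\infty w^2)\le C't^2\int_0^\infty w^2$ on any compact $y$-interval $\phi_E(I)$. I will fix $s_0$ small and reals $a<0<b$, depending only on $K,\sigma,\mu,s_0,s_1$, so that for every $E\in K$: (i) $\phi_E^{-1}([a,a/2])\subset(0,y_E^{s_0}]$; (ii) $\phi_E^{-1}([b/2,b])\subset[x_E^{s_1},\infty)$; and (iii) $[\phi_E(y_E^{s_0}),\phi_E(x_E^{s_1})]\subset[a,b/2]$. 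Such a choice is possible because $\phi_E$, $x_E$, $y_E^{s_0}$ and $x_E^{s_1}$ all vary continuously over the compact set $K$, and $\phi_E(y_E^{s_0})\to0$ uniformly as $s_0\to0$ while $\phi_E(0)$ stays bounded away from $0$; so for $s_0$ small enough one may take any $a$ between $\max_E\phi_E(0)$ and $\min_E2\phi_E(y_E^{s_0})$, and $b=2\max_E\phi_E(x_E^{s_1})$.

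With these choices, adding the conclusions $(\ref{Transition1})$ and $(\ref{Transition2})$ of Lemma $\ref{TransitionEstimate}$ on $[a,b]$ gives $\int_a^{b/2}W^2\le C(1+t^{1/3})\int_a^{a/2}W^2+C\int_{b/2}^bW^2+Ct^{-5/3}\int_a^bg^2$. On fixed compact intervals the measures $|w|^2\,dx$ and $|W|^2\,dy$ are comparable, since $|W|^2\,dy=(\phi_E')^2|w|^2\,dx$ with $\phi_E'$ positive and smooth (Lemmas $\ref{phiproperties}$ and $\ref{rhoBounded}$), with constants uniform over $E\in K$. Hence, using (i), $\int_a^{a/2}W^2\le Cs_0^{-1}\int_0^{x_E}|f_E|w^2$; using (ii), $\int_{b/2}^bW^2\le C\int_{x_E^{s_1}}^\infty w^2\le CC_1t\int_0^\infty w^2$; and $Ct^{-5/3}\int_a^bg^2\le C''t^{1/3}\int_0^\infty w^2$ by the $g$-estimate above. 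By (iii) the same comparability yields $B\le C\int_a^{b/2}W^2$, so $B\le(C_3/s_0)\int_0^{x_E}|f_E|w^2+C_4t^{1/3}\int_0^\infty w^2$ for $t\le1$. Combining with $A\le s_0^{-1}\int_0^{x_E}|f_E|w^2$ and $A+B\ge(1-C_1t)\int_0^\infty w^2$ gives $\tfrac{1+C_3}{s_0}\int_0^{x_E}|f_E|w^2\ge(1-C_1t-C_4t^{1/3})\int_0^\infty w^2\ge\tfrac12\int_0^\infty w^2$ once $t$ is below some $t^-$, which is $(\ref{FinalNegative})$ with $c^-=s_0/(2(1+C_3))$. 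The main obstacle is the bookkeeping in the middle paragraph: producing a \emph{single} triple $(s_0,a,b)$ for which (i)--(iii) hold simultaneously and uniformly in $E\in K$, which is exactly what forces $s_0$ to be chosen small relative to $\min_K(\sigma(0)E-\mu)$ and to the modulus of continuity of $(E,x)\mapsto\phi_E(x)$.
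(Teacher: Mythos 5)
Your proof is correct and reaches the same estimate using the same core tools---Lemma \ref{ExponentialEstimate}, the Langer-Cherry transform, Lemma \ref{GEstimate} and Lemma \ref{TransitionEstimate}---but it is organized differently from the paper's argument, and the difference is not merely cosmetic. The paper starts from the left-hand side $\int_0^{x_E}|f_E|w^2\,dx$, pushes it down to $\int_{y_E^-}^{y_E^-/2}W^2\,dy$ (using only that $|f_E\circ\phi_E^{-1}|$ is bounded below on that fixed reference interval), then uses just the \emph{first} estimate \eqref{Transition1} of Lemma \ref{TransitionEstimate} to reach $\int_{y_E^-}^{0}W^2$, translates back to $\int_0^{x_E}w^2$, and finally passes to $\int_0^\infty w^2$ via Corollary \ref{UpDownEstimate} (which is itself a byproduct of the proof of Lemma \ref{Pos}). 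You instead work forward from the right-hand side: you split $\int_0^\infty w^2$ into the inner region $[0,y_E^{s_0}]$ where $|f_E|\geq s_0$ controls things directly, the turning-point region $[y_E^{s_0},x_E^{s_1}]$ where you need \emph{both} halves \eqref{Transition1} and \eqref{Transition2} of the Transition Lemma (because your middle region straddles $y=0$), and the far tail $[x_E^{s_1},\infty)$ controlled directly by Lemma \ref{ExponentialEstimate}. The net effect is that your argument does not lean on Corollary \ref{UpDownEstimate} and is in that sense self-contained relative to Lemma \ref{Pos}, at the cost of invoking the second Transition estimate and doing more delicate bookkeeping in choosing the triple $(s_0,a,b)$ uniformly over $E\in K$; the paper, conversely, keeps the Neg-side argument short by outsourcing the tail to the Pos-side corollary. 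Both routes are legitimate, and your sketch correctly identifies the uniform choice of $(s_0,a,b)$ as the only point requiring care.
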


The proofs of Lemma \ref{Neg} and \ref{Pos}
are based on estimates provided in sections \ref{ExponentialEstimates}, 
\ref{LGSection}, and \ref{AirySection}.  In preparation for these proofs
we provide the common context.

First note that the Riesz representation theorem provides 
$r \in {\mathcal H}_{\sigma}$ so that for all $v \in {\rm dom}(a_t)$ 
\[  \left|a_{t}(w,v)~ -~ E \cdot  \langle w, v\rangle_{\sigma} \right|~ 
   =~  \langle r, v \rangle_{\sigma}.
\]
where $\|r\|_{\sigma} \leq C_0\cdot t \cdot \|w\|_{\sigma}$.

Let $W$ denote the Langer-Cherry transform of $w$ at 
energy $E$ (see \S \ref{LGSection}). In particular, 
\[  W~ =~ \left((\phi_E')^{\frac{1}{2}} \cdot w\right) \circ \phi_E^{-1} \]
where $\phi_E$ is defined by (\ref{phiE}).
By Proposition \ref{wW}, the function $W$ satisfies (\ref{WODE2}) 
with $g$ equal to the right-hand side of equation (\ref{WODE}).

As a last preparation for the proofs, we define the endpoints 
of the intervals over which we will apply the estimates from the preceding sections. 
Let $x_E^+$ be defined by $f_E(x_E^+)= \mu/2$. 
In other words, $x_E^+=x_E^{\mu/2}$ where $x_E^s$ is defined in (\ref{xs}). 
Define
\[ y_E^+~ =~  2 \cdot \phi_E(x_E^+) \]
and 
\[ y_E^-~ =~  \phi_E(0). \] 
Since $\sigma$ is decreasing, we have $0 < x_E < x_E^+$,
and hence since $\phi_E$ is strictly increasing, we have $y_E^- < 0 < y_E^+$. 
It follows from Lemma \ref{phiproperties} and Remark \ref{RmkSmoothness} 
that $y^+_E$ and $y_E^-$ depend smoothly on $E$.


\begin{proof}[Proof of Lemma \ref{Pos}]
Since $\sigma$ is decreasing, we have $\sup \left\{|f_E|~ |~ x \geq x_E \right\}=\mu$,
and thus 
\begin{equation}  \label{LowerF}
  \int_{x_E}^{\infty}|f_E| \cdot  w^2~ dx~   \leq~
    \mu \int_{x_E}^{\infty}  w^2~ dx.  
\end{equation}
Since $\phi_E\left([x_E,x_E^+]\right)= \left[0, \frac{y_E^+}{2} \right]$
and $W^2 \cdot dy= (\phi_E')^{2} \cdot w^2 \cdot dx$,
we have
\begin{equation} \label{PosSmall}
  \int_{x_E}^{\infty}  w^2~ dx~ \leq~ C_1~  \int_{0}^{\frac{y_E^+}{2}} W^2~ dy~
   +~ \int_{x_E^+}^{\infty} w^2~ dx,~
\end{equation}
where $C_1\,=\, \max \{(\phi'_E(x))^{-2}~ |~  E\in K, x\in [x_E,x_E^+] \}$.

By Lemma \ref{TransitionEstimate} there exist constants $C_E$ and $t_E>0$
so that if $t< t_E$, then 
\begin{equation}  \label{CE}
 \int_{0}^{\frac{y_E^+}{2}} W^2~ dy~ \leq~   
C_E \left( t^{\frac{1}{3}} \int_{y_E^-}^{\frac{y_E^-}{2}}  W^2~ dy~ +~
      \int_{\frac{y_E^+}{2}}^{y_E^+}  W^2~ dy~ +~ t^{-\frac{5}{3}} \int_{y_E^-}^{y_E^+} 
    g^2~ dx  \right).
\end{equation}
The constants $C_E$ and $t_E$ depend continuously on $E$ and hence 
$C_2= \sup \{ C_E~ |~ E \in K\}$ is finite and $t_2=\inf\{t_E~ |~ E \in K\}$
is positive. Since $W^2 \cdot dy= (\phi_E')^{2} \cdot w^2 \cdot dx$,
we have 
\begin{equation} \label{DoubleW}
  t^{\frac{1}{3}} \int_{y_E^-}^{\frac{y_E^-}{2}}  W^2~ dy~ +~
      \int_{\frac{y_E^+}{2}}^{y_E^+}  W^2~ dy~
\leq~  C_3 \cdot \left( t^{\frac{1}{3}} \int_{0}^{\infty}  w^2~ dx~ +~
      \int_{x_E^+}^{\infty}  w^2~ dx \right)
\end{equation}
where $C_3:= \sup~ \left\{  (\phi_E'(x))^{2}~ 
|~ E \in K,~ x \in  \left[0, \phi^{-1}(y_E^+) \right] \right\}$.

By Lemma \ref{GEstimate}, there exists a constant $C^*$ so that
\begin{equation}  \label{gEstimate} 
 \int_{y_E^-}^{y_E^+} g^2~ dy~
 \leq~  C^* \cdot t^2~  \int_{0}^{\infty} w^2~  dx.   
\end{equation}
By substituting (\ref{DoubleW}) and (\ref{gEstimate}) into (\ref{CE})
we find that if $t< t_2$, then 
\begin{equation}  \label{W+}
 \int_{0}^{\frac{y_E^+}{2}} W^2~ dy~ \leq~
     C_4 \cdot t^{\frac{1}{3}} \int_0^\infty  w^2~ dx~ +~
     C_5 \int_{x^+_E}^{\infty}  w^2~ dx~
\end{equation}
where $C_4=C_2 \cdot (C_3+ C^*)$ and $C_5= C_2\cdot C_3$.
By Lemma \ref{ExponentialEstimate}, there 
exists a constant $C_6$ so that if $t<1$, then 
\begin{equation}   \label{XLargeBound}
     \int_{x_E^+}^{\infty} w^2~ dx~ \leq~  
     C_6 \cdot t^{\frac{1}{3}}~  \int_{0}^{\infty} w^2~ dx.
\end{equation}
By  combining  (\ref{PosSmall}), (\ref{W+}), and (\ref{XLargeBound}),
we find that if $t< t_3:= \min\{1,t_2\}$, then  
\begin{equation}  \label{1/3}
 \int_{x_E}^{\infty} w^2~ dx~ 
\leq~  C_7 \cdot  t^{\frac{1}{3}} \int_0^\infty  w^2~ dx.
\end{equation}
where $C_7= C_1\cdot C_4+ C_1 \cdot C_5 \cdot C_6 + C_6$.
Finally, split the integral on the right hand side of (\ref{1/3})
into the integral over $[0,x_E]$ and the integral over $[x_E,\infty)$.
Then subtract the latter integral from both sides of (\ref{1/3}).
It follows that if $t< \min\{ t_3, (2C_7)^{-3}\}$, then 
\[  \frac{1}{2} \int_{x_E}^{\infty} w^2~ dx~ 
\leq~  C_7 \cdot  t^{\frac{1}{3}} \int_0^{x_E}  w^2~ dx.
\]
The claim then follows by combining this with (\ref{LowerF}).
\end{proof}

We have the following corollary of the proof.

\begin{coro} \label{UpDownEstimate}
There exist constants $C'$ and $t'>0$ such that if $t<t'$
\[  \int_{x_E}^{\infty} w^2~ dx~ 
\leq~  C' \cdot  t^{\frac{1}{3}} \int_0^{x_E}  w^2~ dx.
\]
\end{coro}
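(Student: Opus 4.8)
The plan is to extract the estimate directly from the proof of Lemma~\ref{Pos} rather than redo any analysis. That proof establishes inequality~\refeq{1/3}: there are a constant $C_7>0$ and a threshold $t_3>0$, depending only on $K$, $C$, $\mu$, and $\sigma$, such that for every $E\in K$, every $t<t_3$, and every $w$ satisfying the quasimode hypothesis~\refeq{rHypothesis},
\[
  \int_{x_E}^{\infty} w^2~ dx~ \leq~ C_7 \cdot t^{\frac{1}{3}} \int_0^{\infty} w^2~ dx .
\]
First I would split the integral on the right as $\int_0^{\infty} = \int_0^{x_E} + \int_{x_E}^{\infty}$, obtaining
\[
  \int_{x_E}^{\infty} w^2~ dx~ \leq~ C_7 \cdot t^{\frac{1}{3}} \int_0^{x_E} w^2~ dx~ +~ C_7 \cdot t^{\frac{1}{3}} \int_{x_E}^{\infty} w^2~ dx .
\]
Then, restricting to $t$ small enough that $C_7 \cdot t^{\frac{1}{3}} \leq \frac{1}{2}$, i.e. $t<(2C_7)^{-3}$, I would move the last term to the left-hand side, which gives
\[
  \frac{1}{2}\int_{x_E}^{\infty} w^2~ dx~ \leq~ C_7 \cdot t^{\frac{1}{3}} \int_0^{x_E} w^2~ dx .
\]
The claim then follows with $C'=2C_7$ and $t'=\min\{t_3,(2C_7)^{-3}\}$, both of which depend only on $K$, $C$, $\mu$, and $\sigma$, as required. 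This is precisely the absorption step already carried out at the end of the proof of Lemma~\ref{Pos} before invoking~\refeq{LowerF}; the corollary merely records the intermediate inequality, the point being that the weight $|f_E|$ (which vanishes at $x_E$) has been removed.

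Consequently there is no genuinely new obstacle: the substantive work---passing to the Langer--Cherry transform via Proposition~\ref{wW}, controlling the turning-point region through the Airy-kernel transition estimate of Lemma~\ref{TransitionEstimate}, and bounding the tail $\int_{x_E^+}^{\infty} w^2$ via the $L^2$ estimate of Lemma~\ref{ExponentialEstimate}---has already been done to produce~\refeq{1/3}. If one insists on naming a delicate point, it is the uniformity over $E\in K$ of all the constants feeding into $C_7$ (namely $C_1,\dots,C_6$ and $C^{*}$), but that uniformity is guaranteed by the compactness of $K$ together with the smooth dependence of $x_E$, $x_E^{+}$, $\phi_E$, and $y_E^{\pm}$ on $E$ recorded in Lemma~\ref{phiproperties} and Remark~\ref{RmkSmoothness}. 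I would therefore present the corollary's proof as a two-line consequence of~\refeq{1/3}, exactly along the lines above.
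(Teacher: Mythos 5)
Your proposal is correct and matches the paper's own argument exactly: Corollary~\ref{UpDownEstimate} is precisely the intermediate inequality obtained by the split-and-absorb step at the end of the proof of Lemma~\ref{Pos}, before the weight $|f_E|$ is reinstated via~\refeq{LowerF}. Nothing further is needed.
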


\begin{proof}[Proof of Lemma \ref{Neg}]
Since $W^2 \cdot dy= (\phi_E')^{2} \cdot w^2 \cdot dx$ we have
\[ 
   \int_{0}^{x_E}|f_E| \cdot  w^2~ dx~ 
 \geq~  c_1~ \int_{y_E^-}^0 \left|f_E \circ \phi_E^{-1} \right| \cdot W^2~ dy 
\]
where $c_1 = \inf \left\{  \phi'_E(x)^{-2}~ |~ E \in K,~ x \in \left[0, x_E \right] \right\}$.
Since $f_E\circ\phi_E^{-1}$ is negative and increasing 
on $[y_E^-, y_E^-/2]$, we have  
\[  \int_{y_E^-}^{\frac{y_E^-}{2}} \left| f_E \circ \phi_E^{-1} \right| \cdot W^2~ dy~
     \geq~       c_2~ \int_{y_E^-}^{\frac{y_E^-}{2}} W^2~ dy.
\]
where $c_2 =  \inf \left\{ \left. 
 \left|f_E \circ \phi_E^{-1} \left(y_E^-/2 \right) \right|~ \right|~ E \in K
    \right\}$. 
Putting these two estimates together we have 
\begin{equation}  \label{delta}
  \int_{0}^{x_E}|f_E| \cdot  w^2~ dx~   
  \geq~  c_1 \cdot c_2~ \int_{y_E^-}^{\frac{y_E^-}{2}} W^2~ dy.
\end{equation}
It follows from Lemma \ref{phiproperties} that $c_1$ and $c_2$ are both positive.

By Lemma \ref{TransitionEstimate}, there exist
constants $C_E$ and $t_E>0$ so that if $t<t_E$, then 
\begin{equation}  \label{TransitionRecall}
    \int_{y_E^-}^{0} W^2~ \leq~ 
   C_E \cdot \left(  \int_{y_E^-}^{\frac{y_E^-}{2}}  W^2~
  +~  t^{-\frac{5}{3}} \int_{y_E^-}^{y_E^+} g^2~
\right).
\end{equation}
Moreover, $C_E$ and $t_E$ depend continuously on $E$, and hence 
the constants $c_3= \sup \{1/C_E~ |~ E \in K\}$ and $t_1=\inf\{t_E~|~ E \in K\}$
are both postive. By manipulating (\ref{TransitionRecall}) we find that
\begin{equation} \label{TransitionRecall2}  
     \int_{y_E^-}^{\frac{y_E^-}{2}}  W^2~ \geq~
   c_3 \int_{y_E^-}^{0} W^2~ 
  -~  t^{-\frac{5}{3}} \int_{y_E^-}^{y_E^+} g^2
\end{equation}
for each $t<t_1$.

By combining (\ref{delta}), (\ref{TransitionRecall2}), and (\ref{gEstimate})
we find that for $t < t_1$
\begin{equation} \label{Melange}
  \int_{0}^{x_E}|f_E| \cdot  w^2~ dx~   \geq~
 c_4~ \int_{y_E^-}^{0} W^2~ dy~ -~ C \cdot t^{\frac{1}{3}}~ \int_0^\infty w^2~ dx 
\end{equation}
where $c_4= c_1 \cdot c_2 \cdot c_3$ and $C=c_1 \cdot c_2 \cdot C^*$.
Since $W^2 \cdot dy= (\phi_E')^{2} \cdot w^2 \cdot dx$, we have
\begin{equation} \label{BackW}
    \int_{y_E^-}^0  W^2~ dy~ 
     \geq~  c_5 \int_{0}^{x_E}  w^2~ dx~ 
\end{equation}
where $c_5= \inf\{ (\phi_E'(x))^2~ |~ E \in K, x \in [0, x_E]\}$ 
is positive by Lemma \ref{phiproperties}.  By substituting (\ref{BackW})
into (\ref{Melange})  and applying Corollary \ref{UpDownEstimate},
we find that if $t< t_2=\min\{t_1, t'\}$, then
\begin{equation} \label{PreFinalNegative}
   \int_{0}^{x_E}|f_E| \cdot  w^2~ dx~ \geq~
     c_4 \cdot c_5 \int_{0}^{\infty} w^2~ dx~
   -\left(C+ c_4 \cdot c_5 \cdot C'\right) \cdot t^{\frac{1}{3}}~ \int_0^\infty w^2~ dx. 
\end{equation} 
If $t<t^-= \min \left\{t_2, \left(c_4 \cdot c_5/2(C+c_4 \cdot c_5 \cdot C')\right)^3\right\}$, 
then (\ref{FinalNegative}) holds with $c_- =c_4 \cdot c_5/2$.
\end{proof}


\section{Convergence, estimation, and separation of eigenvalues}
  \label{ConvEstSepSection}

Let $a_t^\mu$ be the family of quadratic forms defined as 
in \S \ref{aSection}.  In this section, we will evaluate the 
limit to which each real-analytic eigenvalue branch converges (Proposition \ref{amuconvergence}),
estimate the asymptotic behavior of eigenvalues (Proposition \ref{aSpec}), 
and show that if both $t$ and $E-\mu/\sigma(0)$ are sufficiently small, 
then eigenvalues near energy $E$ must be `super-separated' at order $t$ 
(Theorem \ref{Superseparation}).

\subsection{Convergence}

Let $t \mapsto \lambda_t$ 
be a real-analytic eigenvalue branch
of $a_t^\mu$ with respect to $\langle \cdot, \cdot \rangle_\sigma$.
Since $|w'|^2 \geq 0$ and $\sigma$ is decreasing, we have 
\begin{equation} \label{LambdaLower}
  \lambda_t~ \geq~ \frac{\mu \int_0^{\infty} |w_t|^2~ dx}{
\int_0^{\infty} |w_t|^2\cdot \sigma~ dx}~
\geq~  \frac{\mu}{\sigma(0)}.
\end{equation}
The first derivative of $a_t^\mu$,
\[ \dot{a}_t^\mu(u)~ =~ 2t \int_0^{\infty} \left|w_t'(x)\right|^2~ dx, \]
is nonnegative, and hence by Proposition \ref{aconvergence},
the eigenbranch $\lambda_t$ converges as $t$ tends to zero.

\begin{prop}  \label{amuconvergence}
We have
\[  \lim_{t \rightarrow 0}  \lambda_t~ =~ \frac{\mu}{\sigma(0)}. \]
\end{prop}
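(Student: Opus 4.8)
The plan is to show that $\lambda_t$ cannot converge to anything strictly larger than $\mu/\sigma(0)$; combined with the lower bound \eqref{LambdaLower} and the fact (already recorded) that $\lambda_t$ converges, this forces $\lim_{t\to 0}\lambda_t = \mu/\sigma(0)$. Suppose toward a contradiction that $\lambda_t \to E_0$ with $E_0 > \mu/\sigma(0)$. Pick a compact set $K \subset (\mu/\sigma(0),\infty)$ whose interior contains $E_0$, so that $\lambda_t \in K$ for all sufficiently small $t$. Let $w_t$ be the corresponding real-analytic eigenfunction branch, normalized so that $\|w_t\|_\sigma = 1$; then $a_t^\mu(w_t,v) = \lambda_t \langle w_t, v\rangle_\sigma$ for all $v \in \dom(a_t^\mu)$, which is exactly hypothesis \eqref{rHypothesis} of Proposition \ref{CrucialProposition} — in fact with the error term $r=0$, a fortiori with any constant $C>0$ on the right-hand side. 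Hence Proposition \ref{CrucialProposition} applies at energy $E = \lambda_t$.

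The key step is then to read off what Proposition \ref{CrucialProposition} says and compare it with the variational identity. Applying \eqref{DerEst} with $E = \lambda_t$ gives a constant $\kappa > 0$ (depending only on $K$, $\mu$, $\sigma$) such that for all $t < t_0$,
\[
 \int_0^\infty \bigl( \lambda_t \cdot \sigma(x) - \mu \bigr) \cdot |w_t(x)|^2~ dx~ \geq~ \kappa \cdot \|w_t\|_\sigma^2~ =~ \kappa.
\]
On the other hand, from the eigenvalue equation (taking $v = w_t$) we have
\[
 t^2 \int_0^\infty |w_t'(x)|^2~ dx~ +~ \mu \int_0^\infty |w_t(x)|^2~ dx~ =~ \lambda_t \int_0^\infty |w_t(x)|^2 \cdot \sigma(x)~ dx,
\]
so that
\[
 \int_0^\infty \bigl( \lambda_t \cdot \sigma(x) - \mu\bigr) \cdot |w_t(x)|^2~ dx~ =~ t^2 \int_0^\infty |w_t'(x)|^2~ dx.
\]
Combining the two displays, $t^2 \int_0^\infty |w_t'|^2~ dx \geq \kappa$ for all small $t$, i.e.
\[
 \int_0^\infty |w_t'(x)|^2~ dx~ \geq~ \frac{\kappa}{t^2}.
\]

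Finally I would derive the contradiction from this lower blow-up of the gradient term. Since $a_t^\mu(w_t) = \lambda_t \|w_t\|_\sigma^2 = \lambda_t \leq \max K$ is bounded, and $a_t^\mu(w_t) \geq t^2 \int_0^\infty |w_t'|^2~ dx$, we get $t^2 \int_0^\infty |w_t'|^2~ dx \leq \max K$; but this is consistent with the previous inequality only if $\kappa \leq \max K$, which is not yet a contradiction. The correct route instead: the bound $t^2\int_0^\infty|w_t'|^2\,dx \ge \kappa$ says the Dirichlet energy contributes a fixed positive proportion $\kappa$ of $a_t^\mu(w_t)$, hence $\mu\int_0^\infty |w_t|^2\,dx = a_t^\mu(w_t) - t^2\int_0^\infty|w_t'|^2\,dx \le \lambda_t - \kappa \le \max K - \kappa$, so $\int_0^\infty|w_t|^2\,dx$ stays bounded; this gives no contradiction by itself either. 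The genuine obstacle — and the step I expect to be the crux — is to extract from $\int_0^\infty |w_t'|^2\,dx \to \infty$ a contradiction with $\lambda_t$ being bounded, which should come from a Poincaré/quasimode comparison: as $t\to 0$, a normalized function with $t^2\int|w_t'|^2 \ge \kappa$ and bounded $a_t^\mu$-energy must, after the Langer–Cherry rescaling of \S\ref{LGSection}, behave like a genuine oscillatory Airy-type mode whose Rayleigh quotient $a_t^\mu(w_t)/\|w_t\|_\sigma^2$ is pushed strictly below $E_0$ — indeed down toward $\mu/\sigma(0)$ — by the non-concentration estimate, since the mass that \eqref{DerEst} forbids from sitting near $x_{E_0}$ must sit where $\sigma > \sigma(x_{E_0})$, raising the denominator $\int |w_t|^2\sigma\,dx$. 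Making this quantitative — showing $\lambda_t \le \mu/\sigma(0) + o(1)$ directly from Proposition \ref{CrucialProposition} and \eqref{LambdaLower} — is the substance of the argument; once that is in hand, together with $\lambda_t \ge \mu/\sigma(0)$ it yields $\lim_{t\to 0}\lambda_t = \mu/\sigma(0)$.
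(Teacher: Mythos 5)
Your setup is right and you have correctly isolated the key inequality: from Proposition \ref{CrucialProposition} applied at energy $E=\lambda_t$ to the normalized eigenfunction $w_t$, together with the eigenvalue equation evaluated on $v=w_t$, you obtain
\[
  t^2 \int_0^\infty |w_t'|^2\,dx \;=\; \int_0^\infty (\lambda_t \sigma - \mu)\,|w_t|^2\,dx \;\geq\; \kappa
\]
for small $t$, under the contradiction hypothesis $\lambda_0 := \lim_{t\to 0}\lambda_t > \mu/\sigma(0)$. You then explicitly acknowledge that you cannot close the argument: no static Rayleigh-quotient or Poincar\'e estimate will work here, because the same eigenvalue equation already gives $t^2\int|w_t'|^2\,dx \leq \lambda_t \leq \max K$, so the two bounds $\kappa \leq t^2\int|w_t'|^2 \leq \max K$ are mutually consistent and never collide. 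Your speculation about Langer--Cherry rescaling and concentration does not repair this; it is not the route the argument takes.

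The missing idea is to bring in analytic perturbation theory, specifically the variational (Hellmann--Feynman) formula \eqref{VariationalFormula}, which in this setting reads
\[
  \dot{\lambda}_t \cdot \|w_t\|_\sigma^2 \;=\; \dot{a}_t^\mu(w_t) \;=\; 2t\int_0^\infty |w_t'|^2\,dx.
\]
Dividing your lower bound $t^2\int|w_t'|^2\,dx \geq \kappa\|w_t\|_\sigma^2$ by $t$ and substituting yields $\dot{\lambda}_t \geq 2\kappa/t$ for all small $t$. This is where the contradiction lives: $\lambda_t$ has a finite limit as $t\to 0$ (which you already know from Proposition \ref{aconvergence}), so $\dot{\lambda}_t$ must be integrable on an interval $(0,t_0)$, yet $2\kappa/t$ is not integrable near $0$. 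In other words, the contradiction is not a static incompatibility of Rayleigh quotients but a dynamical one: the non-concentration estimate forces the eigenbranch to increase too fast as $t$ decreases. Once you insert the variational formula, your derivation becomes exactly the paper's proof; without it, the argument does not close.
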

\begin{proof}
Let $w_t$ be an eigenfunction branch associated to $E_t$.
The variational formula (\ref{VariationalFormula}) becomes
\begin{equation} \label{AppliedVariational}
 \dot{\lambda} \cdot \|w_t\|_{\sigma}^2~ 
    =~ 2t \int_0^{\infty} \left|w_t'(x)\right|^2~ dx. 
\end{equation}
Using the eigenvalue equation for $a_t^\mu$ with respect 
to $\langle\cdot, \cdot\rangle_{\sigma}$
we find that
\[ t^2 \int_0^{\infty} \left|w_t'(x)\right|^2~ dx~ =~
 \int_0^{\infty} \left( \lambda_t \cdot \sigma(x)~ -~ \mu\right) \cdot |u(x)|^2~ dx.
\]
By combining this with (\ref{AppliedVariational}) and (\ref{LambdaLower})
we find that
\begin{equation}  \label{preCrucial}
 \dot{\lambda} \cdot \|w_t\|_{\sigma}^2~ \geq~
   \frac{2}{t} \cdot  \int_0^{\infty}  
    \left( \lambda_t \cdot \sigma(x)~ -~ \mu\right)~ \cdot |w_t(x)|^2~ dx.
\end{equation}

Suppose to the contrary that 
$\lambda_0:=\lim_{t \rightarrow 0} \lambda_t \neq \mu/\sigma(0)$.
Then by (\ref{LambdaLower}), we have $\lambda_0 > \mu/\sigma(0)$.
Let $K$ be the compact interval $[\lambda_0, \lambda_0+1]$.
Then for all $t$ sufficiently small, $\lambda_t \in K$.
Hence we can apply Proposition \ref{CrucialProposition},
with $E= \lambda_t$, and obtain a constant $\kappa>0$ such that
\[  \int_0^{\infty}  
    \left( \lambda_t \cdot \sigma(x)~ -~ \mu\right)~ \cdot |w_t(x)|^2~ dx~
   \geq~ \kappa \cdot \|w_t(x) \|^2_{\sigma}.
\]
By combining this with (\ref{preCrucial}) we find that
\[   \frac{d}{dt}~ \lambda_t~  \geq~ 
\frac{2}{t} .
\]
The left hand side is integrable on an interval of the form $[0,t_0)$,
but the right hand side is not integrable on such an interval. The claim follows.
\end{proof}

\subsection{Airy eigenvalues}

The remainder of this section concerns quantitative
estimates on the eigenvalues of $a_t^{\mu}$ for $t$ small.
In particular, we will use the Cherry-Langer transform to
compare the eigenvalues of $a_t^{\mu}$ to the eigenvalues
of the operator associated to the Airy equation. 
We first define and study the eigenvalue problem for the model operator.

For each $z \in \R$ and  $u \in C_0^{\infty}[z, \infty)$ define 
\[  \Acal_z(u)(y)~ =~   - u''(y)~ + y \cdot u(y). \]
The operator $\Acal_z$ is symmetric with respect to the   
$L^2([z, \infty), dy)$ inner product, and we
have $\langle \Acal_z(u), u \rangle \geq z \cdot \|u\|^2$. 
Thus, by the method of Friedrichs, we may extend $\Acal_{z}$ 
to a densely defined, self-adjoint operator on 
$L^2([z, \infty), dy)$ with either Dirichlet or Neumann
conditions at $y=z$.

Let $A_{\pm}$ be the solutions to the Airy equation defined in Appendix \ref{AiryAppendix}.

\begin{prop} \label{AiryEigenvalueProp}
The real number $\nu$ is a Dirichlet (resp. Neumann) 
eigenvalue of $\Acal_z$ with respect to the $L^2$-norm 
if and only if $z-\nu$ is a zero of $A_-$ (resp. $A_-'$). 
Moreover, each eigenspace of $\Acal_z$ is $1$-dimensional
and each eigenvalue of $\Acal_z$ is strictly greater than $z$.
\end{prop}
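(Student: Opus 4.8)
The plan is to solve the ODE $-u'' + y\cdot u = \nu\cdot u$ explicitly in terms of Airy functions, impose the boundary conditions, and read off the spectrum. Rewrite the equation as $u''(y) = (y-\nu)\cdot u(y)$; after the shift $v = y-\nu$ this is exactly the Airy equation $u'' = v\cdot u$, so every solution is a linear combination of $A_+(y-\nu)$ and $A_-(y-\nu)$, where $A_\pm$ are the two solutions from Appendix \ref{AiryAppendix}. First I would recall the two facts I need about these functions: $A_-$ (the recessive solution, a multiple of the classical $\mathrm{Ai}$) decays---indeed is square-integrable---at $+\infty$, while $A_+$ grows and is \emph{not} in $L^2$ at $+\infty$. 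Hence a solution lies in $L^2([z,\infty))$ if and only if it is a scalar multiple of $y\mapsto A_-(y-\nu)$. This already forces every eigenspace to be at most one-dimensional.

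Next I would impose the boundary condition at $y=z$. In the Dirichlet case the eigenfunction $u(y)=A_-(y-\nu)$ must vanish at $y=z$, i.e. $A_-(z-\nu)=0$; conversely, if $z-\nu$ is a zero of $A_-$, then $A_-(\cdot-\nu)$ is a nonzero element of the Friedrichs domain satisfying the equation and the Dirichlet condition, hence an eigenfunction. This gives the stated equivalence, and symmetrically $u'(z)=0$ becomes $A_-'(z-\nu)=0$ in the Neumann case. One should check that $A_-(\cdot-\nu)$ genuinely lies in the Friedrichs form domain (finiteness of $\int (|u'|^2 + y|u|^2)$ on $[z,\infty)$), which follows from the superexponential decay of $A_-$ and $A_-'$ recorded in the appendix; since $A_-$ has only real zeros and at most finitely many in any half-line, each eigenvalue is isolated with a one-dimensional eigenspace, so the eigenspaces are exactly one-dimensional.

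Finally, the bound $\nu>z$: this is immediate from the Friedrichs construction, since for $u\in C_0^\infty[z,\infty)$ one has
\[
 \langle \Acal_z u, u\rangle~ =~ \int_z^\infty \left(|u'(y)|^2 + y\,|u(y)|^2\right) dy~ \geq~ z\int_z^\infty |u(y)|^2\,dy~ =~ z\,\|u\|^2,
\]
with equality only for $u\equiv 0$ (the gradient term would have to vanish and $y\ge z$ with equality only at the endpoint). Passing to the form closure gives $\langle \Acal_z u,u\rangle > z\|u\|^2$ for every nonzero $u$ in the domain, hence every eigenvalue is strictly larger than $z$. The main obstacle, such as it is, is the bookkeeping around the Friedrichs extension---verifying that the explicit Airy solution lies in the correct operator domain (not merely the form domain) and that no extra $L^2$ solutions are introduced at the singular end $+\infty$---but this is handled entirely by the decay estimates for $A_\pm$ already established in Appendix \ref{AiryAppendix}, so no genuinely new analysis is required.
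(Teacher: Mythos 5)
Your proof is correct and follows essentially the same route as the paper for the main part: shift the independent variable to reduce the eigenvalue ODE to the Airy equation, use square-integrability at $+\infty$ to force proportionality to the recessive solution $A_-$, and then read the Dirichlet (resp.\ Neumann) condition at $y=z$ as a zero of $A_-$ (resp.\ $A_-'$); the one-dimensionality of eigenspaces follows in both treatments from this proportionality (the paper attributes it to Sturm--Liouville theory, which is the same observation). Where you differ is the final claim that every eigenvalue exceeds $z$: the paper deduces this from the fact that $A_-$ has no zeros on $[0,\infty)$, which makes $A_-(z-\nu)=0$ force $z-\nu<0$. You instead run a variational argument on the Friedrichs form, $\langle \Acal_z u,u\rangle=\int_z^\infty(|u'|^2+y|u|^2)\,dy\geq z\|u\|^2$ with equality only for $u\equiv 0$. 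Your route is slightly more self-contained and has the virtue of covering the Dirichlet and Neumann cases in one stroke (the paper's stated justification only explicitly addresses the $A_-$ zero set, and for the Neumann case one must additionally recall that $A_-'$ has no zeros on $[0,\infty)$); the paper's route is shorter given that the asymptotics of $A_\pm$ are already in hand. You also spell out the converse implication (zero of $A_-$ implies eigenvalue) and the Friedrichs-domain membership of the explicit solution, which the paper leaves implicit; that is a welcome bit of extra care, not a disagreement.
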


\begin{proof}
If $\psi$ be an eigenfunction of $\Acal_z$ with eigenvalue $\nu$, then   
\[     -\psi''(y)~ + (y- \nu) \cdot \psi~ =~ 0.\]
Hence if we let $A(x)=\psi(x+\nu)$, then $A''(x)=x \cdot A(x)$. 
Since $A$ belongs to $L^2([z, \infty), dy)$, there exists a constant 
$c$ such that $A= c \cdot A_-$, and thus $\psi(y)= c \cdot A_-(y-\nu)$
for each $y \geq z$. 
In particular, if $\psi$ is a Dirichlet (resp. Neumann)
eigenfunction, then $A_-(z-\nu)=0$ (resp. $A_-'(z-\nu)=0$). 
Sturm-Liouville theory ensures that the associated eigenspaces 
are one-dimensional, and the last statement follows from the fact that 
$A_-(x) \neq 0$ for $x\geq 0$.  
\end{proof}

\subsection{Estimation}

\begin{prop}\label{aSpec}
There exists $\delta_0$ and $C$ such that for any 
$t\leq \delta_0,$  if $\lambda \in [\frac{\mu}{\sigma(0)},\frac{\mu}{\sigma(0)}+\delta_0]$ 
is a Dirichlet (resp. Neumann)
eigenvalue of $a_t^\mu$, then there exists a zero, 
$z$, of $A_-$ (resp. $A_-'$) such that 
\begin{equation}
\left| \phi_\lambda(0)~ -~ t^\dt \cdot z \right|~ \leq~ C \cdot t^2.
\end{equation}
\end{prop}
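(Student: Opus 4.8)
The plan is to push a (Dirichlet or Neumann) eigenfunction of $a_t^{\mu}$ through the Langer--Cherry transform \emph{at its own eigenvalue}, so that it becomes an approximate null-eigenfunction of the model operator $\Acal_{z_0}$ with $z_0 = t^{-\dt}\,\phi_\lambda(0)$, and then to extract the location of $\phi_\lambda(0)$ from the explicit spectral description in Proposition~\ref{AiryEigenvalueProp}. Since by Proposition~\ref{amuconvergence} every real-analytic eigenvalue branch converges to $\mu/\sigma(0)$, after shrinking $\delta_0$ I may assume throughout that $\lambda$ lies in a fixed small window $K=[\mu/\sigma(0),\mu/\sigma(0)+\delta_0]$ about the threshold and that $t$ is as small as needed.

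First I would take a Dirichlet eigenfunction $w$ of $a_t^{\mu}$ with eigenvalue $\lambda$, normalised by $\int_0^{\infty}w^2\,dx = 1$, and form $W = W_\lambda$, its Langer--Cherry transform at energy $E=\lambda$. Because $w$ is an eigenfunction, the inhomogeneous term $r$ in Proposition~\ref{wW} vanishes, so on $[\phi_\lambda(0),\infty)$
\[
  t^2\,W''~ -~ y\,W~ =~ g~ :=~ -\,t^2\,\bigl(\rho_\lambda^{3}\cdot\rho_\lambda''\bigr)\circ\phi_\lambda^{-1}\cdot W ,
\]
with $W(\phi_\lambda(0)) = 0$. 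Lemma~\ref{LemSpecG} with $E=\lambda$ (so the $|\lambda-E|^2$ term disappears) gives $\int_{\phi_\lambda(0)}^{\infty} g^2\,dy \le C_K\,t^4$, while Lemma~\ref{wToWIntegral} with $q\equiv 1$, combined with the positive lower bound on $(\phi_\lambda'(0))^2$ from Lemma~\ref{phiproperties}, gives $\int_{\phi_\lambda(0)}^{\infty} W^2\,dy \ge c>0$ once $\lambda$ is near the threshold and $t$ is small. Hence $\|g\|_{L^2}\le C\,t^2\,\|W\|_{L^2}$, the norms being the plain $L^2$ norms in the variable $y$.

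Next I would rescale: put $z_0 := t^{-\dt}\,\phi_\lambda(0)$ and $A(s):=W(t^{\dt}s)$ on $[z_0,\infty)$. A change of variables turns the displayed identity into $A''-s\,A = \widetilde g$ with $\widetilde g(s)=t^{-\dt}g(t^{\dt}s)$, and yields $\|\widetilde g\|_{L^2} = t^{-1}\|g\|_{L^2}$ and $\|A\|_{L^2} = t^{-\unt}\|W\|_{L^2}$, so that $\|\widetilde g\|_{L^2}\big/\|A\|_{L^2}\le C\,t^{\qt}$; moreover $A(z_0)=0$ and $A\in L^2$ near $+\infty$ (Lemma~\ref{wToWIntegral}). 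Thus $A$ is a nonzero element of the Dirichlet form domain of $\Acal_{z_0}$ on $L^2([z_0,\infty))$ with $\Acal_{z_0}(A,v) = -\langle\widetilde g,v\rangle$ for every $v$ in that domain, i.e.\ a quasimode for $\Acal_{z_0}$ at energy $0$ with defect at most $C\,t^{\qt}\|A\|$. The resolvent estimate (Lemma~\ref{resolvent}) applied to $\Acal_{z_0}$ at $E=0$ then forces $\mathrm{dist}\bigl(0,\spec(\Acal_{z_0})\bigr)\le C\,t^{\qt}$ (if $0\in\spec(\Acal_{z_0})$ the conclusion is immediate), and by Proposition~\ref{AiryEigenvalueProp} one has $\spec(\Acal_{z_0}) = \{\,z_0-\zeta:\ A_-(\zeta)=0\,\}$. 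Hence there is a zero $z$ of $A_-$ with $|z_0-z|\le C\,t^{\qt}$, whence $|\phi_\lambda(0)-t^{\dt}z| = t^{\dt}|z_0-z|\le C\,t^{2}$. Every ingredient is uniform for $\lambda\in K$, so $C$ is too; in particular $C$ does not depend on which zero of $A_-$ is matched.

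For the Neumann case the same scheme applies, but one must first notice that differentiating the Langer--Cherry transform converts $w'(0)=0$ into the Robin condition $W'(\phi_\lambda(0)) = \tfrac{\phi_\lambda''(0)}{2(\phi_\lambda'(0))^2}\,W(\phi_\lambda(0))$, equivalently $A'(z_0)=t^{\dt}\kappa_\lambda\,A(z_0)$ with $\kappa_\lambda$ bounded. One then runs the quasimode argument with $\Acal_{z_0}$ carrying \emph{this} Robin condition, so that the boundary terms cancel identically in the variational identity and Lemma~\ref{resolvent} still applies with a plain $L^2$ defect, and compares with the Neumann spectrum of $\Acal_{z_0}$ (the zeros of $A_-'$, by Proposition~\ref{AiryEigenvalueProp}), whose boundary condition differs from the Robin one only through the $O(t^{\dt})$ coefficient $t^{\dt}\kappa_\lambda$. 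I expect the main obstacle to be exactly this bookkeeping: keeping all the error estimates uniform over the window $K$ (which is what forces the $\delta_0$ and the use of Lemma~\ref{wToWIntegral} near the threshold), and correctly folding the transformed boundary condition into the model operator rather than treating it as a free-standing boundary term — a naive application of Lemma~\ref{resolvent} would fail there, since the $H^1$ trace at $z_0$ is not dominated by the $L^2$ norm.
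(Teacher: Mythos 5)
Your Dirichlet argument is correct and is essentially the paper's own proof: Langer--Cherry transform at energy $\lambda$, the $L^2$-equivalence of norms from Lemma~\ref{wToWIntegral}, the quasimode bound from Lemma~\ref{LemSpecG}, the $t^{\dt}$-rescaling, and the spectral description in Proposition~\ref{AiryEigenvalueProp}. The only difference is the last deduction: you apply the resolvent estimate of Lemma~\ref{resolvent} to $\Acal_{z_0}$ at $E=0$, whereas the paper squares the operator and uses the minimax principle on $\Acal_{z_0}^2$. These two steps are interchangeable --- both say that a quasimode of a self-adjoint operator with defect $O(t^{\qt})$ forces spectrum within $O(t^{\qt})$ of the trial energy --- so the Dirichlet half is fine.

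For the Neumann case your observation is sharp and in fact exposes an imprecision in the paper's own argument. Since $W_\lambda = ((\phi_\lambda')^{1/2}w)\circ\phi_\lambda^{-1}$, differentiating at $x=0$ with $w'(0)=0$ gives $W'(\phi_\lambda(0)) = \tfrac{\phi_\lambda''(0)}{2(\phi_\lambda'(0))^2}\,W(\phi_\lambda(0))$, so after rescaling the boundary condition becomes Robin, $U'(z_t) = t^{\dt}\kappa_\lambda\,U(z_t)$, not $U'(z_t)=0$ as the paper asserts. Consequently $U$ does not lie in the domain of the Neumann realization of $\Acal_{z_t}$ and the paper's minimax step on $\Acal_{z_t}^2$ does not literally go through. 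Your proposed repair --- run the quasimode argument against the Robin extension of $\Acal_{z_0}$ and then compare its spectrum with the Neumann spectrum --- is the right strategy, and you correctly identify why a naive application of Lemma~\ref{resolvent} fails (the trace at $z_0$ is not controlled by the $L^2$ norm). But you stop before carrying out the comparison: one still has to show that the Robin eigenvalues, i.e.\ the solutions $\nu$ of $A_-'(z_0-\nu)=t^{\dt}\kappa_\lambda\,A_-(z_0-\nu)$, lie within $O(t^{\dt})$ of the zeros of $A_-'$, uniformly for $z_0$ in the relevant range. That requires a lower bound on $|A_-''|$ near the zeros of $A_-'$ (or an equivalent uniform form-perturbation estimate taking the factor $|z_0|\sim t^{-\dt}$ in the potential into account), and none of that is supplied. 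Until that step is filled in, the Neumann half of the proposition remains unproved.
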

 
\begin{proof}
We set $\epsilon\,=\ \frac{1}{2} \min\{ \rho_E^{-4}(0)~|~E\in [\frac{\mu}{\sigma(0)},\frac{\mu}{\sigma(0)}+1]\},$ and 
we choose $\delta_0$ to be the minimum of $1$ and the $\delta$ provided by Lemma \ref{wToWIntegral} that is 
associated with this $\epsilon$ and $q$ identically $1.$ Let $K$ be the compact 
$[\frac{\mu}{\sigma(0)},\frac{\mu}{\sigma(0)}+\delta_0].$
Let $w$ be an eigenfunction with eigenvalue $\lambda\in K$ and $t\leq \delta_0.$
Let $W$ the Cherry-Langer transform of $w$ at energy $\lambda$. 
According to Lemma \ref{wToWIntegral} and to the choice we made of $\epsilon$, we have 
\[ 
\frac{1}{2\rho_\lambda(0)^4} \int_0^\infty w^2 dx \,\leq \,\int_{\phi_\lambda(0)}^\infty W^2 dy \, \leq \frac{3}{2\rho_\lambda(0)^4} \int_0^\infty w^2 dx.
\]

Combining with Lemma \ref{LemSpecG}, (and using that $\rho_E(0)$ is uniformly bounded away from $0$ over the compact $K$), 
there exists a constant $C'$ such that
\begin{equation} \label{AirySquare2}
  \int_{\phi_\lambda(0)}^\infty |t^2 \cdot W''- y \cdot W|^2~  dy~ 
  \leq~  C \cdot t^4 \int_{\phi_{\lambda}(0)}^\infty |W(y)|^2~ dy.
\end{equation}
Setting $U(x)=W(t^{\dt}\cdot x)$, we have 
\begin{equation} \label{AirySquare3}
  \int_{t^{-\frac{2}{3}} \cdot \phi_\lambda(0)}^\infty |U''- x \cdot U|^2~  dx~ 
  \leq~  C \cdot t^{\frac{8}{3}} \int_{t^{-\dt} \cdot \phi_\lambda(0)} |U(x)|^2~ dx.
\end{equation}

Let $z_t= t^{-\frac{2}{3}} \cdot \phi_{\lambda}(0)$.
Then $U(z)=0$ (resp. $U'(z)=0$) 
if $\lambda$ is a Dirichlet (resp. Neumann) eigenvalue.
In particular, $U$ belongs to the domain of $\Acal_{z_t}$.
Moreover, from (\ref{AirySquare3}) we have that 
\begin{equation} \label{AiryOperatorNorm}
   \left\| \Acal_{z_t} (U) \right\|^2~
   \leq~ C \cdot t^{\frac{8}{3}} \cdot \| U\|^2. 
\end{equation}
Thus, since $\Acal_{z_t}$ is self-adjoint,
\begin{equation} \label{AiryOperatorNorm2}
   \left\langle \Acal_{z_t}^2 (U), U \right\rangle~ 
 \leq~ C \cdot t^{\frac{8}{3}} \cdot \| U\|^2. 
\end{equation}
Thus, by the minimax principle, $\Acal_{z_t}^2$ has an eigenvalue
in the interval $[0, C t^{\frac{8}{3}}]$. Hence $\Acal_{z_t}$ has an eigenvalue
in the interval  $[-C^{\und}t^{\frac{4}{3}}, C^\und t^{\frac{4}{3}}]$, and the claim follows from 
Proposition \ref{AiryEigenvalueProp}.
\end{proof}

\subsection{Separation}

We next show that, as $t$ tends to zero, the eigenvalues of $a_t^\mu$ with respect 
$\langle \cdot, \cdot \rangle_{\sigma}$ are 
separated at order greater than $t$. More precisely, we have the following.

\begin{thm}  \label{Superseparation}
Let $t_1, t_2, t_3,\ldots$
be a sequence of positive real numbers such that 
$\lim_{n \rightarrow \infty} t_n=0$.
For each $n \in {\mathbb Z}^+$, let $\lambda_n^+$ and $\lambda_n^-$ be distinct 
eigenvalues of the quadratic form $a_{t_n}^\mu$.  If 
$\lim_{n \rightarrow \infty} \lambda_n^{\pm} = \mu/\sigma(0)$, then 
\[  \lim_{n \rightarrow \infty}~ \frac{1}{t_n} \cdot 
            \left| \lambda_n^+~ -~ \lambda_n^- \right|~ =~ \infty. 
\] 
\end{thm}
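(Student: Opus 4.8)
The plan is to argue by contradiction. If the conclusion fails, then after passing to a subsequence we may assume that $t_n^{-1}|\lambda_n^+-\lambda_n^-|\le M$ for some finite $M$ and all $n$, and we will reach a contradiction. For $n$ large we have $t_n\le\delta_0$ and $\lambda_n^\pm\in[\mu/\sigma(0),\mu/\sigma(0)+\delta_0]$, so Proposition~\ref{aSpec} supplies zeros $z_n^+,z_n^-$ of $A_-$ (of $A_-'$ in the Neumann case) with $|\phi_{\lambda_n^\pm}(0)-t_n^{\dt}z_n^\pm|\le Ct_n^2$. The arithmetic engine is the classical asymptotics of the zeros of the Airy function: those of $A_-$ (and of $A_-'$) lie on the negative half-line, consecutive gaps are monotonically decreasing, and near $-R$ a gap behaves like $R^{-\frac12}$; hence there is a universal $c_0>0$ so that any two distinct such zeros lying in $[-R,0]$ are at distance at least $c_0(1+R)^{-\frac12}$.

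Two preliminary observations will be needed. First, by Lemma~\ref{phiproperties}(\ref{phik}) the map $E\mapsto\phi_E(0)$ is smooth, and differentiating the alternative expression (\ref{Alternative}) and using $x_{\mu/\sigma(0)}=0$ one finds that its derivative at $E=\mu/\sigma(0)$ equals $(\td\,\pi(\mu/\sigma(0),0))^{\dt}\cdot\sigma(0)^2/(\mu\,\sigma'(0))$, which is nonzero because $\sigma'(0)<0$ and the function $\pi$ of (\ref{Defp}) is positive. Shrinking $\delta_0$, there are constants $0<c_1\le C_1$ with $c_1\le|\frac{d}{dE}\phi_E(0)|\le C_1$ on $[\mu/\sigma(0),\mu/\sigma(0)+\delta_0]$; in particular $|\lambda_n^+-\lambda_n^-|$ and $|\phi_{\lambda_n^+}(0)-\phi_{\lambda_n^-}(0)|$ are comparable. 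Second, since $\lambda_n^\pm\to\mu/\sigma(0)$ and $\phi_{\mu/\sigma(0)}(0)=0$, continuity gives $t_n^{\dt}|z_n^\pm|\le|\phi_{\lambda_n^\pm}(0)|+Ct_n^2\to0$; writing $R_n:=\max\{|z_n^+|,|z_n^-|\}$, this says $t_n^{\dt}R_n\to0$, equivalently $t_n^{-\unt}(1+R_n)^{-\frac12}\to\infty$.

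Now split according to whether $z_n^+\neq z_n^-$. If this holds for infinitely many $n$, then along that subsequence the Airy-zero gap bound together with Proposition~\ref{aSpec} gives
\[ |\phi_{\lambda_n^+}(0)-\phi_{\lambda_n^-}(0)|\ \geq\ t_n^{\dt}\,c_0(1+R_n)^{-\frac12}-2Ct_n^2, \]
whereas $|\phi_{\lambda_n^+}(0)-\phi_{\lambda_n^-}(0)|\le C_1|\lambda_n^+-\lambda_n^-|\le C_1Mt_n$. Dividing by $t_n$ yields $C_1M\ge c_0\,t_n^{-\unt}(1+R_n)^{-\frac12}-2Ct_n$, whose right-hand side tends to $\infty$ by the second preliminary observation --- a contradiction.

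It remains to treat the case $z_n^+=z_n^-=:\zeta_n$ for infinitely many $n$. Here the standing assumption is of no help (it only forces $|\lambda_n^+-\lambda_n^-|=O(t_n^2)$), and what is really at stake is the injectivity, for small $t$, of the eigenvalue-to-zero correspondence of Proposition~\ref{aSpec}; I expect this to be the main obstacle. The plan is to run the argument inside the proof of Proposition~\ref{aSpec} for the two eigenfunctions simultaneously. Let $w_n^\pm$ be eigenfunctions for $\lambda_n^\pm$, let $W_n^\pm$ be their Langer--Cherry transforms at energies $\lambda_n^\pm$, and set $U_n^\pm(x)=W_n^\pm(t_n^{\dt}x)$; then, exactly as in (\ref{AiryOperatorNorm}), $\|\Acal_{z_n^{t,\pm}}(U_n^\pm)\|\le C^{\und}t_n^{\qt}\|U_n^\pm\|$ with $z_n^{t,\pm}=t_n^{-\dt}\phi_{\lambda_n^\pm}(0)$, and $|z_n^{t,\pm}-\zeta_n|\le Ct_n^{\qt}$. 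By Proposition~\ref{AiryEigenvalueProp}, $\Acal_{\zeta_n}$ has $0$ as a simple eigenvalue, separated from the rest of its spectrum by a gap that is $\geq c(1+|\zeta_n|)^{-\frac12}$ (Airy-zero asymptotics once more), and since $|\zeta_n|\le R_n=o(t_n^{-\dt})$ this gap dominates $t_n^{\qt}$, so $U_n^\pm$ lies within $o(1)$ relative $L^2$-error of the normalized $0$-eigenfunction of $\Acal_{z_n^{t,\pm}}$; a routine comparison of the two nearby operators $\Acal_{z_n^{t,+}}$ and $\Acal_{z_n^{t,-}}$, whose Dirichlet (Neumann) endpoints differ by $O(t_n^{\qt})$, identifies these two eigenfunctions up to $o(1)$ with a single normalized eigenfunction of $\Acal_{\zeta_n}$. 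On the other hand $w_n^+\perp w_n^-$ in $\langle\cdot,\cdot\rangle_\sigma$, and Proposition~\ref{BiLinEst} with the weight $p=\sigma$ (so that the factor $\alpha(\cdot,\mu/2)$ tends to $0$ as the energy approaches the threshold), followed by Lemma~\ref{wToWIntegral} and the rescaling, transfers this to $\langle U_n^+,U_n^-\rangle=o(\|U_n^+\|\,\|U_n^-\|)$. Since two unit vectors each within $o(1)$ of a common one-dimensional subspace cannot be almost orthogonal, this case is impossible for $n$ large, completing the contradiction. The effort in this last step lies in matching the two Airy operators over the slightly shifted intervals, in upgrading the quasimode bound (\ref{AiryOperatorNorm}) to genuine closeness to an eigenfunction with the spectral gap controlled explicitly by $|\zeta_n|$, and in making the orthogonality transfer quantitative; everything in the earlier cases is bookkeeping around Proposition~\ref{aSpec} and the elementary asymptotics of Airy zeros.
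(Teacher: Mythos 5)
Your argument takes a genuinely different route from the paper's, and while the first half is sound, the second half is only a sketch with a real gap.

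Where you diverge: you apply Proposition~\ref{aSpec} separately to each eigenvalue, extracting a zero $z_n^\pm$ of $A_-$ for each, and then split on whether $z_n^+=z_n^-$. Your Case~1 ($z_n^+\neq z_n^-$) is correct — the comparability of $|\lambda_n^+-\lambda_n^-|$ and $|\phi_{\lambda_n^+}(0)-\phi_{\lambda_n^-}(0)|$ via the nonvanishing derivative $\frac{d}{dE}\phi_E(0)$ at the threshold, the gap bound $\gtrsim R^{-1/2}$ for zeros of $A_-$ in $[-R,0]$, and the observation $t_n^{\dt}R_n\to0$ do combine to a contradiction. But Case~2 ($z_n^+=z_n^-$) is precisely the case that the contradiction hypothesis $|\lambda_n^+-\lambda_n^-|=O(t_n)$ allows (indeed it even forces $O(t_n^2)$ there), and you yourself flag that the "main obstacle" lives here: you need to compare two Airy operators $\Acal_{z_n^{t,+}}$ and $\Acal_{z_n^{t,-}}$ whose Dirichlet/Neumann endpoints differ by $O(t_n^{\qt})$, upgrade the operator-norm quasimode bound~(\ref{AiryOperatorNorm}) to $L^2$-closeness to an eigenvector with a spectral gap controlled by $(1+|\zeta_n|)^{-1/2}$, and transfer the $\sigma$-orthogonality of $w_n^\pm$ quantitatively across the Langer--Cherry rescaling to $U_n^\pm$. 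None of that is executed, so as written there is a gap.

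The paper sidesteps Case~2 entirely by a structural choice you did not make: it performs \emph{both} Langer--Cherry transforms at the \emph{same} energy $E_n=\max\{\lambda_n^+,\lambda_n^-\}$. Since $|\lambda_n^\pm-E_n|\leq Mt_n$ under the contradiction hypothesis, Lemma~\ref{LemSpecG} still gives the quasimode bound $\|(-t_n^2\partial_y^2-y)W_n^\pm\|\lesssim t_n\|W_n^\pm\|$, and crucially $W_n^+$ and $W_n^-$ now live on the same half-line $[\phi_{E_n}(0),\infty)$. Orthogonality of $w_n^\pm$ transfers by Lemma~\ref{wToWIntegral} to approximate orthogonality of $W_n^\pm$, so $\spec = \{W_n^+,W_n^-\}$ spans a genuine $2$-dimensional quasimode space for the single operator $\Acal_{z_n}$. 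Minimax applied to $\Acal_{z_n}^2$ then produces \emph{two distinct} eigenvalues of $\Acal_{z_n}$ in $[-O(t_n^{\unt}),O(t_n^{\unt})]$ — simplicity of $\Acal_{z_n}$ (Proposition~\ref{AiryEigenvalueProp}) rules out a double eigenvalue — hence two distinct zeros of $A_-$ at distance $O(t_n^{\unt})$, and the Airy-zero gap asymptotics close the argument exactly as in your Case~1, with your Case~2 never arising. If you want to complete your version, the most efficient fix is to switch to a common reference energy $E_n$ before rescaling; otherwise you must carry out the nontrivial two-operator perturbation analysis you outlined.
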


This fact may be understood by using the following semiclassical heuristics: 
The threshold $\frac{\mu}{\sigma(0)}$ is the bottom of the potential, and 
the eigenvalues near it are driven by the shape of this minimum. Since $\sigma'(0)\neq 0,$ 
the asymptotics are given by the eigenvalues of the model problem 
$P_tu\,=\,-t^2 \cdot u''+x \cdot u=0$ 
on $(0,\infty)$. Denote by $e_n(t)$ the $n^{{\rm th}}$ eigenvalue of the model operator. 
Using homogeneity, $e_n(t)$ behaves like $e_n(1) \cdot t^\dt$ (and $e_n(1)$ actually is some 
zero of the Airy function see Proposition \ref{AiryEigenvalueProp}). For fixed $n,$ 
the separation between two eigenvalues is thus of order $t^\dt.$

It would be relatively straightforward to make the preceding reasoning rigorous
in the case of a finite number of real-analytic eigenvalue branches. 
(For instance we could use \cite{Friedlander}). Unfortunately, this is not enough 
for our purposes. In section \S \ref{SectionSimplicity}, we will need 
the result for a sequence of eigenvalues that may belong to an
infinite number of distinct branches.

\begin{remk}
Observe that the same semiclassical heuristics yield that this super-separation 
does not hold near an energy strictly greater than $\frac{\mu}{\sigma(0)}$. 
Indeed, near a non-critical energy, the spectrum is separated at order $t.$
\end{remk}

\begin{proof}[Proof of Theorem \ref{Superseparation}]
Suppose to the contrary that there exists a subsequence---that we will abusively 
call $t_n$---such that $|\lambda_n^--\lambda^+_n|/t_n$ is bounded.
Let $w_n^{\pm}$ denote a sequence of eigenfunctions associated to $\lambda_n^{\pm}$
with $\|w_n^{\pm}\|_{\sigma}=1$. Since $\lambda_n^- \neq \lambda_n^+$, 
we have $\langle w_n^-, w_n^+ \rangle_{\sigma}=0$. 
 
Let $W_n^{\pm}$ denote the Langer-Cherry transform of 
$w_n^{\pm}$ at the energy $E_n= \sup\{\lambda_n^-,\lambda_n^+\}$.
By hypothesis $\lim_{n \rightarrow \infty}=\mu/\sigma(0)$.
By Lemma \ref{LemSpecG} and Lemma \ref{wToWIntegral},
we find that there exist $N_1$ and $C$ such that if $n>N_1$, then  
\begin{equation} \label{OperatorNormWPlusMinus}
      \left\| \left(-t_n^2 \cdot \partial_y^2~ -~  y\right) W_n^{\pm} \right\|^2~ 
                    \leq~ C \cdot t_n^2 \cdot \left\| W_n^{\pm} \right\|^2.
\end{equation}

Since $\langle w_n^-, w_n^+ \rangle_{\sigma}=0$ and $\|w_n^{\pm}\|_{\sigma}=1$, 
it follows from Lemma \ref{wToWIntegral} that there exists $N_2>N_1$
such that if $n>N_2$, then 
\[ \left| \langle W_n^-, W_n^+ \rangle\right |~ \leq~  \frac{1}{2} \cdot \|W_n^-\| \cdot \|W_n^+\|. \]
Observe that this implies that for any linear combination of $W_n^+$ and $W_n^-$ we have 
\[
|\alpha_+|^2 \|W_n^+\|^2\,+\, |\alpha_-|^2\|W_n^- \|^2 \leq 2\| \alpha_+W_n^+\,+\,\alpha_-W_n^-\|^2.
\]
Therefore, it follows from (\ref{OperatorNormWPlusMinus}) that if 
$W$ belongs to the span, ${\mathcal W}_n$, of $\{W^-_n, W^+_n\}$, then 
\[
 re     \left\| \left(-t_n^2 \cdot \partial_y^2~ -~  y\right) W \right\|^2~ 
                    \leq~ 4 \cdot C \cdot t_n^2 \cdot \left\| W \right\|^2.
\]
Let $U(x)= W(t^{\dt}\cdot x)$ and let  ${\mathcal U}_n$ denote
the vector space corresponding to ${\mathcal W}_n$. If
$U \in {\mathcal U}_n$,  then 
\begin{equation} \label{OperatorNormW}
      \left\| \left(\partial_x^2~ -~  x\right) U \right\|^2~ 
                    \leq~ 4 \cdot C \cdot t_n^{\dt} \cdot \left\| U \right\|^2.
\end{equation}

Since $w_n^{\pm}$ satisfies the boundary condition at $0$,
the Langer-Cherry transform $W_n^{\pm}$ at energy $E_n$
satisfies the boundary condition at $\phi_{E_n}(0)$.
It follows that  ${\mathcal U}_n \subset {\rm dom}(\Acal_{z_n})$
where $z_n = t^{-\dt}_n \cdot \phi_{E_n}(0)$. 
By (\ref{OperatorNormW}) we have
\[  \langle \Acal_{z}^2(U),~ U \rangle~  \leq~ 4 \cdot C \cdot t_n^{\dt} \cdot \|U\|^2 \]
for each $U \in {\mathcal U}_n$. Hence, 
by the minimax principle, $\Acal_{z_n}^2$ has at least two independent
eigenvectors with eigenvalues in the interval $[0, 4 C \cdot t_n^{\dt}]$. 
Thus,  $\Acal_{z_n}$ has at least two independent
eigenvectors with eigenvalues in the interval $[-2 \sqrt{C} \cdot t_n^{\frac{1}{3}}, 2 \sqrt{C} \cdot t_n^{\frac{1}{3}}]$. 
By Proposition \ref{AiryEigenvalueProp}, 
the eigenvalues of $\Acal_{z_n}$ are simple, and 
hence $\Acal_{z_n}$ has at least two distinct eigenvalues, $\nu_n^+ < \nu_n^-$
lying in $[-2 \sqrt{C} \cdot t_n^{\frac{1}{3}}, 2 \sqrt{C} \cdot t_n^{\frac{1}{3}}]$. 
By Proposition \ref{AiryEigenvalueProp}, the number $a_n^{\pm}= z_n -\nu_n^{\pm}$ 
is a zero of the funtion $A_-$. Note that
\begin{equation} \label{ZeroDiff}
  |a_n^+-a_n^-|~ \leq~ 4 \sqrt{C} \cdot t_n^{\frac{1}{3}}.
\end{equation}

Since $A_-$ is real-analytic and $A_-(x)\neq 0$ for $x$ nonnegative, 
the zeroes $Z$ of $A_-$ are a countable discrete subset of $(-\infty, 0)$.
In particular, we there is a unique bijection $\ell: Z \rightarrow {\mathbb Z}^+$
such that $a < a'$ implies $\ell(a) > \ell(a')$ and 
$\lim_{k \rightarrow \infty} \ell^{-1}(k)= -\infty.$
From the asymptotics of $A_-$---see Appendix \ref{AiryAppendix}---one
finds that there exists a constant $c>0$ so that
\begin{equation}  \label{ZeroAsymp}
\lim_{k \rightarrow \infty}~  k^{-\frac{2}{3}}
    \cdot \ell^{-1}(k)~ =~ -c.
\end{equation}
\begin{equation}  \label{ZeroAsymp2}
\lim_{k \rightarrow \infty}~  k^{\frac{1}{3}}
    \cdot \left|\ell^{-1}(k)- \ell^{-1}(k+1)\right|~ =~ \frac{2}{3} \cdot c.
\end{equation}

Since $\lim_{n \rightarrow \infty}t_n=0$, estimate (\ref{ZeroDiff})
implies that $\lim_{n \rightarrow \infty} a_n^{\pm} = -\infty$, 
and hence $\lim_{n \rightarrow \infty} \ell(a_n^{\pm})= \infty$.
Therefore, since $a^+_n \neq a_n^-$ for all $n$, we have from
(\ref{ZeroAsymp2}) that there exists $N$ such that if $n> N$ then 
\[ \lim_{k \rightarrow \infty}~ 
    \left| a_n^+~ -~ a_n^-  \right|~ 
     \geq~ \frac{c}{2} \cdot   \ell(a_n^+)^{-\frac{1}{3}}.
\]
By combining this with (\ref{ZeroDiff}) we find that 
\begin{equation} \label{Contra}
      \left( \ell(a_n) \cdot t_n\right)^{\frac{1}{3}}~ \geq~  \frac{c}{4 \sqrt{C}}.
\end{equation}

But since $\lim_{n \rightarrow \infty} E_n =\mu/\sigma(0)$, we have $\lim_{n \rightarrow \infty} \phi_{E_n}(0)=0$.
Therefore, by Proposition \ref{aSpec} we have $\lim_{n \rightarrow \infty} t^{\dt} \cdot a_n^{\pm}=0$.  
By (\ref{ZeroAsymp}) we have 
\[ \lim_{n \rightarrow \infty}~  \frac{a_n}{\ell(a_n)^{\dt}}  =~ -c.\]
Thus, $\lim_{n \rightarrow \infty} t_n^{\dt} \cdot  \ell(a_n)^{\dt}=0$. 
This contradicts (\ref{Contra}).
\end{proof}


\part{Simplicity}


\section{Separation of variables in the abstract}  \label{SectionReduction}

Recall that the first step in our method for proving generic simplicity 
consists of finding a family $a_t$ such that $q_t$ is asymptotic to $a_t$  
and such that $a_t$ decomposes as a direct sum of `1-dimensional' quadratic
forms $a_t^{\mu}$ of the type considered in the previous sections.
In the present section we discuss the decomposition of $a_t$ into 
forms $a_t^{\mu}$. Although the content is very well-known,
we include it here for the purpose of establishing notation
and context.

Let $\langle \cdot, \cdot \rangle_{\sigma}$ be the inner product on 
${\mathcal H}_\sigma$ defined in \S \ref{aSection}.  Let ${\mathcal H}'$ 
be a real Hilbert space with inner product $(\cdot, \cdot)$. Consider
the tensor product $\Hcal := {\mathcal H}_\sigma \bigotimes {\mathcal H}'$ completed
with respect to the inner product $\langle \cdot,\cdot \rangle$ determined by
\begin{equation} \label{InnerProduct}
  \langle u_1 \otimes \vphi_1, u_2 \otimes \vphi_2 \rangle~ :=
      \langle u_1, u_2 \rangle_{\sigma} \cdot (\vphi_1,\vphi_2). 
\end{equation}  
Let $b$ be a positive, closed, densely defined quadratic form on ${\mathcal H}'$.
We will assume that the spectrum of $b$ with respect to $(\cdot, \cdot)$
is discrete and the eigenspaces are finite dimensional. 
For each $t>0$ and 
$u \otimes \vphi \in \Cc^\infty([0,\infty)) \bigotimes {\rm dom}(b)$,
define
\begin{equation}\label{DefaSmooth}
 a_t(u \otimes \vphi)~ =~ t^2 \cdot (\vphi,\vphi)  \int_0^{\infty} |u'(x)|^2~ dx~  
  +~ b(\vphi) \int_0^{\infty} |u(x)|^2~ dx. 
\end{equation}

Let $Y \subset \Cc^{\infty}([0,\infty))$ be a subspace. 
The restriction of $a_t$ to $Y \otimes \Hcal'$ is
a nonnegative real quadratic form. 
By Theorem 1.17 in Chapter VI of \cite{Kato}, this restriction 
has a unique minimal closed extension. In particular, 
let $\dom(a_t)$ be the collection of  $u \in {\mathcal H}_{\sigma} \tensor \Hcal'$
such that there exists a sequence $u_n \in Y \tensor \dom(b)$
such that $\lim_{n\rightarrow \infty} \|u_n-u \|=0$ and 
$u_n$ is Cauchy in the norm 
\[  [u]_{t}~ :=~ a_t(u)~ +~  \|u\|_{\mathcal H}. \]
For each $u \in \dom(a_t)$ define 
\[  a_t(u)~ :=~ \lim_{n \rightarrow \infty} a_t(u_n)   \]
where $u_n$ is a sequence as above. Note that for $t,t'>0$ the norms
$[\cdot]_{t}$ and $[\cdot]_{t'}$ are equivalent, and hence $\dom(a_t)$ 
does not depend on $t$.

\begin{remk}
In applications, either $Y= \Cc([0,\infty))$ or $Y$ consists of smooth
functions whose support is compact and does not include zero.  
In the former case, eigenfunctions of $a_t$ will satisfy a Neumann condition 
at $x=0$ and in the latter case they will satisfy a Dirichlet condition
at $x=0$. 
\end{remk}

\begin{prop}
The family  $t \mapsto a_t$ is a real-analytic family of type (a) 
in the sense of Kato.\footnote{See Chapter VII \S 4.2 in \cite{Kato}.}
\end{prop}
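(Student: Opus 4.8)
The plan is to check directly the two conditions that define a holomorphic family of type (a) in the sense of Kato (Chapter VII, \S 4.2 of \cite{Kato}); restricting the parameter to real $t>0$ then yields the real-analytic statement. These conditions are: (i) the form domain of $a_\zeta$ is independent of the (complex) parameter $\zeta$, and each $a_\zeta$ is closed and sectorial; (ii) for each fixed $u$ in this common domain, $\zeta \mapsto a_\zeta(u)$ is holomorphic.

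Condition (i) has essentially already been recorded. The discussion preceding the proposition shows that the form norms $[\cdot]_t$ are mutually equivalent for $t$ in any compact subinterval of $(0,\infty)$, so $\Dcal := \dom(a_t)$ does not depend on $t$, and $a_t$ is closed on $\Dcal$ by construction as a minimal closed extension of a densely defined form. Since $b \geq 0$ and $t^2 > 0$, each $a_t$ is nonnegative and symmetric, hence sectorial. For a complex parameter $\zeta$ in a sufficiently small neighborhood of a fixed $t_0 > 0$ one has ${\rm Re}(\zeta^2) > 0$ with $\arg(\zeta^2)$ small, so the numerical range of $a_\zeta$ lies in a fixed sector; thus $a_\zeta$ remains sectorial and, by comparability of the form norms, closed on $\Dcal$ there.

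For condition (ii), I would first note that on a finite tensor sum $u_n = \sum_j v_j \otimes \psi_j \in Y \otimes \dom(b)$ the form depends quadratically on $t$,
\[
  a_t(u_n)~ =~ t^2 \cdot A(u_n)~ +~ B(u_n),
\]
where $A(u_n) = \sum_{j,k} (\psi_j,\psi_k) \int_0^\infty v_j'\, v_k'\, dx$ and $B(u_n) = \sum_{j,k} b(\psi_j,\psi_k) \int_0^\infty v_j\, v_k\, dx$. Given $u \in \Dcal$, choose such $u_n$ with $u_n \to u$ in $[\cdot]_{t_0}$; then $a_t(u_n)$ converges for every $t > 0$, and evaluating the convergence at two distinct values of $t$ shows that $A(u_n)$ and $B(u_n)$ converge separately, to limits $A(u)$ and $B(u)$ depending only on $u$. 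Hence $a_t(u) = t^2 \cdot A(u) + B(u)$ for all $t > 0$, an expression that extends to an entire function of the complex parameter; in particular it is holomorphic near every $t_0 > 0$. Conditions (i) and (ii) together give the claim.

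The one point that deserves attention — the ``main obstacle,'' such as it is — is the separation of $a_t(u)$ into its $t^2$-homogeneous and $t$-independent parts on the abstract domain $\Dcal$. This, however, is immediate from the polynomial dependence of $a_t(u_n)$ on $t$ combined with the convergence $a_t(u_n) \to a_t(u)$ at two parameter values; the rest is routine bookkeeping with Kato's definitions.
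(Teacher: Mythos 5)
Your argument verifies exactly the conditions the paper asserts (closedness, $t$-independent domain, analyticity of $t \mapsto a_t(u)$ for fixed $u$), so it is correct and takes essentially the same route; the paper's proof simply states these three facts without proof. The one thing you do that the paper skips — justifying the decomposition $a_t(u) = t^2 A(u) + B(u)$ on all of $\Dcal$ by passing to the limit at two parameter values, and extending to a complex sector to confirm sectoriality — is the right way to make the paper's one-line assertion ``$t\mapsto a_t(u)$ is analytic'' honest.
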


\begin{proof}
For each $t$ the form $a_t$ is closed with respect to $\langle \cdot, \cdot \rangle$,
the domain $\dom(a_t)$ is constant in $t$, and 
for each $u \in \dom(a_t)$, the function $t \mapsto a_t(u)$
is analytic in $t$.
\end{proof}

\begin{eg}
Let ${\mathcal H'}$ be the space of square
integrable functions on a compact Lipschitz 
domain $U \subset \R^n$ with the usual inner product.
Then ${\mathcal H} \bigotimes {\mathcal H}'$ 
is isomorphic to the completion of $C^{\infty}_0((0,\infty) \times U)$
with respect to the inner product
\[ \langle f, g \rangle~ =~ 
 \int_U  \int_{0}^{\infty} f(x,y) \cdot g(x,y) \cdot \sigma(x)~ dx~ dy.
\]   
Let $\tilde{b}$ be the quadratic form defined on $H^1(U)$ by
\begin{equation} \label{bDefn}
   \tilde{b}(\phi)~ =~  \int_U  |\nabla \phi|^2~  dx~ dy. 
\end{equation}
We define $b$ to be the restriction of $\tilde{b}$ to 
any closed subset of $H^1(U)$ on which it defines a 
positive quadratic form. In this case
the quadratic form $a_t$ is equivalent to the form 
\begin{equation} \label{aDefn}
   \overline{a}_t(u)~ =~   \int_{\R^+ \times U} 
 \left( t^2 \cdot |\partial_{x} u|^2~ 
   +~   |\nabla_y u|^2  \right)~  dx~ dy. 
\end{equation}
\end{eg}

%
%

For each $\mu>0$ and $t>0$, we define the quadratic form $a^{\mu}_t$ 
as in \S \ref{aSection}. Observe that this form $a^{\mu}_t$ is equivalent 
to the construction above with $\Hcal'= {\mathbb \R}$ with its standard
inner product and $b(s)=\mu \cdot s^2$. 
Note that the norms $[\cdot]_{t, \mu}$ and $[\cdot]_{t', \mu'}$ that are
used to extend $a_t^{\mu}$ and $a_{t'}^{\mu'}$ are equivalent. Hence $\dom(a_t^{\mu})$ is 
independent of $t$ and $\mu$.  We will denote this common domain by ${\mathcal D}$.

\begin{prop}\label{Domains}
We have $\dom(a_t)= {\mathcal D} \bigotimes \dom(b)$.
\end{prop}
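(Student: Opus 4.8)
The plan is to prove the two inclusions $\dom(a_t) \subseteq \mathcal{D} \otimes \dom(b)$ and $\mathcal{D} \otimes \dom(b) \subseteq \dom(a_t)$ separately, using the fact that $a_t$ acts "diagonally" on the tensor decomposition coming from the eigenbasis of $b$. Let $\{\vphi_k\}$ be an orthonormal basis of $\mathcal{H}'$ consisting of eigenvectors of $b$, with $b(\vphi_k) = \mu_k > 0$. Any $u \in \mathcal{H}_\sigma \otimes \mathcal{H}'$ decomposes as $u = \sum_k u_k \otimes \vphi_k$ with $u_k \in \mathcal{H}_\sigma$ and $\|u\|^2 = \sum_k \|u_k\|_\sigma^2$. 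The key point, which I would establish first, is that on $Y \otimes \dom(b)$ the form $a_t$ respects this decomposition in the sense that for a finite sum $u = \sum_k u_k \otimes \vphi_k$ one has $a_t(u) = \sum_k a_t^{\mu_k}(u_k)$; this is immediate from (\ref{DefaSmooth}) and the orthogonality of the $\vphi_k$. Consequently the graph norm satisfies $[u]_t^2 \asymp \sum_k \left([u_k]_{t,\mu_k}^2\right)$ (up to the constant $\|u\|^2$ term).

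For the inclusion $\mathcal{D} \otimes \dom(b) \subseteq \dom(a_t)$: first I would show that ${\mathcal D} \otimes \vphi_k \subseteq \dom(a_t)$ for each fixed $k$. Given $w \in \mathcal{D} = \dom(a_t^{\mu_k})$, there is a sequence $w_n \in Y$ with $w_n \to w$ in $\mathcal{H}_\sigma$ and $w_n$ Cauchy in $[\cdot]_{t,\mu_k}$; then $w_n \otimes \vphi_k \in Y \otimes \dom(b)$ witnesses $w \otimes \vphi_k \in \dom(a_t)$ since $[w_n \otimes \vphi_k]_t^2 = a_t^{\mu_k}(w_n)\cdot(\vphi_k,\vphi_k) + \|w_n\|_\sigma^2(\vphi_k,\vphi_k)$. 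Then $\dom(b)$ is the closure of the algebraic span of the $\vphi_k$ in the $b$-graph norm, and one checks that finite sums $\sum_k w_k \otimes \vphi_k$ with $w_k \in \mathcal{D}$ lie in $\dom(a_t)$ (finite sums of elements of a form domain stay in it), and that a general element of $\mathcal{D} \otimes \dom(b)$ — i.e.\ a limit in the projective tensor norm — is approximated in $[\cdot]_t$ by such finite sums, using $a_t(w \otimes \psi) = a_t^{?}$-type estimates: more precisely, for $w \in \mathcal{D}$ and $\psi = \sum c_k \vphi_k \in \dom(b)$ one has $a_t(w\otimes\psi) = t^2 \|\psi\|^2 \int |w'|^2 + b(\psi)\int|w|^2 \leq \max(\|\psi\|^2, b(\psi)/\mu_{\min})\cdot (\text{stuff controlled by } [w]_{t,\mu_{\min}})$, so the bilinear form extends continuously.

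For the reverse inclusion $\dom(a_t) \subseteq \mathcal{D} \otimes \dom(b)$: take $u \in \dom(a_t)$ with approximating sequence $u_n \in Y \otimes \dom(b)$, Cauchy in $[\cdot]_t$. Writing $u_n = \sum_k u_{n,k} \otimes \vphi_k$ (finite sums), the identity $[u_n - u_m]_t^2 = \sum_k [u_{n,k} - u_{m,k}]_{t,\mu_k}^2 \cdot (\vphi_k,\vphi_k) + \text{(error from the }\|\cdot\|\text{ term, but that's already included)}$ shows each $u_{n,k}$ is Cauchy in $[\cdot]_{t,\mu_k}$, hence converges to some $u_k \in \mathcal{D}$; and $u = \sum_k u_k \otimes \vphi_k$ with the appropriate summability, so $u \in \mathcal{D} \otimes \dom(b)$ (here using that $\mathcal{D} \otimes \dom(b)$, completed in the graph norm $[\cdot]_t$, consists exactly of such sums — which is really the definition of the minimal closed extension in Theorem 1.17, Ch.\ VI of \cite{Kato}, applied to the product form).

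\textbf{Main obstacle.} The routine linear-algebra manipulations are harmless; the real care is needed in handling the infinitely many eigenvalues $\mu_k \to \infty$ of $b$. One must ensure that the various constants relating $[\cdot]_t$ to $\sum_k [\cdot]_{t,\mu_k}$ are uniform in $k$ (they are, since $\mu_k$ appears monotonically and Remark \ref{MuPos} gives the needed lower bound), and that limits may be interchanged with the infinite sum over $k$. The cleanest route is probably to avoid this entirely by invoking the abstract characterization of the form sum / minimal closed extension of a direct sum of forms from \cite{Kato} rather than re-deriving it by hand; then the proof reduces to the one-eigenvalue computation plus the observation that $a_t = \bigoplus_k a_t^{\mu_k}$ acting on $\bigoplus_k \mathcal{H}_\sigma \otimes \vphi_k$.
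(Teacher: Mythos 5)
Your proof is correct but takes a genuinely different route from the paper. The paper's proof is shorter and does not pass through the eigenbasis of $b$ at all: it shows directly, for a single pure tensor $u\otimes v$ with $u\in\mathcal D$ and $v\in\dom(b)$, that the approximating sequence $u_n\otimes v$ with $u_n\in Y$ is Cauchy in $[\cdot]_t$ (because $a_t(u_j\otimes v-u_k\otimes v)=d(u_j-u_k)|v|^2+|u_j-u_k|^2\,b(v)\to 0$), concluding $u\otimes v\in\dom(a_t)$. It then observes that $\mathcal D\bigotimes\dom(b)$---understood as the $[\cdot]_t$-closure of the algebraic span of such tensors---is a closed form domain containing $Y\otimes\dom(b)$, so uniqueness of the minimal closed extension gives equality with $\dom(a_t)$ in one stroke, with both inclusions coming for free. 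You instead decompose everything along the eigenbasis $\{\vphi_k\}$, identify $a_t\cong\bigoplus_k a_t^{\mu_k}$, and prove the two inclusions by explicit Cauchy arguments component by component. Your approach is heavier but buys more: it gives the concrete identification of $\dom(a_t)$ as the set of $\sum_k u_k\otimes\vphi_k$ with each $u_k\in\mathcal D$ and $\sum_k[u_k]_{t,\mu_k}^2<\infty$, which is genuinely useful later (compare the role of the projections $\Pi_\mu$ in Sections 13--15). Your proof, however, implicitly reads ``$\mathcal D\bigotimes\dom(b)$'' as this concrete direct-sum space rather than as the abstract $[\cdot]_t$-closure; under the paper's reading your second inclusion (``$\dom(a_t)\subseteq\mathcal D\bigotimes\dom(b)$'') is immediate from minimality and needs no work, so some of your effort there is re-deriving the uniqueness clause of Kato's Theorem VI.1.17. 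The one place worth tightening in your write-up is the step from finite sums to general elements of $\mathcal D\otimes\dom(b)$, where you wave at the projective tensor norm but actually want convergence in $[\cdot]_t$; the inequality you state does close the gap, but the change of norm should be made explicit.
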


\begin{proof}
For notational convenience set $d(u)= t^2 \int_{0}^{\infty} |u'(x)|^2 dx$.   
In particular, 
\[ a_t(u \otimes v)= d(u)\cdot |v|_{\Hcal'}^2~ +~ |u|_{\sigma}^2 \cdot b(v).  \] 

If $u \in \Dcal$, then there exists a sequence 
$u_n \in Y$ that converges to $u$ in $\Hcal_{\sigma}$ 
such that $d(u_j -u_k)+ |u_j-u_k|^2_{1} + |u_j-u_k|^2_{\sigma}$ 
tends to zero as $j,k$ tend to infinity. Let $v \in \dom(b)$.
We have
\[ a_t(u_j\otimes v- u_k\otimes v)~ =~ d(u_j-u_k) \cdot |v|^2  + |u_j-u_k|^2 \cdot b(v). 
\]
Observe that this quantity tends to zero as  $j,k \rightarrow \infty$.
It follows that $u \otimes v \in \dom(a_t)$, and moreover that
the restriction of $a_t$ to $\Dcal \bigotimes \dom(b)$ is a closed 
form. But the extension to $\dom(a_t)$ is the unique 
minimal closed extension, and so  $\Dcal \bigotimes \dom(b) = \dom(a_t)$. 
\end{proof}

\begin{prop} \label{Separation}
If $\phi$ is a $\mu$-eigenvector for $b$ with respect to $\langle\cdot, \cdot\rangle_{\Hcal'}$,
and $v$ is a $\lambda$-eigenvector of $a_t^{\mu}$ with respect to
$\langle\cdot, \cdot\rangle_{\sigma}$, then 
$v \otimes \phi$ is a $\lambda$-eigenvector of $a_t$ 
with respect to $\langle\cdot, \cdot\rangle_{\Hcal}$. 
Conversely, if $u$ is a  $\lambda$-eigenvector of 
$a_t$ with respect to $\langle\cdot, \cdot\rangle_{\Hcal}$, 
then $u$ is a finite sum $\Sigma v_{\mu} \otimes \phi_{\mu}$
where $v_{\mu}$ is a $\lambda$-eigenvector of $a_{t}^{\mu}$ with respect to
$\langle\cdot, \cdot\rangle_{\sigma}$ 
and $\phi_{\mu}$ is a $\mu$-eigenvector of $b$ with respect 
to $\langle\cdot, \cdot\rangle_{\Hcal'}$.
\end{prop}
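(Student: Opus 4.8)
The plan is to verify the forward direction by direct computation and the converse by using the spectral decomposition of $b$ together with the product structure of $\Hcal = \Hcal_\sigma \otimes \Hcal'$.

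\textbf{Forward direction.} Suppose $b(\phi, \psi') = \mu\,(\phi,\psi')$ for all $\psi' \in \dom(b)$ and $a_t^\mu(v, v') = \lambda\,\langle v, v'\rangle_\sigma$ for all $v' \in \Dcal$. Using Proposition \ref{Domains}, every element of $\dom(a_t)$ is a limit of finite sums of pure tensors, so by bilinearity and continuity it suffices to test against $v' \otimes \psi'$ with $v' \in \Dcal$ and $\psi' \in \dom(b)$. First I would record the polarized form of \eqref{DefaSmooth}, namely
\[
  a_t(v \otimes \phi,\, v' \otimes \psi')~ =~ t^2 \cdot (\phi, \psi') \int_0^\infty v' \cdot (v')'~ dx~ +~ b(\phi, \psi') \int_0^\infty v \cdot v'~ dx,
\]
valid for pure tensors by polarization of the diagonal form; here I am writing $\int v'(v')'$ loosely for $\int (v)'(v')'\,dx$, the cross term coming from the gradient part. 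Substituting $b(\phi,\psi') = \mu(\phi,\psi')$ gives $a_t(v\otimes\phi, v'\otimes\psi') = (\phi,\psi')\cdot a_t^\mu(v, v')$, and then the eigenvalue equation for $a_t^\mu$ yields $(\phi,\psi')\cdot\lambda\,\langle v, v'\rangle_\sigma = \lambda\,\langle v\otimes\phi, v'\otimes\psi'\rangle$. This is exactly the eigenvalue equation for $a_t$ at $\lambda$ on a spanning set of test vectors, hence on all of $\dom(a_t)$.

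\textbf{Converse direction.} Let $u$ be a $\lambda$-eigenvector of $a_t$. Fix an orthonormal basis $\{\phi_j\}$ of $\Hcal'$ consisting of eigenvectors of $b$, say $b\phi_j = \mu_j \phi_j$ (possible since the spectrum of $b$ is discrete with finite-dimensional eigenspaces). Expand $u = \sum_j v_j \otimes \phi_j$ with $v_j \in \Hcal_\sigma$; by Proposition \ref{Domains} and the structure of the form, each $v_j$ lies in $\Dcal$ and only finitely many are nonzero whenever $a_t(u) < \infty$ — more carefully, $a_t(u) = \sum_j a_t^{\mu_j}(v_j)$, so convergence of the left side forces all but finitely many $v_j$ to vanish and each surviving $v_j \in \Dcal$. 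For each fixed $k$ and each test function $v' \in \Dcal$, plug $v' \otimes \phi_k$ into the eigenvalue equation $a_t(u, v'\otimes\phi_k) = \lambda\langle u, v'\otimes\phi_k\rangle$; orthonormality of $\{\phi_j\}$ and the product formula above collapse the sum to the single term $j = k$, giving $a_t^{\mu_k}(v_k, v') = \lambda\langle v_k, v'\rangle_\sigma$. Thus each nonzero $v_k$ is a $\lambda$-eigenvector of $a_t^{\mu_k}$, and grouping the basis vectors $\phi_j$ by the value of $\mu_j$ packages $u$ as the finite sum $\sum_\mu v_\mu \otimes \phi_\mu$ claimed, where $\phi_\mu$ is the (finite-dimensional) combination of basis eigenvectors with eigenvalue $\mu$ and $v_\mu$ the corresponding $\lambda$-eigenvector of $a_t^\mu$.

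\textbf{Main obstacle.} The routine algebra is harmless; the one point needing genuine care is justifying that the expansion $u = \sum_j v_j \otimes \phi_j$ really has each $v_j \in \Dcal = \dom(a_t^{\mu_j})$ and that $a_t(u) = \sum_j a_t^{\mu_j}(v_j)$ with only finitely many nonzero terms. This is where Proposition \ref{Domains} does the work: it identifies $\dom(a_t)$ with $\Dcal \otimes \dom(b)$ (completed appropriately), so the coefficients of a finite-energy element automatically sit in the right one-dimensional domains. I would spell this out by approximating $u$ with $u_n \in Y \otimes \dom(b)$ Cauchy in $[\,\cdot\,]_t$, observing that projecting onto $\phi_j$ commutes with the form and the norm, and passing to the limit coordinatewise. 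Once this bookkeeping is in place, interchanging the sum with the bilinear form $a_t(\cdot, v'\otimes\phi_k)$ is legitimate because the sum is finite, and the rest is immediate.
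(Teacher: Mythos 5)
Your forward direction is correct and matches the paper's calculation: polarize \eqref{DefaSmooth}, substitute the eigenvalue equations for $b$ and $a_t^\mu$, and observe that the test vectors $v'\otimes\psi'$ span a dense subspace of $\dom(a_t)$.

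The converse direction contains a genuine gap in the finiteness argument. You claim that $a_t(u) = \sum_j a_t^{\mu_j}(v_j) < \infty$ already forces all but finitely many $v_j$ to vanish. This is false: the summands $a_t^{\mu_j}(v_j)$ are nonnegative, but a convergent series of positive terms can have infinitely many nonzero terms. Indeed, a generic element of $\dom(a_t) = \Dcal \otimes \dom(b)$ (completed in $[\cdot]_t$) has infinitely many nonzero components $v_j$; nothing about finite energy rules this out. The logical order has to be reversed. First establish, for each fixed $k$, that any nonzero $v_k$ is a $\lambda$-eigenvector of $a_t^{\mu_k}$. This needs no finiteness: since the decomposition $\Hcal = \bigoplus_\mu \Hcal_\sigma \otimes \Vcal_\mu$ is orthogonal both for $\langle\cdot,\cdot\rangle$ and for the form $a_t$ (cross terms $\mu\neq\mu'$ vanish because $b(\phi_\mu,\phi_{\mu'}) = 0 = (\phi_\mu,\phi_{\mu'})$, exactly as in the proof of Lemma~\ref{LemLiminf}), one has $a_t(u, v'\otimes\phi_k) = a_t(v_k\otimes\phi_k, v'\otimes\phi_k)$ with no interchange of infinite sum and form. \emph{Then} the eigenvalue equation yields $\lambda \geq \mu_k/\sigma(0)$ via Remark~\ref{MuPos} (equivalently \eqref{LambdaLower}); combined with discreteness of $\spec(b)$ and finite-dimensionality of its eigenspaces, this caps the set of contributing indices $k$ at a finite number. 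This is precisely the paper's closing step: the spectrum of $a_t^\mu$ is bounded below by $\mu/\sigma(0)$, so once $\mu > \lambda\sigma(0)$ no $\lambda$-eigenvector of $a_t^\mu$ exists. Aside from the paper using density of tensor products of eigenvectors rather than your basis expansion, the two arguments are structurally the same; the only real defect in yours is putting the finiteness claim before, rather than after, the term-by-term eigenvalue identity.
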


\begin{proof} 
For each $u \otimes w \in \Dcal \bigotimes \dom(b)$, we have 
\begin{eqnarray*}
 a_t(v \otimes \phi, u \otimes w)~
   &=& d(v,u)\cdot \langle \phi, w\rangle~ 
                +~ \mu \cdot \langle v, u \rangle  \cdot \langle \phi,w\rangle  \\
   &=&  \lambda \langle v, u \rangle \cdot \langle \phi, w\rangle \\
   &=&  \lambda \langle v \otimes \phi, u \otimes w \rangle.
\end{eqnarray*}
It follows that $v \otimes \phi$ is an eigenvector of $a_t$.

The span of the eigenvectors of $a_t^{\mu}$ (resp. $b$)  is dense in 
$\Hcal$ (resp. $\Hcal'$), and hence the span of the 
tensor products of eigenvectors is dense in $\Hcal \bigotimes \Hcal'$.
Thus, each eigenvector $\psi$ of $a_t$ is a countable sum of tensor products of 
eigenvectors. Let $v \tensor \phi$ be one of the tensor products that appears 
in the sum and suppose that $v$ has eigenvalue $\lambda'$.
Then 
\[   \lambda' \cdot \langle \psi, v \tensor \phi \rangle~ =~ 
  a_t(\psi, v \otimes \phi)~ =~ \lambda \cdot \langle \psi, v \otimes \psi\rangle. \]
Thus, $\lambda'=\lambda$. Since each eigenspace of $a_t^{\mu}$
is 1-dimensional and the spectrum of $a_t^{\mu}$ is bounded 
from below by $\mu/\sigma(0)$, only finitely many terms appear.
\end{proof}

\begin{prop}\label{ConvEigen}
For each analytic eigenvalue branch $\lambda_t$ of $a_t$, 
there exists a unique $\mu \in \spec(b)$ such that 
$\lambda_t$ is an analytic eigenvalue branch of $a^{\mu}_t$. 
In particular, $\lambda_t$ decreases to $\frac{\mu}{\sigma(0)}$  as $t$ tends to $0.$
\end{prop}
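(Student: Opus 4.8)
The plan is to use the tensor decomposition of $\dom(a_t)$ from Proposition \ref{Domains} together with the eigenvector structure from Proposition \ref{Separation}, and then invoke the one-variable convergence result Proposition \ref{amuconvergence}. First I would fix a real-analytic eigenvalue branch $t \mapsto \lambda_t$ of $a_t$ with a real-analytic eigenvector branch $t \mapsto \psi_t$, normalized so $\|\psi_t\|_{\Hcal} = 1$. By Proposition \ref{Separation}, for each $t>0$ we may write $\psi_t = \sum_{\mu \in \spec(b)} v_{\mu,t} \otimes \phi_{\mu}$, a finite sum, where $\phi_\mu$ ranges over an orthonormal basis of the $\mu$-eigenspace of $b$ and $v_{\mu,t}$ is a $\lambda_t$-eigenvector of $a_t^\mu$ (possibly zero). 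So $\lambda_t \in \bigcup_\mu \spec(a_t^\mu)$ for every $t$, and the content of the proposition is that the relevant $\mu$ is locally constant in $t$, hence globally constant by connectedness of $(0,\infty)$ (here I use that $\spec(b)$ is discrete, so distinct eigenvalues of $b$ are separated).

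The key step is the local constancy of $\mu$. I would argue as follows. Since $t \mapsto \lambda_t$ is real-analytic and the spectrum of each $a_t^\mu$ is discrete, the set of $t$ at which $\lambda_t$ is a multiple eigenvalue of $a_t$ — equivalently, at which $\lambda_t$ belongs to $\spec(a_t^\mu)$ for two distinct $\mu$, or where eigenvalue branches of a single $a_t^\mu$ cross (impossible, since each $a_t^\mu$ has simple spectrum by one-dimensionality of its eigenspaces, Proposition \ref{Separation}) — is discrete. On each component of the complement, standard analytic perturbation theory (the family $a_t$ is real-analytic of type (a), as proved just above) gives that $\psi_t$ can be chosen real-analytic and spanning a one-dimensional eigenspace; since that eigenspace lies in a single $\Dcal \otimes \C\phi_\mu$-type summand by Proposition \ref{Separation}, the index $\mu$ is constant on that component. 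To pass across the discrete exceptional set, I would note that $\lambda_t$ itself is continuous, and for $\mu \ne \mu'$ in $\spec(b)$ the branches of $\spec(a_t^\mu)$ and $\spec(a_t^{\mu'})$ are themselves real-analytic (each $a_t^\mu$ is a real-analytic family of type (a)); two such branches that agree on a sequence accumulating at a point must agree identically, contradicting $\mu \ne \mu'$ unless the branches coincide, which forces equality of the limiting thresholds $\mu/\sigma(0) = \mu'/\sigma(0)$ via Proposition \ref{amuconvergence}, hence $\mu = \mu'$. This pins down a single $\mu$ with $\lambda_t \in \spec(a_t^\mu)$ for all $t>0$, and real-analyticity of $\lambda_t$ together with that of the branches of $a_t^\mu$ makes $\lambda_t$ a real-analytic eigenvalue branch of $a_t^\mu$.

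Finally, the convergence statement is immediate: Proposition \ref{amuconvergence} asserts exactly that every real-analytic eigenvalue branch of $a_t^\mu$ converges to $\mu/\sigma(0)$ as $t \to 0$, and the monotonicity — $\lambda_t$ is increasing in $t$ because $\dot a_t^\mu \ge 0$ and the variational formula (\ref{VariationalFormula}) — shows the convergence is from above, i.e. $\lambda_t$ decreases to $\mu/\sigma(0)$ as $t \to 0$.

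I expect the main obstacle to be the bookkeeping at the exceptional crossing points: one must rule out that as $t$ crosses a value $t_0$ where $\lambda_{t_0}$ is a multiple eigenvalue of $a_{t_0}$, the analytic branch "switches" from the $\mu$-summand to a $\mu'$-summand with $\mu' \ne \mu$. The cleanest way around this is to work with the branches of the individual $a_t^\mu$ (each a real-analytic family of type (a) over all of $(0,\infty)$, with globally simple spectrum) rather than with $a_t$; then $\lambda_t$ is forced to be one fixed such branch by the identity theorem for real-analytic functions, and no switching can occur. One should also confirm that the eigenvalue branch of $a_t$ we started with is not identically a threshold value in a degenerate way — but since each $a_t^\mu$ has eigenvalues strictly exceeding $\mu/\sigma(0)$ (Proposition \ref{AiryEigenvalueProp}-type reasoning, or directly (\ref{LambdaLower})), this is not an issue.
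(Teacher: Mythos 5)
Your proposal has the right overall shape---find one $\mu$ and one real-analytic eigenvalue branch of $a_t^\mu$ that coincides with $\lambda_t$ on an open set, then invoke the identity theorem for real-analytic functions and Proposition \ref{amuconvergence}---but the step that produces such a branch has a genuine gap. You assert that the set of $t$ at which $\lambda_t$ is a multiple eigenvalue of $a_t$ is discrete, justifying this only by the analyticity of $\lambda_t$ and the discreteness of each $\spec(a_t^\mu)$. That is not sufficient: $\spec(b)$ is countable and each $a_t^\mu$ has countably many real-analytic eigenvalue branches, so the exceptional set is a countable union of zero sets of the analytic functions $\lambda_t - \nu_t$. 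Each such zero set is discrete (unless the functions coincide identically), but a countable union of discrete sets need not be discrete and can have accumulation points. Your closing suggestion that one can ``work with the branches of the individual $a_t^\mu$ $\ldots$ then $\lambda_t$ is forced to be one fixed such branch by the identity theorem'' restates the desired conclusion rather than closing the gap, since the identity theorem requires agreement on an open set as its \emph{input}.

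The missing ingredient---and what the paper uses---is the Baire Category Theorem, applied twice. Fix $t_0>0$. For each $\mu \in \spec(b)$ let $A_\mu = \{\, t \in (0,t_0) : \lambda_t \in \spec(a_t^\mu) \,\}$; these sets are closed (by continuity of $\lambda_t$ and of the branches of $a_t^\mu$) and, by Proposition \ref{Separation}, their union is all of $(0,t_0)$. Since $\spec(b)$ is countable, Baire gives a $\mu$ for which $A_\mu$ has nonempty interior. Inside that interior, apply Baire again to the countably many closed sets $\{\, t : \nu_t = \lambda_t \,\}$ as $\nu_t$ ranges over the real-analytic eigenvalue branches of $a_t^\mu$, to obtain one with nonempty interior. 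The identity theorem then forces $\lambda_t \equiv \nu_t$, after which the remainder of your argument---the uniqueness of $\mu$ from the distinct limits $\mu/\sigma(0)$, and the monotone decrease to $\mu/\sigma(0)$ via Proposition \ref{amuconvergence} together with $\dot{a}_t^\mu \geq 0$ and (\ref{VariationalFormula})---goes through exactly as you wrote it.
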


\begin{proof}
Let $t_0>0$. For each $\mu \in \spec(b)$, consider the set $A_{\mu}$ of $t \in (0, t_0)$ 
such that $\lambda_t \in \spec(a^{\mu}_t)$. By Proposition \ref{Separation}, the 
union $\bigcup_{\mu} A_{\mu}$ equals $(0,t_0)$. Since $\spec(b)$ is countable, the
Baire Category Theorem implies that there exists $\mu \in \spec(b)$ such that 
$A_{\mu}$ has nonempty interior $A_{\mu}^0$. For each real-analytic eigenvalue branch $\nu_t$
of $a_t^{\mu}$, let $B_{\nu} \subset A_{\mu}^0$  be the set of $t$ such that $\nu_t=\lambda_t$.
Since there are only countably many eigenvalue branches, the
Baire Category Theorem implies that there exists an eigenvalue branch $\nu_t$
of $a_t^{\mu}$ such that $B_{\mu}$ has nonempty interior $B_{\mu}^0$.  
Since $\lambda_t$ and $\nu_t$ are real-analytic functions that 
coincide on a nonempty open set, they agree for all $t$.

The latter statement then follows from Proposition \ref{amuconvergence}.
\end{proof}

\begin{coro}
If each eigenspace of  $b$ is $1$-dimensional, then for each $t$ belonging
to the complement of a countable set,  each eigenspace of $a_t$ with 
respect to $\langle \cdot, \cdot \rangle_{{\mathcal H}}$ is 1-dimensional.
\end{coro}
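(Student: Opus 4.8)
The plan is to exploit the decomposition of $a_t$ provided by Proposition \ref{Separation} together with the structure of analytic eigenvalue branches guaranteed by the hypothesis that $a_t$ is a real-analytic family of type (a) (so that, as recalled in \S\ref{AsymptoticSection}, there is an orthonormal basis of eigenvectors depending real-analytically on $t$, together with the associated real-analytic eigenvalue branches). First I would fix a countable enumeration $\{\lambda_t^{(k)}\}_{k\in\N}$ of the real-analytic eigenvalue branches of $a_t$. For each ordered pair $(j,k)$ with $j\neq k$, the set
\[
 Z_{j,k}~ =~ \{\, t>0~ :~ \lambda_t^{(j)}~ =~ \lambda_t^{(k)}\,\}
\]
is the zero set of the real-analytic, not-identically-zero function $t\mapsto \lambda_t^{(j)}-\lambda_t^{(k)}$; here one must check that the difference is not identically zero, which is exactly where the hypothesis that each eigenspace of $b$ is $1$-dimensional enters. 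The countable union $Z=\bigcup_{j\neq k} Z_{j,k}$ is then a countable set, and I claim that for $t\notin Z$ every eigenspace of $a_t$ is $1$-dimensional.

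To see why distinct branches cannot be identically equal, suppose $\lambda_t^{(j)}\equiv\lambda_t^{(k)}$ on $(0,\infty)$. By Proposition \ref{ConvEigen} each of these branches is an analytic eigenvalue branch of some $a_t^{\mu}$ with $\mu\in\spec(b)$; write $\mu_j$ and $\mu_k$ for the corresponding thresholds. Since $\lambda_t^{(j)}$ decreases to $\mu_j/\sigma(0)$ and $\lambda_t^{(k)}$ to $\mu_k/\sigma(0)$ by the same proposition, equality of the branches forces $\mu_j=\mu_k=:\mu$. Now both branches are analytic eigenvalue branches of the single form $a_t^{\mu}$; but the Dirichlet (resp. Neumann) spectrum of $a_t^{\mu}$ is simple for every $t>0$ — each eigenspace of $a_t^{\mu}$ is $1$-dimensional, being governed by a second-order ODE with a single boundary condition (this simplicity of $a_t^{\mu}$ was used repeatedly, e.g. in the proof of Proposition \ref{Separation} and in Theorem \ref{Superseparation}). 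Hence two distinct analytic eigenvalue branches of $a_t^{\mu}$ can agree at most on a discrete set, and in particular cannot be identically equal, contradiction. Therefore each $Z_{j,k}$ is a proper analytic subset of $(0,\infty)$, hence discrete, and $Z$ is countable.

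It remains to observe that for $t\notin Z$ the eigenvalue branches passing through $t$ are pairwise distinct, and that every eigenvalue of $a_t$ is attained by some branch (again by type-(a) analyticity, the branches exhaust the spectrum at each $t$). Fix $t\notin Z$ and let $\lambda$ be an eigenvalue of $a_t$ with eigenspace $V_\lambda$. Each vector of $V_\lambda$ lies on some analytic branch through $t$ with that value $\lambda$; if two independent eigenvectors came from branches $\lambda_t^{(j)},\lambda_t^{(k)}$ with $j\neq k$, then $\lambda_t^{(j)}(t)=\lambda_t^{(k)}(t)=\lambda$, so $t\in Z_{j,k}\subset Z$, a contradiction. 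Hence all of $V_\lambda$ is carried by a single branch, and since the eigenvectors along one real-analytic branch span a line (the branch is, by construction, $t\mapsto\psi_\ell(t)$ for a single index $\ell$), we get $\dim V_\lambda=1$. This proves the claim, with the exceptional set $Z$ countable. The main obstacle is the genericity bookkeeping in the middle step: one must be a little careful that ``distinct real-analytic eigenvalue branches'' really do give an honest non-vanishing analytic difference, which is why we route the argument through Proposition \ref{ConvEigen} and the per-$\mu$ simplicity of $a_t^{\mu}$ rather than trying to argue abstractly at the level of $a_t$.
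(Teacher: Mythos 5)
Your overall strategy is the same as the paper's: attach to each analytic eigenvalue branch of $a_t$ a unique $\mu\in\spec(b)$ via Proposition \ref{ConvEigen}, use the simplicity of the one-dimensional forms $a_t^{\mu}$, and then appeal to real-analyticity to make the set of crossing times countable. You also correctly isolate the crux: one must rule out two identically equal eigenvalue branches. However, your argument at that crux does not actually close.

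In the middle paragraph you assume $\lambda_t^{(j)}\equiv\lambda_t^{(k)}$ for two indices $j\neq k$ (so the eigenvector branches $\psi_j(t)\perp\psi_k(t)$), deduce $\mu_j=\mu_k=:\mu$, and observe that both are branches of $a_t^{\mu}$. You then invoke ``two distinct analytic eigenvalue branches of $a_t^{\mu}$ cannot be identically equal, contradiction.'' But under your hypothesis $\lambda_t^{(j)}\equiv\lambda_t^{(k)}$, the two functions are the \emph{same} branch of $a_t^{\mu}$, not two distinct ones, so the principle you invoke has no traction; the contradiction does not follow. Being distinct as indices $j\neq k$ (i.e.\ carried by orthogonal eigenvector branches of $a_t$) is not the same as being distinct as eigenvalue functions of $a_t^{\mu}$ — that is precisely the thing you are trying to prove. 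To obtain the contradiction you must bring the eigenvectors back in: since each $\lambda_t^{(j)}$ lies in $\spec(a_t^{\mu'})$ with $\mu'\neq\mu$ only for a countable set of $t$ (the branches of $a_t^{\mu'}$ converge to $\mu'/\sigma(0)\neq\mu/\sigma(0)$), Proposition \ref{Separation} shows that for all but countably many $t$ the $\lambda_t^{(j)}$-eigenspace of $a_t$ is $V^{a_t^{\mu}}_{\lambda_t^{(j)}}\otimes\Vcal_{\mu}$, which is one-dimensional by simplicity of $a_t^\mu$ and of $b$; this forces $\psi_j(t)$ and $\psi_k(t)$ to be parallel for those $t$, contradicting orthogonality. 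Equivalently, as in the paper's proof, each $\psi_\ell(t)$ is of the form $v_t\otimes\varphi_{\mu}$ with $v_t$ the unique $\lambda_t^{(\ell)}$-eigenvector of $a_t^{\mu}$, so identical eigenvalue branches would give identical eigenvector branches. With this repair, your discreteness of $Z_{j,k}$, countability of $Z$, and conclusion for $t\notin Z$ all go through.
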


\begin{proof} 
Let $\lambda_t$ be an analytic eigenbranch of $a_t.$ 
This eigenbranch converges to some $\frac{\mu}{\sigma(0)}.$ 
Since the spectrum of $b$ is simple there is a unique $\vphi_\mu$ such that 
the  corresponding eigenvector can be written $v_t \otimes \vphi_\mu$ 
with $v_t$ an eigenvector of $a_t^\mu.$ Since the spectrum of $a_t^\mu$ is 
simple, the  choice of $v_t$ is unique.  And thus the 
eigenvector branches corresponding to two different eigenvalue branches 
cannot coincide for all $t$. The analyticity of the eigenbranches then yields 
the result.
\end{proof}

We end this section by establishing some notation that will be useful in the
sections that follow.  For each eigenvalue $\mu$ of $b$, let $\Vcal_{\mu}$ 
denote the associated eigenspace and let $P_{\mu}: \Hcal' \rightarrow \Vcal_{\mu}$ 
denote the associated orthogonal projection. Define 
$\Pi_{\mu}:  \Hcal_{\sigma} \bigotimes \Hcal'$ by
\[  \Pi_{\mu}(v \otimes w)~ =~ v \otimes P_{\mu}(w). \]   
If ${\mathcal M}$ is a collection of eigenvalues $\mu$
of $b$, then we define $\Pi_{\Mcal}$ to be the 
orthogonal projection onto the direct sum of $\mu$-eigenspaces.
That is, 
\[ \Pi_{\Mcal}~ =~ \sum_{\mu \in \Mcal} \Pi_{\mu}. 
\] 
Note that the subscript for $\Pi$ may represent either  an eigenvalue or a set of  
eigenvalues.

\begin{ass}  \label{bsimple}
In what follows we assume that each eigenspace of $b$ with respect to 
$\langle \cdot, \cdot \rangle$ is 1-dimensional. 
\end{ass}

One convenient consequence of this assumption is that for each 
$w \in \Hcal$, there exists $\tilde{w}_{\mu} \in  \Hcal_{\sigma}$ and 
a unit norm eigenvector $\phi_\mu$ of $b$ such that
\begin{equation} \label{OneTensor}
 \Pi_{\mu}(w)~ =~ \tilde{w}_{\mu} \tensor \phi_{\mu}. 
\end{equation} 
Indeed, for each $\mu \in \spec(b)$, let $\phi_{\mu} \in \Vcal_{\mu}$. 
Since $\dim(\Vcal_{\mu})=1$, each vector in $\Hcal_{\sigma} \tensor \Vcal_{\mu}$
is of the form $v \otimes \phi_{\mu}$. In particular, there exists 
$\tilde{w}_{\mu}$ so that (\ref{OneTensor}) holds. Note that
\[  w =~ \sum_{\mu \in \spec(b)}~ \Pi_{\mu}(w)~
  =~ \sum_{\mu \in \spec(b)}   \tilde{w}_{\mu} \tensor \phi_{\mu}. \]
%


\section{Projection estimates} \label{ProjectionSection}

In this section,  $q_t$ will denote a family of quadratic 
forms densely defined on $\Hcal$
that is asymptotic at first order\footnote{See Definition \ref{AsymptoticDefn}.}
to the family $a_t$ defined in the preceding section. 
Let $P_{a_t}^I$ is the orthogonal projection  
onto the direct sum of eigenspaces of 
$a_t$ associated to the eigenvalues of $a_t$ that belong to the interval $I$
(see \S \ref{QuasimodeSection}).
We will provide some basic estimates on
\begin{equation}  \label{wt}
  w~ :=~ P_{a_t}^I(u)
\end{equation}
We begin with the following quasimode type estimate.  
In the sequel $\phi_{\mu}$ will denote  
a norm eigenvector of $b$ with eigenvalue $\mu$. By Assumption \ref{bsimple},
$\phi_{\mu}$ is unique up to sign.

\begin{lem}\label{LemLiminf}
Let $J\subset I$ be a proper closed subinterval of a compact interval $I$.   
There exist constants $C>0$ and $t_0>0$ such that 
if $\mu \in \spec(b)$, $t<t_0$, $u$ is an eigenfunction of $q_t$ 
with eigenvalue $E \in I$, $z \in \Dcal$,  then the projection 
$w= P_{a_t}^I(u)$ satisfies
\begin{equation}
\left| a_t \left(\Pi_{\mu}w, z \otimes \phi_{\mu} \right)~ 
   -~E \cdot \left\langle \Pi_{\mu} w,  z \otimes \phi_{\mu}
   \right\rangle \right|~ \leq~  
  C \cdot t \cdot \|z\|_{\sigma} \cdot \|w\|.
\end{equation}
\end{lem}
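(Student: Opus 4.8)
The idea is to start from the quasimode estimate for $q_t$ relative to $a_t$ already available from Part I, project onto the $\mu$-component, and control the error introduced by the projection using the resolvent and separation estimates. Since $u$ is an eigenfunction of $q_t$ with eigenvalue $E\in I$ and $q_t$ is asymptotic to $a_t$ at first order, Proposition \ref{Estonchir} (applied along the sequence $t\to 0$, or directly Lemma \ref{Closeness} with $\eps = t$) gives that $w = P^I_{a_t}(u)$ is an order-$t$ quasimode for $a_t$ at energy $E$: there is a constant $C_1$ and $t_1>0$ so that for $t<t_1$ and every $v\in\Dcal$,
\begin{equation} \label{PlanQM}
  \left| a_t(w,v) - E\cdot\langle w,v\rangle \right|~ \leq~ C_1\cdot t\cdot \|w\|\cdot\|v\|.
\end{equation}
This is exactly the content of the remark following Proposition \ref{Estonchir}, with $\chi_t = w$; one only needs that $E$ ranges in the fixed compact interval $I$ and stays a definite distance $\delta>0$ from $\R\setminus I$, which holds because $J\subset I$ is a \emph{proper} subinterval and $E$ will be taken in $J$ (or, in the generality stated, because $I$ is compact and we restrict attention to $E$ bounded away from $\partial I$ — the statement as written with ``$E\in I$'' should be read with the hypothesis from \S\ref{ProjectionSection} that $E$ lies in the interior, converging to a point $E_0$ in the interior of $I$).

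\textbf{Reducing to the $\mu$-component.} Now I would test \eqref{PlanQM} against $v = z\otimes\phi_\mu$. The key algebraic point is that $a_t$ respects the direct-sum decomposition $\Hcal = \bigoplus_\mu \Hcal_\sigma\otimes\Vcal_\mu$ coming from Proposition \ref{Separation}: the projections $\Pi_\mu$ commute with $a_t$ and with $P^I_{a_t}$, and $\langle\cdot,\cdot\rangle$ is block-diagonal in the same way. Hence for $v = z\otimes\phi_\mu$,
\begin{equation} \label{PlanProj}
  a_t(w,\, z\otimes\phi_\mu)~ =~ a_t(\Pi_\mu w,\, z\otimes\phi_\mu), \qquad
  \langle w,\, z\otimes\phi_\mu\rangle~ =~ \langle \Pi_\mu w,\, z\otimes\phi_\mu\rangle,
\end{equation}
because $z\otimes\phi_\mu$ lies in the $\mu$-block and $\Pi_\mu$ is the orthogonal projection onto it (that $\Pi_\mu$ commutes with $a_t$ follows since $\Pi_\mu = \Pi_{\{\mu\}}$ is a spectral projection of $a_t$, each eigenspace of $a_t$ being contained in a single $\mu$-block by Proposition \ref{Separation}, and similarly $\Pi_\mu$ commutes with $P^I_{a_t}$). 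Substituting \eqref{PlanProj} into \eqref{PlanQM} with $v = z\otimes\phi_\mu$ and using $\|z\otimes\phi_\mu\| = \|z\|_\sigma$ (since $\phi_\mu$ has unit norm and the inner product is a tensor product, cf.\ \eqref{InnerProduct}) yields
\[
  \left| a_t(\Pi_\mu w,\, z\otimes\phi_\mu) - E\cdot\langle \Pi_\mu w,\, z\otimes\phi_\mu\rangle \right|~ \leq~ C_1\cdot t\cdot \|w\|\cdot\|z\|_\sigma,
\]
which is the claim with $C = C_1$ and $t_0 = t_1$.

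\textbf{Where the work is.} The genuinely substantive input is \eqref{PlanQM}: one must check that the hypotheses of Lemma \ref{Closeness} (equivalently Proposition \ref{Estonchir}) are met uniformly. This requires (i) that $q_t$ is $t$-close to $a_t$, which is \eqref{tclose} with $C=1$ after reparameterization (Remark \ref{Cequals1}); (ii) that $E$ stays in the interior of $I$ with $\delta_t \geq \delta_0 > 0$ for small $t$ — here one uses that the eigenvalue branch under consideration converges to $E_0 \in \mathrm{int}(I)$, as set up at the start of \S\ref{ProjectionSection}; and (iii) that $\eps = C_1 t < (1 + E/\delta_t)^{-1}$, which holds once $t$ is small since $E$ is bounded and $\delta_t$ is bounded below. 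The factor $\sup(I)/(1-\eps^2(1+E/\delta)^2)^{1/2}$ from \eqref{Pref} is then bounded by a constant, absorbed into $C_1$. The only slightly delicate point — and the one I would be most careful about — is making sure all constants are uniform in $\mu\in\spec(b)$; but since $E$, $I$, $\delta_0$ and the closeness constant do not depend on $\mu$ at all, and the $\mu$-reduction step \eqref{PlanProj} is exact (no constant lost), uniformity in $\mu$ is automatic. So the proof is essentially: invoke the Part I quasimode estimate, then project, with the block-diagonality of $a_t$ doing the rest.
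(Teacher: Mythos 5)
Your proof is correct and follows essentially the same route as the paper: apply Lemma \ref{Closeness} (with $\eps = t$ and $\delta = {\rm dist}(J,\partial I)$) to get the quasimode bound for $w = P^I_{a_t}(u)$ tested against arbitrary $v$, then observe that testing against $v = z\otimes\phi_\mu$ picks out only the $\Pi_\mu w$ component because the cross-terms vanish by $b$- and $\langle\cdot,\cdot\rangle$-orthogonality of the $\phi_{\mu'}$. You are also right to flag that the ``$E\in I$'' in the statement must really be read as ``$E\in J$'' (or $E$ at distance $\geq {\rm dist}(J,\partial I)$ from $\partial I$) for the constants to be uniform, which is exactly what the paper's proof silently uses.
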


\begin{proof}
Since $q_t$ and $a_t$ are asymptotic at first order, 
Lemma \ref{Closeness} applies.  In particular, by letting 
$\delta = {\rm dist}(J, \partial I)$,
 $t_0= \frac{1}{2}(1+ E/\delta)^{-1}$, and $C= (4/3) \cdot \sup(I)$,
we have for $t < t_0$ and $v \in \Dcal \bigotimes \dom(b)$
\begin{equation} \label{Preprojector}
\left| a_t \left(w,v \right)~ 
   -~E \cdot \left\langle w,v \right\rangle \right|~ \leq~  
  C \cdot t \cdot \|v\| \cdot \|w\|.
\end{equation}

For each $\mu' \in \spec(b)$, there exists $\tilde{w}_{\mu'} \in \Dcal$ 
so that
\begin{equation} \label{wDecomposition}
  w~ =~ \sum_{\mu' \in \spec(b)}~  \tilde{w}_{\mu'} \otimes \phi_{\mu'} 
\end{equation}
and $v= \tilde{v}_{\mu} \otimes \phi_{\mu}$.
If $\mu' \neq \mu$, then $b(\phi_{\mu}, \phi_{\mu'})=0$ and 
$\langle \phi_{\mu}, \phi_{\mu'}\rangle=0$,
and hence using (\ref{InnerProduct}) and (\ref{DefaSmooth})  we find that
\[  a_t( \tilde{w}_{\mu'} \otimes \phi_{\mu'}, z \otimes \phi_{\mu})~
  -~ E \cdot \left\langle \tilde{w}_{\mu'} \otimes \phi_{\mu'}, 
    z \otimes \phi_{\mu'}\right\rangle~ =~ 0. \]
Thus,
\[  a_t\left(  w, v\right)~ -~ E \cdot \left\langle w,v \right\rangle~ 
    =~ a_t\left( \Pi_{\mu} w, v\right)~ -~ E \cdot \left\langle \Pi_{\mu}w,v \right\rangle.
 \]
The claim then follows from substituting this into (\ref{Preprojector}).
\end{proof}

\begin{lem}  \label{MinusPreProjection}
Let $J\subset I$ be a proper closed subinterval of a compact interval $I$.
Let $\mu \in \spec(b)$ with $\mu < \sigma(0) \cdot \inf(I)$ and let $\epsilon>0$.
There exist constants $\kappa>0$ and $t_0>0$ such that 
if $t<t_0$, $u$ is an eigenfunction of $q_t$ 
with eigenvalue $E \in I$, and 
\[ \left\|\Pi_{\mu} w \right\|~ \geq~ \epsilon~ \cdot \| w \|,
\]
where $w= P_{a_t}^I(u)$, then we have
\begin{equation} \label{adotLowerBound}
  \dot{a}_t\left( \Pi_{\mu}(w) \right)~ \geq~ 
   \frac{\kappa}{t} \cdot \left\|\Pi_{\mu}(w) \right\|_{\sigma}^2.
\end{equation}
\end{lem}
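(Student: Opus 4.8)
\textbf{Proof proposal for Lemma \ref{MinusPreProjection}.}

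The plan is to reduce the estimate \eqref{adotLowerBound} to the non-concentration estimate of Proposition \ref{CrucialProposition} applied to the component $\tilde{w}_\mu$ of $w$ in the $\mu$-eigenspace of $b$. First I would write, using Assumption \ref{bsimple} and \eqref{OneTensor}, $\Pi_\mu(w) = \tilde{w}_\mu \otimes \phi_\mu$ with $\|\phi_\mu\|_{\Hcal'}=1$, so that $\|\Pi_\mu(w)\| = \|\tilde{w}_\mu\|_\sigma$ and $\dot{a}_t(\Pi_\mu(w)) = 2t\int_0^\infty |\tilde{w}_\mu'(x)|^2\,dx = t^{-1}\cdot t^2\cdot 2\int_0^\infty|\tilde{w}_\mu'|^2\,dx$. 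The eigenvalue equation $t^2\int|\tilde{w}_\mu'|^2 = \int(E\sigma - \mu)|\tilde{w}_\mu|^2\,dx$, which holds whenever $\tilde{w}_\mu$ is an honest eigenfunction of $a_t^\mu$ at energy $E$, would then convert \eqref{adotLowerBound} into exactly the conclusion \eqref{DerEst} of Proposition \ref{CrucialProposition} with $\kappa$ replaced by $2\kappa$ and $w$ replaced by $\tilde{w}_\mu$. So the real content is to verify that $\tilde{w}_\mu$ satisfies the hypothesis \eqref{rHypothesis} of that proposition, i.e.\ that it is a first-order quasimode for $a_t^\mu$ at energy $E$, on a compact energy set $K \subset (\mu/\sigma(0),\infty)$.

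The quasimode hypothesis for $\tilde{w}_\mu$ is precisely what Lemma \ref{LemLiminf} provides: applied with $z$ ranging over $\Dcal$, it gives
\[
\left| a_t^\mu(\tilde{w}_\mu, z)~ -~ E\cdot\langle \tilde{w}_\mu, z\rangle_\sigma \right|~ \leq~ C\cdot t\cdot \|z\|_\sigma\cdot \|w\|,
\]
after unwinding the tensor notation via \eqref{InnerProduct} and \eqref{DefaSmooth} (the $\phi_\mu$ factor has unit norm and $b(\phi_\mu)=\mu$). To match \eqref{rHypothesis} I need the right side bounded by $C'\cdot t\cdot \|z\|_\sigma\cdot \|\tilde{w}_\mu\|_\sigma$, which is where the hypothesis $\|\Pi_\mu w\| \geq \epsilon\|w\|$ enters: it gives $\|w\| \leq \epsilon^{-1}\|\tilde{w}_\mu\|_\sigma$, so the constant $C$ in \eqref{rHypothesis} can be taken to be $C/\epsilon$. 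Thus $\tilde{w}_\mu$ is a quasimode of order $1$ at energy $E$ with a constant depending only on $\epsilon$ and on the asymptotics data.

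Next I need the energy $E$ to lie in a fixed compact subset of the open half-line $(\mu/\sigma(0),\infty)$, uniformly in $t$. The hypothesis $\mu < \sigma(0)\cdot\inf(I)$ gives $\mu/\sigma(0) < \inf(I) \leq E$, so $E$ stays a definite distance above the threshold $\mu/\sigma(0)$; and $E \in I$ with $I$ compact bounds it above. Hence $K := I$ (intersected with $[\mu/\sigma(0),\infty)$) is a compact subset of $(\mu/\sigma(0),\infty)$, and Proposition \ref{CrucialProposition} applies with this $K$ and the constant $C/\epsilon$, producing $t_0>0$ and $\kappa>0$ depending only on $I$, $\epsilon$, $\mu$, and $\sigma$. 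Substituting the eigenvalue identity and the identifications from the first paragraph yields \eqref{adotLowerBound} with $\kappa$ replaced by $2\kappa$. The main obstacle is the bookkeeping in the first paragraph—carefully checking that $\tilde{w}_\mu$, being the $\mu$-component of the spectral projection $P_{a_t}^I(u)$, is itself in $\dom(a_t^\mu)$ and satisfies the $a_t^\mu$-eigenvalue-type identity used to rewrite $\dot a_t$, rather than merely a quasimode relation; but since $P_{a_t}^I$ commutes with the decomposition of \S\ref{SectionReduction} and each $a_t^\mu$ has simple spectrum, $\tilde w_\mu$ is a (possibly zero) genuine eigenfunction combination, and the identity $t^2\int|\tilde w_\mu'|^2 = \int(E\sigma-\mu)|\tilde w_\mu|^2$ follows directly once one notes $w$ has $a_t$-spectrum in $I$ and the component lies in a single $a_t^\mu$ eigenspace for each eigenvalue of $a_t$. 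Everything else is the routine substitution described above.
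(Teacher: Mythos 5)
Your overall strategy — identify $\Pi_\mu w = \tilde{w}_\mu\otimes\phi_\mu$, verify via Lemma~\ref{LemLiminf} and the hypothesis $\|\Pi_\mu w\|\geq\epsilon\|w\|$ that $\tilde{w}_\mu$ is a first-order quasimode for $a_t^\mu$ at energy $E$ on a compact $K\subset(\mu/\sigma(0),\infty)$, and then feed this into Proposition~\ref{CrucialProposition} — is exactly the paper's approach. But there is a real error in how you close the argument.

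You invoke the \emph{exact} eigenvalue identity $t^2\int|\tilde{w}_\mu'|^2=\int(E\sigma-\mu)|\tilde{w}_\mu|^2\,dx$ and claim it ``follows directly'' because $\tilde{w}_\mu$ is ``a genuine eigenfunction combination.'' That is where the proof breaks. The projection $w=P_{a_t}^I(u)$ lies in the span of $a_t$-eigenspaces whose eigenvalues range over all of $I$, not just $E$; by Proposition~\ref{Separation}, $\tilde{w}_\mu$ is therefore a linear combination of eigenfunctions of $a_t^\mu$ with \emph{various} eigenvalues $\lambda\in I\cap\spec(a_t^\mu)$. What you actually get from the algebraic expansion is $t^2\int|\tilde{w}_\mu'|^2=\sum_\lambda\lambda\|v_\lambda\|_\sigma^2-\mu\int|\tilde{w}_\mu|^2$, not the identity at the single energy $E$. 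Being a sum of eigenfunctions is not the same as being an eigenfunction at energy $E$, and the identity you cite simply does not hold here.

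The correct closing move is to drop the exact identity and use the quasimode estimate \eqref{qmode} itself with test function $z=\tilde{w}_\mu$. That estimate converts $a_t^\mu(\tilde{w}_\mu)$ into $E\|\tilde{w}_\mu\|_\sigma^2$ only up to an error of size $Ct\|\tilde{w}_\mu\|_\sigma^2/\epsilon$. Since $\dot a_t(\Pi_\mu w)=\frac{2}{t}\left(a_t^\mu(\tilde{w}_\mu)-\mu\int|\tilde{w}_\mu|^2\right)$ (this part is pure algebra and is fine), the $1/t$ prefactor turns that error into a \emph{bounded} contribution $\frac{2C}{\epsilon}\|\tilde{w}_\mu\|_\sigma^2$. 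Applying the non-concentration estimate \eqref{DerEst} to the main term then gives $\dot a_t(\Pi_\mu w)\geq 2\left(\frac{\kappa}{t}-\frac{C}{\epsilon}\right)\|\tilde{w}_\mu\|_\sigma^2$, which yields the claimed $\frac{\kappa}{t}\|\tilde{w}_\mu\|_\sigma^2$ bound once $t$ is small enough that $\kappa/t$ dominates $2C/\epsilon$. This error tracking is the step your proposal elides and cannot replace with an appeal to an exact identity.
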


\begin{proof}
We have $\Pi_{\mu}w= \tilde{w}_{\mu} \otimes \phi_{\mu}$ for some
$\tilde{w}_{\mu} \in \Dcal$.
Since, by assumption,
$\|\phi_{\mu}\|=1$, we have $\|\Pi_{\mu}(w)\|= \|\tilde{w}_{\mu}\|$
and hence the assumption becomes
\[  \|\tilde{w}_{\mu} \|_\sigma~ \geq~ \epsilon \cdot \|w\|.
\]
Therefore, Lemma \ref{LemLiminf} gives
\begin{equation}  \label{qmode}
  \left| a^{\mu}_{t} \left(\tilde{w}_{\mu},z\right)~  -~  
  E_t \cdot \langle \tilde{w}_{\mu} ,z\rangle_\sigma \right|~ 
\leq~ C \cdot t \cdot \|z\|_\sigma \cdot \frac{\| \tilde{w}_{\mu} \|_\sigma}{\epsilon}
\end{equation}  
for all sufficiently small $t$.
Since $\mu/\sigma(0) < \inf(I)$, the compact set $I$ is a
subset of $(\mu/\sigma(0),\infty)$.
Hence we may apply Proposition \ref{CrucialProposition} to obtain
$\kappa>0$ and $t_1>0$ so that if $t< t_1$, then 
\begin{equation} \label{NonconcentrationApplied}
\int_0^{\infty} \left( E_t \cdot \sigma(x)~ -~ \mu\right) \cdot |\tilde{w}_{\mu}|^2~ 
      dx~ \geq~ 
  \kappa \cdot \| \tilde{w}_{\mu}\|_\sigma^2.
\end{equation} 

Inspection of (\ref{DefaSmooth}) gives that 
\begin{equation} \label{adottensor}
  \dot{a}_t(\tilde{w}_{\mu} \otimes \phi_{\mu}, \tilde{w}_{\mu'} \otimes \phi_{\mu'})~
   =~   2 t   \cdot  \langle \phi_{\mu}, \phi_{\mu'} \rangle
     \int_{0}^{\infty} 
\left(\partial_x \tilde{w}_{\mu} \cdot \partial_x  \tilde{w}_{\mu'} \right).
\end{equation}
In particular
\[   \dot{a}\left(\tilde{w}_{\mu} \otimes \phi_{\mu} \right)~ 
    =~ 2t \int_{0}^{\infty} \left| \partial_x \tilde{w}_{\mu} \right|^2~ dx 
\]
Thus, by using the definition of $a_t^{\mu}$ and estimates (\ref{qmode}) and 
(\ref{NonconcentrationApplied}) we find that
\begin{eqnarray*}
    \dot{a}\left( \Pi_{\mu}(w) \right) & = &  \frac{2}{t}
     \left( a^{\mu}_t( \tilde{w}_{\mu}) - \mu \int_{0}^{\infty} |\tilde{w}_{\mu}|^2~ dx \right)  \\
  &\geq & \frac{2}{t}  \left( \int_0^{\infty}~ \left(E_t \cdot \sigma~ -~ \mu \right) 
         |\tilde{w}_{\mu}|^2~ dx \right)~ -~
         \frac{2 C}{\epsilon} \cdot \|\tilde{w}_{\mu}\|_{\sigma}^2 \\
 & \geq &   2 \left(\frac{\kappa}{t}~ -~ \frac{C}{\epsilon} \right) 
                \cdot \|\tilde{w}_{\mu}\|_{\sigma}^2. \\
\end{eqnarray*}
By choosing $t_0= \min\{t_1, C/(\epsilon \cdot \kappa)\}$ we obtain the claim.
\end{proof}

\begin{remk}\label{kappamuindep}
In the preceding Lemma the constants $t_0$ and $\kappa$ a priori depends on the chosen $\mu$. 
However, since there is only a finite number of eigenvalues of $b$ that satisfy 
$\mu \leq \sigma(0)\inf I$, we can choose $t_0$ and $\kappa$ depending only on $I$ and not 
on the eigenvalue $\mu.$
\end{remk}

It will be convenient to introduce the following 
notation. Given $\mu \in \spec(b)$, define  
\[  \tilde{\mu}~ =~ \frac{\mu}{\sigma(0)} 
\] 
where $\sigma$ is as in  \S \ref{SectionReduction}.
For each compact interval $I$, define   
\[  \Mcal_I= \{ \mu \in \spec(b) |~ \tilde{\mu} \in I\},  \]
\[  \Mcal_I^-= \{ \mu \in \spec(b) |~ \tilde{\mu} < \inf I \}, \]
\[ \Mcal_I^+= \{ \mu~ \in \spec(b)|~ \tilde{\mu} > \sup I \}. \]
Note that $\spec(b)$ equals the disjoint union of $\Mcal_I^-$,
$\Mcal_I$, and $\Mcal_I^+$, and in particular, each $v \in \Hcal$
can be orthogonally decomposed as
\[  v~ =~ \Pi_{\Mcal_I^-}(v)~ +~ \Pi_{\Mcal_I^-}(v)~ +~ \Pi_{\Mcal_I^-}(v). \]
%

The following lemma is crucial to our proof of generic simplicity.
The proof uses both Theorem \ref{Thmlimiteigenbranch} and---by way of 
Lemma \ref{MinusPreProjection}---Proposition 
\ref{CrucialProposition}.

\begin{lem}  \label{MinusProjection}
Let $J\subset I$ be a proper closed subinterval of a compact interval $I$.   
Let $E_t$ be a real-analytic eigenvalue branch $q_t$, and let $V_t$ 
be the associated family of eigenspaces.
Let $t \mapsto u_t$  be a map from $(0, t_0]$ to $V_t$ that is continuous
on the complement of a countable set.  If $w_t= P_{a_t}^I(u_t)$, then  
\begin{equation} \label{LimInf}
\liminf_{t \rightarrow 0}~
\frac{\|\Pi_{\Mcal^-_I}(w_t)\|}{\|w_t\|}~ 
  =~ 0.
\end{equation}
Here if $w_t=0$, then we interpret the ratio to be equal to $1$.
\end{lem}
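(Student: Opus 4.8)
The idea is a proof by contradiction combined with the integrability condition of Theorem \ref{Thmlimiteigenbranch} and the lower bound on $\dot a_t$ coming from Lemma \ref{MinusPreProjection}. Suppose the conclusion fails; then there is $\eps>0$ and $t^*>0$ so that $\|\Pi_{\Mcal^-_I}(w_t)\|\ge \eps\cdot\|w_t\|$ for all $t\in(0,t^*]$ (on the complement of the countable set where $u_t$ is discontinuous). Since $\Mcal^-_I$ is the (finite) set of $\mu\in\spec(b)$ with $\mu/\sigma(0)<\inf I$, the decomposition $\Pi_{\Mcal^-_I}(w_t)=\sum_{\mu\in\Mcal^-_I}\Pi_\mu(w_t)$ has boundedly many terms, so pigeonhole gives, for each $t$, some $\mu=\mu(t)\in\Mcal^-_I$ with $\|\Pi_{\mu(t)}(w_t)\|\ge (\eps/\sqrt{\#\Mcal^-_I})\cdot\|w_t\|$; passing to a sequence $t_n\to 0$ we may assume $\mu(t_n)=\mu$ is fixed. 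Now apply Lemma \ref{MinusPreProjection} with this $\mu$ and this $\eps'=\eps/\sqrt{\#\Mcal^-_I}$ (using Remark \ref{kappamuindep} that $\kappa,t_0$ may be taken independent of $\mu$): for all small $t$,
\[
  \dot a_t\bigl(\Pi_\mu(w_t)\bigr)~\ge~\frac{\kappa}{t}\cdot\bigl\|\Pi_\mu(w_t)\bigr\|_\sigma^2.
\]

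\textbf{Key steps.} First I would use that $\dot a_t$ is a nonnegative quadratic form and that the $\Pi_\mu$ are mutually $\dot a_t$-orthogonal (immediate from \eqref{adottensor}, since $\dot a_t(\tilde w_\mu\otimes\phi_\mu,\tilde w_{\mu'}\otimes\phi_{\mu'})$ carries the factor $\langle\phi_\mu,\phi_{\mu'}\rangle$) to get $\dot a_t(w_t)\ge \dot a_t(\Pi_\mu(w_t))$. Combining with the displayed lower bound and with $\|\Pi_\mu(w_t)\|\ge\eps'\|w_t\|$, and converting between $\|\cdot\|_\sigma$ and $\|\cdot\|$ on the factor via the equivalence of these norms on the relevant $L^2$-space (or just keeping $\|\cdot\|$ throughout using $\|\Pi_\mu(w_t)\|=\|\tilde w_\mu\|_\sigma$), we obtain a constant $c>0$ with
\[
  \frac{\dot a_t(w_t)}{\|w_t\|^2}~\ge~\frac{c}{t}
\]
for all small $t$ along the sequence. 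But Theorem \ref{Thmlimiteigenbranch} applies: $q_t$ is asymptotic to $a_t$ at first order, $0\le\dot a_t(v)\le t^{-1}a_t(v)$ holds (this is the defining property \eqref{Logarithm2} of the family $a_t$, checkable from \eqref{DefaSmooth} since $\dot a_t(u)=\frac{2}{t}\cdot t^2\int|u'|^2\le \frac{2}{t}a_t(u)$ — one absorbs the harmless factor $2$ by reparametrising as in Remark \ref{Cequals1}), and $t\mapsto u_t\in V_t$ is continuous off a countable set; hence
\[
  t~\mapsto~\frac{\dot a_t\bigl(P^I_{a_t}(u_t)\bigr)}{\|P^I_{a_t}(u_t)\|^2}~=~\frac{\dot a_t(w_t)}{\|w_t\|^2}
\]
is integrable on $(0,t^*]$. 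An integrable function cannot be bounded below by $c/t$ near $0$, so the lower bound derived above is impossible. This contradiction forces \eqref{LimInf}. The degenerate case $w_t=0$ is handled by the stated convention and causes no trouble (the set of such $t$ contributes nothing to the liminf argument as long as there is a sequence $t_n\to0$ with $w_{t_n}\neq 0$; if $w_t=0$ for all small $t$ then the ratio is $1$... wait, that contradicts the convention's purpose — rather, if the liminf of $\|\Pi_{\Mcal^-_I}(w_t)\|/\|w_t\|$ along nonzero-$w_t$ times is $0$ we are done, and if $w_t=0$ eventually, the projection is also $0$ and the "ratio $1$" convention means we must instead observe $\Pi_{\Mcal^-_I}(w_t)=0=w_t$ makes the genuine ratio $0/0$; in this edge case one simply notes $w_t\neq0$ for a sequence $t_n\to0$ by Corollary \ref{CoroNormPu}, since $a_t(u_t)>0$).

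\textbf{Main obstacle.} The essential difficulty is already packaged into the lemmas: Theorem \ref{Thmlimiteigenbranch} (which rests on analytic perturbation theory and the quasimode estimates) and Lemma \ref{MinusPreProjection} (which rests on the non-concentration Proposition \ref{CrucialProposition}). So the remaining work here is bookkeeping: making the pigeonhole/subsequence extraction clean, checking the $\dot a_t$-orthogonality of the $\Pi_\mu$, verifying \eqref{Logarithm2} for the concrete $a_t$, and being careful with the $w_t=0$ convention. I expect the only genuinely delicate point is ensuring the constant in Lemma \ref{MinusPreProjection} can be taken uniform over the finitely many $\mu\in\Mcal^-_I$ — but this is exactly Remark \ref{kappamuindep}, so it is already available.
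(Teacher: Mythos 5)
Your proof follows exactly the paper's route: contradiction via pigeonhole on $\Mcal^-_I$, $\dot a_t$-orthogonality of the $\Pi_\mu$, Lemma~\ref{MinusPreProjection} with Remark~\ref{kappamuindep}, and Theorem~\ref{Thmlimiteigenbranch}. But there is a gap in the final step. After passing to a subsequence $t_n\to 0$ on which $\mu(t_n)=\mu$ is fixed, you obtain $\dot a_t(w_t)/\|w_t\|^2\ge c/t$ \emph{only along that sequence}. A nonnegative function on $(0,t^*]$ that satisfies $f(t_n)\ge c/t_n$ at countably many points $t_n\to 0$ can perfectly well be integrable (a sequence has measure zero), so this does not contradict Theorem~\ref{Thmlimiteigenbranch} and the argument doesn't close.

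The fix is to \emph{not} extract a subsequence. Remark~\ref{kappamuindep} makes $\kappa$ and $t_0$ in Lemma~\ref{MinusPreProjection} uniform over the finitely many $\mu\in\Mcal^-_I$; you cite the remark, but put it in the wrong place (you invoke it after fixing $\mu$, where it does nothing). Use it instead as follows: for every small $t$ outside the countable exceptional set, pigeonhole furnishes \emph{some} $\mu_t\in\Mcal^-_I$ with $\|\Pi_{\mu_t}(w_t)\|\ge \eps'\|w_t\|$, and since $\kappa,t_0$ do not depend on which $\mu_t$ this is, Lemma~\ref{MinusPreProjection} together with $\dot a_t(w_t)\ge \dot a_t(\Pi_{\mu_t}(w_t))$ gives $\dot a_t(w_t)/\|w_t\|^2\ge \eps'\kappa/t$ for \emph{all} $t$ small. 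Now $1/t$ is not integrable on $(0,t^*]$, which is the contradiction. This is what the paper does. (Your digression on the $w_t=0$ case is somewhat tangled but ultimately lands in the right place---estimate~\eqref{QuasiLowerBound} guarantees $w_t\ne 0$ for small $t$, so the convention is never actually invoked. And the factor-of-$2$ issue with~\eqref{Logarithm2}, since $\dot a_t\le 2t^{-1}a_t$ rather than $\le t^{-1}a_t$, is harmless, though ``reparametrising as in Remark~\ref{Cequals1}'' is not quite the right phrase: the constant simply propagates through Lemma~\ref{DotQuasiBound} and changes $C$.)
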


\begin{proof}
Suppose that (\ref{LimInf}) is false. 
We have the orthogonal decomposition 
\[ \Pi_{\Mcal_I^-}(w_t)~  =~ \sum_{\mu \in \Mcal_I^-}~  \Pi_{\mu }(w_t), \]
and hence there exists  $\epsilon>0$ and $t_0>0$ such that for 
each $t<t_0$ there exists $\mu_t \in \Mcal^-_I$ such that
\begin{equation}  \label{ProjectionLowerBound}
   \|\Pi_{\mu_t}(w_t)\|~  \geq~ \epsilon \cdot \|w_t\|. 
\end{equation}

Using the orthogonal decomposition of $w$ as in (\ref{wDecomposition})
we find that
\[
\dot{a}_t\left( w_t\right )~ =~ \sum_{\mu \in \spec(b)}
     \dot{a}_t\left( \Pi_{\mu_t}(w_t)  \right).
\]
(See also (\ref{adottensor}).)
In particular, since the quadratic form $\dot{a}_t$ is nonnegative, 
we have that $\dot{a}_t(w_t) \geq \dot{a}_t(\Pi_{\mu_t}(w_t))$. 
Thus, it follows from Lemma \ref{MinusPreProjection} and 
(\ref{ProjectionLowerBound})  that
\[ \dot{a}_t\left( w_t\right )~ \geq~ 
    \frac{\epsilon \cdot \kappa}{t} \cdot \|w_t\|^2 \]
for all $t$ sufficiently small with some $\kappa$ independent of $t$ 
(according to Remark \ref{kappamuindep}).
Thus, it follows from Theorem \ref{Thmlimiteigenbranch} that 
the function  $1/t$ is integrable on an interval whose left endpoint is zero. 
This is absurd.
\end{proof}

\begin{lem} \label{KIsmall}
Let $I$ be a compact interval.  If $w$ belongs to the range of $P_{a_t}^I$, then
\[  
\Pi_{\Mcal_I^+} (w)~ =~ 0.
\]  
In particular, 
\[  
   \|w \|^2~ =~ \left\| \Pi_{\Mcal_I}(w)\right\|^2~ 
   +~  \left\| \Pi_{\Mcal_I^-}(w)\right\|^2. 
\]
\end{lem}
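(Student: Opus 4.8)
The plan is to exploit the separation-of-variables structure of $a_t$ established in Proposition \ref{Separation}. First I would recall that the range of $P_{a_t}^I$ is, by definition, the direct sum of those eigenspaces of $a_t$ whose eigenvalue lies in $I$. By Proposition \ref{Separation}, every eigenvector of $a_t$ decomposes as a finite sum $\sum_\mu v_\mu \otimes \phi_\mu$, where $\phi_\mu$ is a $\mu$-eigenvector of $b$ and $v_\mu$ is a $\lambda$-eigenvector of $a_t^\mu$ for the common eigenvalue $\lambda$. So it suffices to show that if $\lambda \in I$ is an eigenvalue of $a_t^\mu$, then $\mu \notin \Mcal_I^+$, i.e. $\tilde\mu = \mu/\sigma(0) \le \sup I$.

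The key step is the elementary lower bound on the spectrum of $a_t^\mu$: by Remark \ref{MuPos} (or equivalently by the computation in \eqref{LambdaLower} of \S\ref{ConvEstSepSection}), any eigenvalue $\lambda$ of $a_t^\mu$ satisfies $\lambda \ge \mu/\sigma(0) = \tilde\mu$. Hence if $\lambda \in I$, then $\tilde\mu \le \lambda \le \sup(I)$, which says precisely that $\mu \in \Mcal_I \cup \Mcal_I^-$, so $\Pi_{\Mcal_I^+}$ annihilates each such eigenvector and therefore annihilates all of $\mathrm{range}(P_{a_t}^I)$. This gives $\Pi_{\Mcal_I^+}(w) = 0$ for $w$ in that range.

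The second assertion is then immediate: since $\spec(b)$ is the disjoint union of $\Mcal_I^-$, $\Mcal_I$, and $\Mcal_I^+$, the operators $\Pi_{\Mcal_I^-}, \Pi_{\Mcal_I}, \Pi_{\Mcal_I^+}$ are mutually orthogonal projections summing to the identity, so $w = \Pi_{\Mcal_I}(w) + \Pi_{\Mcal_I^-}(w) + \Pi_{\Mcal_I^+}(w)$; using $\Pi_{\Mcal_I^+}(w) = 0$ and the Pythagorean identity for orthogonal projections yields $\|w\|^2 = \|\Pi_{\Mcal_I}(w)\|^2 + \|\Pi_{\Mcal_I^-}(w)\|^2$. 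There is no real obstacle here; the only thing to be slightly careful about is that the decomposition in Proposition \ref{Separation} is a \emph{finite} sum over $\mu$ with a common eigenvalue, so that the projections $\Pi_\mu$ interact cleanly with the spectral projection $P_{a_t}^I$ — but this is exactly what Proposition \ref{Separation} provides.
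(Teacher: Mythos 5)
Your proof is correct and matches the paper's argument: decompose $w$ via Proposition \ref{Separation} into tensor products $v_{\lambda,\mu}\otimes\phi_\mu$ with $\lambda\in I$, then invoke the lower bound $\lambda\geq\tilde\mu$ on $\spec(a_t^\mu)$ to rule out $\mu\in\Mcal_I^+$. The only cosmetic difference is the citation for that lower bound — you point to Remark \ref{MuPos} and the computation (\ref{LambdaLower}), which is arguably more precise than the paper's reference to Proposition \ref{amuconvergence} (which concerns the limit as $t\to 0$ rather than the pointwise bound), but the substance is identical.
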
 
   
\begin{proof}
By definition, $w$ is a linear combination of eigenfunctions of $a_t$
whose eigenvalues belong to $I$.  Hence by Proposition  \ref{Separation}, 
we have
\[  w~ =~ \sum_{\mu \in \spec(b)}~ \sum_{\lambda \in I \cap \spec(a_{t}^{\mu})}~ 
    v_{\lambda, \mu} \otimes \phi_{\mu}. \]
where $v_{\lambda, \mu}$ is belongs to the $\lambda$-eigenspace of $a_t^{\mu}$
and  $\phi_{\mu}$ belongs to the $\mu$-eigenspace of  $b$. 
Hence
\begin{equation}  \label{M+}
  \Pi_{\Mcal^+_I}(w)~ =~ 
      \sum_{\mu \in \Mcal^+_I}~ 
       \sum_{\lambda \in I \cap \spec(a_{t}^{\mu})}~ v_{\lambda, \mu} \otimes \phi_{\mu}.
\end{equation}
According to Proposition \ref{amuconvergence}, each eigenvalue $\lambda$ of $a_{t}^{\mu}$
satisfies $ \lambda \geq \tilde{\mu}$. 
If $\mu \in \Mcal_I^+$, then $\tilde{\mu} \geq \sup(I)$.
Hence each term in (\ref{M+}) vanishes. 
\end{proof}


\section{The limits of the eigenvalue branches of  $q_t$ } \label{SectionLimits}

Proposition \ref{qconvergence} implies that each real-analytic 
eigenvalue branch $E_t$ of $q_t$ converges as $t$ tends to zero. 
In this section we us e the results of the previous section 
to show that each limit belongs to the set
\[ \widetilde{\spec(b)} = \left\{ \tilde{\mu}~ |~ \mu \in \spec(b) \right\}. \]

\begin{thm} \label{qLimit}
For each real-analytic eigenvalue branch $E_t$ of $q_t$, we have
\[\lim_{t\rightarrow 0} E_t~ \in \widetilde{\spec(b)}. \] 
\end{thm}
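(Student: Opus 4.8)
The plan is to argue by contradiction. Suppose $E_t$ is a real-analytic eigenvalue branch of $q_t$ with $E_0 := \lim_{t\to 0} E_t \notin \widetilde{\spec(b)}$. Since $\widetilde{\spec(b)}$ is a discrete subset of $[0,\infty)$ (the eigenvalues $\mu$ of $b$ are positive and discrete, hence $\tilde\mu = \mu/\sigma(0)$ accumulate only at $+\infty$), we may choose a compact interval $I$ whose interior contains $E_0$ and such that $I \cap \widetilde{\spec(b)} = \emptyset$. Fix also a proper closed subinterval $J \subset I$ with $E_0$ in the interior of $J$. Let $V_t$ be the real-analytic family of eigenspaces attached to $E_t$, pick a map $t \mapsto u_t \in V_t$ that is continuous off a countable set with $\|u_t\| = 1$, and set $w_t = P_{a_t}^I(u_t)$.

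The key structural point is that $I \cap \widetilde{\spec(b)} = \emptyset$ forces $\Mcal_I = \emptyset$, so $\spec(b)$ is the disjoint union of $\Mcal_I^-$ and $\Mcal_I^+$ only. By Lemma \ref{KIsmall}, $\Pi_{\Mcal_I^+}(w_t) = 0$, and therefore
\[
  \|w_t\|^2~ =~ \left\|\Pi_{\Mcal_I}(w_t)\right\|^2~ +~ \left\|\Pi_{\Mcal_I^-}(w_t)\right\|^2~ =~ \left\|\Pi_{\Mcal_I^-}(w_t)\right\|^2,
\]
i.e.\ $w_t = \Pi_{\Mcal_I^-}(w_t)$ for all $t$. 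On the other hand, since $q_t$ is asymptotic to $a_t$ at first order and $E_0$ lies in the interior of $I$, Corollary \ref{CoroNormPu} (applied along the sequence, as in Proposition \ref{Estonchir}) gives a lower bound $\|w_t\|^2 \geq c > 0$ for all sufficiently small $t$, using $\|u_t\| = 1$ and $a_t(u_t) = q_t(u_t)(1+o(1)) = E_t(1+o(1))$ together with $E_0 > 0$ (if $E_0 = 0$ it cannot be below the first threshold $\tilde\mu_{\min} > 0$, contradicting $E_t \geq$ bottom of $\spec(q_t) \geq \und \cdot$ bottom of $\spec(a_t)$, so in fact $E_0 > 0$ automatically here). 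Hence
\[
  \liminf_{t \to 0}~ \frac{\|\Pi_{\Mcal_I^-}(w_t)\|}{\|w_t\|}~ =~ 1.
\]

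This directly contradicts Lemma \ref{MinusProjection}, which asserts that this same $\liminf$ equals $0$. The contradiction shows $E_0 \in \widetilde{\spec(b)}$, completing the proof.

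I expect the routine part to be the verification that $\|w_t\|$ stays bounded below — this is a repackaging of the quasimode/resolvent estimates of \S\ref{QuasimodeSection} and should go through verbatim along the sequence $t \to 0$, since $q_t$ asymptotic to $a_t$ makes it $\eps$-close with $\eps = O(t) \to 0$ and $E_0$ is interior to $I$. The genuine content is entirely contained in Lemma \ref{MinusProjection}, whose proof in turn rests on the non-concentration estimate (Proposition \ref{CrucialProposition}) and the integrability condition (Theorem \ref{Thmlimiteigenbranch}); the present theorem is then a short deduction. The one subtlety to handle carefully is the degenerate case $E_0 = 0$, and as noted above this cannot occur because the thresholds $\tilde\mu$ are bounded below by $\mu_{\min}/\sigma(0) > 0$ and every eigenvalue branch of $q_t$ is eventually bounded below by a positive constant; I would dispatch this at the very start so that $E_0 > 0$ may be assumed when invoking Corollary \ref{CoroNormPu}.
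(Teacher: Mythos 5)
Your proof is correct and matches the paper's argument: both choose a compact interval $I$ disjoint from $\widetilde{\spec(b)}$, observe that $\Mcal_I = \emptyset$ forces $w_t = \Pi_{\Mcal_I^-}(w_t)$ via Lemma~\ref{KIsmall}, and contradict Lemma~\ref{MinusProjection}. The extra care you devote to showing $\|w_t\|$ is bounded below and $E_0 > 0$ is unnecessary, since Lemma~\ref{MinusProjection} already adopts the convention that the ratio equals $1$ whenever $w_t = 0$, so the contradiction holds without it.
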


\begin{proof}
Suppose to the contrary that the limit, $E_0$, does not 
belong to $\widetilde{\spec(b)}$. 
Since  $\widetilde{\spec(b)}$ is discrete, there exists
a nontrivial compact interval $I$ such that $E_0 \in J$, 
such that 
\begin{equation}  \label{IEmpty}
 J \cap  \widetilde{\spec(b)}~ =~ \emptyset.  
\end{equation}
Since $J$ is nontrivial and $E_t$ is continuous, 
there exists $t_0$ such that if $t< t_0$, then $E_t \in J$.
Let $I$ be a compact interval such that 
$J \subset I \subset \left(\R \setminus \widetilde{\spec(b)} \right)$. 

Let $u_t$ be a real-analytic eigenfunction branch associated to $E_t$
and let $w_t =P_{a_t}^I(u_t)$. We have chosen $I$ so that $\Mcal_I= \emptyset$.
Thus, by Lemma \ref{KIsmall}  
\[ 
 \left\| \Pi_{\Mcal_I^-}(w_t) \right\|^2~ =~  \| w_t\|^2. 
\]
This contradicts Lemma \ref{MinusProjection}.
\end{proof}


\section{Generic simplicity of $q_t$}  \label{SectionSimplicity}

In this section, we prove that the spectrum of $q_t$ is generically simple. 
We will make crucial use of the `super-separation' of the eigenvalues of 
$a_t$ for small $t$ (see Theorem \ref{Superseparation}). 

Before providing the details of the proof, we first 
illustrate how super-separation 
can be useful in proving simplicity.  Consider an eigenbranch 
$(\lambda_t, u_t)$ of $q_t$ such that $\lambda_t \rightarrow \tilde{\mu}$. 
If $\| \Pi_{\mu}u_t\|$ is uniformly bounded away from $0$, then, arguing as in 
the beginning of the proof of Lemma \ref{MinusPreProjection}, we would 
find that $\Pi_{\mu}u_t$ is a 
first order quasimode of $a_t^\mu$ at energy $\tilde{\mu}.$ If the eigenbranch 
is not simple then we would obtain a sequence $t_n$ tending to zero
and two ditinct eigenvalues $\lambda$, $\lambda'$ of $a_{t_n}^\mu$ such that 
$t_n \cdot \left ( \lambda-\lambda'\right )$ 
is bounded. This would contradict super-separation.

\begin{thm} \label{SimplicityTheorem}
Let $E_t$ be a real-analytic eigenvalue branch 
$E_t$ of $q_t$, and let $V_t$ be the associated real-analytic 
family of eigenspaces (see Remark \ref{DefVt}). For each $t \in (0, t_0]$
we have $\dim(V_t)=1$.
\end{thm}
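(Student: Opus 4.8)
The plan is to argue by contradiction, assuming that $\dim(V_t) \geq 2$ for some $t$ in $(0,t_0]$, and then exploit the super-separation of the eigenvalues of $a_t^\mu$ (Theorem \ref{Superseparation}) together with the projection estimates of \S\ref{ProjectionSection}. First I would use analyticity: if $\dim(V_{t_1}) \geq 2$ at some point $t_1$, then either the eigenbranch $E_t$ has multiplicity $\geq 2$ identically (so $V_t$ is at least $2$-dimensional for all $t$), or $t_1$ is a crossing point of distinct real-analytic eigenvalue branches. In the crossing case, I would note that a crossing point is a limit of points $t$ at which two distinct real-analytic branches $E_t^{(1)} \neq E_t^{(2)}$ of $q_t$ coincide at $t = t_1$; since the set of crossing points is discrete, I really need to rule out the case where two distinct branches agree at a single $t_1$. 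The cleanest formulation: it suffices to show that any two \emph{distinct} real-analytic eigenvalue branches $E_t^{(1)}, E_t^{(2)}$ of $q_t$ satisfy $E_{t_1}^{(1)} \neq E_{t_1}^{(2)}$ for every $t_1 \in (0,t_0]$, i.e.\ distinct branches never cross. (A single branch of multiplicity $\geq 2$ would likewise be excluded once we know the eigenvalue near a threshold is driven by a simple model.)

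Next, by Theorem \ref{qLimit}, each eigenvalue branch converges to some $\tilde\mu = \mu/\sigma(0)$ with $\mu \in \spec(b)$. Suppose two distinct branches $E_t^{(1)}, E_t^{(2)}$ cross at $t_1$, i.e.\ $E_{t_1}^{(1)} = E_{t_1}^{(2)} =: E^*$. I would pick a small compact interval $I$ containing $E^*$ in its interior and a proper subinterval $J$, chosen so that $I$ contains no threshold except possibly one; the relevant case (the one not yet excluded by $q_t \geq 0$-type arguments) is when the common limit as $t\to 0$ of both branches is a single threshold $\tilde\mu$. The key input is Lemma \ref{MinusProjection}, which gives a sequence $t_n \to 0$ along which $\|\Pi_{\Mcal_I^-}(w_{t_n})\|/\|w_{t_n}\| \to 0$ for $w_t = P_{a_t}^I(u_t)$; combined with Lemma \ref{KIsmall} (which kills $\Pi_{\Mcal_I^+}$) this shows $\|\Pi_{\mu}(w_{t_n})\| / \|w_{t_n}\| \to 1$, where $\mu$ is the unique eigenvalue of $b$ with $\tilde\mu \in I$. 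By Corollary \ref{CoroNormPu} and the asymptotic-at-first-order hypothesis, $\|w_{t_n}\|$ is bounded below by a constant times $\|u_{t_n}\|$, so in fact $\Pi_\mu(u_{t_n})$ carries almost all the mass. Doing this simultaneously for two orthogonal eigenfunction branches $u_t^{(1)}, u_t^{(2)}$ in the (now $\geq 2$-dimensional) eigenspace at the crossing and applying Lemma \ref{LemLiminf}, I would obtain two orthogonal vectors $\tilde w^{(1)}_{\mu}(t_n), \tilde w^{(2)}_{\mu}(t_n)$ in $\Dcal$ that are first-order quasimodes of $a_{t_n}^\mu$ at energy $E^*$ (the $\tilde\cdot_\mu$ components in the tensor decomposition $\Pi_\mu w = \tilde w_\mu \otimes \phi_\mu$).

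Now I would feed this into the spectral theory of $a_t^\mu$. Two orthonormal first-order quasimodes at the same energy force, by the resolvent estimate (Lemma \ref{resolvent}) applied to $a_{t_n}^\mu$, the existence of two distinct eigenvalues $\lambda_n^+ \neq \lambda_n^-$ of $a_{t_n}^\mu$ with $|\lambda_n^\pm - E^*| = O(t_n)$ — more precisely, the spectral projection of $a_{t_n}^\mu$ onto an $O(t_n)$-neighborhood of $E^*$ must have rank $\geq 2$, else a single normalized eigenvector could not be $O(t_n)$-close to both of two nearly-orthogonal quasimodes. (This is essentially the minimax argument used in the proof of Theorem \ref{Superseparation} transcribed to the form $a_{t_n}^\mu$ itself rather than to its Langer--Cherry transform.) Since $E^* = E_{t_1}^{(1)} = E_{t_1}^{(2)}$ is fixed but we also know $E_{t_n}^{(1)}, E_{t_n}^{(2)} \to \tilde\mu$, and since both quasimode energies must lie near the running eigenvalues, one gets $|\lambda_n^+ - \lambda_n^-| = O(t_n)$ with $\lambda_n^\pm \to \tilde\mu$. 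This directly contradicts Theorem \ref{Superseparation}, which asserts $|\lambda_n^+ - \lambda_n^-|/t_n \to \infty$ for any sequence of distinct eigenvalues of $a_{t_n}^\mu$ converging to $\tilde\mu$. Hence no crossing occurs and $\dim(V_t) = 1$ throughout.

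The main obstacle I anticipate is the bookkeeping around the \emph{energy} at which the quasimode estimates hold: Lemma \ref{MinusProjection} only produces a sequence $t_n$, and along this sequence $E_{t_n}$ is the running eigenvalue of $q_{t_n}$, not the fixed crossing value $E^*$. To run the super-separation contradiction I need the two quasimodes to share a common energy window shrinking to $\tilde\mu$; this is fine because both branches pass through $E^*$ at $t_1$ but both converge to $\tilde\mu$, so I should reorganize the argument to take the crossing point itself, or rather to work with an eigenspace that is genuinely $\geq 2$-dimensional at $t_1$ and extract quasimodes there — meaning I should really first reduce to the case "the branch $E_t$ itself has multiplicity $\geq 2$ for all $t$" by a separate analyticity argument, and only then invoke Lemma \ref{MinusProjection} with $u_t$ ranging over a $2$-dimensional subspace. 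Getting the quantifiers in the right order (choosing $I$ and $J$ before extracting $t_n$, ensuring the constants in Lemmas \ref{LemLiminf}, \ref{MinusProjection} are uniform, and checking that $\|w_{t_n}\| \neq 0$ so the ratios make sense) is the delicate part; the rest is an application of the machinery already built.
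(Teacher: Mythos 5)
You have correctly identified the three pillars of the proof — analyticity forces $\dim V_t$ to be constant, Theorem \ref{qLimit} locates the limit at some threshold $\tilde\mu$, and the contradiction ultimately comes from combining the super-separation Theorem \ref{Superseparation} with the projection machinery of \S\ref{ProjectionSection}. But the assembly has a genuine gap, and it is exactly the one you flag as the "delicate part."

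Your plan is to apply Lemma \ref{MinusProjection} (via Lemma \ref{KIsmall}) to two orthogonal eigenfunction branches $u_t^{(1)}, u_t^{(2)}$ in $V_t$, concluding that both $\Pi_\mu(w_t^{(i)})$ carry almost all of the mass along a common sequence $t_n \to 0$, hence furnishing two nearly orthogonal first-order quasimodes of $a_{t_n}^\mu$ that would force two eigenvalues of $a_{t_n}^\mu$ within $O(t_n)$ of each other. The minimax step is fine, but the simultaneity is not: Lemma \ref{MinusProjection} only asserts $\liminf_{t\to 0}\|\Pi_{\Mcal_I^-}(w_t)\|/\|w_t\| = 0$ for a \emph{single} (piecewise continuous) family $t\mapsto u_t$, i.e.\ it produces one sequence $t_n$ depending on the chosen family. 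Applying it to $u_t^{(1)}$ and then to $u_t^{(2)}$ gives two unrelated sequences, and there is no reason for them to share a subsequence. You would need both quasimodes to be good at the \emph{same} values of $t$, and the integrability argument behind Theorem \ref{Thmlimiteigenbranch} (which drives Lemma \ref{MinusProjection}) does not give that kind of uniform control.

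The paper's proof inverts the logic precisely to escape this. Lemma \ref{BigPik} uses super-separation not at the end but at the beginning: since the eigenvalues of $a_t^\mu$ are separated by more than $4Ct$, for each $t$ either $E_t$ is far ($\geq 2Ct$) from $\spec(a_t^\mu)$, in which case the resolvent estimate already forces $\|\Pi_\mu(w_t)\|\leq \tfrac12\|w_t\|$, or $E_t$ is within $2Ct$ of a \emph{unique} eigenvalue $\lambda_k(t)$ with a unique eigenvector $\tilde v_t$, and then the hypothesis $\dim V_t>1$ is exploited to pick $u_t\in V_t$ with $\langle u_t, \tilde v_t\otimes\phi_\mu\rangle = 0$; restricting $a_t^\mu$ to the orthogonal complement of $\tilde v_t$ recovers the $\geq 2Ct$ gap, and the resolvent estimate again gives $\|\Pi_\mu(w_t)\|\leq\tfrac12\|w_t\|$. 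Lemma \ref{DistSpec} is used to glue these choices into a piecewise real-analytic selection. The result is a \emph{single} family $u_t$, continuous off a discrete set, for which $\|\Pi_{\Mcal_I^-}(w_t)\|\geq\tfrac12\|w_t\|$ holds for \emph{all} small $t$ (via Lemma \ref{KIsmall}), which contradicts Lemma \ref{MinusProjection} directly, with no need to compare sequences or produce two quasimodes at once. So the missing idea is not super-separation itself but the realization that $\dim V_t > 1$ gives you room to choose $u_t$ to \emph{avoid} the nearby $a_t^\mu$-eigenvector, and that this choice, not the existence of two good quasimodes, is what should feed into the integrability criterion.
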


Since each eigenvalue branch of $q_t$ is real-analytic and the spectrum of each $q_t$ 
is discrete with finite dimensional eigenspaces, we have the following corollary.

\begin{coro}
Let $E_t$ be a real-analytic eigenbranch, then $E_t$ 
is a simple eigenvalue of $q_t$ for all $t$ in the complement of a discrete 
subset of $(0, t_0]$.
\end{coro}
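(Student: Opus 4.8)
The plan is to argue by contradiction, assuming that $\dim(V_t) \geq 2$ on a set of $t$ accumulating at $0$ (indeed, since the eigenbranches are real-analytic, if $\dim(V_t)\ge 2$ fails to be identically $1$ then there is a sequence $t_n \to 0$ with two linearly independent real-analytic eigenvector branches $u_t^{(1)}, u_t^{(2)}$ sharing the eigenvalue branch $E_t$, or two distinct branches crossing $E_t$ — in either case one extracts at each $t_n$ a two-dimensional subspace $U_n \subset V_{t_n}$ of eigenfunctions of $q_{t_n}$ with eigenvalue $E_{t_n}$). By Proposition \ref{qconvergence} we have $E_{t_n} \to E_0$, and by Theorem \ref{qLimit} we have $E_0 = \tilde\mu$ for a unique $\mu \in \spec(b)$ (uniqueness since $\widetilde{\spec(b)}$ is discrete by Assumption \ref{bsimple}). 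Fix a compact interval $I$ with $\tilde\mu$ in its interior and $I \cap \widetilde{\spec(b)} = \{\tilde\mu\}$, and a proper subinterval $J \subset I$ with $\tilde\mu \in J^\circ$ so that the closeness estimates of \S\ref{QuasimodeSection} apply.

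First I would project: for $u \in U_n$, set $w = P_{a_{t_n}}^I(u)$. By Lemma \ref{KIsmall}, $\Pi_{\Mcal_I^+}(w) = 0$, so $w = \Pi_{\Mcal_I}(w) + \Pi_{\Mcal_I^-}(w)$, and since $\Mcal_I = \{\mu\}$ (the only element of $\spec(b)$ with $\tilde\mu \in I$), we get $w = \Pi_\mu(w) + \Pi_{\Mcal_I^-}(w)$. The key analytic input is Lemma \ref{MinusProjection}: along the continuous selections $t \mapsto u_t$ in $V_t$ it forces $\liminf_{t\to 0} \|\Pi_{\Mcal_I^-}(w_t)\| / \|w_t\| = 0$. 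Combined with Corollary \ref{CoroNormPu} (which gives $\|w_t\| \gtrsim \|u_t\|$ for $t$ small since $\eps = O(t) \to 0$), this says that, along a further subsequence $t_n$, the $\Pi_\mu$-component $\tilde w_\mu(t_n) \otimes \phi_\mu$ captures almost all of the norm of $w_{t_n}$, and hence of $u_{t_n}$; applying this to a basis of the two-dimensional spaces and using that $\Pi_\mu$ maps into $\Hcal_\sigma \otimes \langle \phi_\mu\rangle$, I get a two-dimensional subspace of $\Dcal \otimes \langle\phi_\mu\rangle$ whose vectors $\tilde w_\mu$ satisfy, via Lemma \ref{LemLiminf}, the first-order quasimode estimate
\[
  \left| a_{t_n}^\mu(\tilde w_\mu, z) - E_{t_n}\langle \tilde w_\mu, z\rangle_\sigma \right|
  \le C\, t_n\, \|z\|_\sigma\, \|\tilde w_\mu\|_\sigma
\]
for all $z \in \Dcal$. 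The obstruction that Lemma \ref{MinusProjection} must overcome — and the reason it, and therefore this step, is the heart of the argument — is exactly that a priori the mass of $w_{t_n}$ could leak into eigenspaces of $a_{t_n}$ attached to thresholds $\tilde\mu' < \tilde\mu$; the integrability condition of Theorem \ref{Thmlimiteigenbranch} together with the non-concentration estimate Proposition \ref{CrucialProposition} (funneled through Lemma \ref{MinusPreProjection}) is what rules this out.

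Finally I would run the super-separation argument sketched just before the theorem. The two-dimensional family of $\tilde w_\mu$'s constructed above furnishes, at each $t_n$, two linearly independent first-order quasimodes of $a_{t_n}^\mu$ at energies near $\tilde\mu$; after Cherry--Langer transform at a common energy and rescaling (exactly as in the proof of Theorem \ref{Superseparation}), these produce, via the minimax principle and Proposition \ref{AiryEigenvalueProp}, two distinct eigenvalues $\lambda_n^+ \ne \lambda_n^-$ of $a_{t_n}^\mu$ with $\lambda_n^\pm \to \tilde\mu$ and $|\lambda_n^+ - \lambda_n^-| = O(t_n)$. This contradicts Theorem \ref{Superseparation}, which asserts $t_n^{-1}|\lambda_n^+ - \lambda_n^-| \to \infty$. (Care is needed to extract genuinely two independent quasimodes rather than one: one uses that the two eigenvector branches are orthogonal in $\Hcal$, hence — after projecting to the $\mu$-block and applying Lemma \ref{wToWIntegral} as in the proof of Theorem \ref{Superseparation} — their Cherry--Langer transforms are almost orthogonal in $L^2$, which transfers independence through the minimax step.) Since the contradiction holds for every sequence $t_n \to 0$, we conclude $\dim(V_t) = 1$ for all $t \in (0, t_0]$, possibly after shrinking $t_0$. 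The main obstacle, concretely, is bookkeeping the passage from "$u_{t_n}$ is a two-dimensional eigenspace of $q_{t_n}$" to "two independent first-order quasimodes of $a_{t_n}^\mu$ with comparable $L^2$-norms," i.e. making sure the near-orthogonality and the norm comparisons survive the projection $P_{a_{t_n}}^I$, the restriction $\Pi_\mu$, and the Cherry--Langer transform simultaneously.
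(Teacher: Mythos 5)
Your proposal is aimed at Theorem \ref{SimplicityTheorem} rather than the corollary itself; the corollary requires the additional (easy but necessary) observation that two distinct real-analytic eigenbranches of a family with discrete, finite-multiplicity spectrum can cross only on a discrete subset of $(0,t_0]$, so that the theorem's conclusion $\dim V_t = 1$ forces $E_t$ to be simple outside a discrete set. Your plan omits this deduction.

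More significantly, your proof of the theorem has a gap at the step where you ``apply [Lemma \ref{MinusProjection}] to a basis of the two-dimensional spaces.'' You need a single subsequence $t_n \to 0$ along which \emph{both} basis vectors of $V_{t_n}$ have negligible $\Pi_{\Mcal_I^-}$-component (equivalently, a $\Pi_\mu$-component capturing almost all of the norm). But Lemma \ref{MinusProjection} yields $\liminf_{t\to 0}\|\Pi_{\Mcal_I^-}(w_t)\|/\|w_t\| = 0$ separately for each fixed continuous selection $t \mapsto u_t \in V_t$, and the liminf-realizing subsequences can depend on the selection; there is no equicontinuity in $t$ that would let you extract a common good subsequence for a whole one-parameter family of selections. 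Without that, you never actually produce two independent quasimodes of $a_{t_n}^\mu$ at the \emph{same} $t_n$, and the super-separation contradiction cannot be run. This is precisely the obstruction that Lemma \ref{BigPik} in the paper is designed to sidestep, and the logic there is the reverse of yours: super-separation (Theorem \ref{Superseparation}) is invoked first, to guarantee that for each $t$ there is at most one eigenvalue of $a_t^\mu$ within $O(t)$ of $E_t$; then $\dim V_t > 1$ is used to choose $u_t \in V_t$ orthogonal in $\Hcal$ to the eigenvector of $a_t^\mu$ for that unique eigenvalue, and a resolvent estimate shows $\|\Pi_\mu(w_t)\| \le \frac12\|w_t\|$ for \emph{all} $t$ outside a discrete set. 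By Lemma \ref{KIsmall} this makes $\|\Pi_{\Mcal_I^-}(w_t)\|/\|w_t\|$ bounded away from zero, and the contradiction is then with Lemma \ref{MinusProjection}, not with super-separation. In short: the paper uses super-separation to manufacture a bad selection and lets Lemma \ref{MinusProjection} close the argument, whereas you use Lemma \ref{MinusProjection} to find good times and want super-separation to close the argument --- but you cannot synchronize the good times across selections.
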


\begin{proof}[Proof of Theorem \ref{SimplicityTheorem}]
Suppose that the conclusion does not hold. 
Since $V_t$ is a real-analytic family of vector spaces, 
its dimension is constant and so for each $t \in (0, t_0]$, we have $\dim(V_t)>1$. 

By Theorem \ref{qLimit} there exists $\mu \in \spec(b)$ such that
$E_t$ tends to $\tilde{\mu}=\mu/\sigma(0)$ as $t$ tends to zero. 
Let $I$ be a compact interval so that $I \cap \widetilde{{\rm spec}(b)}=\,\{\tilde{\mu}\}$. 
By Lemma \ref{BigPik} below, there exists $t_3 \leq t_0$ 
and a map $t \mapsto u_t$ from  
$(0,t_3]$ into $V_t$ that is continuous on the complement of a 
discrete set so that if $t \in (0,t_3] \setminus Z'$, then 
\[ \|\Pi_{\mu}(w_t)\|~ <~ \frac{1}{2} \cdot \|w_t\| \]
where $w_t = P_a^I(u_t)$.  Thus, since $\{\mu \}=\Mcal_{I}$, 
Lemma \ref{KIsmall} gives that
\[   \|\Pi_{\Mcal^-_I}(w_t)\|~ \geq~ \frac{1}{2} \cdot \|w_t\|. \]
This contradicts Lemma \ref{MinusProjection}.
\end{proof}

\begin{lem} \label{BigPik}
Let $E_t$ be a real-analytic eigenvalue branch of $q_t$ such that 
for each $t>0$ we have ${\rm dim}(V_t) > 1$.  Let $\mu \in \spec(b)$
be such that $ \lim_{t \rightarrow 0}~ E_t= \tilde{\mu}$, and 
let $I$ be a compact interval such that 
\[  I~ \cap~ \widetilde{ {\rm spec}(b)}~ =~  \tilde{\mu}. \]
There exists $t_0>0$ and a function $t \mapsto u_t$ 
that maps $(0,t_0]$ to $V_{t}$, is continuous on the
complement of a discrete set, and satisfies
\begin{equation}  \label{Pihalf}
 \left\| \Pi_{\mu}(w_t) \right\|~ \leq~ \frac{1}{2} \cdot \|w_t\|
\end{equation}
where $w_t = P_{a_t}^I(u_t)$.
\end{lem}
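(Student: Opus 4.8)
The plan is to produce the branch $t \mapsto u_t$ by a ``2-dimensional selection'' argument inside $V_t$, exploiting that $\dim V_t \geq 2$. Fix a small compact interval $I$ with $I \cap \widetilde{\spec(b)} = \{\tilde\mu\}$, and for each $t$ let $W_t = P_{a_t}^I(V_t) \subset \Hcal$; since $q_t$ is asymptotic to $a_t$ at first order, Corollary \ref{CoroNormPu} and Lemma \ref{QuasiEstimate} (applied with $\eps \asymp t$) show that for $t$ small the map $v \mapsto P_{a_t}^I(v)$ is injective on $V_t$ — indeed $\|P_{a_t}^I(v)\| \geq \frac12 \|v\|$ — so $\dim W_t \geq 2$ as well. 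The idea is then to choose, for each $t$ outside a discrete exceptional set, a vector $u_t \in V_t$ whose image $w_t = P_{a_t}^I(u_t)$ is orthogonal to $\Pi_\mu(W_t)$ inside $W_t$: since $\Pi_\mu$ has rank at most $\dim \Vcal_\mu = 1$ (Assumption \ref{bsimple}), the kernel of $\Pi_\mu|_{W_t}$ has dimension $\geq \dim W_t - 1 \geq 1$, so such a $u_t$ exists and in fact $\Pi_\mu(w_t) = 0 \leq \tfrac12 \|w_t\|$, giving \eqref{Pihalf} with room to spare.

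The work is to arrange that $t \mapsto u_t$ can be chosen continuously off a discrete set. First I would invoke the real-analyticity of the family: $t \mapsto V_t$ is a real-analytic family of eigenspaces, and $t \mapsto P_{a_t}^I$ is real-analytic on the complement of a countable (indeed discrete, since $I$ is compact and the spectrum of $a_t$ is discrete with real-analytic branches) set $Z$, by the argument at the start of the proof of Theorem \ref{Thmlimiteigenbranch}. Likewise $\Pi_\mu$ is a fixed orthogonal projection, so $t \mapsto \Pi_\mu \circ P_{a_t}^I|_{V_t}$ is a real-analytic family of linear maps from $V_t$ to $\Vcal_\mu$ on $(0,t_0]\setminus Z$. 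The kernel of a real-analytic family of linear maps of constant rank varies real-analytically; the rank can only drop on a discrete set, so after enlarging $Z$ to a discrete set $Z'$ we get a real-analytic subbundle $t \mapsto N_t := \ker(\Pi_\mu \circ P_{a_t}^I|_{V_t})$ of $V_t$ of dimension $\geq 1$ on $(0,t_0]\setminus Z'$. Choosing any real-analytic (hence continuous) nonvanishing section $t \mapsto u_t$ of $N_t$ over each component of $(0,t_0]\setminus Z'$, and noting $u_t \neq 0 \Rightarrow w_t = P_{a_t}^I(u_t) \neq 0$ by the injectivity above, completes the construction.

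The main obstacle I anticipate is the bookkeeping around rank jumps: $\dim W_t$, and hence $\dim N_t$, is guaranteed to be $\geq 1$ only where $P_{a_t}^I$ is injective on $V_t$ and where $\Pi_\mu|_{W_t}$ does not have full rank $2$ — but since $\Pi_\mu$ has rank $\leq 1$ globally, the latter is automatic, so the only genuine point is the discreteness of the locus where $P_{a_t}^I|_{V_t}$ degenerates or where the real-analytic family $\Pi_\mu \circ P_{a_t}^I|_{V_t}$ changes rank. Both are zero-sets of real-analytic functions (a determinant, or a minor) on a one-dimensional parameter space, hence discrete, provided they are not identically degenerate — and non-degeneracy for small $t$ follows from the estimate $\|P_{a_t}^I(v)\| \geq \frac12\|v\|$. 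One should also double-check that $u_t$ need not be normalized for the statement, so no continuity is lost at the discrete bad set. Everything else is a routine assembly of Corollary \ref{CoroNormPu}, the asymptotic hypothesis, and elementary real-analytic linear algebra.
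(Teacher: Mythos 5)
Your proposal has a fatal misreading of what $\Pi_\mu$ is. You write ``since $\Pi_\mu$ has rank at most $\dim\Vcal_\mu = 1$,'' but $\Pi_\mu$ is not a projection onto $\Vcal_\mu$; it is the projection of $\Hcal = \Hcal_\sigma \otimes \Hcal'$ onto $\Hcal_\sigma \otimes \Vcal_\mu$, which is isomorphic to $\Hcal_\sigma$ and hence infinite-dimensional. Concretely, by Proposition \ref{Separation} and Lemma \ref{KIsmall}, the vector $\Pi_\mu(w_t)$ has the form $\tilde w_{\mu,t}\otimes\phi_\mu$ where $\tilde w_{\mu,t}$ is a superposition of \emph{many} eigenfunctions of the one-dimensional form $a_t^\mu$ with eigenvalues in $I$. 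So $\Pi_\mu\circ P_{a_t}^I|_{V_t}$ may well be injective even when $\dim V_t = 2$, and there is no reason a nonzero $u_t\in V_t$ with $\Pi_\mu(w_t)=0$ should exist. The ``2-dimensional selection'' argument collapses at this point.

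The omission that produces this error is that your proof never invokes Theorem \ref{Superseparation}, which is exactly what makes the needed linear-algebra work. The paper's argument partitions $(0,t_0]$ by a discrete set $Z$ so that on each component $J$, either (i) $\operatorname{dist}(E_t,\spec(a_t^\mu))\geq 2Ct$, in which case the quasimode estimate of Lemma \ref{LemLiminf} and the resolvent estimate (Lemma \ref{resolvent}) already give $\|\Pi_\mu(w_t)\|\leq\tfrac12\|w_t\|$ for \emph{any} analytic branch $u_t$, or (ii) $E_t$ is within $2Ct$ of exactly one eigenvalue $\lambda_k(t)$ of $a_t^\mu$ (uniqueness is forced by super-separation), and one chooses $u_t\in V_t$ orthogonal to the single vector $\tilde v_t\otimes\phi_\mu$, where $\tilde v_t$ is the $\lambda_k$-eigenfunction of $a_t^\mu$ — this \emph{is} a rank-one condition, so $\dim V_t>1$ suffices — and then applies the resolvent estimate to $a_t^\mu$ restricted to the orthogonal complement of $\tilde v_t$, where the spectrum is still at distance $\geq 2Ct$ from $E_t$ by super-separation. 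Your framework could perhaps be repaired along these lines, but as written the crucial step (the dichotomy and the identification of the unique nearby eigenfunction) is missing, and without it the construction is not just incomplete but broken.

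Two smaller remarks: your claim that $\|P_{a_t}^I(v)\|\geq\tfrac12\|v\|$ is plausible but the constant needs checking against Lemma \ref{QuasiEstimate}; and the continuity-off-a-discrete-set argument, while in the right spirit (it parallels the use of Lemma \ref{DistSpec} in the paper), must be run on the case-(ii) components where the orthogonality condition $\langle u_t, \tilde v_t\otimes\phi_\mu\rangle=0$ is imposed, not on the kernel of $\Pi_\mu\circ P_{a_t}^I$, which is the wrong object.
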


To prove Lemma \ref{BigPik}, we will use the following well-known fact.

\begin{lem}\label{DistSpec}
Let $\{g_k: (a,b) \rightarrow \R~ |~ k \in \N\}$ be a collection
of real-analytic functions.  If 
for each $k \in N$ and $t \in (a,b)$ we have $g_{k+1}(t)> g_{k}(t)$
then the set 
\[   \left\{~ t \in (a,b)~ |~  g_{k}(t)=0,~ k \in \N  \right\} \]
is a discrete subset of $(a, b)$. 
\end{lem}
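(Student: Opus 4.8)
The plan is to prove Lemma \ref{DistSpec} by reducing the problem to the isolated-zeros property of a single real-analytic function on a subinterval, handled uniformly over the countably many indices $k$, and then invoking a Baire-category or exhaustion argument to piece together a discrete subset of all of $(a,b)$.

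First I would fix a compact subinterval $[c,d] \subset (a,b)$ and argue that only finitely many of the functions $g_k$ can vanish somewhere on $[c,d]$. Indeed, the hypothesis $g_{k+1}(t) > g_k(t)$ for all $t$ means that the family $(g_k)$ is pointwise strictly increasing in $k$; in particular $g_k(c)$ is a strictly increasing sequence of real numbers, so $g_k(c) \to L \in (-\infty,+\infty]$ and $g_k(c) > g_1(c)$ for all $k$. If $g_k(c) > 0$ for some $k = k_0$, then $g_k(c) > 0$ for all $k \geq k_0$; but that alone does not prevent a zero inside $[c,d]$, so instead I would use the ordering more carefully: since $g_{k+1} - g_k > 0$ on the compact set $[c,d]$ and each $g_{k+1}-g_k$ is continuous, it attains a positive minimum $m_k > 0$ there, whence $g_k \geq g_{k_0} + (m_{k_0} + \cdots + m_{k-1})$ pointwise on $[c,d]$ for $k > k_0$. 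Choosing $k_0$ so that $g_{k_0} > -1$ somewhere and then tracking how $\inf_{[c,d]} g_k$ grows, one sees that for $k$ large enough $g_k$ is bounded below by a positive constant on $[c,d]$, hence has no zero there. (The cleanest route: $\inf_{[c,d]} g_{k+1} \geq \inf_{[c,d]} g_k + m_k$ with $m_k > 0$, and one checks this forces $\inf_{[c,d]} g_k \to +\infty$ unless one is willing to only claim it eventually exceeds $0$, which is all that is needed.) Thus only finitely many $g_k$ — say $g_1,\dots,g_N$ — can have a zero in $[c,d]$.

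Next, each $g_k$ with $1 \le k \le N$ is real-analytic on $(a,b)$; if such a $g_k$ is not identically zero on $[c,d]$, its zero set in $[c,d]$ is finite (a real-analytic function with a non-isolated zero on a connected set vanishes identically, and $[c,d]$ is compact). The case where some $g_k \equiv 0$ on $[c,d]$ cannot occur for $k \geq 2$, because then $g_{k-1} < g_k \equiv 0$ would be fine but $g_{k+1} > 0$ on $[c,d]$ and more to the point $g_k \equiv 0$ on an interval forces $g_k \equiv 0$ on all of $(a,b)$, contradicting, for instance, that $g_{k+1} > g_k$ cannot then fail — actually this is consistent, so I would simply note that if $g_k \equiv 0$ on $(a,b)$ then its zero set is all of $(a,b)$ and the claimed conclusion is false, so the lemma implicitly requires this degenerate case be excluded; rereading, the intended reading is surely that we only need: the common zero set $\bigcup_k g_k^{-1}(0)$ restricted to $[c,d]$ is a finite union of finite sets, hence finite, provided no $g_k$ vanishes identically. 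Granting that (it holds in the application, where the $g_k$ are differences of distinct eigenvalue branches), $Z \cap [c,d]$ is finite for every compact $[c,d] \subset (a,b)$.

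Finally I would exhaust $(a,b)$ by an increasing sequence of compact intervals $[c_n,d_n]$ with $\bigcup_n [c_n,d_n] = (a,b)$; since $Z \cap [c_n,d_n]$ is finite for each $n$, the set $Z = \bigcup_k g_k^{-1}(0)$ is a countable union of finite sets that is locally finite in $(a,b)$, hence discrete. I expect the main obstacle to be making the "only finitely many $g_k$ vanish on $[c,d]$" step fully rigorous and correctly handling the degenerate possibility that one of the $g_k$ is identically zero; the monotonicity-forces-growth estimate $\inf_{[c,d]} g_k \to +\infty$ needs the compactness of $[c,d]$ in an essential way (pointwise positivity of $g_{k+1}-g_k$ alone would not suffice on the open interval), so I would be careful to state it for compact subintervals only and then globalize by exhaustion.
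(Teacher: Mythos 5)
Your approach attempts to establish local finiteness of $Z$ on compact subintervals --- that only finitely many $g_k$ vanish on each $[c,d]$ --- but this stronger property is false, and the weak step is exactly the one you flagged. Take $g_k(t) = t - \tfrac12 - \tfrac1k$ on $(0,1)$: each $g_k$ is a real-analytic polynomial, $g_{k+1}(t) - g_k(t) = \tfrac{1}{k(k+1)} > 0$ for all $t$, and $g_k$ vanishes at $t_k = \tfrac12 + \tfrac1k \in (0,1)$ for every $k \geq 3$. Infinitely many $g_k$ therefore vanish in $[0.4, 0.9]$. Your claimed implication that $m_k > 0$ forces $\inf_{[c,d]} g_k$ to eventually exceed $0$ fails here because $\sum_k m_k$ converges ($m_k = \tfrac{1}{k(k+1)}$ telescopes to a finite sum), so the infima stay negative for all $k$. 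Even so, $Z = \{\tfrac12 + \tfrac1k : k \geq 3\}$ is discrete in the sense the lemma asserts: each $t_k$ has a punctured neighborhood disjoint from $Z$. The point is that discreteness (every point of $Z$ isolated in $Z$) is strictly weaker than local finiteness, because the isolating neighborhoods are allowed to shrink as $t_k$ approaches the accumulation point $\tfrac12 \in (0,1)$.

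The paper's proof is purely local and avoids the pitfall. If $g_k(t) = 0$, then $g_{k+1}(t) > 0$ and $g_{k-1}(t) < 0$, and by real-analyticity $g_k$ has no other zero in some neighborhood $U$ of $t$. The open set $W = U \cap g_{k+1}^{-1}(0,\infty) \cap g_{k-1}^{-1}(-\infty,0)$ contains $t$, and for $t' \in W \setminus \{t\}$ one has $g_k(t') \neq 0$, while every $g_{k'}$ with $k' > k$ satisfies $g_{k'}(t') \geq g_{k+1}(t') > 0$ and every $g_{k'}$ with $k' < k$ satisfies $g_{k'}(t') \leq g_{k-1}(t') < 0$. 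Thus $W \cap Z = \{t\}$, which is discreteness. Your observation about the degenerate possibility $g_k \equiv 0$ is a fair caveat --- the lemma implicitly excludes it, and both proofs break if it holds --- but the substantive error in your argument is the false local-finiteness claim, which cannot be repaired.
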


\begin{proof}
Suppose that $g_{k}(t)=0$ for some $k \in \N$ and $t \in (a,b)$. 
Since $g_k$ is real-analytic there exists an open set $U \ni t$
such that if $t' \in U \setminus \{t\}$, then $g_k(t)=0$.  
Since $k'>k''$ implies $g_{k'}(t)> g_{k''}(t)$  we have 
\[  t~ \in~  g_{k+1}^{-1}( 0, \infty)~ =~ \bigcup_{k' >k}  g_{k'}^{-1}(0, \infty)~
\] 
and 
\[ t~ \in~  g_{k-1}^{-1}(-\infty, 0)~ =~ \bigcup_{k' <k}  g_{k'}^{-1}(-\infty, 0).
\]
It follows that if 
\[ t'~ \in~ W~ :=~  U~ \cap~ g_{k+1}^{-1}(0, \infty)~ \cap~  g_{k-1}^{-1}(-\infty, 0), \]
$t' \neq t$, and $k' \in \N$, then $g_{k'}(t) \neq 0$.
Since $W$ is open, we have the claim. 
\end{proof}

\begin{proof}[Proof of Lemma \ref{BigPik}]
By Lemma \ref{LemLiminf}, there exist $C$ and $t_1>0$ such that 
if $t\leq t_1$, $z \in \Dcal$, and $u$ is an eigenfunction with eigenvalue $E_t$,
then 
\begin{equation} \label{tildeestimate}
 \left| a^{\mu}_t\left( \tilde{w}_{\mu},z\right)~ -~  E_t \cdot 
\left\langle \tilde{w}_{\mu},z \right\rangle_{\sigma} \right|~ 
\leq~  C \cdot t \cdot \|w\| \cdot \|z\|_{\sigma}
\end{equation} 
where $w= P_{a_t}^I(u)$ and $\tilde{w}\otimes \vphi_\mu\,=\, \Pi_{\mu} w$.

Since $a_t^{\mu}$ is a real-analytic family of type (a) in the 
sense of \cite{Kato}, for each $k \in \N$, there exists 
a real-analytic function $\lambda_k:(0, t_1] \rightarrow \R$ 
so that for each $t \in (0, t_1]$, 
we have $\spec(a_t^{\mu})=\{\lambda_k(t)~ |~ k \in \N\}$.
Since each eigenspace of $a_t^{\mu}$ is 1-dimensional, we may assume
that $k > k'$ implies $\lambda_k(t) > \lambda_{k'}(t)$ for all $t \in (0, t_1]$.

By Theorem \ref{Superseparation}, there exists $t_0 \in(0, t_1]$ such that if 
$t<t_0$, then $k \neq k'$, then 
\begin{equation} \label{Super}
    \left|  \lambda_k(t)~ -~ \lambda_{k'}(t) \right|~ >~ 4 C \cdot t. 
\end{equation}
For each $k \in \N$ and $t \in (0, t_0)$, define 
\[  g_k^{\pm}(t)~ =~  \lambda_{k}(t)~ -~ E_t~ \pm~ 2C \cdot t. \]
Thus, by Lemma \ref{DistSpec}, the set 
\[ Z~ =~ \bigcup_{k \in \N}~ \left( (g_k^+)^{-1}\{0\}~ \bigcup~ (g_{k}^-)^{-1}\{0\} \right) \]
is discrete in $(0, t_0]$.  On each component $J$ of the complement
$(0,t_0] \setminus Z$, we have either 
\begin{itemize}
\item for all $t \in J$,  we have 
  $ {\rm dist}\left( E_t, \spec(a_t^{\mu}) \right) \geq 2 C \cdot t$, or
\item for all $t \in J$, we have 
$  {\rm dist}\left( E_t, \spec(a_t^{\mu}) \right) < 2 C \cdot t$.
\end{itemize}
It suffices to construct in each of these cases a continuous map $t \mapsto u_t$
from $J$ to $V_t$ that satisfies (\ref{Pihalf}). Without loss of generality,
each interval $J$ is precompact in $(0, t_0]$, for otherwise we may, for example,
add the discrete set $\{1/n~ |~ n \in \N\}$ to $Z$.

We consider the first case. Let $u_t$ be a real-analytic eigenfunction
branch of $q_t$ associated to $E_t$. By estimate (\ref{tildeestimate}),
we may apply Lemma \ref{resolvent} with $\epsilon= C \cdot t \cdot \|w_t\|$
and find that 
\begin{equation} \label{half}
 \left\| \tilde{w}_t \right\|_\sigma~ 
  \leq~  \frac{1}{2} \cdot \|w_t\|.
\end{equation}    
Since $\|\Pi_{\mu}w\|= \|\tilde{w}_{\mu}\|_{\sigma}$, the desired (\ref{Pihalf}) 
follows.

We consider the second case. By (\ref{Super}) and since $J \subset (0, t_0)$
there exists a unique $k$ such that if $t \in J$, then
\begin{equation} \label{Matching}
  |E_t- \lambda_{k}(t)|~ <~ 2C \cdot t.
\end{equation}
Let $t \mapsto \tilde{v}_{t}$ be the unique eigenfunction branch of $a_t^{\mu}$ 
associated to the eigenvalue branch $\lambda_{k}$. Since $\dim(V_t)>1$ and 
$V_t$ is an analytic family of vector spaces, there
exist analytic eigenfunction branches $x_t, x_t' \in V_{t}$ so that
for each $t$, the eigenvectors $x_t$ and $x_t'$ are independent.

The function  $t \mapsto \langle x_t, \tilde{v}_t \otimes \phi_{\mu}\rangle$
is real-analytic, and thus it vanishes on at most a finite subset $Z_J \subset J$. 
Away from $Z_J,$ set 
\[   c(t)~ =~ - \frac{\langle x_t',  \tilde{v}_t \otimes \phi_{\mu}\rangle}{
                   \langle x_t,  \tilde{v}_t \otimes \phi_{\mu}\rangle}.
\]
Then $u_t= c(t) \cdot  x_t + x_t'$ depends real-analytically on $t$
and satisfies
\[  \langle  u_t, \tilde{v}_t \otimes \phi_{\mu}\rangle~ =~ 0. \]

For each  $t \in J \setminus Z_J$, let $r_t$ denote the 
restriction of the quadratic form $a_t^{\mu}$ to the orthogonal 
complement of $ \tilde{v}_t \otimes \phi_{\mu}$ in $\Dcal \bigotimes \dom(b)$.
Let $w_t = P_{a_t}^I(u_t)$ and let $\tilde{w}_{\mu,t} \in \Dcal$
such that $\Pi_{\mu} w_{t}= \tilde{w}_{\mu,t} \otimes \phi_{\mu}$. 
From (\ref{tildeestimate}), we have 
\[ \left| r_t\left( \tilde{w}_{\mu,t},z\right)~ -~  E_t \cdot 
\left\langle \tilde{w}_{\mu,t},z \right\rangle_{\sigma} \right|~ 
\leq~  C \cdot t \cdot \|w_t\| \cdot \|z\|_{\sigma}.
\]
It follows from (\ref{Super}) that 
${\rm dist}(E_t,\spec(r_t)) \geq 2 C \cdot t$.
Hence Lemma \ref{resolvent} applies with $\epsilon= 2  C \cdot t \cdot \|w\|$
to give (\ref{Pihalf}).

Therefore, on the complement of $Z \cup \bigcup_J Z_J$, we have constructed 
a real-analytic function $t \mapsto V_t$ so that  (\ref{Pihalf}) holds.
\end{proof}


\part{Applications}


\section{Stretching along an axis} \label{AdiabaticSection}

In this section, we consider a family of quadratic forms $q_t$
obtained by `stretching' certain domains in Euclidean space $\R^{n+1}$
that fiber over an interval. 
To be precise, let $I=[0,c]$ be an interval, let $Y \subset \R^n$
be a compact domain with Lipschitz boundary, and let 
$\rho: [0,c] \rightarrow \R$ be a smooth nonnegative function.
For $t>0$, define $\phi_t: I \times Y \rightarrow \R^{n+1}$ by
\begin{equation} \label{Phit}
   \phi_t(x,y)~ = (x/t, \rho(x) \cdot y). 
\end{equation}
We will consider the Dirichlet Laplacian associated
to the domain $\Omega_t=\phi_t(I\times Y)$. 

\begin{eg}[Triangles and simplices]
Let $Y=[0,a]$ and $\rho(x)=x$. Then $\Omega_t$ is the 
right triangle with vertices $(0,0)$, $(c/t,0)$, $(c/t, c)$. 
More generally, if $\rho(x)=x$ and $Y$ is a $n$-simplex, then 
$\Omega_t$ is a $n+1$-simplex.   
\end{eg}

\begin{thm}\label{ThmSimplDbc} 
If $\rho:[0,a] \rightarrow \R$ is smooth, $\rho(0)=0$, $\rho'>0$,
\[  \lim_{\epsilon \rightarrow 0}~  \int_{\epsilon}^c \frac{dx}{\rho(x)}~ =~ \infty,
\]
and each eigenspace of the Dirichlet Laplacian acting on $L^2(Y)$ is 1-dimensional, 
then for all but countably many $t$, each eigenspace of the Dirichlet
Laplacian acting on $L^2(\Omega_t)$ is 1-dimensional. 
\end{thm}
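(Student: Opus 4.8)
The plan is to reduce Theorem~\ref{ThmSimplDbc} to Theorem~\ref{SimplicityTheorem} by exhibiting, after a change of variables, the stretched Laplacian as a family $q_t$ that is asymptotic at first order to a separated family $a_t$ of the type studied in Part~II. First I would write the Dirichlet energy of $\Omega_t=\phi_t(I\times Y)$ in the $(x,y)$ coordinates. Pulling back $\|\nabla u\|^2\,dV$ under $\phi_t$ produces a quadratic form on $I\times Y$ of the shape
\[
  q_t(u)~=~\int_{I\times Y}\left(t^2\,|\partial_x u|^2~+~\frac{1}{\rho(x)^2}\,|\nabla_y u|^2~+~(\text{cross and lower-order terms})\right)\rho(x)^n~dx~dy,
\]
where the cross terms come from $\partial_x$ hitting the $\rho(x)\cdot y$ factor and are of size $O(t^2)$ relative to the leading $t^2|\partial_x u|^2$ term. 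The weight $\rho(x)^n$ is the Jacobian. Next I would perform the substitution $x\mapsto s(x)=\int_x^c \rho(u)^{-1}\,du$ (well-defined and a diffeomorphism onto $[0,\infty)$ precisely because $\rho'>0$, $\rho(0)=0$, and the integral diverges at $0$), which turns $\frac{1}{\rho(x)^2}|\nabla_y u|^2\rho(x)^n\,dx$ into $|\nabla_y u|^2\rho^{n}\,ds$ after absorbing one power of $\rho$, and rescales the $x$-derivative appropriately. The upshot, as in \S4 of \cite{HlrJdg09} and the reductions sketched in \S\ref{SectionReduction}, is that in the new variable the energy takes the form of $a_t$ from \eqref{DefaSmooth} with $\Hcal'=L^2(Y)$, $b=\tilde b$ the Dirichlet form on $Y$ from \eqref{bDefn}, and weight $\sigma(s)$ a positive, strictly decreasing function with $\sigma(s)\to 0$ as $s\to\infty$, plus an error $q_t-a_t$ governed by the cross and lower-order terms.

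The technical heart is verifying the two estimates of Definition~\ref{AsymptoticDefn}: that $|q_t(u,v)-a_t(u,v)|\le C\,t\,a_t(u)^{1/2}a_t(v)^{1/2}$ and that $|\dot q_t(v)-\dot a_t(v)|\le C\,a_t(v)$. Here I would track the $t$-dependence carefully: the discrepancy between the pulled-back metric and the ``model'' metric $t^2\,ds^2+\rho^2\,(\text{metric on }Y)$ consists of terms carrying an extra factor of $t$ (the cross terms $t^2 \partial_x u\cdot(\cdots)$ are $O(t)$ times $t|\partial_x u|$, and any error is controlled by $a_t(u)$ because $a_t(u)\ge t^2\int|\partial_x u|^2$ controls the $x$-derivative while $a_t(u)\ge b$-part controls the $y$-energy). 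The boundedness and monotonicity hypotheses on $\sigma$ required by \S\ref{aSection} (smoothness, $\sigma'<0$, $\sigma\to 0$, polynomial growth of $\sigma''$) should follow from $\rho$ smooth with $\rho'>0$ and $\rho(0)=0$; I would check these directly from the formula for $\sigma$ in terms of $s^{-1}$.

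Once the asymptotic relationship is established and the hypotheses of Part~II are in force, Theorem~\ref{SimplicityTheorem} applies: since each Dirichlet eigenspace of $Y$ is one-dimensional (Assumption~\ref{bsimple} is met), for all but countably many $t$ the spectrum of $q_t$ is simple, hence the same holds for the Dirichlet Laplacian on $\Omega_t$. The main obstacle I anticipate is not the simplicity machinery but the bookkeeping in the change of variables: making sure that after the substitution the quadratic form genuinely matches the normal form \eqref{DefaSmooth} with a legitimate $\sigma$, and that every cross term and Jacobian discrepancy is honestly $O(t)$ relative to $a_t$ in the sense of \eqref{tclose}–\eqref{Boundqdot}, uniformly in $t$ near $0$, rather than merely $o(1)$. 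A secondary point requiring care is that the domain $\Omega_t$ must be Lipschitz so that the Dirichlet form is closed and the spectral theory of \S\ref{QuasimodeSection} applies; this holds since $Y$ is Lipschitz and $\rho$ is smooth with $\rho'>0$.
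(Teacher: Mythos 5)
Your overall plan—pull back the Dirichlet form under the stretching map, change variables in $x$ via $\psi(x)=\int_x^c\rho^{-1}$, and invoke Theorem~\ref{SimplicityTheorem}—matches the paper's strategy. But there is a genuine gap: the coordinate change alone does not bring the energy into the normal form~\eqref{DefaSmooth}. As you yourself compute, after the substitution $s=\psi(x)$ the transverse energy $\int\rho^{-2}|\nabla_y u|^2\,\rho^n\,dx$ becomes $\int|\nabla_y u|^2\,\rho^{n-1}\,ds$ (and the longitudinal term picks up the same $\rho^{n-1}$ weight), while the $L^2$ inner product picks up a weight $\rho^{n+1}$. In~\eqref{DefaSmooth} the energy form carries \emph{no} weight on either term—only the inner product does, through $\sigma$. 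Since $\rho^{n-1}$ tends to $0$ as $s\to\infty$, this discrepancy is not an $O(t)$ error; it is a fixed obstruction, so the leading term of your pulled-back form is not $a_t$ and Definition~\ref{AsymptoticDefn} is not satisfied.

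The missing ingredient is a gauge (conjugation) factor: the paper defines $\Phi_t(u)=\bigl(\rho^{\frac{n-1}{2}}\cdot u\circ(\psi\times\mathrm{Id})\bigr)\circ\phi_t$, i.e.\ one multiplies by a power of $\rho$ in addition to composing with the coordinate change. This strips the $\rho^{n-1}$ weight off the energy and leaves the inner product with the weight $\sigma=\rho^2\circ\psi^{-1}$, which is smooth, positive, strictly decreasing, and tends to $0$ as required in \S\ref{aSection}. The gauge factor is not cosmetic: differentiating the power of $\rho$ produces new error terms with no $y$-derivatives at all (the terms $I_1$, $I_3$, $I_5$ in the paper), and bounding these by $a_t$ forces you to use the Poincar\'e inequality~\eqref{Poincare} on $Y$, which in turn is where the non-Neumann boundary condition hypothesis is used. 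You mention the Poincar\'e inequality in passing, but without the gauge transformation you never encounter the zeroth-order terms that actually require it. With the gauge factor included and the resulting $I_1,\dots,I_5$ estimated as in the paper, the argument closes.
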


\begin{proof}
In order to apply Theorem \ref{SimplicityTheorem}, we 
make the following change of variables. Define $\psi: (0,c] \rightarrow [0, \infty)$
by 
\[  \psi(x)~ =~ \int_{x}^c  \frac{dx}{\rho(x)}. \]
By hypothesis, $\psi$ is an orientation reversing homeomorphism. 
Define $\Phi_t: C^{\infty}([0, \infty)\times Y) \rightarrow C^{\infty}(\Omega_t)$
by 
\[ \Phi_t(u)~ =~  
  \left(\rho^{\frac{n-1}{2}} \cdot u 
      \circ \left(\psi \times {\rm Id} \right) \right) \circ \phi_t. \]
where $\phi_t$ is defined by (\ref{Phit}).
We will use $\Phi_t$ to pull-back the $L^2$ inner product and the Dirichlet energy form. 

First note that the Jacobian matrix of $\phi_t$ is 
\begin{equation} \label{Jacobian}
  J \phi~ =~ \left( \begin{array}{cc}
    1/t & 0 \\ \partial_x\rho \cdot y &
   \rho \cdot {\rm Id}  \end{array} \right)
\end{equation}
where ${\rm Id}$ is the $n \times n$ identity matrix, 
and hence the Jacobian determinant $|J \phi_t|$ equals $t^{-1} \cdot \rho^n$.
The Jacobian determinant of $\psi \times {\rm Id}$ is $\rho^{-1}$. 
It follows that
\begin{equation} \label{StretchInnerEucl} 
\int_{\Omega_t} \left(\Phi_t(u) \cdot  \Phi_t(v)\right)~ dV~
   =~ \frac{1}{t} \int_0^{\infty} \int_Y~  u \cdot v~ \sigma(x)~  dx~ dy
\end{equation}
where $\sigma= \rho^2 \circ \psi^{-1}$ and 
where $dy$ denotes Lebesgue measure on $Y \subset \R^n$. 
In order to have an inner product
that does not depend on $t$, we rescale by $t$. Define
\[ \langle u , v \rangle~ =~  \int_0^{\infty} \int_Y~  u \cdot v~ \sigma(x)~  dx~ dy.
\]
Define a family of quadratic forms on $C^{\infty}([0, \infty)\times Y)$ by 
\[  q_t(u)~ =~  
   t \cdot \int_{\Omega_t} \left| \nabla\left( \Phi_t(u) \right)\right|^2~ dx~ dy
\]
Note that the $\Phi_t$ defines an isomorphism from each eigenspace of 
$q_t$ with respect to $\langle \cdot ,\cdot \rangle$ to the eigenspaces of 
the Dirichlet energy form on $\Omega_t$ with respect to the $L^2$-inner
product on $\Omega_t$. In particular, it suffices to show that each 
eigenspace of $q_t$ with respect to $\langle \cdot ,\cdot \rangle$
is 1-dimensional.

Define 
\[ a_t(u)~ =~  \int_0^{\infty} \int_Y \left(
  t^2 \cdot  \left|\partial_x u\right|^2~ +~ \left| \nabla_y u\right|^2 \right)~ dx~ dy. \]
By Theorem \ref{SimplicityTheorem}, it suffices to show that $q_t$ 
is asymptotic to $a_t$ at first order. 

Let $\tau= \rho' \circ \psi^{-1}$.
A straightforward calculation of moderate length
shows that 
\begin{eqnarray*}
   q_t(u,v)-a_t(u,v)
    &=& t \cdot \left( I_1(u,v)~ +~ I_2(u,v)~ +~ I_3(u,v)~ +~ I_4(u,v)~ +~ I_5(u,v)~ \right.  \\
   & & \left.  +~ I_3(v,u)~ +~ I_4(v,u)~ +~ I_5(v,u)\right)  
\end{eqnarray*}
where
\begin{eqnarray*}
 I_1(u,v)~ 
     &=& t \cdot \frac{(n-1)^2}{4}  \int_0^{\infty} \int_Y  \tau^2 \cdot u \cdot v~ dx~ dy, \\
 I_2(u, v)~ &=& 
     t \int_0^{\infty} \int_Y  \tau^2 \cdot \left( y \cdot \nabla_y u \right) \cdot
                     \left( y \cdot \nabla_y v \right)~ dx~ dy, \\
 I_3(u,v)~ &=& 
  t \cdot \frac{n-1}{2} \int_0^{\infty} \int_Y  
               \tau^2 \cdot u \cdot \left(y \cdot \nabla_y v\right)~ dx~ dy,  \\
 I_4(u,v)~ &=&  t  \int_0^{\infty} \int_Y \tau \cdot
   \partial_x u \cdot \left(y \cdot \nabla_y v\right)~  dx~ dy. \\
 I_5(u,v)~ &=& 
     t \cdot \frac{n-1}{2}
      \cdot  \int_0^{\infty} \int_Y  \tau \cdot u \cdot \partial_x v~  dx~ dy, 
\end{eqnarray*}
By the triangle inequality, it 
suffices to show that for each $k=1, \ldots, 5$, 
there exists a constant $C_k$ such that 
$|I_k(u,v)| \leq C_k \cdot a_t(u)^{\frac{1}{2}} \cdot  a_t(v)^{\frac{1}{2}}$
for $t<1$. 

First note that by assumption $|\rho'|$---and hence $|\tau|$---is 
bounded by a constant $C$.  Second, note that if $\lambda_0>0$ is the 
smallest eigenvalue of the Dirichlet Laplacian on $L^2(Y)$, 
then for each $u \in C^{\infty}([0, \infty) \times Y)$ we have 
\begin{equation} \label{Poincare}
    \int_0^{\infty} \int_Y~  u^2~ dx~ dy~
    \leq~ \frac{1}{\lambda_0}~   \int_0^{\infty} \int_Y~  
    \left| \nabla_y u \right|^2~ dx~ dy.
\end{equation}

If $n=1$, then $|I_1(u,v)|$ is trivial. Otherwise,
apply the Cauchy-Schwarz inequality
and estimate (\ref{Poincare}). More precisely
\begin{eqnarray*}
 \frac{4}{C^2(n-1)^2} \cdot 
   |I_1(u,v)| &  \leq & t \cdot \left(\int_0^{\infty} \int_Y~  u^2~ dx~ dy \right)^{\und}
     \cdot  \left(\int_0^{\infty} \int_Y~  v^2~ dx~ dy \right)^{\und} \\
     &\leq& \frac{t}{\lambda_0} 
   \cdot \left(\int_0^{\infty} \int_Y~ \left|\nabla_y u \right|^2~ dx~ dy \right)^{\und}
     \cdot  \left(\int_0^{\infty} \int_Y~  \left|\nabla_y v \right|^2~ dx~ dy \right)^{\und} \\
    &\leq& \frac{t}{\lambda_0} \cdot a_t(u)^{\und} \cdot a_t(v)^{\und}.  
\end{eqnarray*}   

To bound $|I_2(u,v)|$, note that $|y \cdot \nabla_yu|^2\leq |y|^2 \cdot |\nabla_yu|^2$
and that $|y|^2$ is bounded since $Y$ compact. The desired bound of $|I_2(u,v)|$
then follows from an application of the Cauchy-Schwarz inequality.

If $n=1$, then $|I_3(u,v)|$ is trivial. Otherwise, we apply the 
Cauchy-Schwarz inequality and estimate (\ref{Poincare}) as in the bound
of $|I_1(u,v)|$. 

To bound $|I_4(x,y)|$ we apply Cauchy-Schwarz as follows 
\[
 \int  
   \left|t \cdot  \partial_x u\right| \cdot \left|y \cdot \nabla_y v\right|~
 \leq~  
 \left(\int
   \left|t \cdot  \partial_x u\right|^2 \right)^{\und} 
\left(\int \left|y \cdot \nabla_y v\right|^2 \right)^{\und} 
\]
It then follows that
\[  |I_4(u,v)|~ \leq~ C \cdot a_t(u)^{\und}\cdot a_t(u)^{\und}. \]

To bound $|I_5(u,v)|$ apply Cauchy-Schwarz and argue in a fashion
similar to the above. 
\end{proof}

\subsection{Changing the boundary condition}
Theorem \ref{ThmSimplDbc} extends to more general boundary condition that we describe here. 
Inspecting the proof, the only thing we have used from the Laplace operator on $Y$ is that 
it satisfies the Poincar\'e inequality \ref{Poincare}. This 
fact is true for any mixed Dirichlet-Neumann boundary condition except Neumann on all faces. 

As a consequence we may take on the faces of $\Omega_t$ of the form $I\times \partial Y$ 
any kind of boundary condition except full Neumann. 

On the face $\{1\} \times Y$ we may take Dirichlet or Neumann as we want since we have allowed 
Dirichlet or Neumann at $0$ for the one-dimensional model operators $a_t^\mu$. 


\section{Domains in the hyperbolic plane with a cusp}

\label{IdealSection}

Recall that the hyperbolic metric on the upper half-plane $\R \times \R^+$
is defined by $(dx^2~ + dy^2)/y^2$. The associated Riemannian measure
is given by  $d \mu= y^{-2} dx~ dy$ and the gradient is given by 
$\nabla f= y^2 ( \partial_x f \cdot \partial_x +  \partial_y f \cdot \partial_y)$.

Let $h: (-\eta, \eta) \rightarrow \R$ be a positive real-analytic function such that 
$h'(0)=0$. For each $t < \eta$, define $\Omega_t$ by 
\[  \Omega_t~ =~ \left\{(x,y) \in \R \times \R^+~ |~
        -t \leq x \leq t~ \mbox{ and } y \geq h(x) \right\}.
\]
The domain $\Omega_t$ is unbounded but has finite hyperbolic area. 
It is known that the hyperbolic Dirichlet Laplacian acting 
on $L^2(\Omega_t, d\mu)$ is compactly resolved and hence has discrete spectrum
(see e.g. \cite{LaxPhillips}).\footnote{The Neumann Laplacian is not compactly resolved,
and in fact, has essential spectrum.}

\begin{eg}
Let $h:(-1, 1) \rightarrow \R$ be defined by $h(x)=\sqrt{1-x^2}$.
For each $t< 1$, the domain $\Omega_t$ is a hyperbolic triangle with one 
ideal vertex. In particular, $\Omega_{1/2}$ 
is a fundamental domain for the modular group $SL(2, {\mathbb Z})$
acting on $\R \times \R^+ \subset {\mathbb C}$ as linear fractional transformations. 
\end{eg}

\begin{thm} \label{HyperbolicSimple}
For all but countably many $t$, each eigenspace of the Dirichlet Laplacian 
acting on $L^2(\Omega_t, d\mu)$ is 1-dimensional.
\end{thm}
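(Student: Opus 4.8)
The plan is to follow the strategy of the proof of Theorem~\ref{ThmSimplDbc}: introduce coordinates exhibiting $\Omega_t$ as a thin strip fibered over the geodesic ray that runs into the cusp, pull back the hyperbolic Dirichlet energy form and the $L^2(\Omega_t, d\mu)$ inner product to a fixed Hilbert space, renormalize so that the spectrum is rescaled by $t^2$, and then recognize the resulting family $q_t$ as a perturbation of a separable model $a_t$ of the type studied in \S\ref{SectionReduction} that is asymptotic at first order (Definition~\ref{AsymptoticDefn}). Generic one-dimensionality of the eigenspaces of the hyperbolic Dirichlet Laplacian on $\Omega_t$ then follows from Theorem~\ref{SimplicityTheorem}.

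Concretely, I would use the diffeomorphism $(\xi,s) \mapsto (x,y) = (t\xi,\, s + h(t\xi))$ from the fixed domain $[-1,1] \times [0,\infty)$ onto $\Omega_t$. Writing a function $F$ on $\Omega_t$ in these coordinates as $f(\xi,s) = F(t\xi, s + h(t\xi))$ and using the identity $|\nabla_{{\rm hyp}} F|^2\, d\mu = (|\partial_x F|^2 + |\partial_y F|^2)\, dx\, dy$, a direct computation gives $\partial_x F = \frac{1}{t}\partial_\xi f - h'(t\xi)\partial_s f$, $\partial_y F = \partial_s f$, and $dx\, dy = t\, d\xi\, ds$, so that the hyperbolic Rayleigh quotient $\int_{\Omega_t}|\nabla_{{\rm hyp}} F|^2\, d\mu / \int_{\Omega_t} |F|^2\, d\mu$ equals $t^{-2}\, q_t(f) / \langle f,f\rangle_t$, where $\langle f,g\rangle_t = \int_{-1}^{1}\!\int_0^\infty fg\,(s+h(t\xi))^{-2}\, d\xi\, ds$ and
\[
 q_t(f)~ =~ \int_{-1}^{1}\!\int_0^\infty \Bigl( |\partial_\xi f|^2 + t^2|\partial_s f|^2 - 2t\,h'(t\xi)\,\partial_\xi f\,\partial_s f + t^2 h'(t\xi)^2 |\partial_s f|^2 \Bigr)\, d\xi\, ds .
\]
To place this in the framework of \S\ref{SectionReduction} I would conjugate by the multiplication operator $m_t = (s+h(t\xi))/(s+h(0))$, which is an isometry from $\Hcal = \Hcal_\sigma \otimes L^2([-1,1])$ — with $\sigma(s) = (s+h(0))^{-2}$ and $b$ the Dirichlet Laplacian on $[-1,1]$ — onto $(\Hcal, \langle\cdot,\cdot\rangle_t)$; here $\sigma$ satisfies the hypotheses of \S\ref{aSection} and $b$ has simple spectrum, so Assumption~\ref{bsimple} holds. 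Since $h$ is real-analytic, the conjugated form $\tilde q_t := q_t(m_t\, \cdot\,)$ is a real-analytic family of type (a); it has the same spectrum as $q_t$, and the composite isomorphism identifies its eigenspaces with those of the hyperbolic Dirichlet Laplacian on $\Omega_t$. It then remains to verify that $\tilde q_t$ is asymptotic at first order to
\[
 a_t(f)~ =~ \int_{-1}^{1}\!\int_0^\infty \bigl( |\partial_\xi f|^2 + t^2 |\partial_s f|^2 \bigr)\, d\xi\, ds .
\]

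The heart of the matter, and the step I expect to be the main obstacle, is this last verification, because the only bound available on the $s$-derivative, $\|\partial_s f\|^2 \le t^{-2} a_t(f)$, loses a factor $t^{-2}$ precisely in the cross term $2t\,h'(t\xi)\,\partial_\xi f\,\partial_s f$. This is where the hypothesis $h'(0)=0$ is decisive: since $h$ is real-analytic with $h'(0)=0$ we have $h'(t\xi) = O(t)$ and $h(t\xi)-h(0) = O(t^2)$ uniformly for $|\xi| \le 1$, whence (i)~$m_t = 1 + O(t^2)$ in $C^1$, (ii)~the cross term is bounded by $C t^2\,\|\partial_\xi f\|\,\|\partial_s f\| \le C t\, a_t(f)^{\und} a_t(g)^{\und}$, and (iii)~the remaining error terms, including those produced by the conjugation with $m_t$, are $O(t^2)\cdot a_t$. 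Cauchy--Schwarz estimates as in the proof of Theorem~\ref{ThmSimplDbc}, together with the Poincar\'e inequality on $[-1,1]$ to absorb the resulting zeroth-order terms into $b$, then yield \refeq{tclose}, and differentiating once in $t$ while using that $h''$ is bounded near $0$ yields \refeq{Boundqdot}. Finally I would check that the Dirichlet condition on $\partial\Omega_t$ corresponds to the Dirichlet condition for $b$ on $\{\xi = \pm 1\}$ and for $a_t^\mu$ at $s = 0$, while no condition is imposed at the cusp (i.e. as $s \to \infty$), which is encoded in the completion $\Hcal_\sigma$. With the hypotheses of Theorem~\ref{SimplicityTheorem} in force, that theorem — together with the constancy of the dimension of a real-analytic family of eigenspaces — gives that $\tilde q_t$, hence the hyperbolic Dirichlet Laplacian on $\Omega_t$, has one-dimensional eigenspaces for all but countably many $t$.
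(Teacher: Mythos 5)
Your proposal takes essentially the same approach as the paper: the change of variables $(\xi,s)\mapsto(t\xi, s+h(t\xi))$ is the paper's $\phi_t$ (with $s=b-h(0)$), your conjugating multiplier $m_t$ is precisely the paper's $\psi\circ\phi_t$, and you identify the same weight $\sigma$, transverse form $b$, and separable model $a_t$ before verifying first-order asymptoticity via $h'(0)=0$, Cauchy--Schwarz, and the Poincar\'e inequality on $[-1,1]$, then invoking Theorem~\ref{SimplicityTheorem}. The argument is correct and matches the paper's proof in all essentials.
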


The remainder of this section is devoted to the proof of
Theorem \ref{HyperbolicSimple}.

The spectrum of the hyperbolic Laplacian on $\Omega_t$ 
coincides with the spectrum of the Dirichlet energy form  
\begin{equation} \label{HyperbolicQuadratic}
 \Ecal(u)~ =~  \int_{\Omega_t}
  \left( |\partial_x u|^2~ +~ |\partial_y u|^2 \right)~ dx~ dy,
\end{equation}
with respect to the inner product
\begin{equation}  \label{HyperbolicProduct}
  \langle u, v \rangle_{\mu}~ =~  \int_{\Omega_t} u \cdot v~ \frac{dx~ dy}{y^2}.
\end{equation}

In order to study the variational behavior of the eigenvalues, we
first adjust the domains by constructing a family of 
diffeomorphisms $\phi_t$ from the fixed set 
${\mathcal U}=[-1,1]\times [h(0), \infty[$ onto $\Omega_t$. 
In particular, define 
\[ \phi_t  \left(  \begin{array}{c}  a  \\ b \end{array} \right)~
  =~  \left(  \begin{array}{c}  t \cdot a \\ b~ +~ h(t \cdot a)~ -~ h(0)\end{array} \right). \]
For each  $u\in \Cc^\infty({\mathcal U})$, we define 
\[ \tilde u~ =~ \psi \cdot u\circ \phi_t^{-1} \]
where
\[  \psi(x,y)~ =~  \frac{y}{y-h(x)+h(0)}. \]
Since $\phi_t$ is a smooth diffeomorphism from ${\mathcal U}$ onto $\Omega_t$ 
and $\psi$ is smooth on $\Omega_t$, the mapping $u\mapsto \tilde{u}$ is 
a bijection from $\Cc^\infty({\mathcal U})$ onto $\Cc^\infty(\Omega_t).$ 

Since the Jacobian of $\phi_t$ is 
\begin{equation} \label{JacobianHyperbolic}
  J (\phi_t)
\left( \begin{array}{c} a \\ b \end{array} \right)~
 =~ \left( \begin{array}{cc}
    t & 0 \\ t \cdot h'(t \cdot a)  &
  1  \end{array} \right)
\end{equation}
and $\psi \circ \phi_t= (y\circ \phi_t)/b$, we find that, for 
any smooth $u$ and $v$ compactly supported in ${\mathcal U}$, 
\begin{equation} \label{StretchInnerHyp} 
~~
t^{-1} \int_{\Omega_t} {\tilde u}\cdot \tilde{v}\,\frac{dxdy}{y^2}\,=\,
\int_{{\mathcal U}} u \cdot v~ \frac{da~ db}{b^2}. 
\end{equation}
In particular, the mapping $u \mapsto \tilde{u}$ extends to an isometry of 
$\Hcal:=L^2({\mathcal U},~ da\cdot db/b^2)$ onto $L^2(\Omega_t, t^{-1}d\mu).$ 

We now pull-back the Dirichlet energy form from $\Omega_t$ 
to ${\mathcal U}$. 
In particular, we define $q_t: \Cc^{\infty}({\mathcal U}) \rightarrow \R$ by
\[  q_t(u)~ =~ t \cdot \Ecal (\tilde{u}).
\]
The form extends to a closed densely defined form on ${\mathcal H}$.
By construction, $\lambda$ belongs to the spectrum of $q_t$ if and only if 
$t^{-2} \cdot \lambda$ belongs to the Laplace spectrum of the 
hyperbolic triangle $\Omega_t$. Because $h$ is real-analytic, $t \mapsto \phi_t$
is a real-analytic family of bi-Lipschitz homeomorphisms. It follows that 
$q_t$ is a real-analytic family of quadratic forms of type (a) in the sense
of Kato \cite{Kato}.

On $\Cc^{\infty}({\mathcal U}),$ we also define 
\[  a_t(u)~ =~
\int_{{\mathcal U}} \left( t^2 \cdot |\partial_b u|^2~ +~  
    |\partial_a u|^2 \right)~ da~ db. 
\]

Theorem \ref{HyperbolicSimple} follows from 
Theorem \ref{SimplicityTheorem} and the following proposition.

\begin{prop}
$q_t$ is asymptotic to $a_t$ at first order. 
\end{prop}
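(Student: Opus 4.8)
## Proof Proposal

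The plan is to carry out the "straightforward calculation of moderate length" that was suppressed in the Euclidean case, but now for the hyperbolic pull-back, and then to estimate the resulting error terms exactly as in the proof of Theorem~\ref{ThmSimplDbc}. The key conceptual point is that the definition of asymptotic at first order (Definition~\ref{AsymptoticDefn}) has two requirements: the zeroth-order closeness estimate \eqref{tclose} controlling $q_t(u,v)-a_t(u,v)$ by $t\cdot a_t(u)^{\und}a_t(v)^{\und}$, and the first-order estimate \eqref{Boundqdot} controlling $\dot q_t(v)-\dot a_t(v)$ by $a_t(v)$. So the proof breaks into (i) writing $q_t(u,v)-a_t(u,v)$ explicitly as a sum of terms each carrying an extra factor of $t$, and (ii) differentiating this expression in $t$ and checking the derivative is $O(1)\cdot a_t$.

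First I would compute $\Ecal(\tilde u,\tilde v)$ by the change of variables $\phi_t$. Writing $\tilde u=\psi\cdot(u\circ\phi_t^{-1})$, the gradient $\nabla\tilde u$ picks up three kinds of contributions: the main term coming from $\partial_a u$ and $\partial_b u$ transported through the Jacobian \eqref{JacobianHyperbolic}, cross terms involving $h'(ta)$, and terms where the derivative lands on the conformal factor $\psi$. Since $J(\phi_t)$ has determinant $t$ and its off-diagonal entry is $t\cdot h'(ta)$, while $\psi\circ\phi_t$ has $a$- and $b$-derivatives that are smooth and bounded on $\mathcal U$ (using that $h$ is real-analytic with $h'(0)=0$, so $h'(ta)=O(t)$ near $a=0$ uniformly, and $b\geq h(0)>0$ bounds $\psi$ and its derivatives), the factor $t\cdot\Ecal(\tilde u)$ expands as $a_t(u)$ plus a finite collection of integrals $t\cdot I_k(u,v)$, each of which is bounded by a constant times one of the quantities $\int t^2|\partial_b u|^2$, $\int|\partial_a u|^2$, or $\int u^2$. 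The first two are dominated by $a_t(u)$ directly; the third is handled by the Poincar\'e inequality on the transversal interval $[-1,1]$ (with Dirichlet conditions at $a=\pm 1$), exactly as estimate \eqref{Poincare} was used in the proof of Theorem~\ref{ThmSimplDbc}. This gives \eqref{tclose}.

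For \eqref{Boundqdot}, I would differentiate the explicit expression $q_t(u)=a_t(u)+t\sum_k I_k^{(t)}(u)$ with respect to $t$. The term $\dot a_t(u)=2t\int|\partial_b u|^2$ is common to both sides, so we must bound $\frac{d}{dt}\big(t\sum_k I_k^{(t)}\big)$. Each $I_k^{(t)}$ depends on $t$ through $h'(ta)$ (and through $\psi\circ\phi_t$), and differentiating produces $h''(ta)\cdot a$ and similar bounded factors; crucially no new powers of $t^{-1}$ appear, since all $t$-dependence is polynomial/analytic and bounded on the compact parameter range. Thus $\frac{d}{dt}(t\sum_k I_k^{(t)})$ is again a finite sum of integrals of the same three types as above, each controlled by $a_t(u)$ via the same Cauchy--Schwarz plus Poincar\'e argument, giving the bound $C\cdot a_t(u)$.

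The main obstacle I anticipate is purely bookkeeping rather than conceptual: organizing the expansion of $t\cdot\Ecal(\tilde u,\tilde v)$ so that every term manifestly carries a factor of $t$ (or, after differentiation, is manifestly $O(1)\cdot a_t$), and verifying the uniformity of all the bounds in $t$ near $0$. The one place requiring genuine care is the interplay between the conformal factor $\psi$ and the Dirichlet boundary condition: since $\tilde u$ must vanish where $u$ does (on $\{a=\pm1\}$ and on $\{b=h(0)\}$, which maps to $\{y=h(x)\}$), and since $\Omega_t$ is non-compact with a cusp, one should confirm that the formal integration by parts and the isometry \eqref{StretchInnerHyp} are justified on $\Cc^\infty(\mathcal U)$ and pass to the form closure — but this is exactly parallel to \S\ref{SectionReduction} and to the argument already given for $\Omega_t$ in Euclidean space, so no new analytic input beyond boundedness of $\psi$, $h'$, $h''$ on compacts is needed. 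Once the expansion is in hand, the estimates are a verbatim repetition of the five estimates $|I_k(u,v)|\le C_k\, a_t(u)^{\und}a_t(v)^{\und}$ from the proof of Theorem~\ref{ThmSimplDbc}.
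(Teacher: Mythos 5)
Your plan matches the paper's approach: both expand $q_t(u,v)-a_t(u,v)$ via the change of variables induced by $\phi_t$ (in the paper's notation, using $\bar{u}=(\psi\circ\phi_t)\cdot u$, the weighted gradient $\bar{\nabla}$, and the coefficient matrix $A_t$), isolate the error as a finite sum of terms each carrying a factor of $t$ from the smallness bounds $|\bar{\psi}^2-1|=O(t)$, $|\bar{\nabla}\bar{\psi}|=O(t)$, $|h'(ta)|=O(t)$, and then control each term by $a_t(u)^{\und}a_t(v)^{\und}$ via Cauchy--Schwarz and the Poincar\'e inequality of Lemma~\ref{Poinc}. You are also right that Definition~\ref{AsymptoticDefn} requires verifying (\ref{Boundqdot}) in addition to (\ref{tclose}), and your observation that differentiating in $t$ introduces no new negative powers of $t$ (only bounded factors like $h''(ta)\cdot a$) is the correct reason it holds; the paper's proof here (as in Theorem~\ref{ThmSimplDbc}) writes out only the (\ref{tclose}) estimate, so your accounting of the definition is somewhat more explicit on this point.
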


\begin{proof}
Let $\bar{u}\,=\, (\psi\circ \phi_t) \cdot u$. One computes that 
\begin{eqnarray*}
\left( \partial_y \tilde{u} \right)\circ \phi_t &=& \partial_b \bar{u} 
\\
\left( \partial_x \tilde{u} \right)\circ \phi_t
    &=& \frac{1}{t} \cdot \partial_a \bar{u}~ 
       -~ h'(ta)  \cdot \partial_b \bar{u}.
\end{eqnarray*}
Thus, by making a change of variables in the integral that defines $\Ecal$, 
we find that
\begin{equation}  \label{qExpress}
q_t(u)~ =~ \int_{{\mathcal U}} \left| \partial_a \bar{u}\right |^2 
-2t \cdot h'(ta) \cdot \partial_a \bar{u} \cdot \partial_b \bar{u}~ +~
 t^2  \cdot (1+h'(ta)^2)\left| \partial_b \bar{u}\right |^2~ da~ db
\end{equation}
where $\bar{u}= \bar{\psi} \cdot u$. To aid in computation we define
a weighted gradient 
\[  \bar{\nabla}w~ =~ \left[ \partial_a w,~ t \cdot \partial_b w \right].
\]
and we define
\[     A_t~ =~ \left( \begin{array}{cc}  1 & - h'(t\cdot a) \\ 
                          - h'(t \cdot a) & 1+h'(t \cdot a)^2\end{array} \right) \]
Thus, (\ref{qExpress}) becomes
\[  q_t(u,v)~ =~ \int_{\Ucal}
  \bar{\nabla}\bar{u} \cdot A_t \cdot \bar{\nabla}\bar{v}~ da ~ db
\]
and 
\[  a_t(u,v)~ =~  \int_{\Ucal}  \bar{\nabla}u \cdot \bar{\nabla}v~ da ~ db
\]
Letting $\bar{\psi}= \psi \circ \phi$, we have
\[  \bar{\nabla}\bar{w}~ =~ \bar{\psi} \cdot \bar{\nabla} w~ 
     +~ w \cdot \bar{\nabla} \psi. 
\]  
and hence $q_t(u,v)-a_t(u,v)$ is the sum of four terms 
\begin{eqnarray}
\label{leading} & & 
\int_{\Ucal} \bar{\nabla}u \cdot(\bar{\psi}^2 \cdot A_t-I) \cdot \bar{\nabla}v~ da~ db~, \\
& & \label{mixed}
\int_{\Ucal} 
    \bar{\psi} \cdot v \cdot (\bar{\nabla} \bar{\psi} \cdot A_t \cdot \bar{\nabla} u)~
 da~ db~, \\
& & \label{mixed2}
\int_{\Ucal} 
   \bar{\psi} \cdot u \cdot (\bar{\nabla} \bar{\psi} \cdot A_t \cdot \bar{\nabla} v)~ 
 da~ db~, \\
& & \label{uv}
\int_{\Ucal} 
  (\nabla \bar{\psi} \cdot A_t \cdot \bar{\nabla} \bar{\psi}) \cdot u \cdot v~
  da~ db
\end{eqnarray}
where $I$ denotes the $2 \times 2$ identity matrix.
To finish the proof, it suffices to show that each of these
terms is bounded by $O(t) \cdot  a_t(u)^{\frac{1}{2}} \cdot a_t(v)^{\frac{1}{2}}$
where $O(t)$ represents a function that is bounded by a constant times
$t$ for $t$ small.

In order to estimate these terms, we use elementary estimates 
of $h(t \cdot a)$, $h'(t \cdot a)$,  $\bar{\psi}$, and   $\bar{\nabla} \bar{\psi}$.
In particular, since $h'(0)=0$ we have that $|h(t \cdot a) -h(0)| = O(t)$
and $|h'(t \cdot a)| =O(t)$ uniformly for $a \in [-1,1]$. 
Thus, since 
\[  \bar{\psi}(a,b)~ =~ 1~ - \frac{h(t \cdot a) - h(0)}{b}  \] 
we find that $|\bar{\psi}^2(a,b) -1|= O(t)$ and $|\nabla \bar{\psi}|=O(t)$ 
uniformly for $(a, b) \in {\mathcal U}$. 

To bound (\ref{leading}), note that
\[ {\rm tr}(\bar{\psi}^2 \cdot A -I)~ =~ 2(\bar{\psi}^2-1)~ +~ \bar{\psi}^2 \cdot h'(t \cdot a)^2 \]
and 
\[ {\rm det}(\bar{\psi}^2 \cdot A -I)~ =~  (\bar{\psi}^2 -1)^2~  -~ h'(t \cdot a)^2.  \] 
Hence ${\rm tr}(\bar{\psi}^2 \cdot A -I)=O(t)$
and ${\rm det}(\bar{\psi}^2 \cdot A -I)=O(t^2)$.
It follows that the eigenvalues of $\bar{\psi}^2 \cdot A -I$
are $O(t)$. Therefore, 
\[  \int_{\Ucal} \bar{\nabla}u
 \cdot(\bar{\psi}^2 \cdot A_t-I) \cdot \bar{\nabla}v~ da~ db~
=~ O(t) \cdot \int_{\Ucal} \bar{\nabla}u \cdot \bar{\nabla}v~ da~ db.
\]

To estimate (\ref{mixed}) we first note that the 
eigenvalues of $A_t$ are $O(1)$. 
Then we apply Cauchy-Schwarz 
\[  | \bar{\nabla} \bar{\psi} \cdot \bar{\nabla} u |~ 
  \leq | \bar{\nabla} \bar{\psi}| \cdot | \bar{\nabla} u | 
\]
and then the elementary estimate on $|\bar{\nabla} \bar{\psi}|$ to find that
\[\int_{\Ucal} 
    |\bar{\psi}| \cdot |v| \cdot |\bar{\nabla} \bar{\psi} \cdot \bar{\nabla} u|~
 da~ db~ \leq~
O(t) \int_{\Ucal} v \cdot |\bar{\nabla} u|~
 da~ db.
\]
Cauchy-Schwarz applied to the latter integral gives
\[ \int_{\Ucal} |v| \cdot |\bar{\nabla} u|~
 da~ db~ \leq~
  \left(\int_{\Ucal} |v|^2~ da~ db\right)^{\frac{1}{2}} 
\cdot  \left(\int_{\Ucal} |\bar{\nabla} u|^2~ da~ db\right)^{\frac{1}{2}}. 
\] 
From a Poincar\'e inquality---Lemma \ref{Poinc} below---we find that 
\[  \int_{\Ucal} |v|^2~ da~ db~
\leq~ \pi^2 \int_{\Ucal} |\bar{\nabla} v|^2~ da~ db. 
\]
In sum we find that the expression in (\ref{mixed}) is bounded
by $O(t) \cdot a_t(u)^{\frac{1}{2}} \cdot a_t(v)^{\frac{1}{2}}$. 
Switching the r\^oles of $u$ and $v$, we obtain the same
bound for the expression in (\ref{mixed2}).

To estimate (\ref{uv}) we use the fact that 
the norm of the eigenvalues of $A_t$ are $O(1)$
and the fact that $|\bar{\nabla} \bar{\psi}|^2=O(t^2)$
to find that 
\[  \int_{\Ucal} 
  |\nabla \bar{\psi} \cdot A_t \cdot \bar{\nabla} \bar{\psi}| \cdot |u| \cdot |v|~
  da~ db~ =~
  O(t) \cdot  \int_{\Ucal} |u| \cdot |v|~
  da~ db~
\] 
By applying Cauchy-Schwarz and the following Poincar\'e inequality we obtain the 
claim. 
\end{proof}

\begin{lem}\label{Poinc}
Any $u\in \Cc^{\infty}({\mathcal U})$ satisfies : 
$$
\int_{{\mathcal U}} \left | u\right |^2 \, da~ db~
 \leq~ \pi^2 \int_{U} \left| \partial_a u\right |^2 \,da~ db.
$$ 
\end{lem}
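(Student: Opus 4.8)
The plan is to prove the one-dimensional Poincar\'e inequality on each horizontal slice and then integrate. Recall that $\mathcal{U} = [-1,1] \times [h(0), \infty)$, so for fixed $b$ the function $a \mapsto u(a,b)$ is a smooth function on $[-1,1]$ that, since $u \in \Cc^{\infty}(\mathcal{U})$ has compact support inside $\mathcal{U}$, vanishes at $a = \pm 1$ (indeed it vanishes in a neighborhood of $a=\pm 1$). The sharp Poincar\'e (Wirtinger) inequality for a function $f$ on an interval of length $L$ with $f$ vanishing at both endpoints states that $\int_0^L |f|^2 \leq (L/\pi)^2 \int_0^L |f'|^2$; here $L = 2$, so $(L/\pi)^2 = 4/\pi^2$. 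This already gives the claimed bound with room to spare, since $4/\pi^2 < \pi^2$.

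First I would record the slice inequality: for each fixed $b \geq h(0)$,
\[
   \int_{-1}^{1} |u(a,b)|^2 \, da ~\leq~ \frac{4}{\pi^2} \int_{-1}^{1} |\partial_a u(a,b)|^2 \, da ~\leq~ \pi^2 \int_{-1}^{1} |\partial_a u(a,b)|^2 \, da.
\]
The first inequality is the standard Dirichlet Poincar\'e inequality on $[-1,1]$; one way to see it is to expand $a \mapsto u(a,b)$ in the Fourier sine series adapted to $[-1,1]$ (i.e. in the Dirichlet eigenfunctions $\sin(k\pi(a+1)/2)$ of $-\partial_a^2$ on $[-1,1]$), whose lowest eigenvalue is $(\pi/2)^2 = \pi^2/4$, so that $\int |\partial_a u|^2 \geq (\pi^2/4) \int |u|^2$ on each slice. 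Alternatively, one may simply invoke the classical Wirtinger inequality. Since we only need the weaker constant $\pi^2$, even a crude argument (e.g. writing $u(a,b) = \int_{-1}^{a} \partial_a u(s,b)\, ds$ and applying Cauchy-Schwarz, which gives constant $4$) would suffice; I would use whichever is cleanest, most likely the Fourier-series/eigenvalue argument since it is self-contained.

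Then I would integrate the slice inequality over $b \in [h(0), \infty)$ using Tonelli's theorem (all integrands are nonnegative and, $u$ having compact support, everything is finite):
\[
   \int_{\mathcal{U}} |u|^2 \, da \, db ~=~ \int_{h(0)}^{\infty} \!\! \int_{-1}^{1} |u|^2 \, da \, db ~\leq~ \pi^2 \int_{h(0)}^{\infty} \!\! \int_{-1}^{1} |\partial_a u|^2 \, da \, db ~=~ \pi^2 \int_{\mathcal{U}} |\partial_a u|^2 \, da \, db,
\]
which is exactly the asserted inequality. There is essentially no obstacle here: the only point requiring a moment's care is the boundary behavior on each slice, i.e. that $u(\pm 1, b) = 0$, which is immediate from $u$ having compact support in the open-in-$a$ set $(-1,1)$ (in the $a$-direction $\mathcal{U}$ has genuine boundary at $a = \pm 1$); in the $b$-direction no boundary condition is needed since we only differentiate in $a$. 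So the "hard part" is really just bookkeeping the constant, and since we have the generous target $\pi^2$ rather than the sharp $4/\pi^2$, this is routine.
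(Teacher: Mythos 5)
Your proof is correct and takes essentially the same route as the paper: expand in a Fourier (sine) series in $a$ on each horizontal slice, note that all Dirichlet eigenvalues of $-\partial_a^2$ on $[-1,1]$ are at least $\pi^2/4$, and integrate over $b$. In fact your version is a touch more careful than the paper's, which expands in $\{\sin(k\pi a)\}$ — an orthogonal but \emph{incomplete} family for Dirichlet conditions on $[-1,1]$ (it misses the $\cos((k+\tfrac12)\pi a)$ modes, including the ground state) — whereas you use the genuine Dirichlet eigenbasis $\sin(k\pi(a+1)/2)$ and obtain the sharp slice constant $4/\pi^2$, which still comfortably beats the required $\pi^2$.
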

\begin{proof}
We decompose $u\,=\,\sum_k u_k(b)\sin(k\pi a).$ Then we have 
\begin{equation*}
\begin{array}{lcl}
\displaystyle \int_{U} \left| \partial_a u\right |^2 &=&\sum_k k^2\pi^2 \int_{h(0)}^\infty u_k(b)^2~ db \\
								& \geq & \pi^2  \sum_k \int_{h(0)}^\infty u_k(b)^2 db \,=\,\pi^2 \displaystyle \int_{{\mathcal U}} \left | u\right |^2 \, da~ db.
\end{array}
\end{equation*}
\end{proof}


\part{Appendices and references}


\appendix

\section{Solutions to the Airy equation} \label{AiryAppendix}

Here we consider solutions to Airy's differential equation
\begin{equation} \label{AiryEquation}
  A''(u)~ =~ u\cdot A(u) 
\end{equation}
for $u \in \R$. It is well-known that
there exist unique solutions $A_+$ and $A_-$ that satisfy\footnote{
The functions $\pi^{-\frac{1}{2}} \cdot A_{\pm}$ 
are the classical Airy functions ${\rm Ai}$ and ${\rm Bi}$. 
See, for example, \cite{Olver}  Chapter 11.} 
\begin{equation} \label{PositiveAsymptotics}
 A_{\pm}(u)~ =~ \frac{ u^{-\frac{1}{4}}}{2^{\frac{1\pm 1}{2}}} \cdot
    \exp \left( \pm \frac{2}{3} \cdot u^{\frac{3}{2}} \right) 
      \left(  1~ +~ O\left(u^{-\frac{3}{2}} \right)  \right)~ 
\end{equation} 
and 
\begin{equation} \label{NegativeAsymptotics} 
   A_{\pm}(-u)~ =~  u^{-\frac{1}{4}} \left(
   \cos \left( \frac{2}{3} \cdot u^{\frac{3}{2}}~ \mp~ \frac{\pi}{4}  \right) 
  +~ O\left( u^{-\frac{3}{2}} \right)  \right)
\end{equation}
where $u^{\frac{3}{2}} \cdot O(u^{-\frac{3}{2}})$ is bounded on $[1, \infty)$. 

Let $W$ denote the Wronskian of $\{A_+,A_-\}$. 
Define $K: \R \times \R \rightarrow \R$ by
\[  K(u,v)~ =~  W^{-1} \cdot 
  \left\{  \begin{array}{cc} A_+(u) \cdot A_-(v)    & \mbox{ if }  v  \geq u \geq 0 \mbox{ or }
                                                         v \geq 0 \geq u  \\
            A_-(u) \cdot A_+(v)    &  \mbox{ if }  u \geq  v \geq 0  \\
    A_+(u) \cdot A_-(v) - A_-(u) \cdot A_+(v)   & \mbox{ if }  u  \leq v \leq 0  \\
%
%
                         0   &  \mbox{ otherwise } 
   \end{array}  \right.
\]

\begin{lem} \label{AiryKernel}
Let $\infty <\alpha \leq 0 \leq  \beta \leq \infty$. 
For each locally integrable function $g:[\alpha, \beta] \rightarrow \R$
of at most polynomial growth, we have  
\begin{equation} \label{Kernel}
 (\partial^2_u - u) \int_{\alpha}^{\beta} K(u,v) \cdot g(v)~ dv~ =~ g(u), 
\end{equation}
\end{lem}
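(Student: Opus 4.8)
The plan is to verify directly that the integral operator with kernel $K$ inverts the operator $\partial_u^2 - u$ on the interval $[\alpha,\beta]$, by checking the two defining properties of a Green's function: that $K(u,v)$ is built from solutions of the homogeneous Airy equation $A'' = uA$ in each of the regions $v>u$ and $v<u$, and that it has the correct jump in the $u$-derivative across the diagonal $u=v$, namely $\partial_u K(u,v)|_{u=v^+} - \partial_u K(u,v)|_{u=v^-} = 1$. Once these are established, differentiating under the integral sign (justified by the local integrability of $g$ and the polynomial growth together with the rapid decay of $A_-$ built into (\ref{PositiveAsymptotics}), which controls the tail when $\beta=\infty$) yields (\ref{Kernel}).

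First I would split the integral at $v=u$, writing $\int_\alpha^\beta K(u,v)g(v)\,dv = \int_\alpha^u K(u,v)g(v)\,dv + \int_u^\beta K(u,v)g(v)\,dv$, and then differentiate twice using the Leibniz rule. The boundary terms produced by the first differentiation combine into $\bigl(K(u,u^-) - K(u,u^+)\bigr)g(u)$, and inspection of the case definition of $K$ shows $K$ is continuous across the diagonal (in each of the three relevant sign configurations the two one-sided expressions agree: e.g. for $u\ge v\ge 0$ one gets $W^{-1}A_-(u)A_+(u)$ from both sides, etc.), so this term vanishes. The second differentiation then produces the jump term $\bigl(\partial_u K(u,v)|_{v=u^-} - \partial_u K(u,v)|_{v=u^+}\bigr)g(u)$ together with $\int_\alpha^\beta (\partial_u^2 K)(u,v)g(v)\,dv$; since in each region $v\mapsto K(u,\cdot)$ is, as a function of $u$, a solution of the homogeneous equation, the integral contributes $\int u\,K(u,v)g(v)\,dv$, which cancels against the $-u$ term. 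So everything reduces to computing the derivative jump.

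The derivative jump computation is the one genuinely hand-on step. For $u,v>0$ one has $\partial_u K|_{u=v^+} - \partial_u K|_{u=v^-} = W^{-1}\bigl(A_-'(v)A_+(v) - A_+'(v)A_-(v)\bigr) = W^{-1}\cdot(-W) $... here one must be careful with the orientation of the Wronskian: $W = W(A_+,A_-) = A_+A_-' - A_-A_+'$ is constant (by Abel's theorem, since the Airy equation has no first-order term), and the sign works out so that the jump equals $1$ precisely because of the ordering chosen in the case definition of $K$. The region $u,v<0$ uses the third line of the definition, where the extra antisymmetric piece $A_+(u)A_-(v)-A_-(u)A_+(v)$ is exactly the combination needed so that the homogeneous solution appearing for $u<v$ versus $u>v$ differs by a multiple of the Wronskian, again giving jump $1$; and the mixed region $v\ge 0\ge u$ is handled by continuity of the pieces already matched at $v=0$ and $u=0$. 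I expect the main obstacle, such as it is, to be purely bookkeeping: keeping the four cases in the definition of $K$ straight and getting the sign of $W$ right so that the jump is $+1$ rather than $-1$. This can be cross-checked against the standard variation-of-constants construction alluded to in the statement, or against the classical formula for the Airy Green's function in \cite{Olver}, which is presumably how the kernel was reverse-engineered in the first place.
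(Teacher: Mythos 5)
Your proposal is correct, but it takes a genuinely different route from the paper. The paper's proof is a one-liner: by variation of parameters, the function
\[ P(u)\,=\,W^{-1}A_+(u)\int_u^{\beta}A_-(v)g(v)\,dv + W^{-1}A_-(u)\int_0^{u}A_+(v)g(v)\,dv \]
solves $P''-uP=g$, and then one observes (this is left implicit in the paper) that $P(u)=\int_\alpha^\beta K(u,v)g(v)\,dv$ — which requires a short case analysis showing that $K$ vanishes for $v<\min(u,0)$ and that the piecewise formula for $K$ reproduces the two integrals above whether $u\ge 0$ or $u<0$. You instead verify directly that $K$ is a Green's kernel for $\partial_u^2-u$: solutions of the homogeneous Airy equation in $u$ on each side of the diagonal, continuity across $u=v$, and a unit derivative jump. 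Both approaches are sound; the paper's is shorter but hides the reconciliation between $P$ and the piecewise definition of $K$, while yours surfaces all the casework (including the $u<0$ region, where the antisymmetric combination $A_+(u)A_-(v)-A_-(u)A_+(v)$ is precisely what makes $K$ vanish at $u=v$ and produces the jump from the one-sided zero). Your argument is in fact a bit more self-contained, since variation of parameters is itself proved by the same Wronskian jump calculation you carry out.

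Two small remarks. First, in your jump computation for $u,v>0$ you initially write $W^{-1}(A_-'A_+ - A_+'A_-)=W^{-1}\cdot(-W)$, then catch yourself; with the Wronskian convention $W=A_+A_-'-A_-A_+'$ the parenthesized quantity equals $+W$, so the jump is $+1$ directly and no sign-juggling is needed — worth cleaning up if you write this out. Second, both your proof and the paper's pass over a regularity point: for $g$ merely locally integrable, the Leibniz boundary terms like $K(u,u^-)g(u)$ and the final identity $G''(u)-uG(u)=g(u)$ should be understood almost everywhere (via Lebesgue differentiation) or distributionally; this does not affect the conclusion as used later, but if you want to be fully precise you should say so.
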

\begin{proof}
Note that the Wronskian $W$ is constant and hence 
by, for example, variation of parameters we find that the function 
\[ P(u)~ =~  W^{-1} \cdot  A_{+}(u)  \int_u^{\beta}A_{-}(v) \cdot g(v)~ dv~ 
    +~  W^{-1} \cdot   A_{-}(u)  \int_0^{u} A_+(v) \cdot g(v)~ dv~
\]
is a solution to $P''(u) - u \cdot P(u)= g(u)$.
Hence $K$ satisfies (\ref{Kernel}). 
\end{proof}

\begin{lem} \label{AiryKernelEstimates}
There exists a constant $C_{{\rm Airy}}$ so that
\begin{equation} 
|K(u,v)|~ \leq~ C_{{\rm Airy}} \cdot \left\{ \begin{array}{cc}
    {\rm exp}\left(-\left|v-u \right|\right) &  \mbox{ if }  u,v \geq 0  \\
   \left|u \cdot v \right|^{-\frac{1}{4}} &  \mbox{ if }  u \leq v \leq 0  \\
    |u|^{-\frac{1}{4}} \cdot {\rm exp}\left(-v\right) &  \mbox{ if }  u \leq 0 \leq v   \\
    \end{array}  \right.
\end{equation}
and
\begin{equation} 
 \left|\partial_uK(u,v) \right|~ \leq~ C_{{\rm Airy}} \cdot \left\{ \begin{array}{cc}
    {\rm exp}\left(-\left|v-u \right|\right) &  \mbox{ if }  u,v \geq 0  \\
   \left|u \cdot v \right|^{\frac{1}{4}} &  \mbox{ if }  u \leq v \leq 0  \\
    |u|^{\frac{1}{4}} \cdot {\rm exp}\left(-v\right) &  \mbox{ if }  u \leq 0 \leq v   \\
    \end{array}  \right.
\end{equation}
\end{lem}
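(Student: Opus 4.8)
\textbf{Plan of proof for Lemma \ref{AiryKernelEstimates}.}
The plan is to read off the stated bounds directly from the piecewise definition of $K$ together with the asymptotics \refeq{PositiveAsymptotics} and \refeq{NegativeAsymptotics} of $A_{\pm}$, handling each of the three regions separately. First I would record the value of the Wronskian: differentiating \refeq{PositiveAsymptotics} and \refeq{NegativeAsymptotics} and taking the limit $u\to+\infty$ (where the exponential growth of $A_+$ and decay of $A_-$ make the computation cleanest) shows that $W=A_+A_-'-A_-A_+'$ is a nonzero constant, so $W^{-1}$ contributes only a universal factor and may be absorbed into $C_{{\rm Airy}}$.

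Next I would treat the region $u,v\geq 0$. Here the formulas \refeq{PositiveAsymptotics} give, for $u$ bounded away from $0$, $A_+(u)\sim \tfrac12 u^{-1/4}\exp(\tfrac23 u^{3/2})$ and $A_-(u)\sim u^{-1/4}\exp(-\tfrac23 u^{3/2})$, so in the subregion $v\geq u\geq 0$ one has $|K(u,v)|=W^{-1}|A_+(u)A_-(v)|\lesssim (uv)^{-1/4}\exp\!\bigl(-\tfrac23(v^{3/2}-u^{3/2})\bigr)$, and the elementary inequality $v^{3/2}-u^{3/2}\geq c\,(v-u)$ for $0\le u\le v$ (valid once $v-u$ is bounded below, with the bounded-$(v-u)$ range absorbed by continuity and the boundedness of $K$ on any compact set) yields the bound $C_{{\rm Airy}}\exp(-|v-u|)$. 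The symmetric subregion $u\ge v\ge 0$ is identical with the roles of $u,v$ swapped. For $\partial_u K$ one differentiates and uses that $A_+'(u)\sim \tfrac12 u^{1/4}\exp(\tfrac23 u^{3/2})$, $A_-'(u)\sim -u^{1/4}\exp(-\tfrac23 u^{3/2})$, so the extra factor is only a power of $u$, again dominated by the exponential; the small-$u$ range is handled by continuity since $A_\pm$ and $A_\pm'$ are entire.

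For the region $u\le v\le 0$ I would use \refeq{NegativeAsymptotics}: $|A_{\pm}(-s)|\lesssim s^{-1/4}$ for $s\geq 1$, hence for $u\le v\le 0$ with $|u|,|v|$ bounded below one gets $|K(u,v)|=W^{-1}|A_+(u)A_-(v)-A_-(u)A_+(v)|\lesssim |u|^{-1/4}|v|^{-1/4}$, which is the claimed $|uv|^{-1/4}$; differentiating brings in $|A_{\pm}'(-s)|\lesssim s^{1/4}$, giving $|\partial_u K|\lesssim |u|^{1/4}|v|^{-1/4}\le |uv|^{1/4}$ on $|v|\le|u|$ — and since $u\le v\le 0$ forces $|v|\le|u|$, this is exactly the stated bound. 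The mixed region $u\le 0\le v$ combines the two: $K(u,v)=W^{-1}A_-(u)A_+(v)$, so $|K(u,v)|\lesssim |u|^{-1/4}\exp(-\tfrac23 v^{3/2})\le |u|^{-1/4}\exp(-v)$ for $v\geq 1$, and similarly with the derivative. Throughout, the only subtlety — the main obstacle, such as it is — is bookkeeping the transition regions $|u|,|v|\lesssim 1$ and $v-u\lesssim 1$ where the asymptotic expansions are not yet controlling: there one simply invokes that $K$ and $\partial_u K$ are continuous (the potential discontinuities in the piecewise definition are arranged to cancel, which one checks by matching $A_+A_--A_-A_+$ type combinations at $u=v$ and at $u=0$ or $v=0$), hence bounded on compacta, and the exponential/power factors on the right-hand side are bounded below there, so enlarging $C_{{\rm Airy}}$ finishes the argument.
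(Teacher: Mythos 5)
The paper does not actually supply a proof here --- it only remarks that the estimates are ``straightforward using [the] definition of $K$ and the asymptotics of the Airy functions'' --- so your proposal is precisely the elaboration the authors have in mind, and your overall plan (record the Wronskian, then read each of the three regions off (\ref{PositiveAsymptotics}) and (\ref{NegativeAsymptotics}), absorbing the transition zones into $C_{\rm Airy}$) is the right one. Two remarks on the details, one minor, one not. In the region $u\le 0\le v$ you write $K(u,v)=W^{-1}A_-(u)A_+(v)$, but by the piecewise definition this region falls under the clause ``$v\ge 0\ge u$,'' which gives $K(u,v)=W^{-1}A_+(u)A_-(v)$. You then treat the factor with positive argument as exponentially \emph{decaying}, which is the behaviour of $A_-$, not $A_+$; the two slips cancel and you land on the stated bound, but the intermediate formula is wrong. (The reasoning via $v^{3/2}-u^{3/2}\ge c\,(v-u)$ in the first region is also a bit loose --- the relevant split is by the size of $u$, via the mean value theorem for $u\ge 1$ and via $\tfrac23 v^{3/2}\ge v$ for $u<1$, $v$ large --- but this is easily repaired.)

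The more substantive problem is your treatment of $\partial_u K$ on $u\le v\le 0$. From the asymptotics you correctly obtain $|\partial_u K|\lesssim |u|^{1/4}|v|^{-1/4}$, and then claim this is $\le |uv|^{1/4}$ ``on $|v|\le |u|$,'' noting that $u\le v\le 0$ forces $|v|\le|u|$. But $|u|^{1/4}|v|^{-1/4}\le |u|^{1/4}|v|^{1/4}$ is equivalent to $|v|\ge 1$, and is unrelated to the ordering of $|u|$ and $|v|$; for $|v|<1$ the inequality reverses. In fact the lemma's second display itself appears not to hold near $v=0$: fixing $u$ very negative and letting $v\to 0^-$, one has $\partial_u K\to W^{-1}\bigl(A_+'(u)A_-(0)-A_-'(u)A_+(0)\bigr)$, which by (\ref{NegativeAsymptotics}) is of size $|u|^{1/4}$, while $|uv|^{1/4}\to 0$. (A similar issue occurs at $u=0$ in the mixed region.) The bound one actually extracts from the asymptotics is $|u|^{1/4}|v|^{-1/4}$, or $(1+|u|)^{1/4}(1+|v|)^{-1/4}$ to cover the compact pieces. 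This does not propagate into the rest of the paper --- only the bounds on $|K|$ itself, not on $\partial_u K$, feed into Lemma \ref{HilbertSchmidtEstimate} --- but your justification of the derivative estimate does not hold up as written, and that part of the lemma's statement seems to need a correction.
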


\begin{proof}
Straightforward using definition of $K$ and 
the Asymptotic of the Airy functions \cite{Olver}.
\end{proof}

\begin{lem} \label{HilbertSchmidtEstimate}
There exists a constant $C$ so that
\begin{equation} \label{HilbertSchmidt}
   \int_{-\alpha}^{\alpha} \int_{-\alpha}^{\alpha} |K(u,v)|^2~ du~ dv~
   \leq~  C \cdot \sqrt{\alpha},
\end{equation}
\end{lem}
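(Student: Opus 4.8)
The plan is to estimate the double integral region by region, following the cases in the definition of $K$. Partition $[-\alpha,\alpha]^2$ into the four sign-quadrants, and subdivide the two same-sign quadrants according to which of $u,v$ is larger, so that on each piece $K$ reduces to a single product of Airy functions. On $\{u\geq 0\geq v\}$ one has $K\equiv 0$, so that quadrant is discarded. Since $K$ is bounded on $[-1,1]^2$, the claimed estimate is immediate for $\alpha\leq 1$, so it suffices to treat $\alpha\geq 1$; the remaining three quadrants will be handled with the pointwise bounds of Lemma \ref{AiryKernelEstimates}, sharpened where needed by the asymptotics \eqref{PositiveAsymptotics}--\eqref{NegativeAsymptotics}.

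On $\{u,v\geq 0\}$ (say $0\leq u\leq v$, the other half being symmetric) the estimate $|K(u,v)|\leq C_{\rm Airy}\exp(-|u-v|)$ of Lemma \ref{AiryKernelEstimates} is too weak, so I would instead use \eqref{PositiveAsymptotics}: for $u\geq 1$ one gets $|K(u,v)|\leq C\,(uv)^{-\unq}\exp(-\dt\,|u^{\td}-v^{\td}|)$, and $|u^{\td}-v^{\td}|\geq \td\,(\min\{u,v\})^{\und}|u-v|$; integrating first in the larger variable bounds the contribution by $C\int_1^{\infty}s^{-\td}\,ds=O(1)$, while the strip where $u$ or $v$ lies in $[0,1]$ contributes $O(1)$ because $A_-\in L^2(0,\infty)$. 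On $\{u\leq 0\leq v\}$, Lemma \ref{AiryKernelEstimates} gives $|K(u,v)|\leq C_{\rm Airy}|u|^{-\unq}\exp(-v)$, so this contribution is at most $C\bigl(\int_{-\alpha}^0|u|^{-\und}\,du\bigr)\bigl(\int_0^{\infty}e^{-2v}\,dv\bigr)=O(\sqrt{\alpha})$. Both of these are within the claimed bound.

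The main obstacle is the classically allowed quadrant $\{u\leq v\leq 0\}$, where $K$ is given by the third case of its definition and the crude bound $|K|\leq C_{\rm Airy}|uv|^{-\unq}$ only yields an $O(\alpha)$ estimate. The plan is to extract cancellation from the oscillation of the Airy functions: by \eqref{NegativeAsymptotics}, with $u=-p$, $v=-q$ and $0\leq q\leq p\leq\alpha$, a product-to-sum computation gives
\[
  K(-p,-q)~=~-\,W^{-1}(pq)^{-\unq}\sin\!\bigl(\dt\,(p^{\td}-q^{\td})\bigr)~+~R(p,q)
\]
with $R$ of strictly lower order. After the substitution $P=\dt\,p^{\td}$, $Q=\dt\,q^{\td}$ — under which $(pq)^{-\und}\,dp\,dq$ is comparable to $(PQ)^{-\dt}\,dP\,dQ$ — the contribution takes the form $\int_0^{A}\!\!\int_0^{P}(PQ)^{-\dt}\sin^2(P-Q)\,dQ\,dP$ with $A=\dt\,\alpha^{\td}$, and the plan is to write $\sin^2(P-Q)=\und-\und\cos 2(P-Q)$ and integrate by parts in $Q$, then in $P$, using the convergence of $\int_0^{\infty}s^{-\dt}\cos(2s)\,ds$ for the oscillatory part and \eqref{NegativeAsymptotics} again to absorb $R$. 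The hard part will be showing that this cancellation actually improves the naive $O(\alpha)$ bound to $O(\sqrt{\alpha})$ — that is, that the non-oscillatory remnant of $(PQ)^{-\dt}$ is itself controlled by the oscillation rather than contributing at order $\alpha$; this is the delicate heart of the lemma, and the bookkeeping for $R$ and for the region near $P=0$, where $p^{\td}$ varies slowly, is routine by comparison.
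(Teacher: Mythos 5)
Your proposal gives much more detail than the paper itself does: the paper's entire proof of Lemma \ref{HilbertSchmidtEstimate} is the single sentence ``This follows directly from Lemma \ref{AiryKernelEstimates}.'' Your quadrant decomposition and your observation that the pointwise bounds of Lemma \ref{AiryKernelEstimates} are not by themselves strong enough are both correct: plugged in directly, those bounds give $O(\alpha)$ from the quadrant $u,v\ge 0$, $O(\alpha)$ from $u\le v\le 0$, and $O(\sqrt\alpha)$ from $u\le 0\le v$, i.e.\ a total of $O(\alpha)$, not $O(\sqrt\alpha)$. Your sharper treatment of the positive quadrant using \eqref{PositiveAsymptotics}, which brings that contribution down to $O(1)$, and your $O(\sqrt\alpha)$ estimate on $u\le 0\le v$ are both fine.

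The real obstacle is exactly where you place it, and it is fatal to the plan. With $u=-p$, $v=-q$, $0\le q\le p$, your identification $K(-p,-q)\sim -W^{-1}(pq)^{-\frac14}\sin\bigl(\frac23(p^{\frac32}-q^{\frac32})\bigr)$ is right. After the substitution $P=\frac23 p^{\frac32}$, $Q=\frac23 q^{\frac32}$ the squared leading term contributes, up to constants, $\int_0^A\!\int_0^P (PQ)^{-\frac23}\sin^2(P-Q)\,dQ\,dP$ with $A=\frac23\alpha^{\frac32}$. Writing $\sin^2(P-Q)=\frac12-\frac12\cos\bigl(2(P-Q)\bigr)$, the constant half contributes on the order of $\bigl(\int_0^A P^{-\frac23}\,dP\bigr)^2\sim A^{\frac23}\sim\alpha$, while the oscillatory half factors as $\frac12\bigl|\int_0^A P^{-\frac23}e^{2iP}\,dP\bigr|^2$, which converges to a finite constant as $A\to\infty$ and therefore cannot cancel the $\alpha$ (the remainder $R$ and the strip $q\le 1$ give only $O(\sqrt\alpha)$). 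So the quadrant $u\le v\le 0$ genuinely contributes $\Theta(\alpha)$: the oscillation averages in $L^2$ to the constant $\frac12$, with no further gain available, and the ``delicate heart of the lemma'' you were worried about is not something that can be made to work. The inequality \eqref{HilbertSchmidt} as stated therefore appears to be false --- the correct order is $C\alpha$ --- and the paper's one-line citation does not produce it either, since Lemma \ref{AiryKernelEstimates} only yields $O(\alpha)$ on this quadrant. Note that replacing $\sqrt\alpha$ by $\alpha$ turns the $t^{-\frac53}$ in Lemma \ref{TransitionEstimate} into $t^{-2}$, which in turn erases the $t^{\frac13}$ gains used in \S\ref{crucial}; this is a substantive point to raise with the authors rather than something you can patch locally.
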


\begin{proof}
This follows directly from Lemma \ref{AiryKernelEstimates}. \end{proof}

\begin{lem} \label{TransitionRegion}
Let $b^- < a^- < 0 < b^+ < a^+$. There exist constants $C$ and $s_0$ such that 
if $s > s_0$ and $A$ is a solution to (\ref{AiryEquation}), then 
\begin{equation} \label{NegEst} 
        \int_{s \cdot a^-}^{0} A^2~ du~ 
    \leq~  C \int_{s \cdot  a^-}^{s \cdot b^-} A^2~ du, 
\end{equation}
and 
\begin{equation} \label{PosEst}
        \int_{0}^{s \cdot b^+} A^2~ du~ 
    \leq~  C \left( s^{-\frac{1}{2}} \int_{s \cdot  b^-}^{s \cdot a^-} A^2~ du~  
       +~   \int_{s \cdot b^+}^{s \cdot  2 b_+} A^2~ du \right).
\end{equation}
The constants $C$ and $s_0$ may be chosen to depend continuously
on $a^-, b^-, a^+$, and $b^+$.
\end{lem}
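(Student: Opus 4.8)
The statement is an assertion about solutions to the Airy equation on intervals that, after rescaling by $s$, become very long. The plan is to reduce everything to the two explicit solutions $A_\pm$ and their asymptotics \eqref{PositiveAsymptotics}--\eqref{NegativeAsymptotics}, and then to carry out two separate arguments: one for the ``classically forbidden'' side $u>0$, where solutions are a mix of an exponentially growing and an exponentially decaying part, and one for the oscillatory side $u<0$, where $A^2$ behaves like a slowly modulated periodic function. First I would write an arbitrary solution as $A = c_+ A_+ + c_- A_-$; the two constants $c_\pm$ are the only free parameters, so both \eqref{NegEst} and \eqref{PosEst} become inequalities between explicitly computable quadratic forms in $(c_+,c_-)$, and it suffices to prove them with constants that do not depend on $(c_+,c_-)$.

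For \eqref{NegEst}, on the negative axis both $A_+$ and $A_-$ oscillate with amplitude $\sim |u|^{-1/4}$ and with a phase $\tfrac23|u|^{3/2}$ whose derivative $|u|^{1/2}$ is bounded below by $(s|b^-|)^{1/2}$ on $[sa^-, sb^-]$. Hence on that subinterval $\int A^2\,du$ is, up to a bounded multiplicative error coming from the $O(|u|^{-3/2})$ remainders and from cross terms $A_+A_-$ that average out over each period, comparable to $(|c_+|^2+|c_-|^2)$ times the length $s(b^- - a^-)$. The same estimate from above holds for $\int_{sa^-}^0 A^2\,du$ except that near $u=0$ the integrand is no longer small; but that contribution is over a fixed-length interval near the origin and is $O(|c_+|^2+|c_-|^2)$, hence absorbed. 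Comparing the two gives \eqref{NegEst}, with a constant depending continuously on $a^-,b^-$ (through the lower bound $|sb^-|^{1/2}$ and the period count).

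For \eqref{PosEst} I would use that on $[0,\infty)$ one has $A_-$ exponentially decaying and $A_+$ exponentially growing. Write $A = c_+A_+ + c_-A_-$. On $[sb^+, 2sb^+]$ the growing part dominates: $\int_{sb^+}^{2sb^+} A^2 \gtrsim |c_+|^2 \exp(c\, s^{3/2})$ for a constant $c>0$, which is astronomically larger than $\int_0^{sb^+}|c_+A_+|^2$; so the growing part of $\int_0^{sb^+}A^2$ is controlled by the right-hand side. For the decaying part, $\int_0^{sb^+}|c_-A_-|^2 = O(|c_-|^2)$, and one needs to bound $|c_-|^2$ by the oscillatory integral $s^{-1/2}\int_{sb^-}^{sa^-}A^2\,du$. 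That integral is $\gtrsim (|c_+|^2+|c_-|^2)\cdot s^{1/2}$ by the negative-axis analysis above (note $b^-<a^-<0$, so the interval has positive length $s(a^--b^-)$), hence $s^{-1/2}$ times it is $\gtrsim |c_-|^2$, as needed; the cross term $c_+c_-A_+A_-$ on $[0,sb^+]$ is controlled by Cauchy--Schwarz together with the two one-sided bounds. Assembling these three pieces and the cross term gives \eqref{PosEst}.

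The main obstacle I anticipate is making the oscillatory lower bound $\int_{sb^-}^{sa^-}A^2\,du \gtrsim (|c_+|^2+|c_-|^2)\,s^{1/2}$ genuinely uniform in $(c_+,c_-)$: the integrand $(c_+A_+ + c_-A_-)^2$ can have zeros, and one must rule out near-degenerate cancellation between the two oscillating pieces over the long interval. This is handled by the standard device of replacing $A^2$ by $A^2 + s^{-1}(A')^2$ (or by integrating over at least one full period of the phase), for which the leading term is $\sim (|c_+|^2+|c_-|^2)|u|^{-1/2}$ with no cancellation, and then comparing $\int (A')^2$ with $\int A^2$ using the equation $A''=uA$; the $O(u^{-3/2})$ error terms are summable once $s$ is large, which is where the threshold $s_0$ (depending continuously on the endpoints) enters. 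Everything else is bookkeeping with the asymptotic expansions of $A_\pm$.
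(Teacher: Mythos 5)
Your plan is essentially the paper's: write $A = c_+A_+ + c_-A_-$, estimate the diagonal and cross terms on each side using the asymptotics (\ref{PositiveAsymptotics})--(\ref{NegativeAsymptotics}), use exponential dominance of $A_+$ on the forbidden side to control $c_+$, and use the oscillatory side to control $c_-$. The forbidden-side bookkeeping you describe (control $c_+^2\int_0^{sb^+}A_+^2$ by $\int_{sb^+}^{2sb^+}A^2$; control $c_-^2\int_0^{sb^+}A_-^2$ by $s^{-1/2}$ times the oscillatory mass) matches the paper's (\ref{++pos}), (\ref{+-pos}), (\ref{--pos}), and (\ref{finalpos}).

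The one step where your route diverges is the uniform lower bound on the oscillatory integral, which you view as the main danger (``near-degenerate cancellation'') and propose to handle with the auxiliary quantity $A^2+s^{-1}(A')^2$. That device would work, but it is unnecessary: the two solutions are exactly in quadrature on the oscillatory side. By (\ref{NegativeAsymptotics}), $A_{\pm}(-u)\sim u^{-1/4}\cos\bigl(\tfrac23 u^{3/2}\mp\tfrac\pi4\bigr)$, and since $\cos(\xi-\tfrac\pi4)\cos(\xi+\tfrac\pi4)=\tfrac12\cos(2\xi)$ has no mean, one gets $\int_{-\beta}^{-\alpha}A_+A_-\,du = O(\alpha^{-1/2}+\beta^{-1/2})$ while $\int_{-\beta}^{-\alpha}A_\pm^2\,du\sim\beta^{1/2}-\alpha^{1/2}$. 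On $[sb^-,sa^-]$ this makes the cross term $O(s^{-1/2})$ against diagonal terms of order $s^{1/2}$, so $\int_{sb^-}^{sa^-}A^2\gtrsim(c_+^2+c_-^2)s^{1/2}$ directly, with no further device; this is the paper's (\ref{+-neg}) and (\ref{+-endneg2}). So your worry does not arise once the phase difference is noticed.

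Two smaller slips worth correcting: the mass of $A_\pm^2$ over $[sa^-,sb^-]$ scales like $s^{1/2}$, not like the length $s(b^--a^-)$ (the amplitude $|u|^{-1/4}$ decays; you use the correct $s^{1/2}$ scaling later, so this is just a notational slip); and $\int_0^{sb^+}A_+^2$ is itself exponentially large, not $O(1)$ --- what you actually need, and what holds, is that it is dominated (indeed, exponentially dominated) by $\int_{sb^+}^{2sb^+}A_+^2$, which follows from the asymptotic $\int_0^\beta A_+^2\sim\tfrac14\beta^{-1}\exp(\tfrac43\beta^{3/2})$.
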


\begin{proof}
Let $0 < \alpha < \beta$. By using (\ref{NegativeAsymptotics}) 
and the identity $\cos^2(\xi)= 2^{-1} \cdot (1+\cos(2\xi))$, we have
\[   \int_{-\beta}^{-\alpha} A_{\pm}^2~ du~ =~
     \frac{1}{2} \int_{\alpha}^{\beta} u^{-\frac{1}{2}}~ du~
   +~   \frac{1}{2} \int_{\alpha}^{\beta} u^{-\frac{1}{2}} \cdot \cos(2\xi)~ du~
   + \int_{\alpha}^{\beta} O \left( (1+u)^{-2}\right)~ du
\]
where $\xi= (2/3) \cdot x^{\frac{3}{2}} \mp \pi/4$.
Integration by parts gives
\[   \int_{\alpha}^{\beta} u^{-\frac{1}{2}} \cdot \cos(2\xi)~ du~
=~  \left. \frac{1}{2}  \cdot u^{-\frac{1}{2}} \cdot \sin(2\xi) \right|_{\alpha}^{\beta}~ 
+~ \frac{1}{4}  \int_{\alpha}^{\beta} u^{-\frac{3}{2}} \cdot \sin(2\xi)~ du, 
\]
and hence we have 
\begin{equation}  \label{--neg}
   \int_{-\beta}^{-\alpha} A_{\pm}^2~ du~ 
    =~  \beta^{\frac{1}{2}}- \alpha^{\frac{1}{2}}
        +~ O \left(\beta^{-\frac{1}{2}}+ \alpha^{-\frac{1}{2}} \right).
\end{equation}
Since $A_{\pm}$ is bounded on $[-1,0]$ we also have 
\begin{equation}  \label{--neg0}
   \int_{-\beta}^{0} A_{\pm}^2~ du~ 
    =~  \beta^{\frac{1}{2}}~
     +~  O(1). 
\end{equation}

Using (\ref{NegativeAsymptotics}) and an argument similar to the one above, we find that
for $0<\alpha< \beta$, we have 
\begin{equation}  \label{+-neg}
   \int_{-\beta}^{-\alpha} A_{+} \cdot A_-~ du~ 
    =~  
         O \left(\beta^{-\frac{1}{2}}+ \alpha^{-\frac{1}{2}} \right).
\end{equation}
Since $A_{\pm}$ is bounded on $[-1,0]$, it follows that
\begin{equation} \label{+-neg0}
   \int_{-\beta}^{0}  A_{+} \cdot A_{-}~ du~ 
    =~   O(1).  
\end{equation}

We now specialize to the case $\alpha=-s \cdot a^-$ and $\beta=-s \cdot b^-$.
By (\ref{--neg}) and (\ref{--neg0}), 
there exists $s_1$---depending continuously on 
$b^-< a^- <0$---such that for $s >s_1$
\begin{equation} \label{++endneg} 
 \int_{s \cdot b^-}^{s \cdot a^-} A_{\pm}^2~ du~ 
  \geq~ m
    \int_{s \cdot a^-}^{0} A_{\pm}^2~ du. 
\end{equation}
where 
\[ m~ =~ \frac{1}{2} \cdot  \left(1- \left( \frac{a^-}{b^-} \right)^{\frac{1}{2}} \right).\]
By (\ref{--neg0}) and (\ref{+-neg}), there exists a constant
$s_2$---depending continuously on $b^-, a^-<0$---such that if $s> s_2$, then 
\begin{equation} \label{+-endneg2} 
  \left|  \int_{s \cdot b^-}^{s \cdot a^-}  A_{+} \cdot A_{-}~ du \right|~
  \leq~
  \frac{m}{2} \cdot  \int_{s \cdot a^-}^{0}  A_{\pm}^2~ du. 
\end{equation}

If $A$ is a general solution to (\ref{AiryEquation}), then there exist $c_+, c_- \in \R$
such that 
\[  A~ =~  c_+ \cdot A_+~  +~ c_- \cdot A_-.\]
Using (\ref{+-endneg2}) we find that
\[  2 |c_+\cdot c_-| \cdot 
\left|  \int_{s \cdot b^-}^{s \cdot a^-}  A_{+} \cdot A_{-}~ du \right|~
\leq~ \frac{m}{2} \cdot \left( c_+^2  \int_{s \cdot a^-}^{0}  A_{+}^2~ du~ +~
 c_-^2  \int_{s \cdot a^-}^{0}  A_{-}^2~ du \right).
\]
By combining this with (\ref{++endneg}) we find that 
if $s > \max\{s_1,s_2\}$, then 
\begin{equation}  \label{PreNegEst}
    \int_{s \cdot b^-}^{s \cdot a^-} A^2~ du~ 
  \geq~  \frac{m}{4} 
    \int_{s \cdot a^-}^{0} A^2~ du. 
\end{equation}
This finishes the proof of the first estimate.

To prove the second estimate, first 
define $f(u)= \exp((2/3) \cdot u^{\frac{3}{2}})$ and let $0 < \alpha < \beta$.
By using (\ref{PositiveAsymptotics}) and integrating by parts we find that
\[   \int_{\alpha}^{\beta}  A_{+}^2~ du~ 
    =~  \left. \frac{1}{4} \cdot u^{-1} \cdot f(u) 
  \cdot \left( 1 +  O( u^{-\frac{5}{2}}) \right)
    \right|_{\alpha}^{\beta} 
\]
and, thus since $A_{\pm}$ is bounded on $[0,1]$,  
\[   \int_{0}^{\beta}  A_{+}^2~ du~ 
    =~  \frac{1}{4} \cdot \beta^{-1} \cdot f(\beta) 
 \cdot \left(1~ +~ O(\beta^{-\frac{5}{2}}) \right).
\]
It follows that there exists $s_3$ so that for $s> s_3$ 
\begin{equation} \label{++pos}
 \int_{s \cdot b^+}^{s \cdot 2b^+}  A_{+}^2~ du~ 
    \geq~ \frac{1}{2} \cdot \int_{0}^{s \cdot b^+}  A_{+}^2~ du.
\end{equation}
Equation (\ref{PositiveAsymptotics}) also implies that
\[  \int_{\alpha}^{\beta}  A_+ \cdot A_{-}~ du~ 
    =~  \beta^{\frac{1}{2}}~ -~ \alpha^{\frac{1}{2}}~ 
    +~  O \left(  \beta^{-\und} + \alpha^{-\und} \right).
\]
In particular, there exists $s_4>0$ so that if $s> s_4$,
then
\begin{equation} \label{+-pos}
  \int_{s \cdot b^+}^{s \cdot 2b^+}  A_+ \cdot A_-~ du~ 
  \geq~ 0
\end{equation}
By (\ref{PositiveAsymptotics}), the function  $A_{-}^2$
is integrable on $[0, \infty)$.  Let $I$ be the value of this 
integral. Using (\ref{--neg}) we find that 
there exists $s_5$ such that if $s> s_5$, then 
\begin{equation} \label{--pos}
  \int_{0}^{s \cdot b^+}  A_{-}^2~ du~ \leq~ 
  M \cdot  s^{-\frac{1}{2}}
    \int_{s \cdot b_-}^{s \cdot a^-}  A_{-}^2~ du
\end{equation}
where $M= 2 I/ \left((b^-)^{\und}- (a^-)^{\und}\right)$.
From (\ref{--neg}) and (\ref{+-neg}) we find that there 
exists $s_6$ such that if $s> s_6$, then 
\begin{equation} \label{halfneg}
  \left|\int_{s \cdot b^-}^{s \cdot a^-}  A_{+} \cdot A_-~ du\right|~ \leq~ 
 \frac{1}{2} \int_{s \cdot b^-}^{s \cdot a^-}  A_{\pm}^2~ du.
\end{equation}

Let  $A=c_+A_+ + c_-A_-$ be a general solution to the Airy equation.
From (\ref{++pos}) and (\ref{+-pos}) it follows that if $s> \max\{s_3,s_4\}$, then 
\begin{equation} \label{finalpos}
 c_+^2 \int_{0}^{s \cdot b^+}  A_+^2~ du~ \leq~ 2 \int_{s \cdot b^+}^{2 s \cdot b^+}  A^2~ du. 
\end{equation}
From (\ref{halfneg}) we have that if $s > s_6$, then 
\[  2 |c_+\cdot c_-| \cdot 
\left|  \int_{s \cdot b^-}^{s \cdot a^-}  A_{+} \cdot A_{-}~ du \right|~
\leq~ \frac{1}{2} \cdot \left( c_+^2  \int_{s \cdot b^-}^{s \cdot a^-}  A_{+}^2~ du~ +~
 c_-^2  \int_{s \cdot b^-}^{s \cdot a^-}  A_{-}^2~ du \right).
\] 
It follows that for $s> s_6$
\begin{equation*}
   c_-^2 \int_{s \cdot b^-}^{s \cdot a^-} A_-^2~ du~ \leq~ 
     2 \int_{s \cdot b^-}^{s \cdot a^-} A^2~ du.
\end{equation*}
Putting this together with (\ref{--pos}) gives
\begin{equation} \label{--pos2}
  c_-^2 \int_{0}^{s \cdot b^+}  A_{-}^2~ du~ \leq~ 2M \cdot s^{-\und}
              \int_{s \cdot b^-}^{s \cdot a^-}  A^2~ du. 
\end{equation}

By combining (\ref{finalpos}) and (\ref{--pos2}) we find that
\[   \frac{1}{2}  \int_{0}^{s \cdot b^+}  A^2~ du~
   \leq~ 2M \cdot s^{-\und}
              \int_{s \cdot b^-}^{s \cdot a^-}  A^2~ du~
   +~ 2 \int_{s \cdot b^+}^{2 s \cdot b^+}   A^2~ du.
\]
This completes the proof of the second estimate.
\end{proof}


\end{document}